\RequirePackage{fix-cm}
\documentclass[11pt]{article}
\usepackage{authblk}
\usepackage[hidelinks]{hyperref}
\usepackage{mathtools}
\usepackage{amsthm}
\usepackage{amssymb}
\usepackage{thmtools}
\usepackage{biblatex}
\usepackage{tikz-cd}
\usepackage{dsfont}
\usepackage{comment}
\usepackage{enumitem}
\usepackage[margin=2.5cm]{geometry}

\usetikzlibrary{decorations.pathmorphing}
\declaretheorem[name=Theorem, numberwithin=section]{theorem}
\declaretheorem[name=Proposition, sibling=theorem]{proposition}
\declaretheorem[name=Lemma, sibling=theorem]{lemma}
\declaretheorem[name=Corollary, sibling=theorem]{corollary}
\theoremstyle{definition}
\declaretheorem[name=Example, numberwithin=section]{example}
\declaretheorem[name=Remark, sibling=example]{remark}
\newtheorem{claim}{Claim}

\newcommand{\up}{{\uparrow}}

\newcommand{\ca}[1]{\mathcal{#1}}
\newcommand{\bd}[1]{\mathbf{#1}}
\newcommand{\mf}{\mathsf}

\newcommand{\Om}{\Omega}
\newcommand{\Omi}{\Omega_{\ca I}}
\newcommand{\Omt}{\Omega_{\ca T}}
\newcommand{\pt}{\mathsf{pt}}

\newcommand{\ptt}{\pt_{\ca T}}
\newcommand{\Op}{\mathcal O}
\newcommand{\Or}{\mathcal{O}_{\mathcal{R}}}
\newcommand{\fse}{\mathsf{Filt}_{\mathcal{SE}}}
\newcommand{\fe}{\mathsf{Filt}_{\mathcal{E}}}
\newcommand{\fcp}{\mathsf{Filt}_{\ca{CP}}}
\newcommand{\Con}{\mathbf{Con}}
\newcommand{\ptc}{\pt_{\ca{C}}}
\newcommand{\Omc}{\Omega_{\ca{C}}}
\newcommand{\Topp}{\bd{Top}}
\newcommand{\Topz}{\bd{Top_0}}
\newcommand{\Topc}{\bd{Top_{\ca C}}}    
\newcommand{\Topt}{\bd{Top_{\ca T}}}
\newcommand{\Topi}{\bd{Top_{\ca I}}}
\newcommand{\Tc}{\bd{2}_{\ca C}}

\newcommand{\Frm}{\bd{Frm}}
\newcommand{\PFC}{\bd{PFC}}
\newcommand{\SFC}{\bd{SFC}}
\newcommand{\Set}{\bd{Set}}
\newcommand{\Ci}{\ca C_{\ca I}}
\newcommand{\Ct}{\ca C_{\ca T}}

\newcommand{\id}{\mathsf{id}}
\newcommand{\opp}[1]{#1^{\mathsf{op}}}
\newcommand{\bve}{\bigvee}  
\newcommand{\bwe}{\bigwedge}

\renewcommand{\epsilon}{\varepsilon}

\title{A general framework for the faithful pointfree representation of $T_0$-spaces}
\author{Rui Prezado\thanks{CMUC, Department of Mathematics, University of
Coimbra, 3000-143 Coimbra, Portugal}\hspace{2mm} and Anna Laura Suarez\thanks{School of
Mathematics, Hunan University, Changsha, Hunan 410082, China}}
\date{}

\begin{document}

\maketitle

\begin{abstract}
  We introduce a general framework for studying natural contravariant
  adjunctions that refine the adjunction between frames and spaces so that the
  fixpoints are $T_0$-spaces.

  Our objects of study are \textit{spatializable $\mathbf{Frm}$-concrete
  categories}, or \textit{SFC-categories}. These consist of a faithful functor
  $\Op:\ca C\to \Frm$ equipped with an object $\Tc\in \ca C$, satisfying
  compatibility conditions that ensure that $(\Tc,\mathbb{S})$ forms a
  dualizing object in the sense of Porst and Tholen, where $\mathbb{S}$
  denotes the Sierpiński space. 

  Three important instances of pointfree $T_0$ spaces present in the
  literature fit into this framework: strictly zero-dimensional biframes,
  MT-algebras, and Raney extensions. 

  We show SFC-categories are assembled in an ordered category -- a category
  enriched in preordered sets -- whose morphisms are suitable functors which
  preserve certain initial liftings. SFC-categories induce natural dual
  adjunctions, and morphisms between them will respectively induce suitable
  morphisms between these adjunctions.

  Motivated by the characterization of sober spaces as maximal objects in the
  fibers of $\Omega:\mathbf{Top}\to \mathbf{Frm}^{\mathsf{op}}$, and of
  $T_D$-spaces as the minimal ones, due to Banaschewski and Pultr, we study
  initial and terminal objects of fibers for an arbitrary SFC-category. We
  prove that the natural adjunction for fiber-initials has exactly the sober
  spaces as fixpoints, while for fiber-terminals contains at most
  $T_D$-spaces, recovering their results of in a much more general setting.
\end{abstract}

\tableofcontents
\section*{Introduction}

The classical adjunction between frames and topological spaces,
\[
\Om : \Topp^{\mf{op}}\rightleftarrows \Frm:\pt,
\]
restricts to a duality between the category of sober spaces and the category of spatial frames, but it does not capture all $T_0$ spaces. In particular, the unit of the adjunction for a $T_0$ space $X$ is a subspace embedding $\sigma_X:X \to \pt\Om X$, which is a homeomorphism if and only if $X$ is sober. 

The theories of Raney extensions (\cite{suarez25raney}), of strictly
zero-dimensional biframes (\cite{manuell15}), and of MT-algebras
(\cite{bezhanishvili23}), all provide pointfree faithful descriptions of the
category of $T_0$-spaces, by refining the natural adjunction $\Om\dashv \pt$.

The aim of this paper is to provide a general framework for the pointfree and
faithful treatment of $T_0$ spaces, by studying refinements of the classical
adjunction $\Om\dashv \pt$. To do so, we introduce a notion of
``\emph{spatializable \(\Frm\)-concrete category}'', or SFC-category for
short. This consists of a quadruple $(\Op,\ca C,\Tc,\iota_{\ca C})$ where
$\Op:\ca C\to \Frm$ is a faithful functor, $\Tc\in \ca C$, and $\iota_{\ca
C}:\Op \Tc\cong \bd 2$ an isomorphism, and such that compatibility conditions
are verified which ensure that the pair $(\Tc,\mathbb{S})$ forms a
\textit{dualizing object} in the sense of Porst and Tholen (\cite{porst91}).
The dualizing object induces a natural dual adjunction $\Omc\dashv \ptc$, as
shown below:

\begin{equation*}
    \begin{tikzcd}[row sep=huge,column sep=large]
        \ca{C}
            \ar[r,shift left=2,"\ptc"{above}] \ar[d,"\Op"]
        & \bd{Top}_{\ca C}^{\mathsf{op}}
            \ar[l,shift left=2,"\Omc"{below}]
            \ar[l,phantom,"\dashv"{rotate=-90,anchor=center}]
            \ar[d,hookrightarrow] \\
        \bd{Frm}
            \ar[r,shift left=2,"\pt"{above}]
        & \bd{Top}^{\mathsf{op}}
            \ar[l,shift left=2,"\Om"{below}]
            \ar[l,phantom,"\dashv"{rotate=-90,anchor=center}]
   \end{tikzcd}
\end{equation*}
There is always an isomorphism $\Op\Omc\cong \Om$, so that the diagram
commutes in the right adjoint direction. This has a \textit{mate} in the sense
of \cite{ks74}, which yields a natural subspace embedding $\ptc\to \pt\Op$,
filling in the square in the left adjoint direction.

We study morphisms between such SFC-categories, and call $\SFC$ the category
thus obtained, having $(\mathrm{1}_{\Frm},\Frm,\bd{2},\id_{\bd{2}})$ as its
terminal object. The theories of Raney extensions, of strictly zero-dimensional
biframes, and of MT-algebras are all examples of these SFC-categories.

We describe the notion of adjunction in the ordered category of \( \Frm
\)-concrete categories \cite{ahs90, por94}, and using this as a building block
for our setting, we confirm that SFC-functors will induce morphisms between
the induced natural adjunctions. We study the notion of adjunction for
PFC-categories, and show that right adjoints are morphisms in $\SFC$. 

We move on to study SFC-categories where $\Op$ has a left FC-adjoint, i. e.
those for which the free object in $\ca C$ over a frame exists.  In agreement
with the classical characterization of sober spaces, seen as objects of the
$\Frm$-concrete category $\Om:\Topz^{\mf{op}}\to \Frm$, as well as the
definitions of sober Raney extension and sober MT-algebras from
\cite{suarez25raney} and \cite{bezhanishvili23}, we define an object $C\in \ca
C$ to be sober if every point $p:\Op C\to \bd{2}$ of its underlying frame is
realized as $\Op \overline{p}$ for some point $\overline{p}:\ca C\to \Tc$ (i.
e. $C$ "has enough points"). We describe a pointfree notion of sobrification
for pointfree $T_0$-spaces, and find that when the category $\ca C$ has
pullbacks sober objects are coreflective in $\ca C$. 

Next, we consider the SFC-categories for which all $T_0$-spaces are fixpoints.
We recall the characterizations, due to Banaschewski and Pultr, of sober
spaces as the minimal elements and $T_D$ spaces as the maximal ones for the
$\Frm$-concrete category $\Om:\Topz^{\mf{op}}$, respectively. Inspired by this
perspective, we study the SFC-categories such that the fiber-initial and the
fiber-terminal objects induce their own natural adjunctions; we call these
\emph{total} SFC-categories. We let $I_{\ca I}:\Ci\to \ca C$ and $I_{\ca
T}:\Ct\to \ca C$ be the inclusions of the categories of fiber-initial objects
and that of fiber-terminal ones, respectively. The axioms ensure that
 
\begin{align*}
   (\Op I_{\ca I},\Ci,\Tc,\iota_{\ca C}) &&  (\Op I_{\ca T},\Ct,\Tc,\iota_{\ca C})
\end{align*}
are SFC-categories, each inducing a natural contravariant adjunction:
\begin{align*}
 \ptc:\Ci\leftrightarrows \Topp:\Om_{\ca I} &&  \ptc:\Ct\leftrightarrows \Topt:\Omc.
\end{align*}

Here, $\Topt$ denotes the category of spaces for which there exists an object in $\Ct$ with the desired universal property. All spaces in $\Topt$ are in fact fixpoints of the resulting adjunction, giving an natural adjunction that cannot be extended beyond its fixpoints; as is the case for the contravariant adjunction by Banaschewski and Pultr for $T_D$ spaces. In the natural dual adjunction $\ptc:\Ci\leftrightarrows\Topp:\Om_{\ca I}$, the fixpoints are exactly the sober spaces; in $\ptc:\Ct\leftrightarrows\Topt:\Omc X$ are at most the $T_D$-spaces. This result further supports the view already advanced in \cite{banaschewskitd}, where the $T_D$ axiom was first studied in relation to pointfree concepts, that the $T_D$-property and sobriety are each other's duals.

We find that there is a chain of morphisms between adjunctions:
\[
\begin{tikzcd}
    \Ct 
     \ar[r,shift left=2,"\ptc"{above}] 
     \ar[d,hookrightarrow]
    & \Topt
     \ar[l,shift left=2,"\Omc"{below}]
    \ar[l,phantom,"\dashv"{rotate=-90,anchor=center}]
    \ar[d,hookrightarrow] \\
    \ca C
      \ar[r,shift left=2,"\ptc"{above}] 
     \ar[d,"\ca{R}_{\ca I}",twoheadrightarrow]
    & \Topp
      \ar[l,shift left=2,"\Omc"{below}]
    \ar[l,phantom,"\dashv"{rotate=-90,anchor=center}]
    \ar[d,equal]\\
    \Ci
\ar[r,shift left=2,"\ptc"{above}] 
    & \Topp
     \ar[l,shift left=2,"\Om_{\ca I}"{below}]
    \ar[l,phantom,"\dashv"{rotate=-90,anchor=center}]
\end{tikzcd}
\]

\subsection*{Outline}

In Section 1 we recall the necessary background on categories and frames. We introduce the notion of $\Frm$-concrete category in a 2-categorical framework, and show that in this setting hom-categories are a preorder. We then introduce the notion of fiber as an instance of such a preorder poset. We also review the Bruns–Lakser completion, the Funayama embedding, and several characterizations of the $T_D$-axiom and its connection with essentiality in categories of lattices.

Section 2 introduces and explores our general framework. In Subsection \ref{ss motivating examples}, we illustrate the main motivating examples. In Subsection \ref{ss: defn topc ptc omc}, we introduce our main objects of study, \emph{spatializable}
$\Frm$-concrete categories, or \emph{SFC-categories}. We show that every SFC-category $(\Op,\ca
C,\Tc,\iota_{\ca C})$ admits a dualizing object, of which one half is the Sierpi\'{n}ski
space. The result by Porst and Tholen that dualizing objects induce natural
adjunctions, adapted to FC-categories, shows that there is a natural
adjunction $\ptc:\ca C\leftrightarrows\Topc:\Omc$, which we call
$\mf{Dual}(\Op,\ca C,\Tc,\iota_{\ca C})$. We show some preliminaries examples (Examples \ref{e: t0 spaces example}, \ref{e: td duality}, \ref{e: the sober example}) for categories of spaces.

The adjunction $\ptc:\ca C\leftrightarrows\Topc:\Omc$ is idempotent (Theorem \ref{t:
idempotence}), and $\Op$ determines a map in $\bd{RADJ}$ to the natural adjunction for frames and spaces (Proposition \ref{p: the zeta isomorphism and xi mate}).

In Subsection \ref{ss the ordered category sfc}, we define morphisms between SFC-categories and obtain the category $\SFC$; then we extend the assignment $\mf{Dual}(-)$ to a functor
$\mf{Dual}:\SFC\to \bd{RADJ}$ to the categories of adjunctions with right
maps. We also define the notion of adjunction between objects of $\SFC$, which
we call \emph{FC-adjunctions}, whose right adjoints preserve cartesian lifts
(Theorem \ref{l: right pfc adjoint preserve cartesian lifts}), as a result
we show that coreflectors coming from SFC-adjunctions are morphisms in
$\SFC$ (Proposition \ref{p: coreflective stable}). 

In Subsection \ref{ss: pointfree sobrification}, we develop a pointfree notion of sobrification. We restrict ourselves to SFC-categories where $\Op$ is a right FC-adjoint. We express the sober coreflection of an object of $\ca C$ as a pullback (Proposition \ref{p: sobrification as a pullback}), and deduce that when $\ca C$ has all pullbacks sober objects are coreflective in $\ca C$ (Corollary \ref{c: sufficient condition for sobrification to exist}). We also recover a generalization in this setting of the result in point-set topology stating that the sobrification of a subspace of a sober space is the intersection of all sober subspaces containing it (Theorem \ref{t: sobrification as a sober interior}).

In Subsection \ref{ss concrete examples 1}, we turn our attention to application of these results in concrete settings, and show that $\bd{Skula}$,
$\bd{MT}$, and $\bd{Raney}$ all provide instances of SFC-categories, thus
making their dual adjunctions with $\Topp$ a corollary of the result by Porst
and Tholen adapted to FC-categories (Theorems \ref{t: duality for raney
extensions}, \ref{t: duality for skula extensions}, and \ref{t: duality for
mt}). We show that the natural transformation $\mathit{ker}:\ca S\to \mf{Filt}$ naturally defines a functor $\ca S_{\ca S}:\bd{Skula}\to \bd{Raney}$, and that this coincides with a functor introduced in \cite{suarez26zero}. We slightly strengthen a result from \cite{bezhanishvili26raneyandmt} and show that there is a functor $\ca S_{\ca M}:\bd{MT}\to \bd{Raney}$. We show that the functors $\ca S_{\ca S}:\bd{Skula}\to \bd{Raney}$
and $\ca S_{\ca M}:\bd{MT}\to \bd{Raney}$ are morphisms in $\SFC$, and show that the map is $\bd{RADJ}$ they induce is also a left map of adjunctions (Propositions \ref{p: SS is morphism in SFC}
and \ref{p: SM is a morphism in SFC}).

Section 3 focuses on the subcategories of initial and terminal objects of fibers. In Subsection \ref{ss nat adj for fiber initials and terminals}, we slightly strengthen the characterization of Banaschewski and Pultr in \cite{banaschewskitd} of sober spaces as the fiber-maximal, and $T_D$-spaces as fiber-minimal objects of $\Om:\Topz^{\mf{op}}\to \Frm$ (Theorem \ref{t: sober and td as fiber initial and terminals}), and prove that the former coincide with fiber-initials and the latter with fiber-terminals. We introduce \emph{total} SFC-categories, those with all $T_0$-spaces as fixpoints and such that the fiber-initials and the fiber-terminals induce their own natural adjunction.
Letting $I_{\ca I}:\Ci\to \ca C$ be the inclusion of fiber-initial objects, and $I_{\ca T}:\Ct\to \ca C$ that of terminal-ones, we show that for a total SFC-category $(\Op,\ca C,\Tc,\iota_{\ca C})$, the natural adjunction $\mf{Dual}(\Op I_{\ca T},\Ct,\Tc,\iota_{\ca C})$ is a subobject of $\mf{Dual}(\Op,\ca C,\Tc)$, and that $\mf{Dual}(\Op I_{\ca I},\Ci,\Tc,\iota_{\ca C})$ is a quotient (Theorems \ref{t: the initial duality} and \ref{t: the terminal duality}).W e show that sober spaces coincide with the fixpoints of $\mf{Dual}(\Op I_{\ca I},\Ci,\Tc,,\iota_{\ca C})$, and that the fixpoints of $\mf{Dual}(\Op I_{\ca T},\Ct,\Tc,\iota_{\ca C})$ are $T_D$ spaces, although we do not have the reverse inclusion (Theorem \ref{t: main initial terminal theorem}).

In Subsection \ref{ss concrete examples 2}, we apply the results to the settings of Raney extensions, Skula extensions, and MT-algebras (Theorems \ref{t: terminal duality for raney}, \ref{t: terminal duality for skula}, \ref{t: terminal duality for mt}). We study fiber-terminal objects in the category of MT-algebras; we show that there exist fibers without terminal objects (Example \ref{e: a funayama which is not terminal}) and prove that in $\bd{MT}$, too, spectra of terminal objects coincide with the $T_D$-spaces (Proposition \ref{p: td are the spectra of mt terminals}). Finally, we obtain a duality for $T_D$-spaces as a result of a dual adjunction with fiber-terminal MT-algebras (Theorem \ref{t: terminal duality for mt}). 

\subsection*{Acknowledgements}

The first author acknowledges the partial support by CIDMA under the
Portuguese Foundation for Science and Technology (FCT,
\url{https://ror.org/00snfqn58}) Multi-Annual Financing Program for R\&D
Units, grants UID/4106/2025 and UID/PRR/4106/2025, the partial support by the
EPSRC Grant
\href{https://app.researchfish.com/awards/viewdetails/0?gorderby=organisation\&filter=67618964_5d4300dddf8976_AW}{UKRI174
(Toward a cohomology of theories)}, and the partial support by the Center for
Mathematics of the University of Coimbra (CMUC,
\url{https://doi.org/10.54499/UID/00324/2025}) under the Portuguese Foundation
for Science and Technology (FCT), Grants UID/00324/2025 and
  UID/PRR/00324/2025.

\section{Categorical foundations and background}

\subsection{Dual adjunctions}

We provide some background and well-known results on the theory of concrete
dual adjunctions. There are numerous references in the literature which study
duality theory from such an abstract perspective, among which we mention
\cite{dt89, porst91, cd98, hn18}.

We fix a faithful functor \( U \colon \ca C \to \ca S \).  Given morphisms \(
f \colon U(D) \to U(C) \) in \( \ca{S} \) and \( p \colon D \to C \) in \(
\ca{C} \), we say that \( f \) \textit{lifts to} \( p \) if \( U(p) = f \). We
highlight that lifts are necessarily unique.

A \textit{cartesian lifting}\footnote{Traditionally called \textit{$U$-initial
lifting}, as in \cite{porst91, hn18}.} (\textit{with respect to $U$}) of a
family of morphisms \( \{ f_i \colon X \to U(C) \mid i \in I \} \)\footnote{In
general, the codomain of each \( f_i \) is allowed to vary with \(i \in I\),
but we do not make use of this additional feature in the present work.} in \(
\ca S \) is a pair \( (\overline X, \theta) \) consisting of an object \(
\overline X \) in \( \ca C \) and an isomorphism \( \theta \colon U(\overline
X) \cong X \) such that any morphism \( h \colon U(D) \to U(\overline X) \)
lifts to a morphism \( D \to \overline X \) if and only if each morphism \(
f_i \theta h \colon U(D) \to U(C) \) lifts to a morphism \( D \to C \).

We say a family of morphisms \( \{ p_i \colon D \to C \mid i \in I\} \) in \(
\ca C \) is \textit{cartesian} (\textit{with respect to} $U$) if \(
(D,\id_{U(D)}) \) is a cartesian lifting (with respect to $U$) of the family
\( \{ U(p_i) \colon U(D) \to U(C) \mid i \in I \} \).

\begin{remark}
  Given \( (\overline X, \theta) \) a cartesian lifting w.r.t. $U$ of a family
  \( \{ f_i \colon X \to U(C) \mid i \in I \} \), we have that \( f_i\theta \)
  lifts for all \( i \in I \), since the identity on \( U(\overline X) \)
  lifts; we denote the lift of \( f_i\theta \) by \( \overline f_i \). We note
  that the family \( \{ \overline f_i \mid i \in I \} \) is cartesian w.r.t
  $U$, and we say this is the \textit{lifted family} of the given cartesian
  lift.
\end{remark}

\begin{remark}
  \label{r: morphism-lifting}
  Let \( g \colon Y \to X \) be a morphism in \( \ca S \), and let \(
  (\overline X, \theta) \) and \( (\overline Y, \omega) \) be cartesian lifts
  of the families
  \begin{equation*}
    \{ f_i \colon X \to U(C) \mid i \in I \}
      \qquad \text{and} \qquad
    \{ f_i g \colon Y \to U(C) \mid i \in I \},
  \end{equation*}
  respectively. We also let \( \{ \overline f_i \colon \overline X \to C \mid
  i \in I \} \) and \( \{ \overline h_i \colon \overline Y \to C \mid i \in I
  \} \) be the respective lifted families. Since we have
  \begin{equation*}
    U(\overline h_i) = f_ig\omega = f_i\theta \theta^{-1}g\omega,
  \end{equation*}
  we obtain the lift \( \overline g \colon \overline Y \to \overline X \) of
  \( \theta^{-1} g \omega \), satisfying \( \overline f_i \overline g =
  \overline h_i \) for all \( i \in I \). We will say that \( \overline g \)
  is the morphism induced by \( g \).
\end{remark}

We consider faithful functors \( |-| \colon \ca C \to \Set \) and \( |-|
\colon \ca X \to \Set \), and we say these map objects \( C \) in \( \ca C \)
and \( X \in \ca X \) to the \textit{carrier sets} \( |C| \) and \( |X| \)
of \( C \) and \( X \), respectively. 

Any dual adjunction
\begin{equation}
  \label{e: dual adj}
  \begin{tikzcd}[column sep=large]
    \ca C \ar[r, "F", shift left=2]
                \ar[r,"\dashv"{rotate=-90},phantom]
      & \opp{\ca X} \ar[l, "G", shift left=2]
  \end{tikzcd}
\end{equation}
with units \( \eta \colon \id \to GF \) in \( \ca C \) and \( \sigma \colon
\id \to FG \) in \( \ca X \) restricts to a dual equivalence
\begin{equation*}
  \begin{tikzcd}[column sep=large]
    \mf{Fix}(\eta)  \ar[r, "F", shift left=2]
                \ar[r,"\dashv"{rotate=-90},phantom]
      & \opp{\mf{Fix}(\sigma)} \ar[l, "G", shift left=2]
  \end{tikzcd}
\end{equation*}
where \( \mf{Fix}(\eta) \) and \( \mf{Fix}(\sigma) \) are respectively the
full subcategories of \( \ca C \) and \( \ca X \) of those objects for which
the components of \( \eta \) and \( \sigma \) are invertible. Furthermore, if
\eqref{e: dual adj} is an idempotent adjunction, we note that both are
reflective subcategories of \( \ca C \) and \( \ca X \) respectively; in fact,
they are precisely the Eilenberg-Moore categories for the induced monads
\cite{at69}.

Given a pair \( (\Tc \in \ca C,\mathbb S \in \ca X) \) of objects, we obtain
evaluation maps
\begin{align*}
  \nu_c &= 
    \begin{tikzcd}[ampersand replacement=\&]
      \ca C(C,\Tc) \ar[r,"|-|"] 
        \& \Set(|C|,|\Tc|) \ar[r,"\mathsf{ev}_c"]
        \& {|\Tc|}
    \end{tikzcd} \\
  \chi_x &= 
    \begin{tikzcd}[ampersand replacement=\&]
      \ca X(X,\mathbb S) \ar[r,"|-|"] 
        \& \Set(|X|,|\mathbb S|) \ar[r,"\mathsf{ev}_x"]
        \& {|\mathbb S|}
    \end{tikzcd} 
\end{align*}
for all \( c \in |C| \) and \( x \in |X| \). We say that the pair \( (\Tc,
\mathbb S) \) \textit{represents} the dual adjunction \eqref{e: dual adj} if
we have natural isomorphisms \( \phi \colon |F| \cong \ca C(-,\Tc) \) and \(
\gamma \colon |G| \cong \ca X(-,\mathbb S) \). 

We say that a dual adjunction \eqref{e: dual adj} represented by a pair \(
(\Tc,\mathbb S) \) of objects is \textit{natural} if the families 
\begin{align*}
  \{ \gamma_{F(C)}|\eta_C|(c) \colon F(C) \to \mathbb S &\mid c \in |C| \}
  & \{ \phi_{G(X)}|\epsilon_X|(x) \colon G(X) \to \Tc &\mid x \in |X| \}
\end{align*}
are cartesian with respect to \( |-| \colon \ca X \to \Set \) and \( |-|
\colon \ca C \to \Set \) for all \( C \in \ca C \) and all \( X \in \ca X \).

The following theorem of \cite{dt89, porst91} describes how to construct a
natural dual adjunction from a \textit{dualizing object}: this consists of a
triple \( (\Tc,\mathbb S,\iota) \) where \( \Tc \in \ca C \) and \( \mathbb S
\in \ca X \) are objects, and \( \iota \colon |\Tc| \cong |\mathbb S| \) is an
isomorphism, such that the following two conditions hold:
\begin{enumerate}[label=(C\arabic*),ref=C\arabic*]
  \item
    \label{i: cartlift1}
    The family \( \{ \iota \circ \nu_a \colon \ca C(C,\Tc) \to |\mathbb S|
    \mid a \in |C| \} \) has a cartesian lift \( (F(C),\phi_C) \) for all \( C
    \in \ca C \).
  \item
    \label{i: cartlift2}
    The family \( \{ \iota^{-1} \circ \chi_x \colon \ca X(X,\mathbb S) \to
    |\Tc| \mid x \in |X| \} \) has a cartesian lift \( (G(X),\gamma_X) \) for
    all \( X \in \ca X \).
\end{enumerate}

\begin{theorem}
  \label{t: natduality}
  Let \( |-| \colon \ca C \to \Set \) and \( |-| \colon \ca X \to \Set \) be
  faithful functors, and let \( (\Tc,\mathbb S,\iota) \) be a dualizing
  object. We have that
  \begin{itemize}
    \item
      the cartesian lifts \( (F(C),\phi_C) \) provided by \eqref{i: cartlift1}
      for each \( C \in \ca C \) are the underlying object part of a functor
      \( F \colon \ca C \to \opp{\ca X} \) and a natural isomorphism \( \phi
      \colon |F| \cong \ca C(-,\Tc) \), 
    \item
      the cartesian lifts \( (G(X),\gamma_X) \) provided by \eqref{i:
      cartlift2} for each \( X \in \ca X \) are the underlying object part 
      of a functor \( G \colon \opp{\ca X} \to \ca C \) and a natural
      isomorphism \( \psi \colon |G| \cong \ca X(-,\mathbb S) \).
  \end{itemize}
  Moreover, \( F \) and \( G \) yield a dual adjunction \eqref{e: dual adj},
  represented by the pair \( (\Tc, \mathbb S) \), and this dual adjunction is
  natural.
\end{theorem}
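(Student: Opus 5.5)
The plan is to obtain functoriality from Remark~\ref{r: morphism-lifting}, then build the adjunction from a natural bijection between homsets, and finally read off naturality from the cartesian lifts themselves.

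\textbf{Functoriality of \(F\) and \(G\).} Let \( h \colon D \to C \) be a morphism in \( \ca C \). Precomposition gives a map \( h^* \colon \ca C(C,\Tc) \to \ca C(D,\Tc) \). Since \( \nu_a \circ h^* = \nu_{|h|(a)} \) for \( a \in |D| \), the family \( \{\iota\nu_a h^* \mid a\in|D|\} \) is a subfamily (reindexed along \(|h|\)) of the family defining \(F(C)\). I will first check that cartesianness of the lift \((F(C),\phi_C)\) for the whole family implies that \(\phi_D^{-1}h^*\phi_C\) lifts along \(|-|\). The point is that each \( \iota\nu_a\phi_D\,(\phi_D^{-1}h^*\phi_C) = \iota\nu_{|h|a}\phi_C \) lifts, because it is a member of the lifted family of \(F(C)\); the defining property of the lift \(F(D)\) then applies. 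This is essentially Remark~\ref{r: morphism-lifting} with a reindexing. I define \(F(h)\colon F(C)\to F(D)\) in \(\ca X\) to be this lift. Faithfulness of \(|-|\) gives the functor laws, since the underlying maps compose correctly, and \(\phi\) is natural by construction. The functor \(G\) and the isomorphism \(\gamma\) are obtained symmetrically.

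\textbf{The adjunction.} I will establish a bijection \( \ca C(C,G(X)) \cong \ca X(X,F(C)) \), natural in both variables, by showing that both sides correspond to the set of ``bimorphisms''. These are functions \( t\colon |C|\times|X|\to|\Tc| \) such that \( t(-,x) \) lifts to \( C\to\Tc \) for every \(x\) and \( \iota t(c,-) \) lifts to \( X\to\mathbb S \) for every \(c\).

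Given \(f\colon C\to G(X)\), set \(t(c,x) = \iota^{-1}\chi_x(\gamma_X|f|(c))\). For each \(x\), the map \(t(-,x)\) is the underlying map of a member of the lifted family of \(G(X)\) composed with \(f\), so it lifts. For each \(c\), the map \(\iota t(c,-)\) is the underlying map of \(\gamma_X|f|(c)\in\ca X(X,\mathbb S)\), so it lifts.

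Conversely, given a bimorphism \(t\), the map \(c\mapsto\) (the lift of \(\iota t(c,-)\)), composed with \(\gamma_X^{-1}\), is a function \(|C|\to|G(X)|\). After composing with \(\iota^{-1}\chi_x\gamma_X\) it becomes \(t(-,x)\), which lifts. Cartesianness of \(G(X)\) then says exactly that this function lifts to a morphism \(C\to G(X)\). The two assignments are mutually inverse by faithfulness, and the same argument applies on the \(F\) side. Naturality follows because both correspondences are compatible with precomposition in each variable.

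\textbf{Representability and naturality.} Representation by \((\Tc,\mathbb S)\) is given by \(\phi\) and \(\gamma\), by construction. For naturality, I will compute the unit \(\eta_C\colon C\to GF(C)\) explicitly. It corresponds to \(\id_{F(C)}\) under the bijection, so \(|\eta_C|(c)\) is \(\gamma^{-1}\) of the lift of \(p\mapsto\iota\nu_c(\phi_C(p))\). Hence \(\gamma_{F(C)}|\eta_C|(c)\) is precisely the \(c\)-th member of the lifted family of \((F(C),\phi_C)\), and that lifted family is cartesian by the Remark following the definition of cartesian lifting. The dual statement for \(\epsilon_X\) is symmetric.

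\textbf{Main obstacle.} I expect the main obstacle to be the bookkeeping in the bimorphism step. One has to keep track of \(\iota\) versus \(\iota^{-1}\) and \(\phi\) versus \(\gamma\), and verify carefully that the lift condition of the cartesian property is used in the ``if'' direction. No deeper idea beyond the defining property of cartesian lifts and faithfulness should be needed.
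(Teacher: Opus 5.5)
Your proof is correct. The paper gives no proof of this theorem: it defers to \cite{dt89, porst91} and only notes that the action of \(F\) and \(G\) on morphisms, and the components of the units, come from Remark~\ref{r: morphism-lifting}. Your construction of \(F(h)\) for \(h\colon D\to C\) is exactly that argument. You are also right that it needs a reindexing: \((F(C),\phi_C)\) is a cartesian lift of the family indexed by \(|C|\), not of \(\{\iota\nu_a h^*\mid a\in|D|\}\). This does no harm, because only the liftability of each composite \(\iota\nu_{|h|(a)}\phi_C\) is used. The genuine difference lies in how the adjunction is obtained. The route the paper alludes to, and the one taken by Porst and Tholen, defines \(\eta_C\colon C\to GF(C)\) and \(\sigma_X\colon X\to FG(X)\) directly as lifts, using cartesianness of \(GF(C)\) and \(FG(X)\). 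It then verifies naturality and the triangle identities on carriers by faithfulness. You instead identify both \(\ca C(C,G(X))\) and \(\ca X(X,F(C))\) with the set of bimorphisms \(|C|\times|X|\to|\Tc|\). The adjunction is then a composite of two bijections, and naturality (for \(h\colon D\to C\) and \(u\colon Y\to X\)) reduces to the reindexing \(t\mapsto t\circ(|h|\times|u|)\), so there are no triangle identities to check. Your version makes the symmetry between the two sides explicit and is the standard device of natural duality theory. The direct version hands you the explicit unit formulas that the paper uses later, and makes the naturality condition tautological. You recover that anyway, by computing \(\gamma_{F(C)}|\eta_C|(c)\) as the \(c\)-th member of the lifted family of \((F(C),\phi_C)\).
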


The action of the functors \( F \) and \( G \) on morphisms, as well as the
components of the units, are obtained via Remark~\ref{r: morphism-lifting}.

The following elementary observation will also prove to be helpful.

\begin{lemma}
  \label{l: mutually inverse frame isos that lift}
  Let $U : \ca C\to \ca S$ be a faithful functor and let $C,D\in \ca C$. If
  $i:U(C)\to U(D)$ is an isomorphism such that \( i \) and \( i^{-1} \) lift
  to morphisms \( \overline i \) and \( \overline j \) respectively, then we
  have \( \overline i^{-1} = \overline j \).
\end{lemma}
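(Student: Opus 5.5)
The plan is to exploit faithfulness of \( U \) together with functoriality: lifts are unique, and \( U \) preserves composition and identities. Since \( \overline i \colon C \to D \) satisfies \( U(\overline i) = i \) and \( \overline j \colon D \to C \) satisfies \( U(\overline j) = i^{-1} \), I will show that \( \overline j \) is a two-sided inverse of \( \overline i \). Then \( \overline i \) is an isomorphism and \( \overline i^{-1} = \overline j \).

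First I will compute
\begin{equation*}
  U(\overline j \circ \overline i) = U(\overline j) \circ U(\overline i) = i^{-1} \circ i = \id_{U(C)} = U(\id_C).
\end{equation*}
Both \( \overline j \circ \overline i \) and \( \id_C \) are morphisms \( C \to C \) with the same image under \( U \). Faithfulness of \( U \) therefore gives \( \overline j \circ \overline i = \id_C \). In the paper's language, both are lifts of \( \id_{U(C)} \), and lifts are unique.

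Symmetrically, \( U(\overline i \circ \overline j) = i \circ i^{-1} = \id_{U(D)} = U(\id_D) \), so \( \overline i \circ \overline j = \id_D \) by faithfulness again. Together these two identities show that \( \overline i \) is invertible with inverse \( \overline j \), as required.

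There is no genuine obstacle here. The only point needing care is that faithfulness is applied to parallel pairs of morphisms, here \( C \to C \) and \( D \to D \). This holds because \( \overline i \) and \( \overline j \) have the prescribed domains and codomains \( C \to D \) and \( D \to C \).
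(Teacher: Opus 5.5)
Your proof is correct and is essentially the paper's argument. You use functoriality to get \( U(\overline j \circ \overline i) = \id_{U(C)} \) and \( U(\overline i \circ \overline j) = \id_{U(D)} \), and then faithfulness to conclude that both composites are identities.
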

\begin{proof}
  Indeed, since \( i = U(\overline{i}) \) and \( i^{-1} = U(\overline{j})
  \), it follows that $U(\overline{j} \overline{i})$ and $U(\overline{i}
  \overline{j})$ are respectively the identities on \( U(C) \) and \( U(D) \).
  By faithfulness, we deduce that $\overline{j}{} \overline{i}$ is the
  identity on $C$, and similarly that $\overline{i}{} \overline{j}$ is the
  identity on $D$.
\end{proof}

\subsection{Morphisms of adjunctions}\label{ss: maps of adjunctions}

We recall the fundamentals. See \cite{mar65, lin65, kel69, dub70, pal71, ks74,
gra74}.

We consider categories and functors as in the following diagram:
\begin{equation}
  \label{e: fixed.adj.map 1}
  \begin{tikzcd}[row sep=huge, column sep=large]
    \ca A \ar[r, "F"{name=A,above},shift left=2]
           \ar[d,"U",swap]
    & \ca B \ar[l,"G"{name=B,below},shift left=2] 
           \ar[d,"V"] \\
    \ca C \ar[r, "P"{name=C,above},shift left=2]
    & \ca D \ar[l,"Q"{name=D,below},shift left=2]
    \ar[from=A,to=B,"\dashv"{rotate=-90,anchor=center},phantom]
    \ar[from=C,to=D,"\dashv"{rotate=-90,anchor=center},phantom]
  \end{tikzcd}
\end{equation}
where \( F \dashv G \) with unit \( \eta \colon \id \to GF \) and counit \(
\epsilon \colon FG \to \id \), and \( P \dashv Q \) with unit \( \nu \colon
\id \to QP \) and counit \( \delta \colon PQ \to \id \).

\begin{lemma}
  \label{l: mate-corresp}
  There is a bijection between the sets \( \mf{Nat}(UG,\,QV) \) and \(
  \mf{Nat}(PU,\, VF) \). 
\end{lemma}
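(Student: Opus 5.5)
The plan is to write down the standard mate correspondence explicitly and check that the two constructions are mutually inverse, using only the triangle identities for $F\dashv G$ and $P\dashv Q$.

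\textbf{From $\mf{Nat}(UG,QV)$ to $\mf{Nat}(PU,VF)$.} Given $\alpha\colon UG\to QV$, define $\alpha^\flat\colon PU\to VF$ as the composite
\[
PU \xrightarrow{PU\eta} PUGF \xrightarrow{P\alpha F} PQVF \xrightarrow{\delta VF} VF .
\]

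\textbf{From $\mf{Nat}(PU,VF)$ to $\mf{Nat}(UG,QV)$.} Given $\beta\colon PU\to VF$, define $\beta^\sharp\colon UG\to QV$ as
\[
UG \xrightarrow{\nu UG} QPUG \xrightarrow{Q\beta G} QVFG \xrightarrow{QV\epsilon} QV .
\]
Both composites are natural, being built from whiskerings of natural transformations.

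\textbf{$(\alpha^\flat)^\sharp=\alpha$.} Expand $(\alpha^\flat)^\sharp$ as
\[
QV\epsilon\circ Q\delta VFG\circ QP\alpha FG\circ QPU\eta G\circ \nu UG .
\]
First use naturality of $\nu$ to move $\nu$ past $U\eta G$ and $\alpha FG$. This rewrites the expression as
\[
QV\epsilon\circ Q\delta VFG\circ \nu QVFG\circ \alpha FG\circ U\eta G .
\]
The triangle identity $Q\delta\circ\nu Q=\id_Q$, whiskered by $VFG$, removes the middle pair, leaving $QV\epsilon\circ\alpha FG\circ U\eta G$. Next apply naturality of $\alpha$ with respect to $\epsilon$, namely $QV\epsilon\circ\alpha FG=\alpha\circ UG\epsilon$. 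This gives $\alpha\circ U(G\epsilon\circ\eta G)=\alpha$, by the triangle identity $G\epsilon\circ\eta G=\id_G$.

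\textbf{$(\beta^\sharp)^\flat=\beta$.} This is the dual computation. Use naturality of $\delta$ together with $\delta P\circ P\nu=\id_P$, then naturality of $\beta$ and $\epsilon F\circ F\eta=\id_F$.

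The only real obstacle is bookkeeping: choosing the order in which to apply the naturality squares so that each triangle identity appears adjacently. The step I would handle first, in both directions, is sliding the unit (respectively the counit) across the middle 2-cell using interchange. Once that is done, the rest is immediate.
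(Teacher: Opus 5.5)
Your proof is correct, and it constructs the same bijection as the paper, just unfolded. The paper factors the bijection as $\mf{Nat}(UG,QV)\cong\mf{Nat}(U,QVF)\cong\mf{Nat}(PU,VF)$, or alternatively through $\mf{Nat}(PUG,V)$. Each step is a hom-set bijection of one of the whiskered adjunctions $-\circ G\dashv -\circ F$ and $P\circ -\dashv Q\circ -$. Composing these two bijections gives exactly your $\alpha^\flat=\delta VF\circ P\alpha F\circ PU\eta$ and $\beta^\sharp=QV\epsilon\circ Q\beta G\circ \nu UG$.

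The only difference is packaging. The paper delegates the round-trip checks to the standard fact that whiskering by an adjunction yields an adjunction between functor categories; proving that fact takes the same triangle-identity computations you carry out. Its two-step factorization also makes visible the intermediate descriptions behind the four pasting equations stated right after the lemma. Your direct verification is self-contained and gives the explicit formulas immediately, at the cost of the bookkeeping you mention.
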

\begin{proof}
  Since we have adjunctions \( - \circ G \dashv - \circ F \) and \( P \circ -
  \dashv Q \circ - \), we have a square of isomorphisms
  \begin{equation*}
    \begin{tikzcd}
      \mf{Nat}(UG,QV) \ar[r,"\cong"] \ar[d,swap,"\cong"]
        & \mf{Nat}(U,QVF) \ar[d,"\cong"] \\
      \mf{Nat}(PUG,V) \ar[r,swap,"\cong"]
        & \mf{Nat}(PU,VF)
    \end{tikzcd}
  \end{equation*}
  from which our claim follows.
\end{proof}

Pairs of natural transformations \( \zeta \colon UG \to QV \) and \( \xi
\colon PU \to VF \) that correspond to each other via Lemma~\ref{l:
mate-corresp} are said to be a \textit{mate-pair} with respect to \( F \dashv G
\) and \( P \dashv Q \). The explicit relationships between \( \zeta \) and \(
\xi \) obtained from the previous lemma may be given by the following pasting
diagrams:
\begin{equation}
  \label{e: mate-pair-eqs-1}
  \begin{tikzcd}
    \ca A \ar[d,"U",swap]
      \ar[rd,Rightarrow,"\zeta",shorten=4mm]
      & \ca B \ar[l,"G",swap] \ar[d,"V"] \\
    \ca C
      & \ca D \ar[l,"Q"] 
  \end{tikzcd}
    =
  \begin{tikzcd}
    \ca B \ar[d,"G",swap] \ar[rd,"\id"{name=A},bend left] \\
    \ca A \ar[d,"U",swap]
          \ar[r,"F"]
      & \ca B \ar[d,"V"] \\
    \ca C \ar[r,"P"]
          \ar[ru,"\xi",Rightarrow,shorten=4mm]
          \ar[rd,bend right,"\id"{name=B}",swap]
      & \ca D \ar[d,"Q"] \\
    & \ca D
    \ar[from=2-1,to=A,"\epsilon",Rightarrow,shorten=3mm]
    \ar[from=B,to=3-2,"\nu",Rightarrow,shorten=3.5mm]
  \end{tikzcd}
  \qquad \qquad
  \begin{tikzcd}
    & \ca A \ar[d,"F"] \ar[ld,bend right,"\id"{name=A},swap] \\
    \ca A \ar[d,"U",swap]
      \ar[rd,Rightarrow,"\zeta",shorten=4mm]
      & \ca B \ar[l,"G",swap] \ar[d,"V"] \\
    \ca C
      & \ca D \ar[l,"Q"] 
    \ar[from=A,to=2-2,"\eta",Rightarrow,shorten=3.5mm]
  \end{tikzcd}
    =
  \begin{tikzcd}
    \ca A \ar[d,"U",swap]
          \ar[r,"F"]
      & \ca B \ar[d,"V"] \\
    \ca C \ar[r,"P"]
          \ar[ru,"\xi",Rightarrow,shorten=4mm]
          \ar[rd,bend right,"\id"{name=B}",swap]
      & \ca D \ar[d,"Q"] \\
    & \ca D
    \ar[from=B,to=2-2,"\nu",Rightarrow,shorten=3.5mm]
  \end{tikzcd}
\end{equation}
\begin{equation}
  \label{e: mate-pair-eqs-2}
  \begin{tikzcd}
    \ca A \ar[d,"U",swap]
      \ar[rd,Rightarrow,"\zeta",shorten=4mm]
      & \ca B \ar[l,"G",swap] \ar[d,"V"] \\
    \ca C \ar[d,"P",swap]
      & \ca D \ar[l,"Q",swap] \ar[ld,bend left,"\id"{name=B}] \\
    \ca D
    \ar[from=2-1,to=B,"\delta",Rightarrow,shorten=3.5mm]
  \end{tikzcd}
    =
  \begin{tikzcd}
    \ca B \ar[d,"G",swap] \ar[rd,"\id"{name=A},bend left] \\
    \ca A \ar[d,"U",swap]
          \ar[r,"F"]
      & \ca B \ar[d,"V"] \\
    \ca C \ar[r,"P",swap]
          \ar[ru,"\xi",Rightarrow,shorten=4mm]
      & \ca D 
    \ar[from=2-1,to=A,"\epsilon",Rightarrow,shorten=3mm]
  \end{tikzcd}
  \qquad \qquad
  \begin{tikzcd}
    & \ca A \ar[d,"F"] \ar[ld,bend right,"\id"{name=A},swap] \\
    \ca A \ar[d,"U",swap]
      \ar[rd,Rightarrow,"\zeta",shorten=4mm]
      & \ca B \ar[l,"G",swap] \ar[d,"V"] \\
    \ca C \ar[d,"P",swap]
      & \ca D \ar[l,"Q",swap] \ar[ld,bend left,"\id"{name=B}] \\
    \ca D
    \ar[from=A,to=2-2,"\eta",Rightarrow,shorten=3.5mm]
    \ar[from=3-1,to=B,"\delta",Rightarrow,shorten=3.5mm]
  \end{tikzcd}
    =
  \begin{tikzcd}
    \ca A \ar[d,"U",swap]
          \ar[r,"F"]
      & \ca B \ar[d,"V"] \\
    \ca C \ar[r,"P",swap]
          \ar[ru,"\xi",Rightarrow,shorten=4mm]
      & \ca D 
  \end{tikzcd}
\end{equation}
The triangle identities guarantee that any one of the equations implies the
other three.

Functors \( U, V \) as in \eqref{e: fixed.adj.map 1} and a mate-pair \( \zeta
\colon UG \to QV \), \( \xi \colon PU \to VF \) constitute a \textit{right
morphism of adjunctions}, or a \textit{right adjunction morphism}, if \( \zeta
\) is invertible. Likewise, we say this data defines a \textit{strong
morphism} of adjunctions, or \textit{strong adjunction morphism} if \( \xi \)
is invertible as well. 

We note that there is some redundancy in the data for a right/strong map of
adjunctions, since \( \xi \) is determined by \( \zeta \) and vice-versa, any
of the equations in \eqref{e: mate-pair-eqs-1} and \eqref{e: mate-pair-eqs-2}.
We may omit one of the two mate-pairs when describing a right/strong morphism
of adjunctions.

If \( (U,V,\zeta,\xi) \) and \( (U',V',\zeta',\xi') \) are two right morphisms
of adjunctions \( (F\dashv G) \to (P\dashv Q) \), a 2-cell \( (U,V,\zeta) \to
(U',V',\zeta') \) consists of a pair of 2-cells \( \beta \colon U \to U' \)
and \( \gamma \colon V \to V' \) such that either (and therefore both) of the
following equations of pasting diagrams holds:
\begin{equation}
  \begin{tikzcd}[row sep=huge, column sep=large]
    \ca A \ar[d,"U"{name=W},swap,bend right]
          \ar[d,"U'"{name=X},bend left]
          \ar[rd,Rightarrow,shorten=8mm,"\zeta'",shift left=1]
    & \ca B \ar[l,"G",swap] \ar[d,"V'"] \\
    \ca C 
    & \ca D \ar[l,"Q"]
    \ar[from=W,to=X,Rightarrow,shorten=2mm,"\beta"]
  \end{tikzcd}
  = 
  \begin{tikzcd}[row sep=huge, column sep=large]
    \ca A \ar[d,"U",swap]
          \ar[rd,Rightarrow,shorten=8mm,"\zeta",swap,shift right=1]
    & \ca B \ar[d,"V"{name=Y},swap,bend right] 
            \ar[d,"V'"{name=Z},bend left] 
            \ar[l,"G",swap] \\
    \ca C & \ca D \ar[l,"Q"]
    \ar[from=Y,to=Z,Rightarrow,shorten=2mm,"\gamma"]
  \end{tikzcd}
  \qquad \qquad
  \begin{tikzcd}[row sep=huge, column sep=large]
    \ca A \ar[d,"U"{name=W},swap,bend right]
          \ar[d,"U'"{name=X},bend left]
          \ar[r,"F"]
    & \ca B \ar[d,"V'"] \\
    \ca C \ar[ru,Rightarrow,shorten=8mm,"\xi'",shift right=1,swap]
          \ar[r,"P"]
    & \ca D 
    \ar[from=W,to=X,Rightarrow,shorten=2mm,"\beta"]
  \end{tikzcd}
  = 
  \begin{tikzcd}[row sep=huge, column sep=large]
    \ca A \ar[d,"U",swap]
          \ar[r,"F"]
    & \ca B \ar[d,"V"{name=Y},swap,bend right] 
            \ar[d,"V'"{name=Z},bend left] \\
    \ca C \ar[ru,Rightarrow,shorten=8mm,"\xi",shift left=1]
          \ar[r,"P"]
    & \ca D 
    \ar[from=Y,to=Z,Rightarrow,shorten=2mm,"\gamma"]
  \end{tikzcd}
\end{equation}

We call $\bd{RADJ}$ the 2-category of adjunctions equipped with right
adjunction morphisms, and $\bd{SADJ}$ the 2-category of adjunctions with strong
adjunction morphisms.

The following result, merely stating that ``if a square of right adjoints
commutes, then the respective square of left adjoints also commutes'', is
nonetheless an important observation, which may be obtained as a consequence
of doctrinal adjunction \cite[Theorem 1.4]{kel74}.

\begin{lemma}
  \label{l: right adj comm implies left adj comm}
  Let \( (U,V,\zeta,\xi) \colon (F\dashv G) \to (P\dashv Q) \) and \(
  (L,K,\omega,\chi) \colon (P\dashv Q) \to (F\dashv G) \) be right adjunction
  morphisms, so that \( \zeta \colon UG \to QV \) and \( \omega \colon LQ \to
  GK \) are natural isomorphisms, and assume we have adjunctions \( L \dashv U
  \), \( K \dashv V \). This situation is depicted via the following diagram
  \begin{equation}
    \label{e: fixed.adj.map 3}
    \begin{tikzcd}[row sep=huge, column sep=huge]
      \ca A \ar[r, "F"{name=A,above},shift left=2]
             \ar[d,"U",shift left=2]
             \ar[d,phantom,"\dashv"]
      & \ca B \ar[l,"G"{name=B,below},shift left=2] 
             \ar[d,phantom,"\dashv"]
             \ar[d,"V",shift left=2] \\
      \ca C \ar[r, "P"{name=C,above},shift left=2]
            \ar[u,"L",shift left=2]
      & \ca D \ar[l,"Q"{name=D,below},shift left=2]
            \ar[u,"K",shift left=2]
      \ar[from=A,to=B,"\dashv"{rotate=-90,anchor=center},phantom]
      \ar[from=C,to=D,"\dashv"{rotate=-90,anchor=center},phantom]
    \end{tikzcd}
  \end{equation}

  The following are equivalent:
  \begin{itemize}
    \item
      The pair of units \( \id \to UL \), \( \id \to VK \) and the pair of
      counits \( LU \to \id \), \( KV \to \id \) of the adjunctions \( L
      \dashv U \) and \( K \dashv V \) define 2-cells in \( \bd{RADJ} \).
    \item
      We have that \( \chi \colon FL \to KP \) is invertible (that is, \(
      (L,K,\omega,\chi) \) is a strong adjunction morphism), and \( \chi^{-1}
      \) is the mate of \( \xi \colon PU \to VF \) with respect to \( L \dashv
      U \) and \( K \dashv V \).
  \end{itemize}

  When one (therefore all) of the above conditions holds, we have an
  adjunction \( (L,K,\omega) \dashv (U,V,\zeta) \) in the 2-category \(
  \bd{RADJ} \).
\end{lemma}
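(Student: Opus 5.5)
The plan is to translate both conditions into equations between pasting diagrams and compare them using the mate correspondence of Lemma~\ref{l: mate-corresp}. First I will write out the two composite right adjunction morphisms involved. The composite \( (U,V,\zeta)\circ(L,K,\omega) \) is an endomorphism of \( P\dashv Q \): its \( \zeta \)-part is \( \zeta K\cdot U\omega \colon ULQ\to UGK\to QVK \), and its \( \xi \)-part is \( V\chi\cdot \xi L \colon PUL\to VFL\to VKP \). The composite \( (L,K,\omega)\circ(U,V,\zeta) \) is an endomorphism of \( F\dashv G \): its \( \zeta \)-part is \( \omega V\cdot L\zeta \colon LUG \to LQV\to GKV \), and its \( \xi \)-part is \( K\xi\cdot \chi U \colon FLU\to KPU\to KVF \). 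That these \( \xi \)-parts really are the mates of the composite \( \zeta \)-parts is the standard compatibility of mates with pasting, which I would check once using \eqref{e: mate-pair-eqs-1}.

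Next I will write down the 2-cell conditions, using the \( \xi \)-form of the defining equation, which the paper notes is equivalent to the \( \zeta \)-form. Let \( u\colon \id\to UL \), \( v\colon\id\to VK \) be the units and \( e\colon LU\to\id \), \( e'\colon KV\to\id \) the counits. The units form a 2-cell from the identity morphism of \( P\dashv Q \) to the first composite iff
\[ V\chi\cdot\xi L\cdot Pu = vP. \]
The counits form a 2-cell from the second composite to the identity of \( F\dashv G \) iff
\[ e'F\cdot K\xi\cdot\chi U = Fe. \]
Now let \( \xi^\flat\colon KP\to FL \) be the mate of \( \xi \) with respect to \( L\dashv U \) and \( K\dashv V \), namely
\[ \xi^\flat = e'FL\cdot K\xi L\cdot KPu. \]

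For the forward implication I will show the following two identities.
\begin{itemize}
\item Using the unit equation together with a triangle identity for \( K\dashv V \): \( \chi\cdot\xi^\flat=\id_{KP} \). Concretely, by naturality, \( \chi\cdot e'FL\cdot K\xi L\cdot KPu = e'KP\cdot KV\chi\cdot K\xi L\cdot KPu = e'KP\cdot KvP=\id \).
\item Using the counit equation, naturality and a triangle identity for \( L\dashv U \): \( \xi^\flat\cdot\chi=\id_{FL} \). Concretely, \( \xi^\flat\chi = e'FL\cdot K\xi L\cdot \chi UL\cdot FLu = FeL\cdot FLu=\id \).
\end{itemize}
Hence \( \chi \) is invertible with \( \chi^{-1}=\xi^\flat \). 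For the converse, I assume \( \chi^{-1}=\xi^\flat \) and read the same computations backwards. Precomposing \( \chi\cdot \xi^\flat=\id \) with appropriate units, or transposing along \( K\dashv V \), recovers the unit equation. Transposing \( \xi^\flat\chi=\id \) along \( L\dashv U \) recovers the counit equation. These are just the mate bijections applied to the two sides.

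Finally, once the units and counits are 2-cells in \( \bd{RADJ} \), the adjunction \( (L,K,\omega)\dashv(U,V,\zeta) \) in \( \bd{RADJ} \) follows immediately. Vertical and whiskered composition of 2-cells in \( \bd{RADJ} \) is computed componentwise on pairs, so the triangle identities reduce to those of \( L\dashv U \) and \( K\dashv V \). I expect the main obstacle to be purely bookkeeping: getting the composites' \( \xi \)-parts and the directions of the 2-cell equations exactly right, since a wrong whiskering would break the triangle-identity cancellations. I would first verify the composite-mate formula carefully, because everything else is routine naturality.
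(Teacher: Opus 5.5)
Your proof is correct, but it takes a different route from the paper. The paper gives no computation for this lemma. It only remarks that the result follows from Kelly's doctrinal adjunction \cite[Theorem 1.4]{kel74}: roughly, an adjunction in the base lifts to one between lax morphisms exactly when the structure induced on the left adjoint by taking mates is invertible.

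You instead prove the needed special case by hand. You write the 2-cell conditions in their $\xi$-form and identify the composite $\xi$-parts as $V\chi\cdot\xi L$ and $K\xi\cdot\chi U$. You then observe two correspondences:
\begin{itemize}
\item The unit condition $V\chi\cdot\xi L\cdot Pu=vP$ corresponds to $\chi\cdot\xi^\flat=\id$ under the bijection $\mf{Nat}(P,VKP)\cong\mf{Nat}(KP,KP)$, $\alpha\mapsto e'KP\cdot K\alpha$.
\item The counit condition $e'F\cdot K\xi\cdot\chi U=Fe$ corresponds to $\xi^\flat\cdot\chi=\id$ under $\mf{Nat}(FLU,F)\cong\mf{Nat}(FL,FL)$, $\beta\mapsto\beta L\cdot FLu$.
\end{itemize}
The naturality and triangle-identity steps all check out. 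Because these maps are bijections, your ``read backwards'' converse is rigorous as stated.

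What your route buys is a self-contained argument with slightly finer information. Each of the two 2-cell conditions is equivalent to one of the one-sided inverse equations for $\chi$ and $\xi^\flat$. The argument also visibly never uses invertibility of $\zeta$ or $\omega$, so it holds for arbitrary mate-pairs.

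The paper's citation is shorter and places the lemma in a conceptual framework. However, it leaves implicit how $\bd{RADJ}$, whose morphisms carry the mutually determined pair $\zeta,\xi$, is to be cast in Kelly's algebras-and-lax-morphisms setting. Your direct calculation makes that translation unnecessary.
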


\subsection{Fibers of a functor}

Throughout this work, our main objects of study are categories equipped with a
faithful functor to the category \( \Frm \) of frames, and some of our results
are stated in terms of suitable adjunctions between these categories. Such
adjunctions naturally occur in an ambient 2-category, which we proceed to
describe.

We let \( \Con(\Frm) \) be the 2-category whose objects are
\textit{$\Frm$-concrete categories} \cite{ahs90,por94}, or
\emph{FC-categories} for short: pairs \( (\ca C, \Op_{\ca C}) \) where \( \ca
C \) is a category and \( \Op_{\ca C} \colon \ca C \to \Frm \) is a faithful
functor. For each pair of FC-categories \( (\ca C, \Op_{\ca C}) \) and \( (\ca
D, \Op_{\ca D}) \), the hom-category 
\begin{equation}
  \label{eq:confrm-homcat}
  \Con(\Frm)\big( (\ca C, \Op_{\ca C}), (\ca D, \Op_{\ca D}) \big)  
\end{equation}
consists of the following data: 
\begin{itemize}
  \item
    Objects are morphisms of FC-categories \( (\ca C, \Op_{\ca C}) \to (\ca D,
    \Op_{\ca D}) \), which we call \textit{FC-functors}, consisting of pairs
    \( (F,\phi) \) where \( F \colon \ca C \to \ca D \) is a functor and \(
    \phi \colon \Op_{\ca D}  F \to \Op_{\ca C} \) is a natural isomorphism;
  \item
    Morphisms \( (F,\phi) \to (G,\psi) \) are \textit{2-cells} of
    FC-categories, which consist of a natural transformation \( \phi \colon F
    \to G \) such that \( \psi \circ \Op_{\ca D}(\phi) = \phi \).
  \item
    Composition and identities are precisely the usual (vertical) composition
    and identities of natural transformations between functors.
\end{itemize}
If \( (F,\phi ) \colon (\ca C, \Op_{\ca C}) \to (\ca D, \Op_{\ca D}) \) and \(
(G,\psi) \colon (\ca D, \Op_{\ca D}) \to (\ca E, \Op_{\ca E}) \) are
FC-functors, their composite is given by the pair \( (GF, \phi \circ \psi_F)
\), and we likewise define horizontal composition of 2-cells. For an
FC-category \( \Op_{\ca C} \colon \ca C \to \Frm \), the identity FC-functor
is precisely the pair \( (\id_{\ca C}, \id_{\Op_{\ca C}} ) \).

We will often refer to the following observation.
\begin{lemma}\label{l: morphisms in fc are faithful}
    If \( (F,\phi) \colon (\ca C,\Op_{\ca C}) \to (\ca D,\Op_{\ca D}) \) is
    an FC-functor, \( F \) is faithful.
\end{lemma}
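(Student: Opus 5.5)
The plan is to use the natural isomorphism \( \phi \colon \Op_{\ca D} F \cong \Op_{\ca C} \) to reduce faithfulness of \( F \) to faithfulness of \( \Op_{\ca C} \). I take two parallel morphisms \( f, g \colon C \to C' \) in \( \ca C \) with \( F(f) = F(g) \), and aim to show \( f = g \).

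Applying \( \Op_{\ca D} \) gives \( \Op_{\ca D}F(f) = \Op_{\ca D}F(g) \). Naturality of \( \phi \) gives
\[
  \Op_{\ca C}(f) \circ \phi_C = \phi_{C'} \circ \Op_{\ca D}F(f)
  = \phi_{C'} \circ \Op_{\ca D}F(g) = \Op_{\ca C}(g) \circ \phi_C .
\]
Since \( \phi_C \) is an isomorphism in \( \Frm \), I cancel it on the right and obtain \( \Op_{\ca C}(f) = \Op_{\ca C}(g) \). Faithfulness of \( \Op_{\ca C} \), which is part of the definition of an FC-category, then yields \( f = g \).

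There is no real obstacle here. The only point needing care is the direction of \( \phi \) in the naturality square, namely \( \Op_{\ca D}F \to \Op_{\ca C} \), so that \( \phi_C \) sits on the correct side to be cancelled. In fact only the fact that each component \( \phi_C \) is a monomorphism is used, not that it is invertible. Equivalently, the argument is that \( \Op_{\ca D}F \) is faithful, being isomorphic to the faithful functor \( \Op_{\ca C} \), and if a composite \( \Op_{\ca D}\circ F \) is faithful then so is \( F \).
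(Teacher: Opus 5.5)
Your argument is correct and is essentially the paper's proof, which just notes that $F$ is faithful because $\Op_{\ca D}F \cong \Op_{\ca C}$ is faithful. One fix to your closing remark: cancelling $\phi_C$ on the right in $\Op_{\ca C}(f)\circ\phi_C = \Op_{\ca C}(g)\circ\phi_C$ uses that $\phi_C$ is an \emph{epimorphism}, not a monomorphism, so the remark should say "epimorphism".
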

\begin{proof}
Follows from \( \Op_{\ca C}\) and \(\Op_{\ca D} \) being faithful, and there being a natural isomorphism \( \zeta \colon \Op_{\ca D}  F \to \Op_{\ca C} \).
\end{proof}

\begin{proposition}\label{p: hom cat is a preorder}
In the hom-category \(\Con(\Frm)\big( (\ca C, \Op_{\ca C}), (\ca D, \Op_{\ca D}) \big)\), if \( \chi,\omega \colon (F,\phi) \to (G,\psi) \) are 2-cells then \(
    \chi = \omega \).
\end{proposition}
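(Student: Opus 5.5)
The plan is to check the claim componentwise and reduce it, via the defining condition of a 2-cell, to the faithfulness of \( \Op_{\ca D} \). Note that the paper's definition of 2-cell reuses the letter \( \phi \) for the transformation; the intended condition is that a 2-cell \( \chi \colon (F,\phi) \to (G,\psi) \) is a natural transformation \( \chi \colon F \to G \) with
\[
\psi \circ \Op_{\ca D}(\chi) = \phi ,
\]
which is an equation of natural transformations \( \Op_{\ca D}F \to \Op_{\ca C} \).

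Fix an object \( C \in \ca C \). For the two 2-cells \( \chi \) and \( \omega \), evaluate this condition at \( C \):
\[
\psi_C \circ \Op_{\ca D}(\chi_C) = \phi_C = \psi_C \circ \Op_{\ca D}(\omega_C).
\]
Since \( \psi \) is a natural isomorphism, \( \psi_C \) is invertible. Composing on the left with \( \psi_C^{-1} \) gives
\[
\Op_{\ca D}(\chi_C) = \Op_{\ca D}(\omega_C)
\]
as frame homomorphisms \( \Op_{\ca D}F(C) \to \Op_{\ca D}G(C) \).

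Both \( \chi_C \) and \( \omega_C \) are morphisms \( F(C) \to G(C) \) in \( \ca D \), and \( \Op_{\ca D} \) is faithful. Hence \( \chi_C = \omega_C \). As \( C \) was arbitrary, \( \chi = \omega \) as natural transformations, and therefore as 2-cells.

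There is no genuine obstacle. The only point needing care is reading the 2-cell condition with the correct direction of composition. The argument uses only that \( \psi \) is invertible and that \( \Op_{\ca D} \) is faithful; \( \phi \) need not be invertible, and \( \Op_{\ca C} \) plays no role.
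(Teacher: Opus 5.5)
Your proof is correct and is essentially the paper's argument: both obtain \( \Op_{\ca D}(\chi) = \psi^{-1} \circ \phi = \Op_{\ca D}(\omega) \) from the 2-cell condition and conclude \( \chi = \omega \) by faithfulness of \( \Op_{\ca D} \). The paper also remarks that the components of \( \chi \) are monic and epic, but the conclusion does not need this, so your omission of that step loses nothing.
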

\begin{proof}
By faithfulness of \( \Op_{\ca D} \), the components of \( \chi \) are monomorphisms and epimorphisms. Then, the components of \( \Op_{\ca D}(\chi) = \psi^{-1} \circ \phi \) are isomorphisms. Then, \( \Op_{\ca
    D}(\chi) = \Op_{\ca D}(\omega) \), hence $\chi=\omega$ by faithfulness of $\Op_{\ca D}$.   
\end{proof}

Proposition \ref{p: hom cat is a preorder} establishes the hom-category
\eqref{eq:confrm-homcat} to be a preorder, for all \( (\ca C, \Op_{\ca C}) \)
and \( (\ca D,\Op_{\ca D}) \).  Thus, \( \Con(\Frm) \) is an \textit{ordered}
category (a category enriched in preordered sets); hence a 2-cell \( (F,\phi)
\to (G,\psi) \) simply witnesses that a property holds. One may write \(
(F,\phi) \leq (G,\psi) \) to reflect this. 

A special class of these hom-preorders are of particular interest throughout
this work: the \textit{fibers} of an FC-category \( \Op \colon \ca C \to \Frm
\) at a frame \( L \). These can be succinctly defined as the hom-preorder
\begin{equation*}
  \Op^{-1}(L) = \Con(\Frm)\big( (1, L), (\ca C, \Op) \big)  
\end{equation*}
where \( L \colon 1 \to \Frm \) is the (faithful) functor from the terminal
category with value \( L \).

More explicitly, for an FC-category \( \Op \colon \ca C \to
\Frm \), we let the \textit{fiber of} \( \Op \) \textit{at} a frame $L$ be the
preorder whose objects are pairs $(C,\theta^C)$ where $C\in \ca C$ and
$\theta^C:\Op(C)\to X$ is an isomorphism, and whose morphisms $p:(C,\theta^C)\to
(D,\theta^D)$ are morphisms $p:C\to D$ in \( \ca C \) such that $\theta^D
\circ \Op(p) = \theta^C$, i.e. the following diagram commutes
\begin{equation*}
  \begin{tikzcd}[row sep=large,column sep=large]
    \Op(C)
    \ar[dr,"\theta^C",swap]
    \ar[rr,"\Op(p)"]
    && \Op(D)
    \ar[dl,"\theta^D"]\\
    & L
  \end{tikzcd},
\end{equation*}
and we denote this preorder by \( \Op^{-1}(L) \). 

Returning to the topic of adjunctions in \( \Con(\Frm) \) -- which we call
\textit{FC-adjunctions} -- we highlight that since this is an ordered
category, it follows that FC-adjunctions are simply Galois connections. To be
explicit, we let \( (F,\phi) \colon (\ca C, \Op_{\ca C}) \to (\ca D, \Op_{\ca
D}) \) and \( (U,\theta) \colon (\ca D, \Op_{\ca D}) \to (\ca C, \Op_{\ca C})
\) be FC-functors. An unraveling of the definitions yields the following
observation:

\begin{lemma}
  \label{l: confrm-adj}
  The following are equivalent:
  \begin{enumerate}[label=(\alph*)]
    \item
      We have an FC-adjunction \( (F,\phi) \dashv (U,\theta) \).
    \item
      We have \( (F,\phi) \circ (U,\theta) \leq \id_{(\ca D,\Op_{\ca D})} \)
      and \( \id_{(\ca C,\Op_{\ca C})} \leq (U,\theta) \circ (F,\phi) \).
    \item
      There exist natural transformations \( \eta \colon \id \to UF \) and \(
      \epsilon \colon FU \to \id \) such that \( \Op_{\ca C}(\eta) =
      \theta^{-1}_F \circ \phi^{-1} \) and \( \Op_{\ca D}(\epsilon) = \theta
      \circ \phi_U \).
  \end{enumerate}
  Moreover, the unit and counit are uniquely determined.
\end{lemma}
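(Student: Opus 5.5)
The plan is to prove the cycle (a)$\Rightarrow$(b)$\Rightarrow$(c)$\Rightarrow$(a) and then deduce uniqueness from Proposition~\ref{p: hom cat is a preorder}. The whole argument is an unwinding of the definition of an adjunction in a 2-category when the hom-categories are preorders.

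For (a)$\Rightarrow$(b), an FC-adjunction supplies a unit 2-cell $\id_{(\ca C,\Op_{\ca C})} \to (U,\theta)\circ(F,\phi)$ and a counit 2-cell $(F,\phi)\circ(U,\theta)\to \id_{(\ca D,\Op_{\ca D})}$. Their mere existence is exactly the two inequalities in (b).

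For (b)$\Rightarrow$(c), we first compute the composites using the composition rule for FC-functors:
\[
(U,\theta)\circ(F,\phi) = (UF,\ \phi\circ\theta_F), \qquad (F,\phi)\circ(U,\theta) = (FU,\ \theta\circ\phi_U),
\]
where $\phi\circ\theta_F \colon \Op_{\ca C}UF \to \Op_{\ca C}$ and $\theta\circ\phi_U \colon \Op_{\ca D}FU\to \Op_{\ca D}$.
\begin{itemize}
  \item A 2-cell $\eta\colon (\id,\id)\to (UF,\phi\circ\theta_F)$ is a natural transformation $\eta\colon \id\to UF$ with $\phi\circ\theta_F\circ\Op_{\ca C}(\eta)=\id$. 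This is equivalent to $\Op_{\ca C}(\eta)=\theta_F^{-1}\circ\phi^{-1}$, since $\phi$ and $\theta$ are isomorphisms.
  \item A 2-cell $\epsilon\colon (FU,\theta\circ\phi_U)\to(\id,\id)$ is a natural transformation $\epsilon\colon FU\to\id$ with $\id\circ\Op_{\ca D}(\epsilon)=\theta\circ\phi_U$.
\end{itemize}
These are exactly the conditions in (c). The only care needed is the direction convention for the compatibility isomorphisms in the definition of 2-cells.

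For (c)$\Rightarrow$(a), the computation above shows that $\eta$ and $\epsilon$ are 2-cells. It remains to check the triangle identities $(\epsilon F)\circ(F\eta)=\id_F$ and $(U\epsilon)\circ(\eta U)=\id_U$. Both sides of each equation are 2-cells in a hom-category of $\Con(\Frm)$: the whiskerings and vertical composites of 2-cells are 2-cells from $(F,\phi)$ to itself, respectively from $(U,\theta)$ to itself. By Proposition~\ref{p: hom cat is a preorder}, any two parallel 2-cells coincide, so the triangle identities hold automatically.

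Uniqueness of the unit and counit is a further application of Proposition~\ref{p: hom cat is a preorder}, since they are 2-cells between fixed FC-functors. It also follows directly from faithfulness of $\Op_{\ca C}$ and $\Op_{\ca D}$, because their images under these functors are prescribed in (c).

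The main obstacle is bookkeeping rather than substance. Specifically, it is getting the composite isomorphisms $\phi\circ\theta_F$ and $\theta\circ\phi_U$, and the direction of the compatibility equation for 2-cells, exactly right so that the formulas in (c) come out as stated. I would fix the convention $\psi\circ\Op(\chi)=\phi$ for a 2-cell $\chi\colon(F,\phi)\to(G,\psi)$ at the outset and check the two computations against it.
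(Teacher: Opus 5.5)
Your proposal is correct and follows the paper's route: the paper's proof just notes that (a)$\Leftrightarrow$(b) is the definition of an adjunction in an ordered category, and (b)$\Leftrightarrow$(c) restates the definition of 2-cell in $\Con(\Frm)$. Your explicit composites $\phi\circ\theta_F$ and $\theta\circ\phi_U$, and your appeal to Proposition~\ref{p: hom cat is a preorder} for the automatic triangle identities and for uniqueness, fill in what the paper leaves implicit.
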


\begin{proof}
  The equivalence between (a) and (b) is simply the definition of adjunction
  in an ordered category, while the equivalence between (b) and (c) is merely
  restating the definition of 2-cell in \( \mathbf{Con}(\Frm) \).
\end{proof}

\begin{remark}
  \label{r: redundant info in FC-adj}
  We note that, for an FC-adjunction \( (F,\phi) \dashv (G,\theta) \), we
  necessarily have that \( \phi^{-1} \) and \( \theta \) define a mate-pair
  with respect to \( F \dashv G \) and \( \id_{\Frm} \dashv \id_{\Frm} \).
  Explicitly, we have the following equations:
  \begin{equation*}
    \phi^{-1} = \theta_F \circ \Op_{\ca C}(\eta)
      \qquad \theta = \Op_{\ca D}(\epsilon) \circ \phi^{-1}_G.
  \end{equation*}
  Indeed, this shows there is some redundant data and conditions in an
  FC-adjunction, an observation made precise with the following lemma.
\end{remark}

\begin{lemma}
  \label{l: when does fc-functor have adjoint}
  Let \( (F,\phi) \colon (\ca C, \Op_{\ca C}) \to (\ca D, \Op_{\ca D}) \) be
  an FC-functor. We have that:
  \begin{enumerate}[label=(\alph*)]
    \item
      \label{l: when has right adjoint}
      \( (F,\phi) \) has a left FC-adjoint \( (L,\lambda) \) if and only if we
      have an ordinary adjunction \( L \dashv F \) whose unit \( \eta \colon \id
      \to FL \) is such that \( \Op_{\ca D}(\eta) \) is invertible.
    \item
      \label{l: when has left adjoint}
      \( (F,\phi) \) has a right FC-adjoint \( (R,\rho) \) if and only if we
      have an ordinary adjunction \( F \dashv R \) whose counit \( \epsilon
      \colon FR \to \id \) is such that \( \Op_{\ca D}(\epsilon) \) is
      invertible.
  \end{enumerate}
\end{lemma}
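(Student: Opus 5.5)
We prove part \ref{l: when has right adjoint}; part \ref{l: when has left adjoint} is formally dual, obtained by reversing the direction of the adjunction and exchanging the roles of unit and counit.

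\emph{Forward direction.} Suppose \( (L,\lambda) \dashv (F,\phi) \) is an FC-adjunction. By Lemma~\ref{l: confrm-adj}, there are natural transformations \( \eta \colon \id \to FL \) and \( \epsilon \colon LF \to \id \) with
\[ \Op_{\ca D}(\eta) = \phi^{-1}_L \circ \lambda^{-1}, \qquad \Op_{\ca C}(\epsilon) = \phi \circ \lambda_F. \]
The right-hand sides are composites of isomorphisms. The triangle identities hold because, after applying the faithful functors \( \Op_{\ca C} \) and \( \Op_{\ca D} \), they reduce to identities among the \( \phi \)'s and \( \lambda \)'s. These identities hold by naturality: for example, \( \Op_{\ca C}(\epsilon_{L}) \circ \Op_{\ca C}(L\eta) \) is computed by transporting along \( \lambda \), and it cancels. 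Alternatively, the ordered-category argument suffices, since all 2-cells are unique by Proposition~\ref{p: hom cat is a preorder}. Thus \( L \dashv F \) is an ordinary adjunction whose unit has invertible image under \( \Op_{\ca D} \).

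\emph{Converse, construction of \( \lambda \).} Suppose \( L \dashv F \) with unit \( \eta \) and counit \( \epsilon \), and \( \Op_{\ca D}(\eta) \) invertible. We must produce an isomorphism \( \lambda \colon \Op_{\ca C} L \to \Op_{\ca D} \) making \( (L,\lambda) \) an FC-functor. Guided by Remark~\ref{r: redundant info in FC-adj}, which forces \( \phi^{-1}_L = \ldots \), we define
\[ \lambda := \Op_{\ca D}(\eta)^{-1} \circ \phi^{-1}_L \colon \Op_{\ca C}L \to \Op_{\ca D}FL \to \Op_{\ca D}. \]
This is a natural isomorphism, being a composite of natural isomorphisms. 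Then \( \Op_{\ca D}(\eta) = \phi^{-1}_L \circ \lambda^{-1} \) holds by construction.

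\emph{Converse, the counit condition.} By Lemma~\ref{l: confrm-adj}(c), it remains to check
\[ \Op_{\ca C}(\epsilon) = \phi \circ \lambda_F. \]
This is the step we expect to be the main obstacle, since it is not definitional. We use the triangle identity \( F\epsilon \circ \eta_F = \id \). Applying \( \Op_{\ca D} \) gives
\[ \Op_{\ca D}(F\epsilon) = \Op_{\ca D}(\eta_F)^{-1}. \]
Naturality of \( \phi \) gives \( \Op_{\ca D}(F\epsilon) = \phi^{-1} \circ \Op_{\ca C}(\epsilon) \circ \phi_{LF} \). Combining the two,
\[ \Op_{\ca C}(\epsilon) = \phi \circ \Op_{\ca D}(\eta_F)^{-1} \circ \phi^{-1}_{LF} = \phi \circ \lambda_F, \]
which is exactly the required equation.

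Hence both conditions of Lemma~\ref{l: confrm-adj}(c) hold, and \( (L,\lambda) \dashv (F,\phi) \) is an FC-adjunction.
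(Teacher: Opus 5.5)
Your proof is correct and follows the paper's argument: you define \( \lambda = \Op_{\ca D}(\eta)^{-1} \circ \phi_L^{-1} \) exactly as the paper does and derive the counit condition from the triangle identity \( F\epsilon \circ \eta_F = \id \), a computation the paper only asserts; your forward direction, via uniqueness of 2-cells (Proposition~\ref{p: hom cat is a preorder}), justifies what the paper calls immediate. Your form of the counit condition, \( \Op_{\ca C}(\epsilon) = \phi \circ \lambda_F \), is the correct one, since the paper's displayed version carries a stray factor \( \lambda_F^{-1} \).
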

\begin{proof}
  The direct implications of both (a) and (b) are immediate.

  Let \( L \dashv F \) be an adjunction with unit \( \eta \).  If \( \Op_{\ca
  D}(\eta) \) is invertible, we let \( \lambda = \Op_{\ca D}(\eta)^{-1} \circ
  \phi_L^{-1} \), so that \( (L,\lambda) \) defines an FC-functor. By
  definition, we have \( \id_{(\ca D,\Op_{\ca D})} \leq (F,\phi) \circ
  (L,\lambda) \), so we need only confirm that \( (L,\lambda) \circ (F,\phi)
  \leq \id_{(\ca C,\Op_{\ca C})} \), which is to say that there exists a
  natural transformation \( \delta \colon LF \to \id \) such that \( \Op_{\ca
  C}(\delta) \circ \lambda_F^{-1} = \phi \circ \lambda_F \). Indeed, as a
  consequence of a triangle identity, it follows that the counit of \( L
  \dashv F \) satisfies this.

  The converse implication of (b) follows similarly.
\end{proof}

Let \( (U,\theta) \colon (\ca D, \Op_{\ca D}) \to (\ca C, \Op_{\ca C}) \) be
an FC-functor. We say it \textit{preserves the cartesian lifting of a family} 
\( \{ f_i \colon X \to \Op_{\ca D}(D) \mid i \in I \} \), if \(
\big(U(\overline X), \omega \circ \theta_{\overline X}\big) \) is a cartesian
lift with respect to \( \Op_{\ca C} \) of the family \( \{ \theta_D^{-1} \circ
f_i \colon X \to \Op_{\ca C}(U(D)) \mid i \in I \} \) whenever \( (\overline
X, \omega) \) is the cartesian lift with respect to \( \Op_{\ca D} \) of \(
\{ f_i \mid i \in I \} \).

\begin{lemma}
  Let \( (U,\theta) \colon (\ca D, \Op_{\ca D}) \to (\ca C, \Op_{\ca C}) \) be
  an FC-functor. Given a family \( \{ f_i \colon X \to \Op_{\ca D}(D) \mid i
  \in I \} \) with a cartesian lifting \( (\overline X,\omega) \), and lifted
  family \( \{ \overline f_i \mid i \in I \} \), the following are equivalent:
  \begin{enumerate}[label=(\alph*)]
    \item
      \( (U,\theta) \) preserves the cartesian lifting of \( \{ f_i \mid i \in
      I \} \). 
    \item
      The family \( \{ F(\overline f_i) \mid i \in I \} \) is cartesian.
  \end{enumerate}
\end{lemma}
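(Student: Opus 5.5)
The equivalence should follow from the naturality of \( \theta \) together with the observation that the isomorphism component of a cartesian lifting can be absorbed into the family. (I read the \( F(\overline f_i) \) in (b) as \( U(\overline f_i) \), since \( U \) is the functor in play.) I will first isolate that general fact, then compute the underlying frame maps of \( U(\overline f_i) \), and finally compare the two conditions.

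First, a general observation about an arbitrary faithful functor \( V \colon \ca A \to \ca S \). Let \( \{ g_i \colon X \to V(C) \mid i \in I \} \) be a family, let \( Y \in \ca A \), and let \( \psi \colon V(Y) \cong X \) be an isomorphism. I claim that \( (Y,\psi) \) is a cartesian lifting of \( \{ g_i \} \) if and only if \( (Y,\id_{V(Y)}) \) is a cartesian lifting of \( \{ g_i\psi \} \). Here "\( (Y,\id) \) is a cartesian lifting of \( \{g_i\psi\} \)" is, by definition, the statement that any family of lifts \( p_i \colon Y \to C \) with \( V(p_i) = g_i\psi \) is cartesian. The claim holds because both conditions say literally the same thing. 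Each asserts that, for every \( h \colon V(E) \to V(Y) \), the morphism \( h \) lifts to \( E \to Y \) if and only if every \( g_i\psi h \) lifts to \( E \to C \). This is a direct unwinding of the definition, and no real work is needed.

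Next, I compute \( \Op_{\ca C}(U(\overline f_i)) \). By definition of the lifted family, \( \Op_{\ca D}(\overline f_i) = f_i\omega \). The transformation \( \theta \colon \Op_{\ca C}U \to \Op_{\ca D} \) is natural, so applying naturality at \( \overline f_i \colon \overline X \to D \) gives
\[
  \theta_D \circ \Op_{\ca C}(U(\overline f_i)) = \Op_{\ca D}(\overline f_i)\circ \theta_{\overline X} = f_i\,\omega\,\theta_{\overline X}.
\]
Since \( \theta_D \) is invertible, this yields
\[
  \Op_{\ca C}(U(\overline f_i)) = (\theta_D^{-1} f_i)\circ(\omega\,\theta_{\overline X}).
\]

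Finally, I apply the observation with \( V = \Op_{\ca C} \), \( Y = U(\overline X) \), \( g_i = \theta_D^{-1} f_i \), and \( \psi = \omega\circ\theta_{\overline X} \). Condition (a) says precisely that \( (U(\overline X), \omega\circ\theta_{\overline X}) \) is a cartesian lifting of \( \{ \theta_D^{-1} f_i \} \). By the observation, this is equivalent to \( (U(\overline X), \id) \) being a cartesian lifting of \( \{ \theta_D^{-1} f_i\,\omega\,\theta_{\overline X} \} \). By the computation above, that family is \( \{ \Op_{\ca C}(U(\overline f_i)) \} \). So the condition is exactly that \( \{ U(\overline f_i) \} \) is cartesian, which is (b). The only step requiring care is keeping track of the direction of \( \theta \) in the naturality square; the rest is bookkeeping.
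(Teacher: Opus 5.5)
Your proof is correct and follows the paper's argument. The paper likewise computes \( \Op_{\ca C}U(\overline f_i) = \theta_D^{-1}\circ f_i\circ\omega\circ\theta_{\overline X} \) from naturality of \( \theta \) and then reads off the equivalence; your observation about absorbing the isomorphism \( \omega\circ\theta_{\overline X} \) into the family is exactly the step the paper calls ``immediate''. Your reading of \( F \) as \( U \) matches the paper's intent, since the same typo appears in its proof.
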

\begin{proof}
  We have that
  \begin{equation*}
    \Op_{\ca C}U(\overline f_i) = \theta^{-1}_D \circ f_i \circ \omega \circ
    \theta_{\overline X},
  \end{equation*}
  so it immediately follows that the family \( \{ F(\overline f_i) \mid i \in
  I \} \) is cartesian if and only if \( (F(\overline X),\omega \circ
  \theta_{\overline X}) \) is the cartesian lift of the family \( \{
  \theta^{-1}_D \circ f_i \mid i \in I \} \).
\end{proof}

\begin{lemma}
  \label{l: radj-prsv-carts}
  Right adjoint FC-functors preserve cartesian liftings of all families.
\end{lemma}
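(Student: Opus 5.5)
Let \( (L,\lambda) \dashv (U,\theta) \) be an FC-adjunction, where \( (U,\theta)\colon(\ca D,\Op_{\ca D})\to(\ca C,\Op_{\ca C}) \), with unit \( \eta\colon \id\to UL \) and counit \( \epsilon\colon LU\to\id \). By Lemma~\ref{l: confrm-adj}, \( \Op_{\ca C}(\eta) \) and \( \Op_{\ca D}(\epsilon) \) are invertible and given by composites of \( \lambda \) and \( \theta \). Fix a family \( \{f_i\colon X\to\Op_{\ca D}(D)\} \) with cartesian lift \( (\overline X,\omega) \) and lifted family \( \{\overline f_i\} \). By the preceding lemma it suffices to show that \( \{U(\overline f_i)\} \) is cartesian with respect to \( \Op_{\ca C} \).

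So let \( C\in\ca C \) and \( h\colon \Op_{\ca C}(C)\to\Op_{\ca C}(U\overline X) \) be such that each \( \Op_{\ca C}U(\overline f_i)\circ h \) lifts, say to \( g_i\colon C\to UD \). We must show that \( h \) lifts; the converse direction is trivial, since composites of lifts are lifts. The plan is to transpose across the adjunction. Consider
\[ k := \Op_{\ca D}(\epsilon_{\overline X})\circ \lambda_{U\overline X}^{-1}\circ\Op_{\ca C}(\ldots)\colon \Op_{\ca D}(LC)\to\Op_{\ca D}(\overline X), \]
that is, the frame map obtained from \( h \) by conjugating with the isomorphisms \( \lambda_C\colon\Op_{\ca D}LC\cong\Op_{\ca C}C \) and \( \theta_{\overline X}\colon\Op_{\ca C}U\overline X\cong\Op_{\ca D}\overline X \). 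Concretely, \( k=\theta_{\overline X}\circ h\circ\lambda_C \), up to the identifications forced by the 2-cell equations.

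Then \( \overline f_i\)-composites satisfy \( \Op_{\ca D}(\overline f_i)\circ k=\theta_D\circ\Op_{\ca C}U(\overline f_i)\circ h\circ\lambda_C \), using naturality of \( \theta \). This equals \( \theta_D\circ\Op_{\ca C}(g_i)\circ\lambda_C \). That map is the underlying map of the transpose \( \epsilon_D\circ L(g_i)\colon LC\to D \), provided \( \Op_{\ca D}(\epsilon_D)\circ\lambda^{-1}_{UD}=\theta_D \), which is exactly the equation of Lemma~\ref{l: confrm-adj}(c) (equivalently Remark~\ref{r: redundant info in FC-adj}), combined with naturality of \( \lambda \). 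Hence each \( \Op_{\ca D}(\overline f_i)\circ k \) lifts. By cartesianness of \( \{\overline f_i\} \), \( k \) lifts to some \( \overline k\colon LC\to\overline X \).

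Finally, transpose back: \( U(\overline k)\circ\eta_C\colon C\to U\overline X \) has underlying map \( \theta_{\overline X}^{-1}\circ k\circ\lambda_{C}\ldots \). Using \( \Op_{\ca C}(\eta_C)=\theta^{-1}_{LC}\circ\lambda_C^{-1} \) and naturality of \( \theta \), this simplifies to exactly \( h \), so \( h \) lifts.

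The main obstacle is the bookkeeping of the isomorphisms \( \lambda,\theta \) and the unit/counit identities, so that the underlying maps match on the nose. By faithfulness everything reduces to these frame-level equations, and they are exactly those supplied by Lemma~\ref{l: confrm-adj}(c).
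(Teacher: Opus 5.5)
Your proposal is correct and follows the paper's proof essentially step for step: transpose the lifts $g_i$ to $\epsilon_D\circ L(g_i)$, use cartesianness of $\{\overline f_i\}$ to lift $\theta_{\overline X}\circ h\circ\lambda_C$ to some $\overline k\colon LC\to\overline X$, and transpose back to $U(\overline k)\circ\eta_C$, whose underlying map is $h$ by the unit/counit equations of Lemma~\ref{l: confrm-adj}(c). There are only minor slips: your first displayed definition of $k$ is garbled, and in the last step the underlying map should read $\theta_{\overline X}^{-1}\circ k\circ\lambda_C^{-1}$ rather than $\ldots\circ\lambda_C$; note also that your $\theta_{\overline X}\circ h\circ\lambda_C$ is the correctly typed form of the paper's $\theta_{\overline X}\circ h\circ\phi_C^{-1}$.
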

\begin{proof}
  We consider an FC-adjunction
  \begin{equation*}
    \begin{tikzcd}[column sep=large]
      (\ca C,\Op_{\ca C}) \ar[r,shift left=2.5,"{(F,\phi)}"]
               \ar[r,"\dashv"{rotate=-90},phantom]
      & (\ca D,\Op_{\ca D}), \ar[l,shift left=2.5,"{(U,\theta)}"]
    \end{tikzcd}
  \end{equation*}
  a family \( \{ f_i \colon X \to \Op_{\ca D}(D) \mid i \in I \} \) with a
  cartesian lifting \( (\overline X,\omega) \) with respect to \( \Op_{\ca D}
  \), and a morphism \( h \colon \Op_{\ca C}(C) \to \Op_{\ca C}(U(\overline
  X)) \). 

  Denoting the lifted family \( \{ \overline f_i \colon \overline X \to D \mid
  i \in I \} \), we observe that if the morphism \( \Op_{\ca C}U(\overline
  f_i) \circ h \colon \Op_{\ca C}(C) \to \Op_{\ca C} U(D) \) lifts to a
  morphism \( q_i \colon C \to U(D) \) for each \( i \in I \), then, letting
  \( q^\flat_i = \epsilon_D \circ F(q_i) \), we have
  \begin{equation*}
    \Op_{\ca D}(q^\flat_i) 
      = \Op_{\ca D}(\overline f_i) 
          \circ \theta_{\overline X} 
          \circ h
          \circ \phi_C^{-1}
  \end{equation*}
  for all \( i \in I \), meaning that \( q^\flat_i \) is a lift of the
  morphism on the right-hand side. Since \( \overline X \) is cartesian, it
  follows that \( \theta_{\overline X} \circ h \circ \phi_C^{-1} \) must have a
  lift, say, \( k \colon F(C) \to \overline X \). 

  Now, if we let \( k^\sharp = U(k) \circ \eta_C \), we observe that \(
  \Op_{\ca C}(k^\sharp) = h \). Thus, we conclude that \( (U(\overline X),
  \omega \circ \theta_{\overline X}) \) is a cartesian lifting of \( \{ f_i
  \mid i \in I \} \).
\end{proof}

\subsection{Frames and sublocales}
We give some background on the facts about frames and sublocales needed in this work. For more details, we refer the reader to \cite{picadopultr2012frames}. For every frame $L$, we call a subset $S\subseteq L$ a \emph{sublocale} if it is closed under all meets, and stable under the operation $a\to -$ for all $a\in L$. Sublocales are inclusions which are maps in the category $\bd{Loc}$, which is described in \cite{picadopultr2012frames} as having frames as objects, and right adjoints of frame maps as morphisms.

\begin{example}\label{e: open and closed}
    For every frame $L$ and $a\in L$, the inclusion $\up a\subseteq L$ is a sublocale, as well as $\{a\to x\mid x\in L\}\subseteq L$.
\end{example}

\begin{example}\label{e: point induces a sublocale}
 For a frame $L$ and a prime element $p\in L$, the inclusion $\{p,1\}\subseteq L$ is a sublocale. In particular, $\{X{\setminus}\overline{\{x\}},X\}\subseteq \Om X$ is a sublocale for every space $X$ and $x\in X$. 
\end{example}

The collection $\ca S L$ of all sublocales of $L$, ordered under set inclusion, is a coframe. The assignment $a\mapsto \up a$ (see Example \ref{e: open and closed}) is a frame embedding $e_L^{\ca S}:L\to \ca S L^{op}$. Every element of the form $e^{\ca S}_L(a)$ has a complement, given by the sublocale $\{a\to x\mid x\in L\}$ from Example \ref{e: open and closed}. Furthermore, it is proven in \cite{Joyal1984} that the embedding $e^{\ca S}_L$ enjoys the universal property that any frame map $g:L\to M$ such that all elements of $g[L]$ are complemented factors through it uniquely. The proof of the fact is also based on the following result.

\begin{lemma}\label{l: how L generates SL}
    In the coframe $\ca S L$, every element is a meet of elements of the form $e^{\ca S}_L(a)\vee \neg e^{\ca S}_L(b)$ for $a,b\in L$.
\end{lemma}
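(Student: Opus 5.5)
We prove the statement by an explicit formula: every sublocale is cut out by its own nucleus. Fix a sublocale \(S\subseteq L\) and let \(\nu=\nu_S\colon L\to L\) be its associated nucleus, \(\nu(x)=\bigwedge\{s\in S\mid x\leq s\}\). Recall that \(S\) is exactly the set of fixpoints of \(\nu\).

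Write \(\mathfrak c(b)=e^{\ca S}_L(b)={\up b}\) and \(\mathfrak o(a)=\neg e^{\ca S}_L(a)=\{a\to x\mid x\in L\}\), as in Example \ref{e: open and closed}. We aim to show
\begin{equation*}
  S=\bigcap_{a\in L}\big(\mathfrak c(\nu a)\vee \mathfrak o(a)\big),
\end{equation*}
where joins and meets are taken in \(\ca S L\). This is enough, since meets in \(\ca S L\) are intersections. The one fact about joins we use is the standard description of a binary join of sublocales as the set of pairwise meets: \(T_1\vee T_2=\{t_1\wedge t_2\mid t_i\in T_i\}\). That set is closed under all meets and under \(a\to-\), since \(a\to-\) preserves meets. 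Hence
\begin{equation*}
  \mathfrak c(b)\vee\mathfrak o(a)=\{u\wedge(a\to v)\mid u\geq b,\ v\in L\}.
\end{equation*}

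For the inclusion \(\subseteq\), take \(s\in S\) and \(a\in L\). We claim \(s=(s\vee \nu a)\wedge(a\to s)\), which exhibits \(s\) in the required form. The inequality \(\geq\) holds because \(s\leq a\to s\). For \(\leq\), distributivity gives \((s\vee\nu a)\wedge(a\to s)=s\vee(\nu a\wedge(a\to s))\). Since \(S\) is a sublocale, \(a\to s\in S\), so \(a\to s\) is fixed by \(\nu\). Therefore
\begin{equation*}
  \nu a\wedge(a\to s)=\nu a\wedge\nu(a\to s)=\nu\big(a\wedge(a\to s)\big)\leq\nu s=s,
\end{equation*}
using that nuclei preserve binary meets.

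For the inclusion \(\supseteq\), let \(y\) lie in the intersection and specialize to \(a=y\). This gives \(y=u\wedge(y\to v)\) with \(u\geq\nu y\). Then \(y\leq y\to v\), so \(y=y\wedge(y\to v)=y\wedge v\), hence \(y\leq v\) and \(y\to v=1\). Consequently \(y=u\geq\nu y\geq y\), so \(y\) is a fixpoint of \(\nu\), that is, \(y\in S\).

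The only step that needs some care is the description of the binary join \(\mathfrak c(b)\vee\mathfrak o(a)\) as the set of pairwise meets. One must check it is closed under arbitrary meets, and this uses that both \(\mathfrak c(b)\) and \(\mathfrak o(a)\) are closed under arbitrary meets. The remaining steps are the two short computations above.
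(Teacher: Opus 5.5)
Your proof is correct. The paper gives no proof of this lemma: it quotes it as background from the literature, as the ingredient behind the Joyal--Tierney universal property of $e^{\ca S}_L$. Your argument is the standard one, namely the explicit decomposition
\[
S=\bigcap_{a\in L}\bigl(e^{\ca S}_L(\nu_S a)\vee\neg e^{\ca S}_L(a)\bigr).
\]
Dually, in congruence terms, this says the congruence of $S$ is the join of the least congruences identifying $a$ with $\nu_S a$.

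The pieces all check out:
\begin{itemize}
\item your description of a binary join of sublocales as the set of pairwise meets;
\item the identity $s=(s\vee\nu a)\wedge(a\to s)$ for $s\in S$;
\item the specialization $a=y$ in the reverse inclusion.
\end{itemize}

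Two cosmetic remarks. First, in the first inclusion your labels ``$\geq$'' and ``$\leq$'' are swapped relative to the displayed equation. The easy direction is $s\leq(s\vee\nu a)\wedge(a\to s)$. Second, you can avoid invoking meet-preservation of the nucleus. The element $(a\to s)\to s$ lies in $S$ and above $a$, so $\nu a\leq(a\to s)\to s$, which is exactly $\nu a\wedge(a\to s)\leq s$.
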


We will also need some facts about how subspaces relate to sublocales. For a space $X$, we call $b(x)$ the sublocale $\{X{\setminus}\overline{\{x\}},X\}\subseteq \Om X$ in Example \ref{e: point induces a sublocale}. For a space $X$, and a subspace $Y\subseteq X$, we say that the sublocale
\[
\bigvee \{b(x)\mid x\in Y\}\subseteq \Om X
\]
is \emph{induced} by $Y$. A sublocale is called \emph{induced} if it is induced by some subspace. The following result may be taken as motivation for the definition of sublocales of the form $e^{\ca S}_L(a)$ as \emph{closed} and those of the form $\neg e^{\ca S}_L(a)$ as \emph{open}.

\begin{lemma}\label{l: open and closed means induced}
Let $X$ be a space, and $U\in \Om X$.
\begin{enumerate}
    \item The sublocale $e^{\ca S}_L(U)$ is induced by $U^c$;
    \item The sublocale $\neg e^{\ca S}_L(U)$ is induced by $U$.
\end{enumerate}
Therefore, all open and closed sublocales of spatial frames are induced.
\end{lemma}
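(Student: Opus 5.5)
The plan is to compute the sublocale induced by an arbitrary subspace explicitly, and then compare it with $e^{\ca S}_L(U)=\up U$ and with $\neg e^{\ca S}_L(U)=\{U\to V\mid V\in\Om X\}$ (Example \ref{e: open and closed}). Here $L=\Om X$.

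First I will recall the standard description of joins in the coframe $\ca S L$ from \cite{picadopultr2012frames}. The join of a family of sublocales is the set of all meets in $L$ of subsets of their union. It is closed under all meets by construction, and it is stable under $a\to-$ because $a\to\bigwedge_i s_i=\bigwedge_i(a\to s_i)$ and each summand is already stable.

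Applied to the family $b(x)=\{X\setminus\overline{\{x\}},X\}$ for $x\in Y$, this says that the sublocale induced by $Y\subseteq X$ consists of the meets $\bigwedge_{x\in S}(X\setminus\overline{\{x\}})$ for $S\subseteq Y$. The empty meet gives $X$. Meets in $\Om X$ are interiors of intersections, and the interior of the complement of a set $A$ is $X\setminus\overline A$. Moreover the closure of $\bigcup_{x\in S}\overline{\{x\}}$ is $\overline S$. Hence the sublocale induced by $Y$ is
\[
\{\,X\setminus\overline S \mid S\subseteq Y\,\}.
\]
This is the key step.

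For (1), take $Y=U^c$.
\begin{itemize}
\item If $S\subseteq U^c$, then $\overline S\subseteq U^c$ because $U^c$ is closed, so $X\setminus\overline S\supseteq U$.
\item Conversely, any open $W\supseteq U$ equals $X\setminus\overline S$ with $S=W^c$, since $S$ is closed and $S\subseteq U^c$.
\end{itemize}
So the induced sublocale is exactly $\up U$.

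For (2), take $Y=U$. Heyting implication in $\Om X$ is $U\to V=\mathrm{int}(U^c\cup V)=X\setminus\overline{U\setminus V}$.
\begin{itemize}
\item Every such element has the required form with $S=U\setminus V\subseteq U$.
\item Conversely, given $S\subseteq U$, put $V=U\setminus\overline S$, which is open. Then $U\setminus V=U\cap\overline S$. This set contains $S$, and since $U$ is open it is contained in $\overline{U\cap S}=\overline S$. Hence $\overline{U\setminus V}=\overline S$, and so $X\setminus\overline S=U\to V$.
\end{itemize}
So the induced sublocale is exactly $\neg e^{\ca S}_L(U)$.

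The final sentence of the lemma follows by transporting along an isomorphism $L\cong\Om X$ for a spatial frame $L$. The only delicate point is the use of openness of $U$ in the closure identity in (2); the rest is routine.
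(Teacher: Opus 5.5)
Your argument is correct. The paper gives no proof of this lemma; it is recalled as background from the sublocale literature, so there is nothing to match step by step.

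The standard argument there goes differently. It identifies the sublocale induced by $Y$ with the image of the right adjoint of the restriction map $\Om X\to\Om Y$, $V\mapsto V\cap Y$. That image is $\{\mathrm{int}(Y^c\cup V)\mid V\in\Om X\}$, and then (1) and (2) are one-liners: $\mathrm{int}(U\cup V)=U\cup V$ and $\mathrm{int}(U^c\cup V)=U\to V$.

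Your route instead works directly from the paper's definition of an induced sublocale as $\bigvee\{b(x)\mid x\in Y\}$, using the explicit description of joins in $\ca S L$. This avoids first identifying that join with the localic image of $Y$, at the cost of a few closure computations. It also gives the uniform description $\{X\setminus\overline S\mid S\subseteq Y\}$, which agrees with the standard one because $\overline S=\overline{Y\cap\overline S}$ for $S\subseteq Y$.

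One minor correction to your closing comment. In (2), openness of $U$ is what makes $V=U\setminus\overline S$ open. It is not needed for $\overline{U\cap\overline S}=\overline S$, which follows from $S\subseteq U\cap\overline S\subseteq\overline S$ alone.
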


Every frame map $f:L\to M$ lifts to a coframe map $\ca S f:\ca S L\to \ca S M$. This is defined on generators as 
\begin{align*}
\ca S f(e^{\ca S}_L(a))=e^{\ca S}_L(f(a))&&\ca S f(\neg e^{\ca S}_L(a))=\neg e^{\ca S}_L(f(a)).
\end{align*}
As it is a coframe map, $\ca S f$ has a left adjoint, which we call $\ca S f^*$. 

\subsection{Exactness and strong exactness}
Throughout this work, for a frame $L$ we will regard the collection $\mathsf{Filt}(L)$ of filters of $L$ as ordered under reverse set inclusion. By analogy with $\bd{Loc}$, we call $\bd{CoLoc}$ the category of coframes equipped with left adjoints of coframe morphisms, and define subcolocales dually as sublocales. A meet $\bwe_i x_i\in L$ of a frame $L$ is called \emph{exact} if for all $y\in L$
\[
\bwe_i (x_i\vee y)=\bwe_i x_i \vee y.
\]
Filters closed under exact meets are called exact; we call their collection $\fe(L)$. The collection of exact filter has the following useful characterization (Lemma 3.7 of \cite{suarez25raney}).
\begin{lemma}\label{l: fe is smallest sl containing the principals}
    For a frame $L$, $\fe(L)\subseteq \mf{Filt}(L)$ is the smallest subcolocale containing all principal filters.
\end{lemma}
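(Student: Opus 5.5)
The statement is an order-theoretic fact about the coframe $\mf{Filt}(L)$ ordered by reverse inclusion. I will prove it in two halves: first that $\fe(L)$ is a subcolocale containing the principal filters, and then that it is contained in every such subcolocale. Throughout I use the following conventions.
\begin{itemize}
  \item Joins in $\mf{Filt}(L)$ are intersections of filters.
  \item $F\leq H$ means $F\supseteq H$.
  \item For a subcolocale $S$, dualizing the definition of sublocale, I require $S$ to be closed under all joins and stable under the co-Heyting difference $F\mapsto F-G$ for every filter $G$. Here $F-G$ is the least $H$ with $F\leq H\vee G$.
\end{itemize}

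For the first half I will carry out three steps.
\begin{itemize}
  \item Closure under joins: an intersection of filters each closed under exact meets is clearly closed under exact meets.
  \item Principal filters are exact: a meet of elements above $a$ is above $a$, so $\up a\in\fe(L)$.
  \item Stability under differences: I first compute the difference explicitly. The condition $F\leq H\vee G$ means $H\cap G\subseteq F$. The least such $H$ in the reverse order is the largest such set, namely
\[
F-G=\{x\in L\mid x\vee g\in F \text{ for all } g\in G\}.
\]
This is a filter. Now let $F$ be exact and let $\bwe_i x_i$ be an exact meet with all $x_i\in F-G$. For $g\in G$, exactness gives $(\bwe_i x_i)\vee g=\bwe_i(x_i\vee g)$. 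This new meet is again exact, because for any $y$
\[
\Bigl(\bwe_i(x_i\vee g)\Bigr)\vee y=\Bigl(\bwe_i x_i\Bigr)\vee g\vee y=\bwe_i(x_i\vee g\vee y).
\]
Since each $x_i\vee g\in F$ and $F$ is exact, we get $(\bwe_i x_i)\vee g\in F$. Hence $\bwe_i x_i\in F-G$, and $F-G$ is exact.
\end{itemize}

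For minimality, let $S$ be a subcolocale containing all principal filters, and let $F$ be an exact filter. Since $S$ is closed under intersections, it suffices to separate: for every $z\notin F$, find a filter in $S$ that contains $F$ but not $z$. The natural candidates are the differences of principal filters. By the formula above,
\[
\up m-\up z=\{x\mid x\vee z\geq m\}.
\]
This contains $F$ exactly when $m\leq f\vee z$ for all $f\in F$. So I set $m=\bwe_{f\in F}(f\vee z)$. It then remains to show $z\not\geq m$, that is, that $z$ is not itself equal to $\bwe_{f\in F}(f\vee z)$.

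This separation step is the main obstacle, and it is where exactness of $F$ must be used. A single difference $\up m-\up z$ may not suffice, because the meet $\bwe_{f}(f\vee z)$ need not be exact. My first approach is therefore to use iterated differences and intersections of differences, of the form $(\up m-\up z)-\up w$ and joins of these. Each of these lies in $S$. The aim is to show that the filter they generate around $F$ is exactly the closure of $F$ under meets $\bwe_i x_i$ with $x_i\in F$ that are exact, which equals $F$ because $F$ is exact. Concretely, I would argue that if $z$ lies in every such difference containing $F$, then $z$ can be written as an exact meet of elements of $F$. For the contrapositive, I would exhibit the exactness witness $y$ as the parameter $w$ of a further difference $-\up w$.
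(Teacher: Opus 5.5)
Your first half is correct. Intersections of exact filters are exact, principal filters are exact, and your formula $F-G=\{x\mid x\vee g\in F \text{ for all } g\in G\}$ is right, as is the check that $F-G$ is exact whenever $F$ is. Your reduction of minimality to separating $F$ from each $z\notin F$ by a difference of principal filters is also right. The gap is the separation step itself, which you leave as a plan, and the plan as stated points the wrong way.

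Iterating differences by principal filters produces nothing new, since
\[(\uparrow m-\uparrow z)-\uparrow w=\uparrow m-\uparrow(z\vee w)=\{x\mid x\vee z\vee w\geq m\}.\]
Joins, which are intersections, only shrink filters, so they cannot help with separation either. Worse, if you keep $m=\bigwedge_{f\in F}(f\vee z)$, then in exactly the problematic case you have $m=z$. Every further difference $\{x\mid x\vee z\vee w\geq z\}$ is then all of $L$ and still contains $z$. The missing idea is that the minuend $m$ must be re-chosen together with the subtrahend.

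The missing argument is short. Let $z\notin F$. All elements $f\vee z$ lie in $F$, and $z\le\bigwedge_{f\in F}(f\vee z)$. Suppose that for every $y\in L$ we had $\bigwedge_{f\in F}(f\vee z\vee y)\le z\vee y$.
\begin{itemize}
\item Taking $y=0$ gives $z=\bigwedge_{f\in F}(f\vee z)$.
\item For every $y$ we would get $\bigwedge_{f\in F}((f\vee z)\vee y)=z\vee y$, so this meet is exact.
\item Exactness of $F$ would then force $z\in F$, a contradiction.
\end{itemize}
Hence there is some $y$ (possibly $y=0$) such that, with $w=z\vee y$, we have $\bigwedge_{f\in F}(f\vee w)\not\le w$. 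Put $m=\bigwedge_{f\in F}(f\vee w)$ and $H=\uparrow m-\uparrow w=\{x\mid x\vee w\geq m\}$, which lies in $S$. Then $F\subseteq H$ by the choice of $m$. Also $z\notin H$, because $z\vee w=w\not\geq m$. Intersecting such $H$ over all $z\notin F$ gives $F\in S$.

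So a single difference of principal filters suffices, once the subtrahend is allowed to absorb the exactness witness $y$ and $m$ is recomputed for that new subtrahend. Your closing sentence gestures at the first half of this, but without recomputing $m$ the argument does not go through.
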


We will call a frame map $f:L\to M$ \emph{exact} if $f^{-1}(F)\in \fe(L)$ whenever $F\in \fe(M)$. For a more explicit definition and the proof of its equivalence with ours, we refer to Proposition 6.6 of \cite{suarez25raney}. We call $\bd{Frm}_{\ca{E}}$ the category of frames and exact frame maps. A frame map $f:L\to M$ is exact if and only if it extends to a coframe map $\fe(f):\fe(L)\to \fe(M)$. The notion of exactness is often studied with and compared to that of \emph{strong exactness}. A family $S$ of elements of a frame $L$ is called \emph{strongly exact} if whenever $y\in L$ and $s\to y=y$ for all $s\in S$, then also $\bwe S\to y=y$. Strongly exact meets are introduced in \cite{Wilson1994TheAT}, where they are called \emph{free} meets, and their description as those meets that are preserved by every frame map is used. Filters that are closed under strongly exact meets are called \emph{strongly exact}, and their collection, again ordered under reverse set inclusion, is denoted as $\fse(L)$. The following results are is Corollary 2.7 of \cite{ball20} and Corollary 3.6 of \cite{moshier20}.
\begin{theorem}
    For a frame $L$, there is a chain of subcolocale inclusions 
    \[
    \fe(L)\subseteq \fse(L)\subseteq \mathsf{Filt}(L).
    \]
\end{theorem}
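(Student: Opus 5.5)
The plan is to reduce the statement to three facts: $\fse(L)$ is a subcolocale of $\mathsf{Filt}(L)$; every principal filter is strongly exact; and Lemma~\ref{l: fe is smallest sl containing the principals}. Given these, $\fe(L)$ is the smallest subcolocale of $\mathsf{Filt}(L)$ containing the principal filters. Hence $\fe(L)\subseteq \fse(L)$, and this is automatically an inclusion of subcolocales, since a subcolocale of $\mathsf{Filt}(L)$ contained in another is a subcolocale of it. The second fact is immediate, because a principal filter ${\up}a$ is closed under arbitrary meets. So everything reduces to showing that $\fse(L)\subseteq\mathsf{Filt}(L)$ is a subcolocale.

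Since $\mathsf{Filt}(L)$ carries the reverse-inclusion order, the dual of the sublocale axioms asks for two things. First, $\fse(L)$ must be closed under arbitrary joins in $\mathsf{Filt}(L)$, which are intersections of filters. Second, it must be stable under the co-Heyting operation $G\smallsetminus -$ for every $G\in\mathsf{Filt}(L)$, the left adjoint of $G\vee -$ in the reverse order. Closure under intersections is routine: if each $F_j$ is closed under strongly exact meets, so is $\bigcap_j F_j$.

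For stability I would first identify the co-Heyting operation concretely. In the inclusion order, with $F,G$ filters, it should be the largest filter $H$ with $H\cap G\subseteq F$, namely $H=\{a\in L \mid \forall g\in G,\ a\vee g\in F\}$. This is checked directly from the fact that $H\cap G$ is the set of joins $h\vee g$. Now let $F$ be strongly exact and $S\subseteq H$ a strongly exact family. For each $g\in G$ the family $\{s\vee g\mid s\in S\}\subseteq F$ is again strongly exact, and strong exactness should give $\bigwedge_{s}(s\vee g)=\bigwedge S\vee g$. Both points use Wilson's characterization of strongly exact meets as those preserved by every frame map: the frame map $x\mapsto x\vee g$ from $L$ to ${\up}g$ preserves the meet, and composites of frame maps preserve it as well. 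Then $\bigwedge S\vee g\in F$ for every $g\in G$, so $\bigwedge S\in H$, and $H$ is strongly exact.

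I expect the main obstacle to be this last verification. One must check that the family $\{s\vee g\}$ is strongly exact as a family in $L$, and not merely that its meet is preserved by maps out of ${\up}g$. My first attempt would be to use the definition directly: if $(s\vee g)\to y=y$ for all $s$, derive $s\to (g\to y)=g\to y$, apply strong exactness of $S$ at the element $g\to y$, and reassemble to get $(\bigwedge S\vee g)\to y=y$. If that computation does not close, I would fall back on citing Corollary 2.7 of \cite{ball20} for the subcolocale property of $\fse(L)$.
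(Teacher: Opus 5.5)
Your argument is correct, but it is not the paper's route. The paper gives no proof of its own and simply cites Corollary 2.7 of \cite{ball20} and Corollary 3.6 of \cite{moshier20}.

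You instead verify by hand that $\fse(L)$ is a subcolocale. This means closure under intersections and under $F\mapsto\{a\in L\mid \forall g\in G,\ a\vee g\in F\}$. That map is $F\smallsetminus G$, the left adjoint of $G\vee-$ evaluated at $F$, not ``$G\smallsetminus -$'' as you wrote; your explicit formula is the right one. You then obtain $\fe(L)\subseteq\fse(L)$ from the minimality in Lemma~\ref{l: fe is smallest sl containing the principals}. This is elementary and self-contained once Wilson's characterization is granted.

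Within the paper's own toolkit, the subcolocale property would instead come structurally. By Lemma~\ref{l: the properties of ker}, $\fse(L)=\mathit{ker}_L[\ca S L]$ is the image of a $\bd{CoLoc}$ map, hence a subcolocale. The inclusion also follows from $\fe(L)=\mathit{ker}_L[\ca F L]$ (Lemma~\ref{l: ker of SbL is fe}) together with $\ca F L\subseteq\ca S L$.

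The step you flagged does close via Wilson's characterization. For any frame map $h\colon L\to M$ we have $h(\bigwedge_s(s\vee g))=h(\bigwedge S)\vee h(g)=\bigwedge_s(h(s)\vee h(g))$. The last equality holds because the composite of $h$ with $-\vee h(g)\colon M\to\up h(g)$ is a frame map, so it preserves $\bigwedge S$.

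Your direct fallback, however, fails as written. The hypothesis $(s\vee g)\to y=y$ does not give $s\to(g\to y)=g\to y$: in $\Omega(\mathbb R)$ take $y=\emptyset$, $s=(0,\infty)$, $g=(-\infty,1)$. The fix is to apply strong exactness of $S$ at $y'\to y$, where $y'=g\to y$:
\begin{itemize}
\item The hypothesis says $(s\to y)\wedge y'=y$.
\item Hence $s\to(y'\to y)=y'\to(s\to y)=y'\to\bigl((s\to y)\wedge y'\bigr)=y'\to y$ for every $s\in S$.
\item Strong exactness of $S$ then gives $y'\to(\bigwedge S\to y)=y'\to y$.
\item Meeting with $y'$ yields $(\bigwedge S\to y)\wedge y'=y$, that is, $(\bigwedge S\vee g)\to y=y$.
\item Since $\bigwedge_s(s\vee g)\geq\bigwedge S\vee g$, it follows that $\bigwedge_s(s\vee g)\to y=y$.
\end{itemize}
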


\subsection{Exactness and essentiality}\label{ss: essentiality}

Exactness is the specialization to frames of the more general notion of \emph{admissibility}, i. e. distributivity over binary joins of a meet in a join-semilattice, with admissible joins defined dually. Admissibility is strongly intertwined with essentiality in categories of lattices, see for example \cite{BrunsLakser1970}, or \cite{ball19} for a specialization of these and related results in the context of frames. There, maximal essential extensions for frames are characterized; in particular, the maximal essential extension of a frame is proven to be a structure introduced by Funayama in \cite{funayama59} for a general lattice. For a frame $L$, the \emph{Funayama embedding} $e^{\ca F}_L:L\to \ca F L$ can be characterized as the embedding of $L$ into the MacNeille completion of its Boolean envelope; this characterization is proven in \cite{gr2003general} (Section II.4).  

The Funayama construction can also be realized, as done in \cite{johnstone82} (Section II.2), as a subcolocale $\ca F L\subseteq \ca S L$, the collection of all sublocales of $L$ which are joins of elements of the form $e^{\ca S}_L(a)\wedge \neg e^{\ca S}_L(b)$ for some $a,b\in L$ (in fact, in \cite{ball19}, this description is used). The two structures in \cite{gr2003general} and \cite{johnstone82} are shown to be isomorphic in \cite{funayamasrevisited}. Observe the parallel between the following result and Lemma \ref{l: fe is smallest sl containing the principals}. 

\begin{lemma}\label{l: funayama smallest subloc}
    For a frame $L$, $\ca F L\subseteq \ca S L $ is the smallest subcolocale containing $e^{\ca S}_L[L]$.
\end{lemma}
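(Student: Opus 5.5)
We have to show two things: $\ca F L$ is a subcolocale of $\ca S L$ containing $e^{\ca S}_L[L]$, and every subcolocale of $\ca S L$ containing $e^{\ca S}_L[L]$ also contains $\ca F L$. Recall that $\ca S L$ is a coframe ordered by inclusion. A subcolocale is then a subset closed under arbitrary joins and stable under the co-Heyting operations $-\smallsetminus a$ (the duals of meets and $a\to-$). The description $\ca F L=\{\bve_i (e^{\ca S}_L(a_i)\wedge \neg e^{\ca S}_L(b_i))\}$ is taken from the realization cited from \cite{johnstone82}.

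\emph{Containment.} Each $e^{\ca S}_L(a)$ equals $e^{\ca S}_L(a)\wedge \neg e^{\ca S}_L(0)$, since $\neg e^{\ca S}_L(0)$ is the top sublocale $L$. Hence $e^{\ca S}_L[L]\subseteq \ca F L$.

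\emph{Closure.} By definition $\ca F L$ is closed under arbitrary joins, so only stability under co-Heyting differences $S\smallsetminus T$ needs checking, for $S\in\ca F L$ and arbitrary $T\in\ca S L$. The difference distributes over joins in the first variable, since $-\smallsetminus T$ is a left adjoint. So it suffices to show that $(e(a)\wedge\neg e(b))\smallsetminus T$ lies in $\ca F L$, writing $e=e^{\ca S}_L$. By Lemma \ref{l: how L generates SL}, write $T=\bwe_j (e(c_j)\vee \neg e(d_j))$.

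The generator $e(a)\wedge\neg e(b)$ is the intersection of a closed and an open sublocale, i.e. a locally closed sublocale. The key step is to show that each locally closed sublocale $K$ is ``join-prime-like'' enough that $K\smallsetminus T$ can be computed. Concretely, since $K\smallsetminus T$ is the least sublocale $X$ with $K\subseteq X\vee T$, I would aim to prove
\[
K\smallsetminus T=\bve_j \big(K\smallsetminus (e(c_j)\vee\neg e(d_j))\big).
\]
If this fails in general, I would fall back on the fact that $\ca F L$ is a complete lattice closed under joins in $\ca S L$. Then $K\smallsetminus T$ can be defined as the least element $X$ of $\ca F L$ with $K\subseteq X\vee T$, provided one verifies that such least elements exist and agree with the difference computed in $\ca S L$.

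For a single complemented element $C=e(c)\vee\neg e(d)$, the difference is just a meet with its complement: $K\smallsetminus C=K\wedge\neg C=e(a)\wedge e(d)\wedge\neg e(b)\wedge \neg e(c)$. Using $e(a)\wedge e(d)=e(a\vee d)$ and $\neg e(b)\wedge\neg e(c)=\neg e(b\vee c)$, this equals $e(a\vee d)\wedge\neg e(b\vee c)$, again a generator of $\ca F L$. The main obstacle is the interchange displayed above, that is, commuting the difference with an infinite meet in the second variable, which is not automatic in a coframe. I expect to handle it via the Funayama characterization recalled in Subsection \ref{ss: essentiality}: $\ca F L$ is the maximal essential extension, i.e. the MacNeille completion of the Boolean envelope. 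In that setting, joins of locally closed sublocales are exactly the elements generated from the Boolean envelope by joins, and closure under differences follows from the isomorphism of \cite{funayamasrevisited} with the order-theoretic construction.

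\emph{Minimality.} Let $\ca S'$ be any subcolocale containing $e[L]$. It contains all $e(a)$, and hence also $e(a)\smallsetminus e(b)$ by stability under differences. Now $e(a)\smallsetminus e(b)=e(a)\wedge\neg e(b)$, because $e(b)$ is complemented. Being closed under joins, $\ca S'$ then contains every join of such elements, so $\ca F L\subseteq\ca S'$. This part is routine; the real work is the closure step.
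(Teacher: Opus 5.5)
Your minimality argument is the paper's proof in dual form. The paper uses the Heyting implication $e^{\ca S}_L(a)\to e^{\ca S}_L(b)=\neg e^{\ca S}_L(a)\sqcup e^{\ca S}_L(b)$ in $\ca S L$ ordered by reverse inclusion. That is exactly your difference $e^{\ca S}_L(b)\smallsetminus e^{\ca S}_L(a)=e^{\ca S}_L(b)\wedge\neg e^{\ca S}_L(a)$ in $(\ca S L,\subseteq)$, followed by closure under joins.

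The paper does not verify that $\ca F L$ is itself a subcolocale; it takes this from Johnstone's realization of the Funayama construction. Your closure step therefore goes beyond the paper. It is left open only because of a misconception: the interchange you call the main obstacle holds in every coframe. For any $X\in\ca S L$,
\[
  K\smallsetminus \bwe_j C_j\subseteq X
  \iff K\subseteq X\vee \bwe_j C_j=\bwe_j (X\vee C_j)
  \iff K\smallsetminus C_j\subseteq X\ \text{for all } j,
\]
using only the coframe distributive law. This is just the dual of $(\bve_j b_j)\to a=\bwe_j(b_j\to a)$ in a frame. Hence, for $S=\bve_i K_i$ with each $K_i$ locally closed, $S\smallsetminus T=\bve_{i,j}(K_i\wedge\neg C_j)$ is a join of locally closed sublocales. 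So your closure argument is complete as it stands, and the fallback through MacNeille completions and \cite{funayamasrevisited} is unnecessary.

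Two computational slips need fixing. First, $e^{\ca S}_L(0)=\up 0=L$ is the top of $(\ca S L,\subseteq)$, so its complement $\neg e^{\ca S}_L(0)$ is the void sublocale $\{1\}$. Then $e^{\ca S}_L(a)\wedge\neg e^{\ca S}_L(0)=\{1\}$, not $e^{\ca S}_L(a)$. What you want is $\neg e^{\ca S}_L(1)=L$, which gives $e^{\ca S}_L(a)=e^{\ca S}_L(a)\wedge\neg e^{\ca S}_L(1)$; recall that $e^{\ca S}_L$ reverses order into $(\ca S L,\subseteq)$. Second, $e^{\ca S}_L(b)\vee e^{\ca S}_L(c)=e^{\ca S}_L(b\wedge c)$, so $\neg e^{\ca S}_L(b)\wedge\neg e^{\ca S}_L(c)=\neg e^{\ca S}_L(b\wedge c)$, not $\neg e^{\ca S}_L(b\vee c)$. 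This does not affect your conclusion that $K\wedge\neg C$ is again of the form $e^{\ca S}_L(a')\wedge\neg e^{\ca S}_L(b')$.
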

\begin{proof}
We denote reverse set inclusion as $\sqsubseteq$. Suppose that $\ca T\subseteq \ca S L$ is a subcolocale. For $a,b\in L$, in the frame $(\ca S L,\sqsubseteq)$ the Heyting implication $e^{\ca S}_L(a)\to e^{\ca S}_L(b)$ is evaluated as $\neg e^{\ca S}_L(a)\sqcup e^{\ca S}_L(b)$. Then, every element of this form is in $\ca T$, and sublocales are closed under arbitrary meets, $\ca F L\subseteq \ca T$. 
\end{proof}
 In \cite{moshier20}, the map 
\begin{align*}
\mathit{ker}_L:&\ca S L\to \mf{Filt}(L)\\
& S\mapsto \{x\in L\mid S\leq \neg e^{\ca S}_L(x)\}
\end{align*}
is defined. 
\begin{lemma}\label{l: the properties of ker}
For every frame $L$, the map $\mathit{ker}_L:\ca S L\to \mf{Filt}(L)$ is in $\bd{CoLoc}$. Its image concides with the strongly exact filters.    
\end{lemma}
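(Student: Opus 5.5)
We must show two things: that $\mathit{ker}_L$ is a morphism of $\bd{CoLoc}$, that is, the left adjoint of a coframe map, and that its image is exactly $\fse(L)$. Throughout, filters are ordered by reverse inclusion and $\ca S L$ by set inclusion. Since $\neg e^{\ca S}_L(x)$ is the open sublocale $\mathfrak o(x)$, we have $\mathit{ker}_L(S)=\{x\mid S\subseteq \mathfrak o(x)\}$.

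\textbf{Step 1: $\mathit{ker}_L(S)$ is a filter and $\mathit{ker}_L$ is monotone.} The set $\mathit{ker}_L(S)$ is a filter because $x\mapsto \mathfrak o(x)$ is monotone and preserves finite meets. If $S\subseteq T$, then $\mathit{ker}_L(T)\subseteq \mathit{ker}_L(S)$, which means $\mathit{ker}_L(S)\le \mathit{ker}_L(T)$ in the reverse-inclusion order. So $\mathit{ker}_L$ is monotone.

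\textbf{Step 2: $\mathit{ker}_L$ preserves arbitrary joins.} A join of sublocales is their closed union. Containment of a join in $\mathfrak o(x)$ holds iff each member is contained in $\mathfrak o(x)$. Hence
\[
\mathit{ker}_L\Big(\bigvee_i S_i\Big)=\bigcap_i \mathit{ker}_L(S_i),
\]
and the intersection of filters is their join in $\mathsf{Filt}(L)$ under reverse inclusion.

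\textbf{Step 3: $\mathit{ker}_L$ is the left adjoint of a coframe map.} A join-preserving map has a right adjoint, namely
\[
F\mapsto \bigvee\{S\mid \mathit{ker}_L(S)\supseteq F\}=\bigcap_{x\in F}\mathfrak o(x).
\]
It then remains to check that this right adjoint preserves finite joins and arbitrary meets; it is then a coframe map, and $\mathit{ker}_L$ is its left adjoint. The right adjoint automatically preserves meets. For finite joins, the join of two filters $F\vee G$ is $F\cap G$, and we need
\[
\bigcap_{x\in F\cap G}\mathfrak o(x)=\bigcap_{x\in F}\mathfrak o(x)\vee\bigcap_{y\in G}\mathfrak o(y).
\]
One would try to use $\mathfrak o(x\vee y)=\mathfrak o(x)\vee\mathfrak o(y)$ together with the fact that $F\cap G=\{x\vee y\mid x\in F,\ y\in G\}$. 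This reduces the claim to the distributivity $\bigcap_{x,y}(\mathfrak o(x)\vee\mathfrak o(y))=\bigcap_x\mathfrak o(x)\vee\bigcap_y\mathfrak o(y)$, which holds because $\ca S L$ is a coframe.

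\textbf{Step 4: identify the image with $\fse(L)$.} The image of a left adjoint consists of the fixpoints $F$ with $\mathit{ker}_L\big(\bigcap_{x\in F}\mathfrak o(x)\big)=F$, that is, the filters $F$ such that
\[
\bigcap_{x\in F}\mathfrak o(x)\subseteq \mathfrak o(y)\implies y\in F.
\]
The key fact is the known result that, for any family $S\subseteq L$, $\bigcap_{s\in S}\mathfrak o(s)=\mathfrak o\big(\bigwedge S\big)$ holds iff $S$ is strongly exact; this is the characterisation of free meets as those preserved by $\mathfrak o$, in the spirit of Wilson's description of free meets as those preserved by every frame map.

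\textbf{Fixpoints are strongly exact.} Let $F$ be a fixpoint and $S\subseteq F$ a strongly exact family. Then $\bigcap_{x\in F}\mathfrak o(x)\subseteq\bigcap_{s\in S}\mathfrak o(s)=\mathfrak o\big(\bigwedge S\big)$, so $\bigwedge S\in F$. Thus $F\in\fse(L)$.

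\textbf{Strongly exact filters are fixpoints.} This is the harder direction and the main obstacle. Suppose $F$ is strongly exact and $\bigcap_{x\in F}\mathfrak o(x)\subseteq\mathfrak o(y)$; we must show $y\in F$. I would first try the elementwise criterion $z\in\mathfrak o(x)\iff x\to z=z$. Then show that the family $F\cup\{y\}$, or rather $\{x\vee y\mid x\in F\}$, is strongly exact with meet below or equal to $y$, so that closure of $F$ under strongly exact meets gives $y\in F$. This uses the fixed points of $z\mapsto(\bigwedge S\to z)$ and the fact that $\mathfrak o(y)$ consists exactly of the $z$ with $y\to z=z$. If that fails, I would fall back on citing the corresponding result of \cite{moshier20}.
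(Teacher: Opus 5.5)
The paper gives no proof of this lemma; it imports it from \cite{moshier20}. Your Steps 1--3 and the inclusion ``fixpoints are strongly exact'' are correct. In Step 3 you should also check the empty join: $\bigcap_{x\in L}\mathfrak{o}(x)=\mathfrak{o}(0)=\{1\}$. Your ``key fact'' needs no citation. Since $z\in\mathfrak{o}(a)$ iff $a\to z=z$, the equality $\bigcap_{s\in S}\mathfrak{o}(s)=\mathfrak{o}(\bigwedge S)$ is literally the paper's definition of strong exactness, and the inclusion $\supseteq$ always holds.

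The genuine gap is the converse inclusion $\fse(L)\subseteq \mathit{ker}_L[\ca S L]$. You leave this as a hedged plan, and the option $F\cup\{y\}$ is circular because you do not know $y\in F$. The family $\{x\vee y\mid x\in F\}$ is the right choice, and the tools from your Step 3 finish the argument with no elementwise work.

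Assume $\bigcap_{x\in F}\mathfrak{o}(x)\subseteq\mathfrak{o}(y)$. Using $\mathfrak{o}(x\vee y)=\mathfrak{o}(x)\vee\mathfrak{o}(y)$ and coframe distributivity in $\ca S L$, you get
\[
\bigcap_{x\in F}\mathfrak{o}(x\vee y)=\Big(\bigcap_{x\in F}\mathfrak{o}(x)\Big)\vee\mathfrak{o}(y)=\mathfrak{o}(y).
\]
Let $m=\bigwedge_{x\in F}(x\vee y)$. Clearly $y\le m$. Since $m\le x\vee y$ for every $x\in F$, you have $\mathfrak{o}(m)\subseteq\bigcap_{x\in F}\mathfrak{o}(x\vee y)=\mathfrak{o}(y)$. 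This gives $m\le y$, because $\mathfrak{o}$ reflects order: if $\mathfrak{o}(a)\subseteq\mathfrak{o}(b)$, then $a\to b\in\mathfrak{o}(a)\subseteq\mathfrak{o}(b)$, so $a\to b=b\to(a\to b)=1$.

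Hence $m=y$ and $\bigcap_{x\in F}\mathfrak{o}(x\vee y)=\mathfrak{o}(m)$, so the family is strongly exact by your key fact. The family lies in $F$ because $F$ is an up-set, and therefore $y=m\in F$ because $F$ is strongly exact. With this paragraph added, your proof is complete and self-contained, whereas the paper only cites the result.
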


\begin{example}\label{e: kernel of an open sublocale}
For every $a\in L$,
\begin{itemize}
    \item $\mathit{ker}_L(\neg e^{\ca S}_L(a))=\up a$.
    \item $\mathit{ker}_L(e^{\ca S}_L(a))=\{x\in L\mid x\vee a=1\}$.
\end{itemize}    
\end{example}
The following is Theorem 6.6 in \cite{JS25}.
\begin{lemma}\label{l: ker of SbL is fe}
    For a frame $L$, $\mathit{ker}_L[\ca F L]=\fe(L)$.
\end{lemma}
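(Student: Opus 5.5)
The plan is to prove the two inclusions separately. Both directions rest on the minimality characterizations already available: Lemma~\ref{l: fe is smallest sl containing the principals} for $\fe(L)\subseteq\mf{Filt}(L)$ and Lemma~\ref{l: funayama smallest subloc} for $\ca F L\subseteq \ca S L$. These are combined with the fact from Lemma~\ref{l: the properties of ker} that $\mathit{ker}_L$ is a morphism in $\bd{CoLoc}$.

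For the inclusion $\mathit{ker}_L[\ca F L]\subseteq \fe(L)$, I would consider the preimage $\mathit{ker}_L^{-1}[\fe(L)]\subseteq \ca S L$. I will use the order-dual of the standard localic fact that preimages of sublocales under localic maps are sublocales. From it, the preimage of a subcolocale under a $\bd{CoLoc}$-morphism is a subcolocale, so $\mathit{ker}_L^{-1}[\fe(L)]$ is a subcolocale of $\ca S L$. By Lemma~\ref{l: funayama smallest subloc}, it then suffices to check that it contains every closed sublocale $e^{\ca S}_L(a)$. By Example~\ref{e: kernel of an open sublocale}, $\mathit{ker}_L(e^{\ca S}_L(a))=\{x\mid x\vee a=1\}$, and this filter is exact. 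Indeed, if $\bwe_i x_i$ is an exact meet with $x_i\vee a=1$ for all $i$, then
\[
(\bwe_i x_i)\vee a=\bwe_i(x_i\vee a)=1.
\]
Hence $\ca F L\subseteq \mathit{ker}_L^{-1}[\fe(L)]$, which is the desired inclusion.

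For the reverse inclusion $\fe(L)\subseteq\mathit{ker}_L[\ca F L]$, I will use the dual of the fact that images of sublocales under localic maps are sublocales. This shows that $\mathit{ker}_L[\ca F L]$ is a subcolocale of $\mf{Filt}(L)$. By Lemma~\ref{l: fe is smallest sl containing the principals}, it then suffices that every principal filter $\up a$ lies in this image. By Example~\ref{e: kernel of an open sublocale}, $\up a=\mathit{ker}_L(\neg e^{\ca S}_L(a))$. Moreover, $\neg e^{\ca S}_L(a)=e^{\ca S}_L(0)\wedge\neg e^{\ca S}_L(a)$, because $e^{\ca S}_L(0)=\up 0=L$ is the top sublocale. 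So $\neg e^{\ca S}_L(a)$ is one of the generators of $\ca F L$ in the description recalled from \cite{johnstone82}.

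The main obstacle is the transfer of the two localic facts (images and preimages of sublocales) to the dual setting of $\bd{CoLoc}$ and subcolocales. I expect this to be purely formal by order-duality: a $\bd{CoLoc}$-morphism is a left adjoint of a coframe map, and subcolocales are closed under joins and under the co-Heyting operations. Still, one must check carefully that the conventions match, in particular the reverse-inclusion order on $\mf{Filt}(L)$ and the ordering on $\ca S L$ used in Lemma~\ref{l: funayama smallest subloc}. If the image statement proves awkward, a fallback is to verify directly that $\mathit{ker}_L[\ca F L]$ is closed under the relevant operations. For this I would use that $\mathit{ker}_L$ preserves the joins that are meets of filters, together with the adjunction with its right adjoint.
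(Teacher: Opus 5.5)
The paper gives no proof of this lemma; it imports it from the literature. Your reverse inclusion $\fe(L)\subseteq\mathit{ker}_L[\ca F L]$ is correct. Images of subcolocales under $\bd{CoLoc}$-morphisms are subcolocales, being the order-dual of the true fact for localic images. Moreover $\up a=\mathit{ker}_L(\neg e^{\ca S}_L(a))$ with $\neg e^{\ca S}_L(a)\in\ca F L$. This is exactly the argument the paper uses in Lemma~\ref{l: assignment from skula to raney is well defined}.

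The first inclusion, however, rests on a false lemma. The set-theoretic preimage of a sublocale under a localic map is closed under meets but is in general not a sublocale. Dually, $\mathit{ker}_L^{-1}[\fe(L)]$ need not be a subcolocale of $\ca S L$, and here it genuinely fails.

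Take the chain $L=\{1=t_0>t_1>t_2>\cdots>0\}$ with $0=\bwe_n t_n$. In a chain $a\to s\in\{s,1\}$, so sublocales are just the meet-closed subsets containing $1$. All meets in a chain are exact, so $\fe(L)$ consists of the principal filters. Moreover $\mathit{ker}_L(S)=\{x\mid s<x\text{ for all }s\in S\setminus\{1\}\}$.

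Now $S=\{1,t_1,0\}$ is a sublocale with $\mathit{ker}_L(S)=\{1\}\in\fe(L)$. Let $T=e^{\ca S}_L(t_1)=\{1,t_1\}$. Then the co-Heyting difference in $\ca S L$ is $S\smallsetminus T=\{1,0\}$. Indeed, $\{1,0\}\vee T=S$. Also, any sublocale $Q$ with $0\in Q\vee T$ must contain $0$: otherwise $Q$ contains infinitely many $t_n$ and is meet-closed, so contains $0$ anyway. But $\mathit{ker}_L(\{1,0\})=L\setminus\{0\}$ is not exact. So the minimality argument via Lemma~\ref{l: funayama smallest subloc} cannot be run on the preimage. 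Checking only the closed sublocales $e^{\ca S}_L(a)$ is therefore not enough.

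The repair is direct. $\ca F L$ consists of joins of locally closed sublocales $e^{\ca S}_L(a)\cap\neg e^{\ca S}_L(b)$. The map $\mathit{ker}_L$ preserves arbitrary joins, and joins in $\mf{Filt}(L)$ are intersections. The family $\fe(L)$ is closed under intersections. So it suffices to treat these generators. We have $e^{\ca S}_L(a)\cap\neg e^{\ca S}_L(b)\subseteq\neg e^{\ca S}_L(x)$ iff $e^{\ca S}_L(a\vee x)\subseteq e^{\ca S}_L(b)$ iff $b\le a\vee x$. Hence $\mathit{ker}_L(e^{\ca S}_L(a)\cap\neg e^{\ca S}_L(b))=\{x\in L\mid b\le a\vee x\}$. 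This filter is exact by your own computation: if $\bwe_i x_i$ is exact and $b\le a\vee x_i$ for all $i$, then $b\le\bwe_i(a\vee x_i)=a\vee\bwe_i x_i$.
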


We call \emph{locally exact} those frame maps $f:L\to M$ such that they extend to maps of complete Boolean algebras $\ca F f:\ca F L\to \ca F M$. These maps were recently characterized explicitly in \cite{arrieta26}. However, as in this work we do not need their explicit description, we take the property of extending to the Funayama construction as a definition. We call $\bd{Frm}_{\ca{LE}}$ the category of frames and locally exact frame maps. The following is observed, for example, in Example 4.1 in \cite{arrieta26}). We give a proof based on our definitions.
\begin{lemma}\label{l: fe is subcoframe of F}
Every locally exact map is exact.
\end{lemma}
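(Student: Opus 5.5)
Let $f:L\to M$ be locally exact, so there is a complete Boolean algebra map $\ca F f:\ca F L\to \ca F M$ with $\ca F f\circ e^{\ca F}_L=e^{\ca F}_M\circ f$. The goal is to show that $f$ extends to a coframe map $\fe(f):\fe(L)\to\fe(M)$; by the characterization recalled above, this is equivalent to exactness. The plan is to transport $\ca F f$ along the kernel maps, using Lemma \ref{l: ker of SbL is fe}: $\mathit{ker}_L$ restricts to a surjection $\ca F L\to \fe(L)$, and likewise for $M$.

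First I will work with the realization of $\ca F L$ as a subcolocale of $(\ca SL,\subseteq)$. In this realization, elements are joins of $e^{\ca S}_L(a)\wedge\neg e^{\ca S}_L(b)$, and $\ca F f$ agrees with $\ca S f$ on the generators $e^{\ca S}_L(a)$ and $\neg e^{\ca S}_L(a)$. Since $\ca F f$ preserves all joins and meets, it coincides with the restriction of $\ca S f$ to $\ca F L$.

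Next, I will check the square $\mathit{ker}_M\circ \ca S f = {\uparrow}f[-]\circ \mathit{ker}_L$ in $\bd{CoLoc}$ at the level of left adjoints. Here ${\uparrow}f[-]$ sends a filter $F$ to the filter generated by $f[F]$, and it is the left adjoint of $f^{-1}$. I will compare right adjoints instead: the preimage $f^{-1}$ on filters versus $\ca S f^*$ composed with the right adjoint of $\mathit{ker}$. By Example \ref{e: kernel of an open sublocale} the two sides agree on open and closed sublocales. Both composites preserve the relevant joins, and by Lemma \ref{l: how L generates SL} every sublocale is a meet of joins of such generators, so the equality extends to all of $\ca SL$. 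I expect this step to be the main obstacle: one must be careful about which of these maps preserve meets versus joins under the reverse-inclusion orders. A fallback is to prove only the inequality $f^{-1}(\mathit{ker}_M(T))\supseteq \mathit{ker}_L(\ca S f^*T)$, which is the part actually needed below.

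Finally, for an exact filter $G\in\fe(M)$, Lemma \ref{l: ker of SbL is fe} gives $G=\mathit{ker}_M(T)$ with $T\in\ca F M$. I will compute $f^{-1}(G)$ as $\mathit{ker}_L(S)$ for a suitable $S\in\ca F L$, namely the image of $T$ under the left adjoint of $\ca F f$. This left adjoint exists because $\ca F f$ preserves arbitrary meets, and it lands in $\ca F L$. Concretely, $x\in f^{-1}(G)$ iff $T\subseteq \neg e^{\ca S}_M(f(x))=\ca F f(\neg e^{\ca S}_L(x))$, iff $\ca F f^*(T)\subseteq \neg e^{\ca S}_L(x)$, iff $x\in\mathit{ker}_L(\ca F f^*T)$. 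This direct computation may render the previous step unnecessary. Applying Lemma \ref{l: ker of SbL is fe} again gives $f^{-1}(G)\in\fe(L)$, so $f$ is exact.
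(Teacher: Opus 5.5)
Your final paragraph is a complete and correct proof, and it takes a different route from the paper.

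\textbf{The paper's route.} The paper argues structurally. $\mathit{ker}_L$ restricts to a surjection $\ca FL\to\fe(L)$ in $\bd{CoLoc}$, so its right adjoint embeds $\fe(L)$ into $\ca FL$ as a subcoframe compatibly with $L$ (it sends ${\uparrow}a$ to $\neg e^{\ca S}_L(a)$). Hence $\ca Ff$ restricts to a coframe map $\fe(L)\to\fe(M)$ extending $f$, and exactness follows from the cited characterization of exact maps as those extending to $\fe(-)$.

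\textbf{Your route.} You work with the left adjoint $\ca Ff^*$ and check the preimage definition of exactness directly. You prove $f^{-1}(\mathit{ker}_M(T))=\mathit{ker}_L(\ca Ff^*(T))$ for $T\in\ca FM$ and apply Lemma~\ref{l: ker of SbL is fe} at both ends. This is the computation of Proposition~\ref{p: ker is a natural transformation} with $\ca Ff^*$ in place of $\ca Sf^*$. It isolates what local exactness is for: $\ca Ff^*$, unlike $\ca Sf^*$, keeps you inside $\ca FL$.

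\textbf{Comparison.} Your argument needs only that $\ca Ff$ preserves arbitrary meets and complements. It also avoids a step the paper leaves implicit, namely that $\ca Ff$ carries the copy of $\fe(L)$ into that of $\fe(M)$; this holds because these copies consist of the $\ca F$-meets of open sublocales. The paper's route, in exchange, exhibits $\fe(f)$ explicitly as a restriction of $\ca Ff$.

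\textbf{Intermediate steps to drop.} You should delete your second and third paragraphs, because they contain false claims.
\begin{itemize}
\item Agreement of $\ca Ff$ and $\ca Sf$ on generators does not propagate, since $\ca Sf$ preserves only finite joins. In fact $\ca Ff\neq\ca Sf|_{\ca FL}$ in general. Take the locally exact inclusion $f\colon\Om\mathbb R\hookrightarrow\ca P\mathbb R$ (both spaces are $T_D$). Here $\ca Sf$ is preimage under the localic map $\mathrm{int}\colon\ca P\mathbb R\to\Om\mathbb R$, and $\emptyset$ lies in the sublocale induced by the dense set $\mathbb Q$. So $\ca Sf$ sends that sublocale to all of $\ca P\mathbb R$, whereas $\ca Ff$ sends it to $e^{\ca S}_{\ca P\mathbb R}(\mathbb R\setminus\mathbb Q)$.
\item The square $\mathit{ker}_M\circ\ca Sf={\uparrow}f[-]\circ\mathit{ker}_L$ also fails. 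Take $f\colon\{0<m<1\}\to\bd 2\times\bd 2$ with $m\mapsto(1,0)$, evaluated at $e^{\ca S}_L(m)$: the left side gives $\{(0,1),(1,1)\}$ and the right side gives $\{(1,1)\}$.
\end{itemize}
Neither claim is used in your final argument. Its only input about generators is $\ca Ff(\neg e^{\ca S}_L(x))=\neg e^{\ca S}_M(f(x))$, which follows from $\ca Ff$ preserving complements.
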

\begin{proof}
By Lemma \ref{l: ker of SbL is fe}, the restriction and co-restriction of $\mathit{ker}_L$, $\mathit{ker}'_L{}:\ca FL \to \fe(L)$, is a surjective map in $\bd{CoLoc}$. This means that its right adjoint is a subcoframe embedding of $\fe(L)$ into $\ca F$ (restricting as the identity on $L$), and so any frame map extending to $\ca F(-)$ also extends to $\fe(-)$. 
\end{proof}

Finally, we argue that $T_D$-spaces, first introduced in \cite{Aull62}, are topological counterparts of essential extensions of frames. Concretely, a space is
$T_D$ if and only if every point is the intersection of an open and a closed set; alternatively a space is $T_D$ when the topology on $X$ generated by the opens and their complement, called the \emph{Skula topology} and denoted by $\ca{SK}X$, is discrete. For a topological space $X$, $\ca U X$ will denote the collection of saturated sets, i. e. the intersections of open sets, ordered under set inclusion\footnote{The notation is motivated by the fact that saturated sets coincide with upper sets in the specialization preorder}.

\begin{theorem}\label{t: many chactacterizations of td spaces}
    The following are equivalent for a space $X$.
    \begin{enumerate}
        \item The space $X$ is $T_D$.
        \item The embedding $\Om X\subseteq \ca P X$ is, up to isomorphism, the Funayama embedding $e^{\ca F}_{\Om X}:\Om X\to \ca F \Om X$;

        \item The embedding $\Om X\subseteq \ca U X$ is, up to isomorphism, $\up:\Om X\to\fe(\Om X)$;

    \end{enumerate}
\end{theorem}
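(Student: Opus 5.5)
The plan is to route every characterization through a single structure: the Boolean algebra $\ca P X$ together with the frame of saturated sets $\ca U X$. The key facts I would use are the following. First, $\Om X\subseteq \ca P X$ is a frame embedding into a complete Boolean algebra. Second, $\ca P X$ is generated by $\Om X$ under complements and arbitrary joins if and only if every singleton is a union of sets of the form $U\cap V^c$ with $U,V$ open. For a single point, that union condition reduces to $\{x\}=U\cap V^c$ for some opens $U,V$, which is exactly the $T_D$ condition. So the proof will be organised as $(1)\Leftrightarrow(2)$ and $(1)\Leftrightarrow(3)$.

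\textbf{$(1)\Rightarrow(2)$.} I would use the description of $\ca F L$ as the join-closure in $\ca S L$ of the elements $e^{\ca S}_L(a)\wedge\neg e^{\ca S}_L(b)$, or equivalently the MacNeille completion of the Boolean envelope.
\begin{itemize}
\item For a $T_D$ space, every subset is a union of sets $U\cap V^c$, since each singleton is one. Hence $\ca P X$ is join-generated by the Boolean envelope $B$ of $\Om X$ inside $\ca P X$.
\item $B$ is also meet-generated, by complementation.
\item Therefore $\ca P X$ is the MacNeille completion of $B$, which gives $\Om X\subseteq\ca P X$ equal to $e^{\ca F}_{\Om X}$ up to isomorphism.
\end{itemize}

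\textbf{$(2)\Rightarrow(1)$.} In the Funayama completion every element is a join of elements $a\wedge\neg b$ with $a,b\in L$.
\begin{itemize}
\item Transport this along the isomorphism with $\ca P X$, which restricts to the identity on $\Om X$. Then each singleton $\{x\}$ is a union of sets $U\cap V^c$.
\item Any nonempty such set contained in $\{x\}$ equals $\{x\}$, so $\{x\}=U\cap V^c$ and $X$ is $T_D$.
\item The one subtle point is that "up to isomorphism" must mean an isomorphism commuting with the embeddings. The isomorphism then identifies the elements $a\wedge\neg b$ with the sets $U\cap V^c$, because it is a Boolean isomorphism fixing $\Om X$.
\end{itemize}

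\textbf{$(1)\Leftrightarrow(3)$.} I would follow the same pattern, with exact filters in place of Funayama.
\begin{itemize}
\item Use Lemma \ref{l: fe is smallest sl containing the principals}, which says $\fe(\Om X)$ is the smallest subcolocale of $\mf{Filt}(\Om X)$ containing the principal filters.
\item Map $\ca U X$ into $\mf{Filt}(\Om X)$ by $S\mapsto\{U\in\Om X\mid S\subseteq U\}$. This is an injective coframe-type map whose image contains the principal filters. I would check that it lies in $\bd{CoLoc}$, i.e. that its image is a subcolocale.
\item Alternatively, I would pass through the previous equivalence. Use $\mathit{ker}$ with Lemma \ref{l: ker of SbL is fe}: $\mathit{ker}'$ sends $\ca F\Om X\cong\ca P X$ onto $\fe(\Om X)$, and under the identification it sends a subset to the filter of opens containing it. That image is exactly $\ca U X$, because two subsets have the same open neighbourhoods iff they have the same saturation. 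This yields $(1)\Rightarrow(3)$.
\item For $(3)\Rightarrow(1)$, the saturated set ${\uparrow}x$ is an exact-filter element, hence a meet of opens. Exactness then forces ${\uparrow}x\setminus\{x\}$ to be saturated and "open relative to ${\uparrow}x$". Concretely, the saturated set ${\uparrow}x\setminus\{x\}$ must be a meet of opens compatible with exact meets. I would argue: $\fe$ is the smallest subcolocale, so the copy of $\ca U X$ is generated by opens under meets and the colocale implications $U\setminus V$-type operations. Then ${\uparrow}x\setminus\{x\}$ arises from an open $W\ni x$ with ${\uparrow}x\cap W^c$ minus $\{x\}$ open, giving $\{x\}={\uparrow}x\cap W$ minus an open, which is the $T_D$ condition.
\end{itemize}

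The main obstacle is this last direction, $(3)\Rightarrow(1)$. There the isomorphism only knows saturated sets, so one has to extract a witness open set $V$ with $\{x\}=\overline{\{x\}}\cap V$, i.e. $\overline{\{x\}}\setminus\{x\}$ closed. I would try to show that the exactness of the meet $\bigwedge\{U\mid x\in U\}$ forces this. By Banaschewski–Pultr, $x$ is a $T_D$ point iff $\overline{\{x\}}^{c}$ is covered-prime. Non-$T_D$ at $x$ would give $\bigcup\{U\mid x\notin U,\ U\not\supseteq\ldots\}$ making the meet non-exact, contradicting the identification with principal elements ${\uparrow}x$.
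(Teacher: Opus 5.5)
Your proof of $(1)\Leftrightarrow(2)$ is sound. One direction uses join- and meet-density of the Boolean envelope in $\ca PX$; the other uses the normal form $\bigvee_i(a_i\wedge\neg b_i)$ of elements of $\ca F\Om X$. Your second route to $(1)\Rightarrow(3)$, through $\mathit{ker}$ and Lemma~\ref{l: ker of SbL is fe}, is also sound once you check that the isomorphism $\ca PX\cong\ca F\Om X$ sends $Y$ to $\bigvee_{x\in Y}b(x)$. This reduces to $\neg e^{\ca S}_{\Om X}(U)\cap e^{\ca S}_{\Om X}(V)=b(x)$ whenever $\{x\}=U\cap V^c$. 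Your first route cannot work on its own. The image $\ca U^*X$ of $\ca UX$ always contains $\fe(\Om X)$ (Example~\ref{e: the raney extension UX}), so minimality of $\fe$ only gives an inclusion that holds for every space, and $T_D$ is never used.

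The genuine gap is $(3)\Rightarrow(1)$, for which no valid argument is given. The step ``$\{x\}={\uparrow}x\cap W$ minus an open, which is the $T_D$ condition'' is false, because ${\uparrow}x$ is only saturated. For instance, take $\mathbb N\cup\{\infty\}$ with opens $\emptyset$ and $\{m\mid m\geq n\}\cup\{\infty\}$. This space is $T_0$ and ${\uparrow}\infty=\{\infty\}$, yet $\overline{\{\infty\}}\setminus\{\infty\}=\mathbb N$ is not closed.

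Your last paragraph also targets the wrong object. What (3) provides is not exactness of the meet $\bigwedge\{U\mid x\in U\}$, but that the neighbourhood filter $N(x)=\{U\in\Om X\mid x\in U\}$ is an exact \emph{filter}. To see this, note that $\ca UX$ and $\fe(\Om X)$ are both meet-generated by $\Om X$. So an isomorphism compatible with the embeddings sends $S$ to the smallest exact filter containing $N(S)=\{U\mid S\subseteq U\}$. Since $\fe(\Om X)\subseteq\ca U^*X$, injectivity then forces every $N(S)$ to be exact.

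The missing step is to apply closure of $N(x)$ under exact meets to the exact meet of Lemma~\ref{l: a particular exact meet}, taken at $p=X\setminus\overline{\{x\}}$:
\begin{itemize}
\item Every open $U\supsetneq p$ meets $\overline{\{x\}}$ and hence contains $x$. So $m=\bigwedge\{U\mid p\subsetneq U\}$ is an exact meet of members of $N(x)$, and therefore $x\in m$.
\item If $y\in\overline{\{x\}}\setminus\{x\}$, then $T_0$ gives $x\notin\overline{\{y\}}$. So $X\setminus\overline{\{y\}}$ is an open strictly above $p$ that omits $y$, and thus $y\notin m$.
\end{itemize}
Hence $\{x\}=m\cap\overline{\{x\}}$ is locally closed.

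This also shows that $T_0$ is indispensable in this direction, and your sketch never invokes it. The two-point indiscrete space satisfies (3), since $\Om X$, $\ca UX$ and $\fe(\Om X)$ are all $\bd 2$, but it is not $T_D$. So the equivalence with (3) must be read for $T_0$ spaces.
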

\begin{proof}
The equivalence between 1 and 2 is in 3.2 of \cite{ball19}, and that between 1 and 4 is in Theorem 5.7 of \cite{suarez25raney}.
\end{proof}

\subsection{Raney extensions}

In \cite{suarez25raney}, the category $\bd{Raney}$ of Raney extensions is introduced. A Raney extension can be identified, up to isomorphism, with a pair $(L,\ca F)$ where $L$ is a frame and $\ca F$ is a subcolocale of $\mathsf{Filt}(L)$ such that $\fe(L)\subseteq \ca F\subseteq \fse(L)$. These form the category $\bd{Raney}$, where a morphism $f:(L,\ca F)\to (M,\ca G)$ is a frame map $f:L\to M$ satisfying $f^{-1}(G)\in \ca F$ whenever $G\in \ca G$. The obvious forgetful functor $\Op_{\ca R}:\bd{Raney}\to \Frm$ is thus faithful, and so $(\Op_{\ca R},\bd{Raney})$ is a $\Frm$-concrete category. 
\begin{example}\label{e: re of strongly exact}
$(L,\fse(L))$ is a Raney extension for every frame $L$.
\end{example}
\begin{example}\label{e: re of exact}
 $(L,\fe(L))$ is a Raney extension for every frame $L$.
\end{example}

The assignment $L\mapsto \fse(L)$ from Example \ref{e: re of strongly exact} can be extended to a functor $\fse:\Frm\to \bd{Raney}$: using the characterization of strongly exact meets as those preserved by any frame morphism, it is easy to show that for every frame map preimages of strongly exact filters are strongly exact. The following is Theorem 6.3 in \cite{suarez25raney}.
\begin{lemma}\label{l: fse is left adjoint}
    The functor $\fse:\Frm\to \bf{Raney}$ is left adjoint to $\Or:\bd{Raney}\to \Frm$.
\end{lemma}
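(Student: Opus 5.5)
To prove that $\fse:\Frm\to\bd{Raney}$ is left adjoint to $\Or$, I will exhibit, for each frame $L$, a universal arrow from $L$ to $\Or$. The candidate unit is the identity $\id_L\colon L\to \Or(L,\fse(L))=L$, so the unit is invertible. By Lemma \ref{l: when does fc-functor have adjoint}\ref{l: when has right adjoint}, this will in fact make $(\fse,\id)$ a left FC-adjoint of $(\Or,\id)$. Universality then amounts to a single claim: for every Raney extension $(M,\ca G)$ and every frame map $f\colon L\to M$, the map $f$ is already a morphism $(L,\fse(L))\to(M,\ca G)$ in $\bd{Raney}$. Uniqueness of the factorization is automatic, since $\Or$ is faithful and the unit is the identity. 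Functoriality of $\fse$ on morphisms is the observation already made before the statement, and naturality follows from faithfulness.

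So the key step is the following: if $f\colon L\to M$ is any frame map and $G\in\ca G$, then $f^{-1}(G)\in\fse(L)$. Since $\ca G\subseteq\fse(M)$, the filter $G$ is strongly exact, and it suffices to show that preimages of strongly exact filters under arbitrary frame maps are strongly exact. Clearly $f^{-1}(G)$ is a filter. Now let $S\subseteq f^{-1}(G)$ be strongly exact in $L$. I will use the characterization recalled in the paper (\cite{Wilson1994TheAT}): strongly exact meets are exactly those preserved by every frame morphism. This gives $f(\bwe S)=\bwe f[S]$.

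It remains to check that $f[S]$ is strongly exact in $M$, so that $\bwe f[S]\in G$. For this I would argue directly from the definition. Suppose $y\in M$ satisfies $f(s)\to y=y$ for all $s\in S$. Pass to the right adjoint $f_*$ of $f$: the identity $f(s)\to y=y$ translates to $s\to f_*(y)=f_*(y)$, using the standard identity $f_*(f(a)\to y)=a\to f_*(y)$. Strong exactness of $S$ then yields $\bwe S\to f_*(y)=f_*(y)$, and hence $f_*(f(\bwe S)\to y)=f_*(y)$.

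This last step is the main obstacle. Equality after applying $f_*$ does not immediately give equality in $M$, because $f_*$ need not be injective. If the direct route fails, I would instead rely on the characterization via preserved meets. Compose $f$ with the frame map $M\to{\uparrow}y$ given by $m\mapsto m\vee y$, or with the closed-quotient map relevant to $y$, and use that strong exactness of a meet is detected by all frame maps. This should show that $f[S]$ is strongly exact. Then $\bwe S\in f^{-1}(G)$, which finishes the proof.
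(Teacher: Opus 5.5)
Your overall structure matches what the paper intends. The paper cites the adjunction from the literature and remarks that it rests on preimages of strongly exact filters under arbitrary frame maps being strongly exact, via the characterization of strongly exact meets as those preserved by every frame map. Your chain is the same: identity unit, lifting every frame map $f\colon L\to M$ to a morphism $(L,\fse(L))\to(M,\ca G)$, and reducing to $f(\bwe S)=\bwe f[S]\in G$ via strong exactness of $f[S]$.

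\textbf{The gap.} That last step is never actually proved. The $f_*$ computation stalls, as you note. The fallback you sketch also points at the wrong maps. A meet $\bwe T$ is preserved by all closed quotients $M\to\up y$, $m\mapsto m\vee y$, exactly when $\bwe(t\vee y)=\bwe T\vee y$ for all $y$, i.e.\ exactly when it is exact. Strongly exact meets form a strictly smaller class than exact ones. Since $G\in\ca G\subseteq\fse(M)$ is only guaranteed to be closed under strongly exact meets, showing that $f[S]$ is exact would not put $\bwe f[S]$ in $G$.

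\textbf{First fix.} Use the characterization you already quoted in both directions, tested against an arbitrary frame map. For every $g\colon M\to N$, the composite $gf$ is a frame map out of $L$, so
\[ g\bigl(\bwe f[S]\bigr)=g\bigl(f(\bwe S)\bigr)=(gf)(\bwe S)=\bwe gf[S]. \]
Hence $\bwe f[S]$ is preserved by every frame map out of $M$, so it is strongly exact. Therefore $f(\bwe S)=\bwe f[S]\in G$.

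\textbf{Second fix.} If you prefer to use only the forward direction together with the definition, argue with the right quotient:
\begin{itemize}
\item Given $y$ with $f(s)\to y=y$ for all $s\in S$, let $T=\bigcap_{s\in S}\neg e^{\ca S}_M(f(s))$. This sublocale contains $y$.
\item Let $\nu\colon M\to T$, $m\mapsto\bwe\{t\in T\mid m\le t\}$, be the associated frame surjection. Then $\nu f(s)=1$ for all $s\in S$.
\item Since $\nu f$ preserves the strongly exact meet $\bwe S$, we get $\nu f(\bwe S)=1$.
\item $\nu(a)=1$ means $T\subseteq\neg e^{\ca S}_M(a)$. Applied to $a=f(\bwe S)$, this gives $f(\bwe S)\to y=y$.
\end{itemize}
With either completion in place, the rest of your argument goes through.
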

On the other hand, the assignment in Example \ref{e: re of exact} is not functorial (the maps that lift coincide with the exact ones, as seen in the preliminaries). We now describe the natural adjunction between Raney extensions and spaces. Because the embedding $e_X:\Om X\subseteq \ca U X$ is a map of distributive lattices with a coframe as the codomain, by the universal property of the ideal completion there is a coframe map $\overline{e_X}:\mf{Filt}(\Om X)\to \ca U X$, which then has a left adjoint $\overline{e_X}^*:\ca U X\to \mf{Filt}(\Om X)$, whose image we call $\ca U^* X$. All elements of $\ca U X$ are fixpoints of this adjunction, and so we get an order-isomorphism $\overline{e_X}|_{\ca U^* X} \overline{e_X}^*: \ca U X\to \ca U^*X$, explicitly given by
\[
S\mapsto \{U\in \Om X\mid S\subseteq U\}.
\]

\begin{example}\label{e: the raney extension UX}
 The pair $(\Om X,\ca U^*X)$ is a Raney extension for every space $X$. 
\end{example}
There is a contravariant functor $\ca U:\Topp\to \bd{Raney}$ mapping each space $X$ to the Raney extension $(\Om X,\ca U^*X)$ from Example \ref{e: the raney extension UX}. The functor $\ca U$ has an adjoint $\pt_{\ca R}:\bd{Raney}\to \Topp$ mapping a Raney extension $(L,\ca F)$ to $\bd{Raney}((L,\ca F),\bd 2_{\ca R})$, equipped with the subspace topology inherited from $\pt(L)$, that is, the one whose opens are sets of the form 
\[
\{f\in \bd{Raney}((L,\ca F),\bd 2_{\ca R})\mid \Op_{\ca S}f(a)=1\}
\]
for some $a\in L$.

\begin{theorem}
There is a dual adjunction $\ca{U}:\bd{Top}\leftrightarrows \bd{Raney}:\pt_{\ca R}$. The fixpoints in $\bd{Top}$ are the $T_0$-spaces.     
\end{theorem}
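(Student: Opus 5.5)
The plan is to obtain the adjunction from Theorem \ref{t: natduality}, applied to the dualizing object $(\bd 2_{\ca R},\mathbb S,\iota)$. Here $\bd 2_{\ca R}=(\bd 2,\mf{Filt}(\bd 2))$; note that every filter of $\bd 2$ is principal, hence exact. The concrete functors are $\Op_{\ca R}$ followed by the underlying-set functor on $\bd{Raney}$, and the usual forgetful functor on $\Topp$.

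The first step is to check condition \eqref{i: cartlift1} for a Raney extension $(L,\ca F)$. Let $P=\bd{Raney}((L,\ca F),\bd 2_{\ca R})$, which we regard as a subset of $\pt(L)$. The cartesian lift of the evaluation family $\{\nu_a\}$ is $P$ with the initial topology, generated by the sets $\{f\mid f(a)=1\}$. This is exactly the topology of $\pt_{\ca R}$ described above, since the preimages of the open point of $\mathbb S$ are these sets.

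The second step is to check condition \eqref{i: cartlift2} for a space $X$. We claim that $\ca U X=(\Om X,\ca U^*X)$ is the cartesian lift of the family $\chi_x\colon \Topp(X,\mathbb S)\cong\Om X\to\bd 2$. Under the identification $\Topp(X,\mathbb S)\cong\Om X$, the map $\chi_x$ is the frame map $U\mapsto[x\in U]$. We verify it is a Raney morphism $\ca U X\to \bd 2_{\ca R}$: the preimage of the filter $\{1\}$ is the filter of opens containing ${\up}x$, which lies in $\ca U^*X$. The preimage of $\bd 2$ is all of $\Om X$, and this is the image of $\emptyset$.

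For cartesianness, take a Raney extension $(M,\ca G)$ and a frame map $h\colon M\to\Om X$ such that every $\chi_x h$ is a Raney map. We must show that $h^{-1}(\overline{e_X}^*(S))\in\ca G$ for every saturated $S$. Write $S=\bigcup_{x\in S}{\up}x$. The order-isomorphism $\ca U X\cong\ca U^*X$ sends unions to meets in $\mf{Filt}$, that is, to intersections of filters. Hence $\overline{e_X}^*(S)=\bigcap_{x\in S}\{U\mid x\in U\}$. Taking preimages gives $h^{-1}(\overline{e_X}^*(S))=\bigcap_x(\chi_x h)^{-1}(\{1\})$, an intersection of members of $\ca G$. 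Intersection of filters is the join in $\mf{Filt}$ under reverse inclusion, and $\ca G$ is a subcolocale, so it is closed under these joins. Therefore the preimage lies in $\ca G$, and the lift exists.

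Theorem \ref{t: natduality} now yields the natural dual adjunction with $F=\pt_{\ca R}$ and $G=\ca U$. The functor actions agree with those stated, by Remark \ref{r: morphism-lifting}.

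For the fixpoints, the unit at $X$ is $\sigma_X\colon X\to\pt_{\ca R}\ca U X$, $x\mapsto\chi_x$. Every point of $\pt_{\ca R}\ca U X$ is a Raney map $p\colon(\Om X,\ca U^*X)\to\bd 2_{\ca R}$. Its kernel filter $p^{-1}(1)$ lies in $\ca U^*X$, so $p^{-1}(1)=\{U\mid S\subseteq U\}$ for a saturated set $S$. Since $p$ is a frame map, this filter is completely prime. We then argue that $S$ is nonempty and that some $x\in S$ has ${\up}x=S$. If instead $S\neq{\up}x$ for every $x\in S$, then for each $x\in S$ there is an open $V_x\ni x$ with $S\not\subseteq V_x$. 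Then $S\subseteq\bigcup_x V_x$, while no $V_x$ contains $S$, contradicting complete primeness. Also $S\neq\emptyset$, since $\emptyset\notin p^{-1}(1)$. Hence $p=\chi_x$, so $\sigma_X$ is always surjective. It is initial because the topology on $\pt_{\ca R}\ca U X$ is induced from $\pt\Om X$. It is therefore a homeomorphism exactly when it is injective, which is exactly when $X$ is $T_0$.

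The main obstacle is the second step, specifically the identification of meets in $\ca U^*X$ with intersections and the closure of $\ca G$ under them. The surjectivity argument for the unit is the other delicate point.
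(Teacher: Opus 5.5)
Your proof is correct and follows the same route as the paper's proof of Theorem \ref{t: duality for raney extensions}. The adjunction comes from the Porst--Tholen dualizing object $(\bd 2_{\ca R},\mathbb S)$, with $(\Om X,\ca U^*X)$ shown to be the cartesian lift of the characteristic functions via $\overline{e_X}^*(S)=\bigcap_{x\in S}\{U\mid x\in U\}$ and the closure of the subcolocale $\ca G$ under intersections. Two small tidy-ups: $h$ is a priori only a function, and it is a frame map because every $\chi_x h$ is, as the paper notes; also, intersections are joins rather than meets in $\mf{Filt}$ under reverse inclusion. Your complete-primeness argument shows every Raney point of $\ca U X$ equals some $\chi_x$, so $\sigma_X$ is a surjective initial map and hence a homeomorphism exactly when $X$ is $T_0$; this supplies the fixpoint claim, which the paper states but does not argue in that proof.
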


\subsection{Skula extensions}

In \cite{manuell15}, another category giving a pointfree description of $T_0$-spaces is introduced. This is the category of \emph{strictly zero-dimensional biframes}. We work with an equivalent category, whose objects we call \emph{Skula extensions}. The category of Skula extensions bears many analogies with the category of Raney extensions. A \emph{Skula extension} is a pair $(L,\ca D)$, where $L$ is a frame, and $\ca D$ is a subcolocale of $\ca S L$ such that $\ca F L\subseteq \ca D$. Skula extensions form the category $\bd{Skula}$, where a morphism $f:(L,\ca D)\to (M,\ca E)$ is a frame map $f:L\to M$ such that $\ca S f^*(E)\in \ca D$ whenever $E\in \ca E$. This is equivalent to $f$ extending to a coframe map $\ca D\to \ca E$, but this characterization is less useful for our ends. We immediately obtain the following examples.

\begin{example}\label{e: frame of congruences}
    $(L,\ca S L)$ is a Skula extension for every frame $L$.
\end{example}
\begin{example}\label{e: funayama}
   $(L,\ca F L)$ is a Skula extension for every frame $L$.
\end{example}
The assignment in Example \ref{e: frame of congruences} can be extended to a functor $\ca S:\Frm\to \bd{Skula}$. The following is Proposition 3.4 in \cite{manuell15}.

\begin{lemma}\label{l: congruence frame gives left adjoint}
    The functor $\ca S:\Frm\to \bd{Skula}$ is left adjoint to $\Op_{\ca S}:\bd{Skula}\to \Frm$.
\end{lemma}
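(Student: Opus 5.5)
The plan is to exhibit the adjunction through universal arrows whose components are identities, using that $\ca S L$ is the largest possible choice of second component. In fact we aim for slightly more than an ordinary adjunction, namely an FC-adjunction $(\ca S,\id)\dashv(\Op_{\ca S},\id)$. The key steps are checking that $\ca S$ is well defined and functorial, then checking the universal property at each frame; neither involves any computation with sublocales beyond existence.

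First, $\ca S$ is well defined on objects and morphisms. For a frame $L$, the pair $(L,\ca S L)$ is a Skula extension: $\ca S L$ is trivially a subcolocale of itself, and $\ca F L\subseteq \ca S L$ by the definition of $\ca F L$ as a subcolocale of $\ca S L$. This is Example \ref{e: frame of congruences}.

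For a frame map $f\colon L\to M$, we set $\ca S(f)=f$, which must be a morphism $(L,\ca S L)\to(M,\ca S M)$. The left adjoint $\ca S f^*$ exists because $\ca S f$ is a coframe map, so it preserves arbitrary meets between complete lattices. The required condition is that $\ca S f^*(E)\in\ca S L$ for $E\in\ca S M$, which holds automatically since $\ca S f^*$ lands in $\ca S L$. Functoriality is inherited from $\Frm$, and by construction $\Op_{\ca S}\ca S=\id_{\Frm}$.

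Next, take the unit $\eta_L=\id_L\colon L\to\Op_{\ca S}(L,\ca S L)=L$. We verify that it is universal. Let $(M,\ca E)$ be a Skula extension and $g\colon L\to M$ a frame map. We need a unique Skula morphism $\overline g\colon(L,\ca S L)\to(M,\ca E)$ with $\Op_{\ca S}(\overline g)\circ\eta_L=g$.

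Uniqueness follows from faithfulness of $\Op_{\ca S}$. For existence, $g$ itself is a Skula morphism, since for every $E\in\ca E$ the sublocale $\ca S g^*(E)$ lies in $\ca S L$, the whole coframe of sublocales. Hence $\eta$ is a universal arrow at every $L$, and $\ca S\dashv\Op_{\ca S}$.

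Finally, $\Op_{\ca S}(\eta)$ is the identity and hence invertible. By Lemma \ref{l: when does fc-functor have adjoint}(a), the adjunction is therefore an FC-adjunction. This is the form in which the result will later be used.

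I expect no real obstacle here. The only points needing care are the existence of $\ca S f^*$, which is settled above, and making sure the morphism condition in $\bd{Skula}$ is the preimage-type condition stated in the paper; both are immediate with the definitions as given.
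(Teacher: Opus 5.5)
Your proof is correct. The paper does not prove this lemma itself. It cites Proposition 3.4 of \cite{manuell15}, which is stated for strictly zero-dimensional biframes. In that setting the natural argument uses the universal property of the congruence frame recalled in the preliminaries: a frame map $g\colon L\to M$ such that every element of $g[L]$ is complemented factors uniquely through $e^{\ca S}_L$. This produces the required biframe map out of $(\ca S L)^{\mathsf{op}}$.

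You instead check the adjunction directly in the Skula-extension language, where it is nearly tautological. Since $\ca S L$ is the largest admissible second component, the condition $\ca S g^*(E)\in \ca S L$ is vacuous. So $\id_L$ is a universal arrow, and you even get an FC-adjunction with identity unit.

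Your route gives a self-contained argument from the paper's definitions. The cost is that the real content has moved into those definitions. It sits in the existence of the coframe maps $\ca S f$ (again Joyal's universal property), which are needed to state the morphism condition. It also sits in the equivalence between $\bd{Skula}$ and Manuell's category of biframes, which the paper asserts without proof and which is what identifies your statement with the cited one.

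There is one small slip. The unit lives in $\Frm$, so Lemma~\ref{l: when does fc-functor have adjoint}(a) asks for $\eta$ itself, not $\Op_{\ca S}(\eta)$, to be invertible. Since $\eta$ is an identity, nothing changes.
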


On the other hand, the assignment in Example \ref{e: funayama} is not functorial; the maps that do lift are exactly the locally exact maps, as seen in the preliminaries.

For any topological space $X$, we let $\ca{SKC}X$ be the coframe of Skula-closed sets. The frame embedding $\Om X\subseteq \ca{SKC}X^{op}$ provides complements to all elements of $\Om X$, and so by the universal property of $\ca S L$ there is a coframe map $\ca S \Om X\to \ca{SKC}X$ extending the identity on $\Om X$, which is additionally a surjection by Lemma \ref{l: how L generates SL}. We call the corresponding subcolocale inclusion $\ca{SKC}^*X\subseteq \ca S \Om X$. Explicitly,
\[
\ca{SKC}^*X=\{\bigvee \{b(x)\mid x\in U\}\mid U\in \ca{SKC}X\}.
\]
Since $\ca{SKC}^*X\subseteq \ca S \Om X$ is an embedding, this collection is isomorphic to the collection $\ca{SKC}X$ of Skula-closed sets.
\begin{example}\label{e: skula of a space}
For a space $X$, the pair $(\Om X,\ca{SKC}^*X)$ is a Skula extension. To see this, we check that $\ca F \Om X\subseteq \ca{SKC}^*X$. By Lemma \ref{l: funayama smallest subloc}, for this it suffices to show that $e^{\ca S}_{\Om X}(U)\in \ca{SKC}^*X$ for all $U\in \Om X$. But, as $U$ is open, the sublocale $e^{\ca S}_{\Om X}(U)$ is induced (see Lemma \ref{l: open and closed means induced}), and the desired result follows.
\end{example}

What follows is in \cite{manuell15}, in particular the main duality result is Proposition 3.33. The assignment $X\mapsto (\Om X,\ca{SKC}^*X)$ is the object part of a contravariant functor $\ca{SK}:\Topp\to \bd{Skula}$. The adjoint $\pt_{\ca S}:\bd{Skula}\to \Topp$ maps a Skula extension $(L,\ca D)$ to $\bd{Skula}((L,\ca D),\bd 2_{\ca S})$, equipped with the subspace topology inherited from $\pt(L)$.

\begin{theorem}
There is a dual adjunction $\ca{SK}:\bd{Top}\leftrightarrows \bd{Skula}:\pt_{\ca S}$. The fixpoints in $\bd{Top}$ are the $T_0$-spaces.     
\end{theorem}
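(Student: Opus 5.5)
This theorem, attributed to \cite{manuell15}, has two parts: the existence of the dual adjunction $\ca{SK}\dashv \pt_{\ca S}$, and the identification of its fixpoints in $\Topp$ as the $T_0$-spaces. I would obtain both from Theorem~\ref{t: natduality}, using the dualizing pair $(\bd 2_{\ca S},\mathbb S)$. Here $\bd 2_{\ca S}=(\bd 2,\ca S\bd 2)$, and $\ca S\bd 2=\ca F\bd 2$ is the Boolean algebra of the two closed/open sublocales of $\bd 2$.

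\textbf{Condition (C1).} For a Skula extension $(L,\ca D)$, the set $\bd{Skula}((L,\ca D),\bd 2_{\ca S})$ needs a cartesian topology with respect to the evaluation maps. The initial topology is exactly the subspace topology from $\pt(L)$, with opens $\{f\mid f(a)=1\}$, so this is immediate.

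\textbf{Condition (C2).} For a space $X$, I would check that $(\Om X,\ca{SKC}^*X)$ is the cartesian lift of the family of maps $\chi_x:\Topp(X,\mathbb S)\cong\Om X\to\bd 2$. Each $\chi_x$ is the point $U\mapsto[x\in U]$. Concretely, a frame map $h:\Op(L,\ca D)=L\to\Om X$ lifts to a Skula morphism $(L,\ca D)\to(\Om X,\ca{SKC}^*X)$ if and only if each $\chi_x\circ h$ lifts to $(L,\ca D)\to\bd 2_{\ca S}$.

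To prove this, I would use the fact that $\ca{SKC}^*X$ is generated under joins by the point sublocales $b(x)$. The left adjoint $\ca S h^*$ is join-preserving, so $\ca S h^*(\bve_{x\in U} b(x))=\bve_{x\in U}\ca S h^*(b(x))$. Moreover, $\ca S(\chi_x h)^*$ applied to the nonzero element of $\ca S\bd 2$ is precisely $\ca S h^*(b(x))$, since $b(x)$ is the image of the point sublocale of $\bd 2$ under $\ca S\chi_x^*$. Hence the condition for $h$ is equivalent to all $\ca S h^*(b(x))\in\ca D$, which is the pointwise condition, provided $\ca D$ is closed under joins. It is, because $\ca D$ is a subcolocale.

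This computation is the main obstacle. It requires verifying that $\ca S f^*$ commutes with the relevant joins, and identifying $\ca S\chi_x^*$ with $b(x)$.

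\textbf{Fixpoints.} Theorem~\ref{t: natduality} now yields a natural dual adjunction. The unit at $X$ sends $x\mapsto\chi_x$, which is a subspace embedding into $\pt\Om X$ composed with a bijection onto its image. It is therefore injective exactly when $X$ is $T_0$. For surjectivity, take a Skula point $p:(\Om X,\ca{SKC}^*X)\to\bd 2_{\ca S}$. The sublocale $\ca S p^*(\top)$ is a nonzero element of $\ca{SKC}^*X$ and is a two-element sublocale, so it corresponds to a Skula-closed set whose induced sublocale is a single $b(x)$. I would argue that $p=\chi_x$, since a Skula-closed set determines its induced sublocale and minimality forces a singleton in the $T_0$ case. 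Thus the unit is a homeomorphism exactly for $T_0$-spaces.
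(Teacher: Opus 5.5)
Your proposal is correct. For the existence of the adjunction it is the paper's own argument, namely the sketch proof of Theorem~\ref{t: duality for skula extensions}: $(\Om X,\ca{SKC}^*X)$ is the cartesian lift because $\ca S\chi_x^*$ sends the top of $\ca S\bd 2$ to $b(x)$, $\ca S h^*$ preserves joins, and $\ca D$ is closed under joins. You only need to add the paper's preliminary step, since carriers are sets: an arbitrary function $h\colon |L|\to|\Om X|$ whose composites $\chi_x\circ h$ all lift is already a frame map, by the cartesian property of $\Om X$ in $\Frm$.

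The paper never argues the fixpoint claim itself; it defers to \cite{manuell15}. Your unit argument is therefore a genuine supplement, and it is sound: the unit $x\mapsto\chi_x$ is initial and is injective exactly for $T_0$ spaces, so everything reduces to surjectivity.

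That surjectivity step is simpler than your minimality detour. Put $P=p_*(0)$ and write $\ca S p^*(\top)=\{P,X\}=\bigvee_{y\in Y}b(y)$. Then $Y\neq\emptyset$, and each two-element $b(y)$ sits inside the two-element $\{P,X\}$, hence equals it. So $P=X\setminus\overline{\{y\}}$ and $p=\chi_y$, and this holds for any space $X$.
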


\subsection{McKinsey-Tarski algebras}
In \cite{bezhanishvili23}, the category $\bd{MT}$ of \emph{McKinsey-Tarski algebras}, or \emph{MT-algebras}, is introduced. These may be identified with pairs $(L,M)$ where $M$ is a complete Boolean algebra and $L$ is any subframe. Elements of $L$ are called \emph{open} elements. Morphisms between MT-algebras are maps of complete Boolean algebras that restrict to the subframes of open elements. 

\begin{example}\label{e: PX}
    For a space $X$, the pair $(\Om X,\ca P X)$ is an MT-algebra.
\end{example}
An MT-algebra is said to be $T_0$ (as defined in \cite{bezhanishvili23}) if every element of $M$ is a join of elements of the form $\bigwedge _i a_i\wedge \neg a$, with $a,a_i\in L$. We will assume all MT-algebras to be $T_0$, and in particular $\bd{MT}$ will denote the category of $T_0$-algebras. In particular (Theorem 5.8 in \cite{bezhanishvili23}), the functor $\Op_{\ca M}:\bd{MT}\to \Frm$ is faithful. So, $(\Op_{\ca M},\bd{MT})$ is an FC-category.

The assignment in Example \ref{e: PX} extends to a contravariant functor $\ca P:\bd{Top}\to \bd{MT}$, mapping each space to the MT-algebra $(\Om X,\ca P X)$. This has an adjoint $\pt_{\ca M}:\bd{MT}\to \Topp$, mapping an MT-algebra $(L,M)$ to the set $\pt_{\ca M}M$ of atoms of $M$, suitably topologized. The fixpoints of the adjunction are all $T_0$-spaces on the $\bd{Top}$ side (Theorem 3.17 in \cite{bezhanishvili23}). 

\begin{theorem}
There is a dual adjunction $\ca P:\bd{Top}\leftrightarrows \bd{MT}:\pt_{\ca M}$. The fixpoints in $\bd{Top}$ are the $T_0$-spaces.     
\end{theorem}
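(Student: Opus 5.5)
The statement to prove is the last theorem: there is a dual adjunction $\ca P:\bd{Top}\leftrightarrows \bd{MT}:\pt_{\ca M}$, and its fixpoints in $\bd{Top}$ are exactly the $T_0$-spaces. I would deduce the adjunction from Theorem~\ref{t: natduality} and then identify the fixpoints by a direct computation of the unit.

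\textbf{Dualizing object.} Take $\bd 2_{\ca M}=(\bd 2,\bd 2)$, the MT-algebra of the one-point space, and $\mathbb S$ the Sierpiński space, with $\iota$ the obvious bijection between $|\bd 2_{\ca M}|=\bd 2$ and $|\mathbb S|$.

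\textbf{Condition \eqref{i: cartlift1}.} For an MT-algebra $(L,M)$, a morphism $(L,M)\to \bd 2_{\ca M}$ is a complete Boolean homomorphism $M\to\bd 2$. Such maps correspond bijectively to atoms of $M$, so the hom-set is $\pt_{\ca M}M$. Put on it the initial topology for the evaluation maps $f\mapsto f(a)$ with $a\in L$. Its opens are then exactly the sets $\{f\mid f(a)=1\}$ for $a\in L$, since these are closed under finite intersections and arbitrary unions (the maps preserve the frame operations of $L$). Initial topologies are cartesian lifts for $|-|:\bd{Top}\to\Set$, so this lift exists.

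\textbf{Condition \eqref{i: cartlift2}.} For a space $X$, identify $\bd{Top}(X,\mathbb S)$ with $\Om X$ and take the lift $(\Om X,\ca P X)$. Here one must check the following. If $h:|D|\to \Om X$ is a set map, with $D=(L',M')$, such that each $\chi_x h$ lifts to a morphism $D\to\bd 2_{\ca M}$, then $h$ itself lifts to an MT-morphism.

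This is done in three steps. First, $h$ sends each element $d$ of $M'$ to the set $\{x\mid \chi_x h(d)=1\}$. Each $\chi_x h$ is a complete Boolean homomorphism, so $h$, viewed as a map into $\ca P X$ pointwise, preserves all Boolean operations. Second, $h$ sends $L'$ into $\Om X$, because $h$ already lands in $\Om X$ on carriers. So $h$ is a morphism $(L',M')\to(\Om X,\ca P X)$. Third, the converse direction of the cartesian property is just composition.

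\textbf{Main obstacle.} This is the carrier bookkeeping. The forgetful functor on $\bd{MT}$ should take the carrier $|(L,M)|=|M|$, so the evaluation $\chi_x$ is defined on all of $\ca PX$, and $\Om X\subseteq\ca PX$ must be exhibited as the lifted family. I would first fix $|-|$ on $\bd{MT}$ to be the underlying set of $M$, and set the lift of $\bd{Top}(X,\mathbb S)$ to have carrier $\ca P X$. With that choice, the lifted family is evaluation at points of $X$, and the verification above goes through. Theorem~\ref{t: natduality} then gives the natural dual adjunction.

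\textbf{Fixpoints.} The unit $X\to\pt_{\ca M}\ca P X$ sends $x$ to the atom $\{x\}$. It is a bijection, since the atoms of $\ca P X$ are exactly the singletons. Its image topology has opens $\{\{x\}\mid x\in U\}$ for $U\in\Om X$, so it is always open onto its image and surjective. It is injective exactly when it is a homeomorphism, and that is exactly when distinct points are separated by an open set, i.e.\ when $X$ is $T_0$. In particular, the $T_0$-assumption on MT-algebras is used only to guarantee faithfulness; it is not needed for this computation.
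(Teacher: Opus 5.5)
Your overall strategy (exhibit a dualizing object and invoke Theorem~\ref{t: natduality}) is the paper's. The gap is the carrier functor you fix, $|(L,M)|=|M|$: with it, both dualizing conditions fail.

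\textbf{The carrier choice.} In \eqref{i: cartlift1} the evaluation family must then be indexed by \emph{all} $a\in M$, not only $a\in L$. Since $\{f\mid f(p)=1\}$ is a singleton for every atom $p$ of $M$, the resulting initial topology on $\pt_{\ca M}M$ is discrete, not the intended one. In \eqref{i: cartlift2}, a cartesian lift comes with a bijection $\gamma_X\colon|G(X)|\cong\bd{Top}(X,\mathbb S)\cong|\Om X|$. So $(\Om X,\ca P X)$ with carrier $\ca P X$ is not a cartesian lift, and for $X=\mathbb S$ no MT-algebra can serve: $|\Om\mathbb S|=3$, and no Boolean algebra has three elements. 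Declaring that the lift should ``have carrier $\ca P X$'' is exactly where the bookkeeping breaks; it does not fix it.

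The paper instead takes carriers through the faithful functor $\Op_{\ca M}$, i.e.\ $|(L,M)|=|L|$. Faithfulness is precisely what the $T_0$ axiom provides, since a complete Boolean homomorphism out of a $T_0$ MT-algebra is determined by its restriction to $L$. With this choice, \eqref{i: cartlift1} is indexed by $a\in L$ and the lift in \eqref{i: cartlift2} has carrier $|\Om X|$. Your extension $\overline f(m)=\{x\mid\overline{f_x}(m)=1\}$ of $f\colon L'\to\Om X$ is then exactly the verification in Theorem~\ref{t: duality for mt}; note that $f$ is a frame map by the cartesian property of $\Om X$ in $\Frm$.

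\textbf{The fixpoint computation.} As written it contradicts itself. The map $x\mapsto\{x\}$ is always injective, so your argument shows that $X\to\pt_{\ca M}(\Om X,\ca P X)$ is a homeomorphism for \emph{every} space. Your closing ``injective exactly when $X$ is $T_0$'' contradicts your earlier ``it is a bijection''.

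What is missing is that $\bd{MT}$ consists of $T_0$ MT-algebras, and $(\Om X,\ca P X)$ is one only when $X$ is $T_0$. Each singleton $\{x\}$ must be a union of sets $S\cap F$ with $S$ saturated and $F$ closed. This forces the saturation of $x$ to meet $\overline{\{x\}}$ only in $x$.

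For general $X$, the cartesian lift is $(\Om X_0,\ca P X_0)$, with $X_0$ the $T_0$-reflection. Faithfulness of the test object gives $\overline{f_x}=\overline{f_y}$ for indistinguishable $x,y$, so your extension formula descends to $X_0$. The unit is then the reflection map $X\to X_0$. Equivalently, \eqref{e: the second of the mate relations} gives $\xi_{\Omc X}\circ\sigma^{\ca C}_X=\pt(\zeta_X)\circ\sigma_X$ with $\xi$ injective. Hence the unit is injective iff $\sigma_X\colon X\to\pt\Om X$ is, i.e.\ iff $X$ is $T_0$.

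So the $T_0$ hypothesis on MT-algebras is not incidental to the fixpoint computation, as you claim. It is exactly what excludes non-$T_0$ spaces from the fixpoints.
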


\section{SFC-categories}
\subsection{Motivating examples}\label{ss motivating examples}
The classical dual adjunction between frames and spaces
\begin{equation*}
  \begin{tikzcd}[column sep=large]
    \bd{Top} \ar[r,shift left=3,"\Om"]
             \ar[r,"\dashv"{rotate=-90},phantom]
    & \bd{Frm}^\bd{op} \ar[l,shift left=3,"\pt"]
  \end{tikzcd}
\end{equation*}
is an archetypal example of a \textit{natural} dual adjunction (in the sense
of \cite{porst91}), which is obtained via the dualizing object construction
due to Porst and Tholen. Its dualizing pair is $(\bd 2,\mathbb{S})$, where
$\bd 2$ is the two-element frame and $\mathbb{S}$ is the Sierpiński space.

Another natural dual adjunction between frames and spaces, where the fixpoints in $\bd{Top}$ are a category incomparable with $\bd{Sob}$, is studied in \cite{banaschewskitd}. A map $f:L\to M$ of frames is a \emph{D-morphism} if $f^{-1}(F)\in \fe(L)$ whenever $F\in \fcp(M)\cap \fe(M)$\footnote{Being a D-morphism, then, may be seen as a spatialized version of exactness.}. We call $\bd{Frm_D}$ the category of frames with D-morphisms, and we call $\bd{Top_D}$ the category of $T_D$-spaces and continuous maps. The duality result in \cite{banaschewskitd} can be phrased as follows.
\begin{theorem}
There is a dual adjunction $\Om:\bd{Top_D}\leftrightarrows \bd{Frm_D}:\pt_D$, coming from the dualizing object $(\bd 2,\mathbb{S})$, with $T_D$-spaces the fixpoints on the side of topological spaces.     
\end{theorem}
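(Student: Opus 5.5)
We obtain the adjunction from Theorem~\ref{t: natduality}, taking $\ca C=\bd{Frm_D}$ and $\ca X=\bd{Top_D}$ with their forgetful functors to $\Set$, $\Tc=\bd 2$, $\mathbb S$ the Sierpiński space, and $\iota$ the obvious bijection. The Sierpiński space is $T_D$, since both its points are locally closed. The inclusion $\{1\}\subseteq \bd 2$ is a D-morphism, because every filter of $\bd 2$ is principal and hence exact. So it suffices to verify the two lifting conditions (\ref{i: cartlift1}) and (\ref{i: cartlift2}); the theorem then yields the natural dual adjunction represented by $(\bd 2,\mathbb S)$. The last step will be to identify the fixpoints.

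\textbf{Condition (\ref{i: cartlift1}).} For a frame $L$, let $\pt_D(L)=\bd{Frm_D}(L,\bd 2)$ with the topology $\{\{p\mid p(a)=1\}\mid a\in L\}$.
\begin{itemize}
\item I will check that every D-morphism $L\to\bd 2$ is a point. In the spatial situation this should hold for all points, and I expect $\pt_D(L)$ to be exactly the set of points $p$ for which the filter $p^{-1}(1)$ is exact and completely prime. This space should be $T_D$: $p$ is the unique point of $\{q\mid q(a)=1\}\setminus\{q\mid q(b)=1\}$ for suitable $a,b$. I would obtain $a,b$ from exactness of the completely prime filter $p^{-1}(1)$, writing it as an exact meet, the $T_D$ analogue of Banaschewski--Pultr's covered primes.
\item Cartesianness: a map $h\colon X\to\pt_D(L)$ from a $T_D$-space is continuous if and only if each composite with evaluation at $a$ is continuous into $\mathbb S$. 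This is immediate because the topology is initial.
\end{itemize}

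\textbf{Condition (\ref{i: cartlift2}).} For a $T_D$-space $X$, the frame $\bd{Top_D}(X,\mathbb S)\cong\Om X$ with pointwise structure is a frame. I must show two things.
\begin{itemize}
\item Each evaluation $\Om X\to\bd 2$ is a D-morphism. Its filter is the completely prime filter of neighbourhoods of $x$. This filter is exact precisely when $x$ is a $T_D$ point, by Theorem~\ref{t: many chactacterizations of td spaces}. Since the codomain is $\bd 2$, this is where the $T_D$ hypothesis enters.
\item Cartesianness: a frame map $g\colon M\to\Om X$ is a D-morphism if and only if every composite $\mathrm{ev}_x\circ g$ is one. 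The forward direction holds because D-morphisms compose. For the converse, take $F\in\fcp(\Om X)\cap\fe(\Om X)$. In a $T_D$-space such $F$ should be a neighbourhood filter of a point, though exactness may force it to be a sober point coming from a point of $X$. Then $g^{-1}(F)=(\mathrm{ev}_x g)^{-1}(1)\in\fe(M)$.
\end{itemize}

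This converse is the main obstacle. It requires showing that exact completely prime filters of $\Om X$, for $X$ a $T_D$-space, are exactly the neighbourhood filters of points. I would first try to derive it from characterization~3 of Theorem~\ref{t: many chactacterizations of td spaces}: exact filters correspond to saturated sets, and completely prime ones to saturated sets that are not unions of smaller saturated sets. Such a set must be ${\uparrow}x$.

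\textbf{Fixpoints.} Given that description of exact completely prime filters, the unit $X\to\pt_D\Om X$ is a bijection onto its image and an embedding because $X$ is $T_0$. It is surjective by the same description. Hence every $T_D$-space is a fixpoint.
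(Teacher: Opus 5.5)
Your proposal is correct and follows the route the paper indicates in Example~\ref{e: td duality}; the paper itself only cites \cite{banaschewskitd} for this theorem. That route realizes the adjunction from the dualizing object $(\bd 2,\mathbb S)$ with $\bd{Top_D}$ on the spatial side, and your ``main obstacle'' is exactly Lemma~\ref{l: exact cp filters are always induced.}, which the paper proves for every $T_0$-space directly from Lemma~\ref{l: a particular exact meet} rather than through the saturated-set description of $\fe(\Om X)$ you use. The step you leave vague, that $\pt_D(L)$ is $T_D$, closes with the same Lemma~\ref{l: a particular exact meet}: for $c=\bigvee p^{-1}(0)$ the meet $a=\bigwedge\{y\mid c<y\}$ is exact with all terms in $p^{-1}(1)$, so $a\in p^{-1}(1)$ and $p$ is the only point with $q(a)=1$, $q(c)=0$; also drop the aside that in the spatial case every point is a D-morphism, which is false (Example~\ref{e: some fibers of omega have no terminal objects}) and unused.
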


We note that the two natural adjunctions in this subsection are
related by a right map of adjunctions, in the sense of Subsection \ref{ss:
maps of adjunctions}, as shown below:

\begin{equation}\label{d: left map of adjunctions for TD}
    \begin{tikzcd}[row sep=huge,column sep=large]
      \bd{Frm_D}
            \ar[r,shift left=2,"\pt_D"{above}] 
            \ar[d,hookrightarrow]
        & \bd{Top_D}^{\mathsf{op}}
            \ar[l,shift left=2,"\Om"{below}]
            \ar[l,phantom,"\dashv"{rotate=-90,anchor=center}]
            \ar[d,hookrightarrow] \\
        \bd{Frm}
            \ar[r,shift left=2,"\pt"{above}]
        & \bd{Top}^{\mathsf{op}}
            \ar[l,shift left=2,"\Om"{below}]
            \ar[l,phantom,"\dashv"{rotate=-90,anchor=center}]
   \end{tikzcd}
\end{equation}

The natural transformations witnessing the two inclusions being a right map of
adjunctions, componentwise, are:
\begin{itemize}
    \item The identity $\Om X=\Om X$ for every $T_D$-space $X$;
    \item The assignment
    \begin{align*}\label{e: faithfulness becomes subspace inclusion}
       \xi_L^D:& \bd{Frm_D}(L,\bd 2)\to \bd{Frm}(L,\bd 2) \\
       & f\mapsto i_D(f)
    \end{align*}
    for every frame $L$.
\end{itemize}
If we replace $i_D:\bd{Frm_D}\subseteq \bd{Frm}$ with each of $\Op_{\ca R}:\bd{Raney}\to \Frm$, $\Op_{\ca S}:\bd{Skula}\to \Frm$ and $\Op_{\ca M}:\bd{MT}\to \Frm$, we witness a similar phenomenon. In all three cases, there are maps in $\bd{RADJ}$:
    \begin{equation}\label{d: maps of adjunctions for the three main}
\begin{minipage}{0.3\textwidth}
    \begin{tikzcd}[row sep=huge,column sep=large]
        \bd{Raney}
            \ar[r,shift left=2,"\pt_{\ca R}"{above}] \ar[d,"\Op_{\ca R}"]
        & \bd{Top}^{\mathsf{op}}
            \ar[l,shift left=2,"\ca U"{below}]
            \ar[l,phantom,"\dashv"{rotate=-90,anchor=center}]
            \ar[d,equal] \\
        \bd{Frm}
            \ar[r,shift left=2,"\pt"{above}]
        & \bd{Top}^{\mathsf{op}}
            \ar[l,shift left=2,"\Om"{below}]
            \ar[l,phantom,"\dashv"{rotate=-90,anchor=center}]
   \end{tikzcd}
\end{minipage}
\hfill
\begin{minipage}{0.3\textwidth}
    \begin{tikzcd}[row sep=huge,column sep=large]
              \bd{Skula}
            \ar[r,shift left=2,"\pt_{\ca S}"{above}] \ar[d,"\Op_{\ca S}"]
        & \bd{Top}^{\mathsf{op}}
            \ar[l,shift left=2,"\ca{SK}"{below}]
            \ar[l,phantom,"\dashv"{rotate=-90,anchor=center}]
            \ar[d,equal] \\
        \bd{Frm}
            \ar[r,shift left=2,"\pt"{above}]
        & \bd{Top}^{\mathsf{op}}
            \ar[l,shift left=2,"\Om"{below}]
            \ar[l,phantom,"\dashv"{rotate=-90,anchor=center}]
   \end{tikzcd}
\end{minipage}
\hfill
\begin{minipage}{0.3\textwidth}
    \begin{tikzcd}[row sep=huge,column sep=large]
                \bd{MT}
            \ar[r,shift left=2,"\pt_{\ca M}"{above}] \ar[d,"\Op_{\ca M}"]
        & \bd{Top}^{\mathsf{op}}
            \ar[l,shift left=2,"\ca{P}"{below}]
            \ar[l,phantom,"\dashv"{rotate=-90,anchor=center}]
            \ar[d,equal] \\
        \bd{Frm}
            \ar[r,shift left=2,"\pt"{above}]
        & \bd{Top}^{\mathsf{op}}
            \ar[l,shift left=2,"\Om"{below}]
            \ar[l,phantom,"\dashv"{rotate=-90,anchor=center}]
   \end{tikzcd}
\end{minipage}
\end{equation}

In all three cases, the natural transformations for each direction of the diagrams are given similarly as those for $i_D:\bd{Frm_D}\subseteq \Frm$. For each of the three $\Frm$-concrete categories $\Op:\ca C\to \Frm$, we let $\Omc:\ca C\leftrightarrows \Topp:\ptc$ be the upper adjunction in the diagrams.
\begin{itemize}
    \item From the definitions, it is immediate there is an equality $\Op \Omc X=\Om X$.
    \item As the underlying set of $\ptc (C)$ for $C\in \ca C$ is given by $\ca C(C,\Tc)$ for some $\Tc\in \ca C$ with $\Op \Tc= \bd 2$, and from the definition of its topology, there is a subspace inclusion given by:
\begin{align*}
   \xi^{\ca C}_C:&\ptc C\to \pt \Op C \\
   & f\mapsto  \Op f.
\end{align*}
\end{itemize}
As our last example, we consider the $\Frm$-concrete category $\Om:\Topz^{\mf{op}}\to \Frm$. There is a morphism in $\bd{RADJ}$ as below, where $r_0:\Topp\to \Topz$ denotes the $T_0$-reflection functor.

\begin{equation}\label{d: diagram for t0}
     \begin{tikzcd}[row sep=huge,column sep=large]
        \Topz^{\mf{op}}
            \ar[r,shift left=2,hookrightarrow] 
            \ar[d,"\Om"]
        & \bd{Top}^{\mathsf{op}}
            \ar[l,shift left=2,"r_0"{below}]
            \ar[l,phantom,"\dashv"{rotate=-90,anchor=center}]
            \ar[d,equal] \\
        \bd{Frm}
            \ar[r,shift left=2,"\pt"{above}]
        & \bd{Top}^{\mathsf{op}}
            \ar[l,shift left=2,"\Om"{below}]
            \ar[l,phantom,"\dashv"{rotate=-90,anchor=center}]
   \end{tikzcd}
\end{equation}

The natural transformations exhibiting this as a right map of adjunctions are:
\begin{itemize}
    \item The obvious isomorphism $\Om r_0 C\cong \Om X$ for every space $X$;
    \item The sobrification $\sigma_X:X\to \pt \Om X$ for every $T_0$ space $X$. Viewing the inclusion $\Topz\subseteq \Topp$ as a functor $\ptc$ with $|\ptc(X)|=\Topz(\{*\},X)$ this is described as:
    \begin{align*}
   \xi^{0}_X:& \ptc(X)\to \pt \Om X \\
   & f\mapsto  \Om f.
\end{align*}
\end{itemize}

\begin{remark}
    The functor $\Om:\Topp^{\mf{op}}\to \Frm$ is not faithful, while the restriction $\Om:\Topz^{\mf{op}}\to \Frm$ is. This provides some initial motivation for viewing $\Frm$-concrete categories as categories of pointfree $T_0$-spaces.
\end{remark}

These analogies suggest that the development of a general setting where we can
describe natural adjunctions between $\Frm$-concrete categories may prove
fruitful to study dualities between full subcategories of \( \Topp \) and
suitable categories of pointfree spaces.

\subsection{Definition and basic results}\label{ss: defn topc ptc omc}

Throughout, we fix an FC-category \( \Op \colon \ca C \to \Frm \), an object
\( \Tc \in \ca C \) and an isomorphism \( \iota \colon \Op(\Tc) \cong \bd 2 \). We will take the carrier set of an object \( C \in \ca C \) to be the set of
elements of the frame $\Op C$, denoted by $|\Op C|$, while we regard the
carrier of a space \( X \) in \( \Topp \) to be the set of its points \( |X|
\). 

We define $\Topc$ as the full subcategory of $\Topp$ consisting of those
spaces $X$ such that the family obtained by composing \( |\iota^{-1}| \) with
the characteristic functions of their points 
\begin{equation}\label{e: characteristic functions}
  \{ |\iota^{-1}| \circ \chi_{x}:\Topp(X,\mathbb S)\to |\Op(\Tc)| \mid x\in X\}
\end{equation}
has a cartesian lift \( (\Omc(X),\theta_X \colon |\Op\Omc(X)| \cong
\Topp(X,\mathbb S) ) \) in $\ca C$, and we denote the lifted family by
\begin{equation}
  \{\overline{\chi_{x}}: \Omc X \to \Tc \mid x\in X\}.
\end{equation}
We note that \( \Topc \) is a replete subcategory: if \( X \in \Topc \), and
if there is a homeomorphism \( X \cong Y \), then \( Y \in \Topc \). 

\begin{lemma}
  \label{l: pfc-cat implies zeta iso}
  Let \( (\ca C, \Op) \) be an FC-category, let \( \Tc \in \ca C \) be an
  object, and let \( \iota \colon \Op(\Tc) \cong \bd 2 \).be an isomorphism.
  If we take \( \Topc \) to be the category defined above, then \( \Omc \colon
  \opp{\Topc} \to \ca C \) defines a functor, and we have a natural
  isomorphism \( \zeta \colon \Op \Omc \cong \Om \).
\end{lemma}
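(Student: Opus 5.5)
The plan is to define $\Omc$ on morphisms via Remark~\ref{r: morphism-lifting} and to obtain $\zeta$ from the isomorphisms $\theta_X$. First I make precise the identification $\Topp(X,\mathbb S)\cong|\Om X|$: a continuous map $X\to\mathbb S$ corresponds to the open set $g^{-1}(1)$. Under this bijection the frame structure of $\Om X$ (unions and finite intersections of opens) corresponds to the pointwise order on maps into $\mathbb S$. So $\theta_X$ composed with it is a bijection $|\Op\Omc X|\cong|\Om X|$. The first task is to show that this bijection is a frame isomorphism. The cartesian lift is only a bijection of carrier sets, so this step needs care.

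For the frame-isomorphism step I use that the identity of $\Op\Omc X$ lifts, so each $\Op(\overline{\chi_x})=|\iota^{-1}|\circ\chi_x\circ\theta_X$ is a frame map $\Op\Omc X\to\Op\Tc$. Composing with $\iota$ gives, for each $x\in X$, a frame map $\Op\Omc X\to\bd 2$ that sends $a$ to $1$ exactly when $x\in\theta_X(a)$, regarded as an open set. Hence the map $a\mapsto\{x\mid\iota\Op(\overline{\chi_x})(a)=1\}$ equals $\theta_X$ under the identification. Since it is assembled from a jointly injective family of frame maps into $\bd 2$, it preserves arbitrary joins and finite meets: membership of $x$ is tested pointwise, and joins and finite meets in $\bd 2$ are computed pointwise. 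It is bijective because $\theta_X$ is. A bijective frame homomorphism is an isomorphism, and I take this as $\zeta_X$.

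Next I treat functoriality. For a continuous $f\colon Y\to X$ between spaces of $\Topc$, precomposition $-\circ f\colon\Topp(X,\mathbb S)\to\Topp(Y,\mathbb S)$ satisfies $\chi_y\circ(-\circ f)=\chi_{f(y)}$. The family defining $\Omc Y$, composed with $-\circ f$, is therefore a subfamily of the family defining $\Omc X$, indexed by $y\mapsto f(y)$. Arguing as in Remark~\ref{r: morphism-lifting}, the map $\theta_Y^{-1}\circ(-\circ f)\circ\theta_X\colon\Op\Omc X\to\Op\Omc Y$ composed with each $\Op(\overline{\chi_y})$ equals $\Op(\overline{\chi_{f(y)}})$, which lifts. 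By cartesianness of $\Omc Y$ this map therefore lifts to a unique morphism $\Omc f\colon\Omc X\to\Omc Y$. Preservation of identities and composition then follows from uniqueness of lifts, since $\Op$ is faithful.

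Finally, naturality of $\zeta$ says $\zeta_Y\circ\Op\Omc f=\Om f\circ\zeta_X$. This holds by construction: $\Op\Omc f$ is $\theta_Y^{-1}(-\circ f)\theta_X$, and under the identification of maps into $\mathbb S$ with opens, $-\circ f$ is $f^{-1}=\Om f$. I expect the main obstacle to be the frame-isomorphism step, namely checking that the carrier bijection respects the frame structure. The pointwise argument through the jointly injective family of points resolves it, and the remaining steps are bookkeeping.
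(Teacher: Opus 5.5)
Your proposal is correct and follows essentially the same route as the paper: \(\Omc\) on morphisms is the morphism induced by precomposition \(-\circ f\) as in Remark~\ref{r: morphism-lifting}. The paper likewise gets \(\zeta_X\) from the carrier bijection \(\theta_X\) using that each \(\chi_x\circ\theta_X\) is, up to \(\iota\), the frame map \(\Op(\overline{\chi_x})\), and checks naturality on underlying functions. The only difference is that the paper cites the cartesian-lift property of \(\Om X\) over its points to conclude \(\theta_X\) is a frame map, whereas you verify that property directly through the pointwise union/intersection computation.
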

\begin{proof}
  We take the pair \( (\Omc(X), \theta_X) \) to be the cartesian lift of the
  characteristic functions for each \( X \in \Topc \). By Remark~\ref{r:
  morphism-lifting}, we may let \( \Omc(f) \colon \Omc(Y) \to \Omc(X)  \) be
  the morphism induced by \( - \circ f \colon \Topp(Y,\mathbb S) \to
  \Topp(X,\mathbb S) \) for \( f \colon X \to Y \). It is routine to ensure
  that \( \Omc \) is functorial.

  Now, we observe that \( \theta_X \colon |\Op \Omc(X)| \to |\Om(X)| \) lifts
  to a frame morphism \( \zeta_X \colon \Op\Omc(X) \to \Om(X) \), since we
  have that \( \chi_x \circ \theta_X \) lifts to \( \iota \circ \Op(\overline
  \chi_x) \) for all \( x \). Frame maps whose underlying functions are
  bijective must be frame isomorphisms, so \( \zeta_X \) is an isomorphism for
  all \( X \). Naturality of \( \zeta \) is an immediate consequence of
  faithfulness of \( \Op \).
\end{proof}

For each object $C\in \ca C$, we define $\ptc(C)$ to be the space with carrier
set $\ca{C}(C,\Tc)$, and whose opens are sets of the form
\[
\{f\in \ca{C}(C,\bd{2}_{\ca{C}})\mid \Op f(a)=1\}
\]
for some $a\in \Op C$. We note that this is precisely the initial topology on
\( \ca C(C,\Tc) \) with respect to the evaluation maps $\{\eta_a:\ca
C(C,\Tc)\to 2\mid a\in \Op C\}$, defined by $\eta_a(f)=(\iota \circ \Op
f)(a)$.  Thus, \( (\ptc(C), \id) \) is a cartesian lift of this family.

We say that the quadruple $(\Op,\ca C,\Tc, \iota)$ is a \emph{spatializable
FC-category}, abbreviated as \emph{SFC-category}, if $\mathbb{S}\in \Topc$ and
$\ptc(C) \in \Topc$ for all \( C \in \ca C \). Thus, for an SFC-category, the
triple \( (\Tc, \mathbb S, |\iota| \colon |\Op(C)| \cong |\bf 2| = |\mathbb
S|) \) defines a dualizing object. As an immediate consequence of
Theorem~\ref{t: natduality}, we obtain the following:

\begin{theorem}
  \label{t: main theorem in porst tholen adapted}
  If $(\Op,\ca C,\Tc,\iota)$ is an SFC-category, then we have a natural dual
  adjunction
  \begin{equation}
    \label{e: dualadjC}
    \begin{tikzcd}[column sep=large]
      \ca C  \ar[r, "\ptc", shift left=2]
                  \ar[r,"\dashv"{rotate=-90},phantom]
        & \opp{\Topc} \ar[l, "\Omc", shift left=2]
    \end{tikzcd}
  \end{equation}
  obtained from the dualizing object $(\Tc,\mathbb{S},|i|)$.
\end{theorem}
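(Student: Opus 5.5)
The plan is to apply Theorem~\ref{t: natduality} with $\ca C$ carrying the faithful functor $|\Op(-)|\colon \ca C\to\Set$ and $\opp{\Topc}$ (or rather $\Topc$ as the category $\ca X$) carrying the point-set forgetful functor $|-|$. The candidate dualizing object is $(\Tc,\mathbb S,|\iota|)$, where $|\iota|\colon|\Op\Tc|\cong|\bd 2|=|\mathbb S|$. Two preliminary points need checking: that $\Tc$ and $\mathbb S$ lie in the relevant categories, and that the ambient forgetful functors are faithful. The object $\mathbb S$ lies in $\Topc$ by the SFC axiom. Faithfulness of $|\Op(-)|$ holds because it is the composite of the faithful $\Op$ with the faithful forgetful functor $\Frm\to\Set$. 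Faithfulness of $|-|$ on $\Topc$ holds because $\Topc$ is a full subcategory of $\Topp$.

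First, I verify condition (\ref{i: cartlift1}). For $C\in\ca C$, the family $\{|\iota|\circ\nu_a\colon\ca C(C,\Tc)\to|\mathbb S|\mid a\in|\Op C|\}$ is exactly the family of evaluation maps $\eta_a(f)=(\iota\circ\Op f)(a)$. As observed just before the definition, $(\ptc(C),\id)$ is a cartesian lift of this family in $\Topp$, since $\ptc(C)$ carries the initial topology. Because $\ptc(C)\in\Topc$ by the SFC axiom and $\Topc$ is full, cartesianness in $\Topp$ restricts to cartesianness in $\Topc$. The lifting property only quantifies over maps from objects of $\Topc$, and those maps are the same as in $\Topp$.

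Next, I verify condition (\ref{i: cartlift2}). For $X\in\Topc$, the family $\{|\iota|^{-1}\circ\chi_x\mid x\in X\}$ is precisely (\ref{e: characteristic functions}), and it has the cartesian lift $(\Omc X,\theta_X)$ by the very definition of $\Topc$. Its functoriality and the isomorphism $\Op\Omc\cong\Om$ are already provided by Lemma~\ref{l: pfc-cat implies zeta iso}. One subtlety is the direction of the variance. Porst and Tholen's setup takes $F\colon\ca C\to\opp{\ca X}$, so a morphism $f\colon X\to Y$ of spaces induces $-\circ f$ on hom-sets, and this is the map handled in Remark~\ref{r: morphism-lifting}.

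With both conditions in place, Theorem~\ref{t: natduality} yields the functors $\ptc\colon\ca C\to\opp{\Topc}$ and $\Omc\colon\opp{\Topc}\to\ca C$, the adjunction between them represented by $(\Tc,\mathbb S)$, and its naturality. The main obstacle is only bookkeeping. One must make sure that the carrier functor on $\ca C$ (elements of $\Op C$) and the isomorphism $|\iota|$ match the hom-set conventions of Theorem~\ref{t: natduality}, in particular that $\nu_a$ composed with $|\iota|$ agrees with $\eta_a$. One must also confirm that restricting from $\Topp$ to the full subcategory $\Topc$ does not destroy the cartesian lifts. I expect both checks to follow directly from the definitions.
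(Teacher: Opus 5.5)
Your proposal is correct and follows the paper's route: the SFC axioms ($\mathbb S\in\Topc$ and $\ptc(C)\in\Topc$), together with the definition of $\Topc$, make $(\Tc,\mathbb S,|\iota|)$ a dualizing object, and Theorem~\ref{t: natduality} then gives the natural dual adjunction. You also spell out a detail the paper leaves implicit, namely that cartesian lifts in $\Topp$ remain cartesian in the full subcategory $\Topc$.
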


We denote by $\mf{Dual}(\Op,\ca C,\Tc,\iota)$ the natural adjunction \eqref{e:
dualadjC} induced by $(\Op,\ca C,\Tc,\iota)$, and we denote the full
subcategory of its fixpoints in \( \Topc \) and in \( \ca C \) by
$\mf{Fix}(\Topc)$ and $\mf{Fix}(\ca D)$, respectively.

Explicit descriptions of the data of the natural adjunction in Theorem \ref{t:
main theorem in porst tholen adapted} are given below.
\begin{enumerate}[label=(NA\arabic*),ref=NA\arabic*]
  \item 
    \label{NA1} 
    The functor $\ptc:\ca C\to \opp{\Topc}$ acts on objects as $C\mapsto \ptc
    (C)$. On morphisms, it sends a map $f:C\to D$ to precomposition $-{\circ}
    f$.
  \item 
    \label{NA2} 
    The functor $\Omc:\Topc\to \ca C$ assigns each $X\in \Topc$ to the object
    $\Omc X\in \ca C$ which provides the cartesian lift of the family of
    characteristic functions. On morphisms, it sends a map $f:X\to Y$ to the
    lift of $-{\circ} f:\Topp(Y,\mathbb{S})\to \Topp(X,\mathbb{S})$.
  \item 
    \label{NA3}
    For $X\in \Topc$, the unit $\sigma^{\ca C}:X\to \ptc \Omc X$ maps $x\in X$
    to the lift $\overline{\chi_x}:\Omc X\to \Tc$ of its characteristic
    function. 

  \item 
    \label{NA4} 
    For $C\in \ca C$, as $\Op$ is faithful the unit $\eta^{\ca C}_C:C\to \Omc
    \ptc C$ is completely determined by $\Op (\eta^{\ca C}_{C})$.  This map
    takes $a\in \Op C$ to the map $\ptc C\to \mathbb{S}$ corresponding to the
    open
    \begin{equation}
      \label{e: stone like map}
      \{f\in \ptc C\mid \Op f(a)=1\}
    \end{equation}
    The topology of $\ptc C$ may then be described by treating $\Op (\eta^{\ca
    C}_C)$ as a Stone-like map, in the sense that the opens are the sets in
    the image of $\Op (\eta^{\ca C}_C)':\Op C\to \mathcal{P}(\ptc C)$, where
    $\Op (\eta^{\ca C}_C)'$ sends each $a\in \Op C$ to the open \eqref{e:
    stone like map}.
\end{enumerate}

Let us look at simple examples of the construction from Theorem \ref{t: main
theorem in porst tholen adapted}.

\begin{example}\label{e: trivial example}
  As a trivial example of SFC-category we have the quadruple
  $(\mathds{1}_{\Frm},\Frm,\bd 2,\id)$, which induces the classical
  contravariant adjunction between frames and spaces. Here, $\Topc=\Topp$, and
  $\mf{Fix}(\Topc)=\bd{Sob}$.
\end{example}

\begin{example}\label{e: td duality}
  In \cite{banaschewskitd}, the adjunction for $T_D$-spaces arises as in
  Theorem \ref{t: main theorem in porst tholen adapted} from the SFC-category
  $(i_D,\bd{Frm_D},\bd 2,\id)$, where $i_D$ is the subcategory inclusion
  $\bd{Frm_D}\subseteq \Frm$. The category $\Topc$ is the category of
  $T_D$-spaces. In fact, these are characterized as the spaces where every
  characteristic map $\widehat{\chi_x}:\Om X\to \bd 2$ is a morphism in
  $\bd{Frm_D}$. The $T_D$-spaces are also the fixpoints of the resulting
  adjunction. In this case, $\Topc=\mf{Fix}(\Topc)$.
\end{example}

\begin{example}\label{e: t0 spaces example}
  The quadruple $(\Om,\Topz^{\mathsf{op}},\{*\},\id)$, where $\{*\}$ is the
  one-point space, is an SFC-category. Here, $\Topc=\Topp$ and \(
  \mf{Fix}(\Topc)=\Topz \).

  For every $X\in \Topp$, we claim that its $T_0$-reflection $X_0$ provides a
  cartesian lift of the family of characteristic functions $\{ \chi_x:|\Om
  X|\to \bd 2 \mid x \in X \} $, whose lifted family is given by the
  inclusions $ \{ i_{r(x)}:\{*\}\to X_0 \mid x \in X \}$ with
  $i_{r(x)}(*)=r(x)$, where \( r \colon X \to X_0 \) is the \( T_0
  \)-reflection map.

  To see this, we let $Y$ be a $T_0$ space, and we let $f:|\Om Y|\to |\Om X|$
  be a function such that every composite $\chi_x\circ f$ lifts to some map \(
  \overline{f_x}:\{*\}\to Y \).  The desired lift $\overline{f}:X_0\to Y$ of
  $f$ is defined as $\overline{f}(r(x))=\overline{f_r(x)}(*)$ for every $x\in
  X$; this is well-defined as $Y$ is $T_0$. The functors obtained as in
  Theorem \ref{t: main theorem in porst tholen adapted} are the inclusion
  $\Topz\subseteq\Topp$ on one side, and the $T_0$-reflection of a space on
  the other. This implies that $\mf{Fix}(\Topc)=\Topz$.
\end{example}

\begin{example}
  \label{e: the sober example}
  The quadruple $(\Om,\bd{Sob}^{\mathsf{op}},\{*\}, \id)$ is an SFC-category.
  First, we show that $\Topc=\Topp$. For a topological space $X$, the object
  providing the cartesian lift of the family $\{\chi_x:|\Om X|\to 2\mid x\in
  X\}$ is the sobrification $\mf{sob}(X)$ of $X$. Calling $s:X\to \mf{sob}(X)$
  the sobrification map, the lift of the characteristic function of $x\in X$
  is given by the inclusion $i_{s(x)}:\{*\}\to \mf{sob}(X)$ defined as
  $i_{s(x)}(*)=s(x)$. To show that this is a cartesian lift, let $Y$ be a
  sober space and suppose that there is a function $f:|\Om Y|\to |\Om X|$
  whose composite $\chi_x\circ f$ lifts for every $x\in X$. By the universal
  property of $\Om X$, there is a frame map $\widehat{f}:\Om Y\to \Om X$
  lifting $f$, the desired lift is then, up to isomorphism, $\pt
  \widehat{f}:\mf{sob}X\to \mf{sob}Y$. The induced natural adjunction is
\begin{equation*}
\begin{tikzcd}[column sep=large,row sep=large]
     \Topp
            \ar[r,shift left=2,"\mf{sob}"{above}] 
        & \bd{Sob}
            \ar[l,hookrightarrow,{below},shift left=2]
            \ar[l,phantom,"\dashv"{rotate=-90,anchor=center}]
\end{tikzcd}
\end{equation*}
Then, $\mf{Fix}(\Topc)=\bd{Sob}$.
\end{example}

Whenever we are in the presence of an SFC-category \( (\ca C, \Op, \Tc, \iota)
\), the situation can be depicted as in the following diagram, which generalizes the diagrams in \ref{d: left map of adjunctions for TD},  \ref{d: maps of adjunctions for the three main}, and \ref{d: diagram for t0}. 

\begin{equation}
  \label{d: the basic picture of a T0 duality}
    \begin{tikzcd}[row sep=huge,column sep=large]
        \ca{C}
            \ar[r,shift left=2,"\ptc"{above}] \ar[d,"\Op",swap]
        & \bd{Top}_{\ca C}^{\mathsf{op}}
            \ar[l,shift left=2,"\Omc"{below}]
            \ar[l,phantom,"\dashv"{rotate=-90,anchor=center}]
            \ar[d,hookrightarrow] \\
        \bd{Frm}
            \ar[r,shift left=2,"\pt"{above}]
        & \bd{Top}^{\mathsf{op}}
            \ar[l,shift left=2,"\Om"{below}]
            \ar[l,phantom,"\dashv"{rotate=-90,anchor=center}]
   \end{tikzcd}
\end{equation}

Our next aim is to show that the vertical morphisms above constitute a right
map of adjunctions. For every $X \in \Topc$ and $C\in \ca C$, we define
\begin{align*}
  \zeta_X:\Op \Omc X\to \Om X && \xi_C:\ptc C\to \pt\Op C 
\end{align*}
via their actions on the carriers:
\begin{align*}
  \theta_X \colon |\Op \Omc(X)| &\to |\Om(X)| = \Topp(X,\mathbb{S})
    & \ca C(C,\Tc)&\to \Frm(\Op C,\bd 2)
    \\
   a &\mapsto \theta_{X,a},
    & f &\mapsto \iota \circ \Op f 
\end{align*}
so that $\zeta_X$ and \( \xi_C \) are the lifts of the above functions,
respectively, coming from the cartesian liftng properties of $\Om X$ in the
category of frames and \( \pt\Op(C) \) in the category of spaces; indeed, for
the former, we note that \( \chi_x \circ \theta_X = |\iota \circ \Op
\Omc(\overline{\chi}_x)| \) for all \( x \in X \).

\begin{proposition}
  \label{p: the zeta isomorphism and xi mate}
  For an SFC-category $(\Op,\ca C,2_{\ca{C}},\iota)$, the data depicted in
  \eqref{d: the basic picture of a T0 duality}, together with the natural
  transformations \( \zeta, \xi \), defines a right adjunction morphism.
\end{proposition}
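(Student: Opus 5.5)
We must show that $(\Op, \text{incl}, \zeta, \xi)$ is a right adjunction morphism. Here $\zeta\colon \Op\Omc \to \Om\circ\text{incl}$ is the natural isomorphism of Lemma~\ref{l: pfc-cat implies zeta iso}, and $\xi\colon \ptc \to \pt\Op$ is the transformation defined just before the statement; both are viewed as natural transformations between functors in the appropriate directions (recall the spaces side is $\opp{\Topc}$). Since invertibility of $\zeta$ and its naturality are already given by Lemma~\ref{l: pfc-cat implies zeta iso}, two things remain:
\begin{itemize}
\item $\xi$ is a well-defined natural transformation;
\item $\zeta$ and $\xi$ form a mate-pair with respect to $\ptc\dashv\Omc$ and $\pt\dashv\Om$.
\end{itemize}
By the remark after \eqref{e: mate-pair-eqs-2}, it suffices to verify one of the four pasting equations.

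First, well-definedness and naturality of $\xi$. The map $f\mapsto \iota\circ\Op f$ is continuous $\ptc C\to\pt\Op C$. Indeed, the preimage of the basic open $\{p\mid p(a)=1\}$ is $\{f\mid \Op f(a)=1\}$, which is open by definition of the topology on $\ptc C$; equivalently, this follows from the cartesian property of $\pt\Op C$ with respect to the evaluations. Naturality in $C$ is immediate: for $g\colon C\to D$ both composites send $f\colon D\to\Tc$ to $\iota\circ\Op f\circ \Op g$. We also note that $\xi_C$ is injective by faithfulness of $\Op$, which is what makes it a subspace embedding, although this is not needed for the statement.

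Second, the mate equation. We check the second equation of \eqref{e: mate-pair-eqs-1}, with the roles $F=\ptc$, $G=\Omc$, $U=\Op$, $V=$ inclusion, $P=\pt$, $Q=\Om$. At a component $C\in\ca C$ it reads
\[
\zeta_{\ptc C}\circ \Op(\eta^{\ca C}_C)\;=\;\Om(\xi_C)\circ \nu_{\Op C},
\]
where $\nu$ is the unit $\Op C\to \Om\pt\Op C$ of the classical adjunction; the variance is checked in $\Frm$ by the contravariance of the topological side. Since both sides are frame maps $\Op C\to \Om\ptc C$, and the underlying set of $\Om\ptc C$ consists of subsets of $\ca C(C,\Tc)$, it suffices to compare them elementwise.

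\begin{itemize}
\item Left-hand side. By \eqref{NA4}, $\Op(\eta^{\ca C}_C)(a)$ is the element whose image under $\theta_{\ptc C}$ (equivalently under $\zeta_{\ptc C}$, which is the lift of $\theta$) is the open $\{f\mid \Op f(a)=1\}$.
\item Right-hand side. The element $a$ goes to $\{p\in\pt\Op C\mid p(a)=1\}$, whose preimage under $\xi_C$ is $\{f\mid \iota\Op f(a)=1\}$.
\end{itemize}

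These two opens coincide. Faithfulness of the forgetful functors (in particular of $\Frm\to\Set$) then gives the equation.

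The main obstacle is bookkeeping rather than mathematics. One must correctly orient the mate-pair equation given the contravariance ($\opp{\Topc}$, the counits of the dual adjunctions acting as units in $\Topp$), and correctly identify $\eta^{\ca C}$ via $\theta$ with the Stone-like map of \eqref{NA4}. We would first fix the orientation by writing every functor as covariant between $\ca C$ and $\opp{\Topc}$, and then compute the component in $\Frm$ as above, where everything reduces to equality of subsets of $\ca C(C,\Tc)$.
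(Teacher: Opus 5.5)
Your proposal is correct and matches the paper's proof. Both take invertibility of \(\zeta\) from Lemma~\ref{l: pfc-cat implies zeta iso}, reduce the mate-pair condition to the single equation \(\zeta_{\ptc C}\circ\Op(\eta^{\ca C}_C)=\Om(\xi_C)\circ\eta_{\Op C}\), check that both sides send \(a\) to the open \(\{f\mid \iota\circ\Op f(a)=1\}\), and conclude by faithfulness of \(\Frm\to\Set\). Your explicit check of continuity and naturality of \(\xi\) is a harmless addition; the paper leaves it implicit in defining \(\xi_C\) as a lift through the cartesian property of \(\pt\Op C\).
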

\begin{proof}
  We already confirmed that \( \zeta \colon \Omc \Op \to \Om \) is a natural
  isomorphism in Lemma~\ref{l: pfc-cat implies zeta iso}.

  It remains to confirm that \( \zeta \) and \( \xi \) constitute a mate-pair, 
  and by Lemma~\ref{l: mate-corresp}, it is enough to prove that
  \begin{equation}
    \label{e: one of the mate relations}
    \begin{tikzcd}
      & \ca C \ar[d,"\ptc"] \ar[ld,bend right,"\id"{name=A},swap] \\
      \ca C \ar[d,"\Op",swap]
        \ar[rd,Rightarrow,"\zeta",shorten=6mm]
        & \opp{\Topc} \ar[l,"\Omc",swap] \ar[d,hookrightarrow] \\
      \Frm
        & \opp{\Topp} \ar[l,"\Omega"] 
      \ar[from=A,to=2-2,"\eta^{\ca C}",Rightarrow,shorten=4.5mm]
    \end{tikzcd}
      =
    \begin{tikzcd}
      \ca C \ar[d,"\Op",swap]
            \ar[r,"\ptc"]
        & \opp{\Topc} \ar[d,hookrightarrow] \\
      \Frm \ar[r,"\pt"]
            \ar[ru,"\xi",Rightarrow,shorten=5mm]
            \ar[rd,bend right,"\id"{name=B}",swap]
        & \opp{\Topp} \ar[d,"\Omega"] \\
      & \Frm
      \ar[from=B,to=2-2,"\eta",Rightarrow,shorten=4mm]
    \end{tikzcd}
  \end{equation}
  holds. Indeed, we simply need to observe that both \( |\zeta_{\ptc(C)} \circ
  \Op(\eta^\ca C_C)| \) and \( |\Omega(\xi_C) \circ \eta_{\Op(C)}| \) must be
  the function
  \begin{align*}
    |\Op(C)| &\to \Topp(\ptc(C),\mathbb S) \\
    a &\mapsto (f \mapsto \iota \circ \Op(f)(a)),
  \end{align*}
  so the condition \eqref{e: one of the mate relations} holds by faithfulness
  of \( |-| \colon \Frm \to \Set \), concluding the proof.
\end{proof}

We highlight that since \( \zeta \) and \( \xi \) are mates, we also have the
following equality of pasting diagrams
\begin{equation}
  \label{e: the second of the mate relations}
  \begin{tikzcd}
    \ca C \ar[d,"\Op",swap]
      \ar[rd,Rightarrow,"\zeta",shorten=6mm]
      & \opp{\Topc} \ar[l,"\Omc",swap] \ar[d,hookrightarrow] \\
    \Frm \ar[d,"\pt",swap]
      & \opp{\Topp} \ar[l,"\Omega",swap] \ar[ld,bend left,"\id"{name=B}] \\
    \opp{\Topp}
    \ar[from=2-1,to=B,"\sigma",Rightarrow,shorten=4.5mm]
  \end{tikzcd}
    =
  \begin{tikzcd}
    \opp{\Topc} \ar[d,"\Omc",swap] \ar[rd,"\id"{name=A},bend left] \\
    \ca C \ar[d,"\Op",swap]
          \ar[r,"\ptc"]
      & \opp{\Topc} \ar[d,hookrightarrow] \\
    \Frm \ar[r,"\pt",swap]
          \ar[ru,"\xi",Rightarrow,shorten=5mm]
      & \opp{\Topp} 
    \ar[from=2-1,to=A,"\sigma^{\ca C}",Rightarrow,shorten=6mm]
  \end{tikzcd}
\end{equation}

\begin{lemma}
  \label{l: xi is a subspace embedding and basic facts about the units}
  Let $(\Op,\ca C,2_{\ca{C}},\iota)$ be an SFC-category. The following hold:
  \begin{enumerate}[label=(\alph*)]
    \item 
      \label{xi subspace emb} 
      $\xi_C:\ptc C\to \pt \Op C$ is a subspace embedding for all $C\in \ca C$.
    \item
      \label{basic unit 1}
      $\Op(\eta^{\ca C}_C)$ is a frame surjection for all $C\in \ca C$.
  \end{enumerate}
\end{lemma}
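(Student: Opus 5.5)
For \ref{xi subspace emb}, I will work directly from the definitions of the two topologies. The underlying function of $\xi_C$ is $f\mapsto \iota\circ\Op f$. It is injective because $\Op$ is faithful and $\iota$ is an isomorphism: if $\iota\circ\Op f=\iota\circ\Op g$ then $\Op f=\Op g$, so $f=g$.

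For the subspace topology, recall that the opens of $\pt\Op C$ are the sets $\Sigma_a=\{p\in\Frm(\Op C,\bd 2)\mid p(a)=1\}$ for $a\in\Op C$. The opens of $\ptc C$ are the sets $\{f\mid (\iota\circ\Op f)(a)=1\}$, and each of these is exactly $\xi_C^{-1}(\Sigma_a)$. Two things follow:
\begin{itemize}
\item $\xi_C$ is continuous, which is also why it lifts, as noted before Proposition~\ref{p: the zeta isomorphism and xi mate}.
\item Every open of $\ptc C$ is the preimage of an open of $\pt\Op C$, so the topology of $\ptc C$ is the initial topology along $\xi_C$.
\end{itemize}
An injective continuous map carrying the initial topology is a subspace embedding. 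Equivalently, $\ptc C$ carries the initial topology for the evaluations $\eta_a$, and these factor through $\xi_C$ via the evaluations on $\pt\Op C$.

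For \ref{basic unit 1}, I will use the description in \eqref{NA4}. Under the identification $\zeta_{\ptc C}\colon\Op\Omc\ptc C\cong\Om\ptc C$ from Lemma~\ref{l: pfc-cat implies zeta iso}, the map $\Op(\eta^{\ca C}_C)$ sends $a$ to the open $\{f\in\ptc C\mid \Op f(a)=1\}$. The mate equation \eqref{e: one of the mate relations} makes this precise: it gives
\[
\zeta_{\ptc C}\circ\Op(\eta^{\ca C}_C)=\Om(\xi_C)\circ\eta_{\Op C}.
\]
Since $\zeta_{\ptc C}$ is an isomorphism, it suffices to show that the right-hand side is surjective.

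By definition every open of $\ptc C$ has the form $\{f\mid\Op f(a)=1\}$ for some $a$, so every open lies in the image. Equivalently, $\eta_{\Op C}\colon\Op C\to\Om\pt\Op C$ is surjective (the classical spatialization), and $\Om(\xi_C)$ is surjective because $\xi_C$ is a subspace embedding by \ref{xi subspace emb}. Either way $\Op(\eta^{\ca C}_C)$ is a surjective frame map, so it is a frame surjection.

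The only point needing care is bookkeeping: checking that the open-set description of $\Op\Omc\ptc C$ is compatible with $\zeta$. The mate relation already proved handles this, so I expect no real obstacle.
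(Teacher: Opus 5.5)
Your proof is correct and essentially the paper's argument: for (a), injectivity of $\xi_C$ follows from faithfulness of $\Op$, and the topology on $\ptc C$ is initial along $\xi_C$; for (b), you use the mate equation $\zeta_{\ptc C}\circ\Op(\eta^{\ca C}_C)=\Om(\xi_C)\circ\eta_{\Op C}$, surjectivity of $\eta_{\Op C}$ and $\Om(\xi_C)$, and invertibility of $\zeta_{\ptc C}$. Your direct remark that every open of $\ptc C$ has the form $\{f\mid \Op f(a)=1\}$ is just a more explicit way of seeing that the right-hand side of that equation is surjective.
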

\begin{proof}
  By faithfulness of $\Op$, the map $\xi_C:\ca C(C,\Tc)\to \Frm(\Op C,\bd 2)$
  is an injection. The topology on $\ca C(C,\Tc)$, as seen in \eqref{NA4}, is
  the initial one with respect to this injection, and so $\xi_C$ is a subspace
  embedding, confirming \ref{xi subspace emb}.

  Moreover, it follows that \( \Omega(\xi_C) \) must be a frame surjection for
  all \( C \in \ca C \), as is \( \eta_{\Op(C)} \colon \Op(C) \to \Omega \pt
  \Op(C) \). Then, by \eqref{e: one of the mate relations}, we have that
  \begin{equation*}
    \zeta_{\ptc(C)} \circ \Op(\eta^{\ca C}_C)
      = \Omega(\xi_C) \circ \eta_{\Op(C)} 
  \end{equation*}
  is a frame surjection. Since \( \zeta \) is a natural isomorphism, we
  conclude that \ref{basic unit 1} must hold.
\end{proof}

Natural dual adjunctions induced by SFC-categories are idempotent.
Idempotence ascends from the base adjunction between frames and spaces: in the
proof below, we note the use of Item (a) of Lemma \ref{l: xi is a subspace
embedding and basic facts about the units}, as well as the idempotence of
$\Om\dashv \pt$.

\begin{theorem}
  \label{t: idempotence}
  If $(\Op,\ca C,\Tc,\iota)$ is an SFC-category, the adjunction
  $\mf{Dual}(\Op,\ca C,\Tc,\iota)$ is idempotent.
\end{theorem}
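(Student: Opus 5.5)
The plan is to use the standard criterion for idempotence of an adjunction. For a dual adjunction $\ptc \dashv \Omc$ with units $\eta^{\ca C}$ and $\sigma^{\ca C}$, idempotence holds as soon as $\ptc(\eta^{\ca C}_C)$ is an isomorphism for every $C \in \ca C$. Equivalently, it suffices that $\sigma^{\ca C}_{\ptc C}$ is invertible for every $C$, and then the same follows for the other whiskered units. So I will fix $C\in\ca C$ and prove that the continuous map
\[
\ptc(\eta^{\ca C}_C)\colon \ptc\Omc\ptc C \to \ptc C,
\]
which by \eqref{NA1} is precomposition $-\circ\eta^{\ca C}_C$, is a homeomorphism.

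The first step is to invoke one of the triangle identities of the adjunction in Theorem~\ref{t: main theorem in porst tholen adapted}. Read in $\Topc$, it gives
\[
\ptc(\eta^{\ca C}_C)\circ\sigma^{\ca C}_{\ptc C}=\id_{\ptc C}.
\]
Hence $\ptc(\eta^{\ca C}_C)$ is a split epimorphism in $\Topc$, with section $\sigma^{\ca C}_{\ptc C}$. I will double-check the orientation coming from the $\mathsf{op}$, but with $\ptc$ contravariant this is the identity that makes sense on carriers.

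The second step is to show that $\ptc(\eta^{\ca C}_C)$ is injective on carriers. Take $g,h\in\ca C(\Omc\ptc C,\Tc)$ with $g\circ\eta^{\ca C}_C=h\circ\eta^{\ca C}_C$. Applying $\Op$ gives $\Op g\circ\Op(\eta^{\ca C}_C)=\Op h\circ\Op(\eta^{\ca C}_C)$. By Lemma~\ref{l: xi is a subspace embedding and basic facts about the units}\ref{basic unit 1}, $\Op(\eta^{\ca C}_C)$ is a frame surjection, hence an epimorphism, so $\Op g=\Op h$. Faithfulness of $\Op$ then gives $g=h$.

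To conclude, a map with a two-sided-compatible section that is also injective must be bijective, and its section is then its inverse. Concretely, from the triangle identity, $\ptc(\eta^{\ca C}_C)\circ\sigma^{\ca C}_{\ptc C}\circ\ptc(\eta^{\ca C}_C)=\ptc(\eta^{\ca C}_C)$. Cancelling the monomorphism $\ptc(\eta^{\ca C}_C)$ on the left yields $\sigma^{\ca C}_{\ptc C}\circ\ptc(\eta^{\ca C}_C)=\id$. Thus $\sigma^{\ca C}_{\ptc C}$ and $\ptc(\eta^{\ca C}_C)$ are mutually inverse continuous maps, and the adjunction is idempotent.

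The only delicate point is keeping the variances straight in the triangle identity. The substantive input is the surjectivity of $\Op(\eta^{\ca C}_C)$. That surjectivity was already derived from Lemma~\ref{l: xi is a subspace embedding and basic facts about the units}\ref{xi subspace emb} together with the idempotence of $\Om\dashv\pt$, which is where idempotence ascends from the base adjunction.
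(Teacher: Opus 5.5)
Your argument is correct, but it goes through the other triangle identity from the one the paper's proof uses.

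The paper's route works on the $\ca C$ side and checks that $\eta^{\ca C}\Omc$ is invertible.
\begin{itemize}
\item The identity $\Omc(\sigma^{\ca C}_X)\circ\eta^{\ca C}_{\Omc X}=\id$ makes $\eta^{\ca C}_{\Omc X}$ a split monomorphism in $\ca C$.
\item To show it is epic, the paper rederives that $\Op(\eta^{\ca C}_{\Omc X})$ is surjective. It does this from naturality of $\zeta$, the mate square, and the invertibility of $\eta_{\Op\Omc X}$, which comes from idempotence of $\Om\dashv\pt$.
\item It then uses that the faithful functor $\Op$ reflects epimorphisms.
\end{itemize}

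Your route works on the space side and checks that $\ptc\eta^{\ca C}$ is invertible.
\begin{itemize}
\item The identity $\ptc(\eta^{\ca C}_C)\circ\sigma^{\ca C}_{\ptc C}=\id$ makes $\ptc(\eta^{\ca C}_C)$ a split epimorphism in $\Topc$.
\item Since $\eta^{\ca C}_C$ is epic, precomposition with it is injective, hence monic.
\end{itemize}

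The common engine is the same: $\eta^{\ca C}$ is componentwise epic because $\Op(\eta^{\ca C})$ is surjective. Your version takes this directly from Lemma~\ref{l: xi is a subspace embedding and basic facts about the units}\ref{basic unit 1}, which holds at every object. This shows that the invertibility of $\eta_{\Op\Omc X}$ used in the paper is not actually needed.

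One small correction to your last paragraph: part (b) of that lemma rests on the surjectivity of the spatialization map $\eta_{\Op C}$, which holds for every frame. It does not rest on idempotence of $\Om\dashv\pt$.

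The paper's route shows directly that each $\Omc X$ is a fixpoint in $\ca C$. Yours shows directly that each $\ptc C$ is a fixpoint in $\Topc$. The standard equivalence of the whiskered-unit conditions for idempotence makes these interchangeable.
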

\begin{proof}
  We claim it is enough to confirm that \( \eta^{\ca C}_{\Omc(X)} \) is an
  epimorphism for all \( X \). Indeed, the triangle identity \(
  \Omc(\sigma^{\ca C}_X) \circ \eta^{\ca C}_{\Omc(X)} = \id \) tells us that
  \( \eta^{\ca C}_{\Omc(X)} \) is a split monomorphism, so our claim follows.

  First, we observe that we have
  \begin{equation*}
    \begin{tikzcd}
      \Op \Omc(X) \ar[r,"\zeta_X"] \ar[d,swap,"\eta_{\Op \Omc(X)}"]
        & \Omega(X) \ar[d,"\eta_{\Omega(X)}"] \\
      \Omega \pt \Op \Omc(X) \ar[r,swap,"\zeta_{\Op \Omc(X)}"]
        & \Omega \pt \Omega(X)
    \end{tikzcd}
  \end{equation*}
  by naturality of \( \zeta \). Since \( \pt \dashv \Omega \) is idempotent, we
  have that \( \eta_{\Omega(X)} \) is an isomorphism, and therefore so is \(
  \eta_{\Op \Omc(X)} \).

  Now, evaluating \eqref{e: one of the mate relations} at \( \Omc(X) \), we
  obtain the following commutative square
  \begin{equation*}
    \begin{tikzcd}
      \Op \Omc(X) \ar[r,"\Op(\eta^{\ca C}_{\Omc(X)})"]
                  \ar[d,"\eta_{\Op\Omc(X)}",swap]
        & \Op \Omc \ptc \Omc(X) \ar[d,"\zeta_{\ptc \Omc(X)}"] \\
      \Omega \pt \Op \Omc(X) \ar[r,swap,"\Omega(\xi_{\Omc(X)})"]
        & \Omega \ptc \Omc(X)
    \end{tikzcd}
  \end{equation*}
  from where we deduce that, since \( \Omega(\xi_{\Omc(X)}) \) is a frame
  surjection, we must have that \( \Op(\eta_{\Omc(X)}) \) is a frame
  surjection as well. In particular, it is an epimorphism.

  Since \( \Op \) reflects epimorphisms, our result now follows.
\end{proof}

\begin{corollary}
  \label{c: duality result}
  If \( (\Op, \ca C, \Tc, \iota) \) is an SFC-category, then the natural dual
  adjunction \( \mathsf{Dual}(\Op, \ca C, \Tc, \iota) \) restricts to a dual
  equivalence
  \begin{equation*}
    \begin{tikzcd}[column sep=large]
      \mathsf{Fix}(\ca C) \ar[r,"\ptc",shift left=2]
                          \ar[r,"\dashv"{rotate=-90},phantom]
        & \mathsf{Fix}(\Topc)
            \ar[l,"\Omc",shift left=2]
    \end{tikzcd}
  \end{equation*}
  between the reflective subcategories of fixpoints of \( \ptc \dashv \Omc \).
\end{corollary}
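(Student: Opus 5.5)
This follows from Theorem~\ref{t: idempotence} together with the general fact about dual adjunctions recalled in the first subsection. Any dual adjunction restricts to a dual equivalence between the full subcategories $\mf{Fix}(\eta^{\ca C})$ and $\mf{Fix}(\sigma^{\ca C})$ of objects at which the units are invertible. So I would first apply this to $\ptc\dashv\Omc$ from Theorem~\ref{t: main theorem in porst tholen adapted}. For $C\in\mf{Fix}(\ca C)$ the unit $\eta^{\ca C}_C$ is an isomorphism, and I need $\ptc C\in\mf{Fix}(\Topc)$. This follows from the triangle identity
\[
\Omc(\sigma^{\ca C}_{\ptc C})\circ\eta^{\ca C}_{\Omc\ptc C}=\id
\]
together with its dual form. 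Naturality of the units then transports invertibility along the isomorphism $\eta^{\ca C}_C$, so that $\sigma^{\ca C}_{\ptc C}$ is invertible. Symmetrically, $\Omc$ maps $\mf{Fix}(\Topc)$ into $\mf{Fix}(\ca C)$. The restricted functors are then quasi-inverse, with the restricted units as the witnessing natural isomorphisms. This part is routine.

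The remaining claim is that both fixpoint categories are reflective. Here I would use idempotence, which Theorem~\ref{t: idempotence} gives via the fact that $\eta^{\ca C}_{\Omc X}$ is invertible for every $X$. For an idempotent adjunction, the components $\sigma^{\ca C}_{\ptc C}$ and $\eta^{\ca C}_{\Omc X}$ are invertible for all $C$ and $X$. Hence $\Omc\ptc C\in\mf{Fix}(\ca C)$ and $\ptc\Omc X\in\mf{Fix}(\Topc)$. I then take $\eta^{\ca C}_C\colon C\to\Omc\ptc C$ as the candidate reflection of $C$ into $\mf{Fix}(\ca C)$. Given $g\colon C\to D$ with $D$ fixed, the factorisation is $(\eta^{\ca C}_D)^{-1}\circ\Omc\ptc(g)$. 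Uniqueness follows because, for an idempotent monad, $\eta$ inverted at fixed objects makes $T\eta=\eta T$ invertible. This is the standard fact that the Eilenberg--Moore category of an idempotent monad is the reflective subcategory of objects at which the unit is invertible, as cited from \cite{at69}. The same argument applies on the space side, where the reflection is taken in $\opp{\Topc}$, that is, reflective in the appropriate variance.

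The only step needing care is checking that idempotence in the sense proved gives invertibility of both families $\eta^{\ca C}\Omc$ and $\sigma^{\ca C}\ptc$, not just one. I would handle it as follows. From $\eta^{\ca C}_{\Omc X}$ invertible and the triangle identity, $\Omc(\sigma^{\ca C}_X)$ is invertible. Then, writing the other triangle identity at $\ptc C$, the standard lemma that one of $\Omc\sigma^{\ca C}$, $\eta^{\ca C}\Omc$, $\ptc\eta^{\ca C}$, $\sigma^{\ca C}\ptc$ being invertible implies all are yields the rest. With that in hand, the corollary is immediate.
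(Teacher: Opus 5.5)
Your proposal is correct and is essentially the paper's argument: the paper records the corollary as an immediate consequence of Theorem~\ref{t: idempotence} combined with the general facts on (idempotent) dual adjunctions from the first subsection, and you unpack those facts in detail. There are two small slips. First, the identity that actually makes $\sigma^{\ca C}_{\ptc C}$ invertible is $\ptc(\eta^{\ca C}_C)\circ\sigma^{\ca C}_{\ptc C}=\id$; the one you display only gives invertibility of $\Omc(\sigma^{\ca C}_{\ptc C})$, and $\Omc$ need not reflect isomorphisms. Second, $\mf{Fix}(\Topc)$ is reflective in $\Topc$ itself, via $\sigma^{\ca C}_X\colon X\to\ptc\Omc X$ (just as sobrification is a reflection in $\Topp$), not in $\opp{\Topc}$.
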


\begin{remark}
  Idempotence of \( \ptc \dashv \Omc \) allows us to obtain further conditions
  on the units. Indeed, we obtain at once that 
  \begin{equation}
    \Om(\sigma_{X}^{\ca C}), \qquad
    \Om(\xi_{\Omc X}), \quad
    \text{and} \quad \Op(\eta^{\ca C}_{\Omc X})
  \end{equation}
  are isomorphisms for all \( X \in \Topc \).
\end{remark}

We say that an object $C\in \ca C$ is \emph{sober} if every point of the frame
$\Op C$ lifts. We observe that this is equivalent to $\xi_C$ being surjective,
hence a homeomorphism, by Lemma \ref{l: xi is a subspace embedding and basic
facts about the units} above.

\begin{proposition}
  \label{p: the zeta isomorphism and the subspace inclusion ptc to ptO}
  If $(\Op,\ca C,\Tc,i)$ be an SFC-category, then the data depicted
  on \eqref{d: the basic picture of a T0 duality} together with the mate-pair
  \( \zeta, \xi \) defines a strong adjunction morphism if and only if every
  object of $\ca C$ is sober.
\end{proposition}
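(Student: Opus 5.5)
By definition, a right adjunction morphism is strong exactly when the mate $\xi$ is also invertible. Proposition~\ref{p: the zeta isomorphism and xi mate} already gives that $(\Op,\hookrightarrow,\zeta,\xi)$ is a right adjunction morphism. So the statement reduces to showing that $\xi\colon \ptc\to\pt\Op$ (viewed in $\opp{\Topp}$) is a natural isomorphism if and only if every object of $\ca C$ is sober. Being a natural isomorphism is a componentwise condition, so it suffices to prove, for each $C\in\ca C$, that $\xi_C\colon\ptc C\to\pt\Op C$ is a homeomorphism if and only if $C$ is sober.

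For a fixed $C$, I will use Lemma~\ref{l: xi is a subspace embedding and basic facts about the units}\ref{xi subspace emb}: $\xi_C$ is a subspace embedding, so it is a homeomorphism precisely when it is surjective. On carriers, $\xi_C$ sends $f\in\ca C(C,\Tc)$ to $\iota\circ\Op f$.

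Surjectivity therefore says the following: every frame point $p\colon\Op C\to\bd 2$ can be written as $\iota\circ\Op(\overline p)$ for some $\overline p\colon C\to\Tc$. Equivalently, $\iota^{-1}\circ p\colon \Op C\to\Op\Tc$ lifts along $\Op$. This is exactly the definition of sobriety, where a point of $\Op C$ is read as a map into $\Op\Tc$ via the fixed isomorphism $\iota$. This is the observation already recorded just before the statement.

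The only point requiring care is the direction of the isomorphism. The relevant component lives in $\opp{\Topp}$, and invertibility there is the same as invertibility in $\Topp$, so no issue arises. Hence $\xi$ is invertible exactly when every $C$ is sober, and the morphism is then strong. I do not expect a genuine obstacle here; the argument is a direct unwinding of definitions on top of Lemma~\ref{l: xi is a subspace embedding and basic facts about the units}.
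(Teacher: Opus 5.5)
Your proposal is correct and follows the paper's proof: strongness is reduced to invertibility of $\xi$, Lemma~\ref{l: xi is a subspace embedding and basic facts about the units}\ref{xi subspace emb} turns "$\xi_C$ is a homeomorphism" into "$\xi_C$ is surjective", and surjectivity is exactly the statement that every point of $\Op C$ lifts, i.e.\ that $C$ is sober. Your explicit treatment of the identification through $\iota$ just spells out that last step, which the paper leaves implicit.
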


\begin{proof}
  By Lemma \ref{l: xi is a subspace embedding and basic facts about the
  units}, $\xi_C$ is a homeomorphism precisely when it is surjective. This,
  indeed, is equivalent to sobriety of every $C\in\ca C$.
\end{proof}

We now look at the relation between sober objects in $\ca C$ and sobriety for
topological spaces.
\begin{lemma}
  \label{l: fibers of spatial frames whose spec is in topc}
    Let $(\Op,\ca C,\Tc,\iota_{\ca C})$ be an SFC-category. If \( L \) is a
    spatial frame, then the following are equivalent:
    \begin{enumerate}[label=(\roman*)]
      \item $\pt(L) \in \Topc$.
      \item $\pt(L) \in \mathsf{Fix}(\Topc)$.
      \item The fiber of $ L $ contains a sober object.
      \item The fiber of $L$ contains a spatial sober object.
    \end{enumerate}
    In case the conditions are verified, $\Omc \pt (L)$ is, up to isomorphism,
    the  unique spatial sober object which is both in $\Op^{-1}(L)$ and
    $\mf{Fix}(\ca C)$.
\end{lemma}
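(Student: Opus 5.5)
We prove (ii)$\Rightarrow$(i)$\Rightarrow$(iv)$\Rightarrow$(iii)$\Rightarrow$(ii), and then the uniqueness clause. The implication (ii)$\Rightarrow$(i) is trivial, since $\mf{Fix}(\Topc)\subseteq\Topc$.

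For (i)$\Rightarrow$(iv), put $C=\Omc\pt(L)$. By Lemma \ref{l: pfc-cat implies zeta iso}, $\Op C\cong \Om\pt(L)$, and since $L$ is spatial, $\eta_L\colon L\to\Om\pt L$ is an isomorphism; hence $C$ lies in the fiber of $L$, with a suitable choice of $\theta^C$. To show $C$ is sober, every point $p\colon \Op C\to\bd 2$ must lift. Transport $p$ along $\zeta$ and $\eta_L$ to a point of $L$; as an element of $\pt(L)$ it is some $x\in X:=\pt(L)$. By (NA3), $\overline{\chi_x}\colon\Omc X\to\Tc$ is the lift of $\iota^{-1}\circ\chi_x\circ\theta_X$. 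Under $\zeta_X$ and $\Om X\cong L$, this composite is exactly $p$, so $p$ lifts. Spatiality of $C$ amounts to $\Op C$ being spatial, which holds because $\Op C\cong L$. In particular, (iv)$\Rightarrow$(iii) is immediate.

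For (iii)$\Rightarrow$(ii), let $C$ be sober with $\Op C\cong L$. Sobriety says $\xi_C\colon\ptc C\to\pt\Op C\cong\pt L$ is a homeomorphism. Since $\ptc C\in\Topc$ and $\Topc$ is replete, $\pt L\in\Topc$, which gives (i). To reach (ii) we must show the unit $\sigma^{\ca C}$ at $\pt L$ is invertible. Up to isomorphism this is the unit at $\ptc C$, and by idempotence (Theorem \ref{t: idempotence}) $\ptc C\in\mf{Fix}(\Topc)$, because spaces of the form $\ptc(-)$ are fixpoints of an idempotent adjunction. Repleteness of $\mf{Fix}(\Topc)$ then gives (ii).

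For uniqueness, first check that $\Omc\pt L$ lies in $\mf{Fix}(\ca C)$; it does, since by idempotence objects of the form $\Omc X$ are fixpoints. Now suppose $D$ is a spatial sober object in $\Op^{-1}(L)\cap\mf{Fix}(\ca C)$. Then $D\cong\Omc\ptc D$ via $\eta^{\ca C}_D$. Sobriety gives $\ptc D\cong\pt\Op D\cong\pt L$ through $\xi_D$, and applying $\Omc$ yields $D\cong\Omc\pt L$. The main obstacle is the bookkeeping of the isomorphisms $\zeta,\theta,\xi,\eta$: one must check that the point-lifting in (i)$\Rightarrow$(iv) really matches $p$, and that the isomorphism in the uniqueness argument respects the fiber structure, i.e.\ commutes with the identifications with $L$. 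For the latter I would invoke the mate equation \eqref{e: one of the mate relations}. It gives $\zeta_{\ptc D}\circ\Op(\eta^{\ca C}_D)=\Om(\xi_D)\circ\eta_{\Op D}$, which shows the composite isomorphism $\Op D\to \Op\Omc\pt L$ is the canonical $\eta$-isomorphism.
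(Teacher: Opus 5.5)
Your proof is correct, and it is organized somewhat differently from the paper's.

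\textbf{How the paper argues.} The paper gets everything out of (i) in one step. It evaluates the mate relation \eqref{e: the second of the mate relations} at $\pt(L)$, which gives $\xi_{\Omc\pt(L)}\circ\sigma^{\ca C}_{\pt(L)}=\pt(\zeta_{\pt(L)})\circ\sigma_{\pt(L)}$. The right-hand side is invertible, so $\xi_{\Omc\pt(L)}$ is a split epimorphism and also an embedding, hence a homeomorphism; this is the sobriety of $\Omc\pt(L)$. It then follows that $\sigma^{\ca C}_{\pt(L)}$ is invertible, which gives (ii) directly. The implication (iii)$\Rightarrow$(i) is just the composite $\pt(\theta^{-1})\circ\xi_S$.

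\textbf{Sobriety of $\Omc\pt(L)$.} Your elementwise lifting of a point $p$ to $\overline{\chi_x}$ is exactly the componentwise content of that square: it says $\xi(\sigma^{\ca C}(x))=\chi_x\circ\zeta$. So on this point the two arguments agree in substance.

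\textbf{Obtaining (ii).} Here you genuinely differ. You route (ii) through (iii) and the global Theorem~\ref{t: idempotence}: $\ptc C$ is a fixpoint, and you transport this along the homeomorphism $\xi_C$ using repleteness. This is conceptually clean. Note, however, that the proof of Theorem~\ref{t: idempotence} only establishes that $\eta^{\ca C}_{\Omc X}$ is invertible. To conclude that $\sigma^{\ca C}_{\ptc C}$ is invertible, you are relying on the standard fact that, for an adjunction, invertibility of $\eta^{\ca C}\Omc$ is equivalent to invertibility of $\sigma^{\ca C}\ptc$; you should state this explicitly. The paper's route is more local and needs only idempotence of the classical adjunction $\Om\dashv\pt$.

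\textbf{Uniqueness.} You follow the paper's chain of isomorphisms, but you are more careful in two places. First, you check that $\Omc\pt(L)$ actually lies in $\mf{Fix}(\ca C)$; the paper leaves this implicit, though it is exactly what the proof of Theorem~\ref{t: idempotence} shows. Second, you verify via the mate equation that the resulting isomorphism respects the identifications with $L$, i.e.\ that it is an isomorphism in the fiber $\Op^{-1}(L)$; the paper does not address this.
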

\begin{proof}
  It is clear that (ii) implies (i) and that (iv) implies (iii).

  If \( \pt(L) \) is in \( \Topc \), then we can evaluate \eqref{e: the second
  of the mate relations} at \( \pt(L) \), yielding a commutative square
  \begin{equation}
    \label{e: omc pt L sober}
    \begin{tikzcd}[column sep=large,row sep=large]
      \pt (L) \ar[r,"\sigma^{\ca C}_{\pt(L)}"] 
                 \ar[d,"\sigma_{\pt(L)}",swap]
        & \ptc \Omc \pt (L) \ar[d,"\xi_{\Omc\pt(L)}"] \\
      \pt \Om \pt(L) \ar[r,swap,"\pt \zeta_{\pt (L)}"]
        & \pt \Op \Omc \pt (L)
    \end{tikzcd}
  \end{equation}
  We note that \( \pt(\zeta_{\pt(L)}) \) is invertible, and since \( \Om
  \dashv \pt \) is idempotent, \( \sigma_{\pt(L)} \) is invertible as well. It
  follows that \( \xi_{\Omc\pt(L)} \) is a split epimorphism, and therefore
  invertible by Lemma~\ref{l: xi is a subspace embedding and basic facts about
  the units}.\ref{xi subspace emb}. Moreover, it also follows that \(
  \sigma^{\ca C}_{\pt(L)} \) is invertible.

  Thus, we obtain that the spatial object \( \Omc\pt(L) \) is sober as well,
  and that \( \pt(L) \in \mathsf{Fix}(\Topc) \). We also have a string of
  isomorphisms
  \begin{equation*}
    \begin{tikzcd}
      \Op \Omc \pt(L) \ar[r,"\zeta_{\pt(L)}"]
        & \Om \pt(L) \ar[r,"\eta^{-1}_L"]
        & L
    \end{tikzcd}
  \end{equation*}
  which ensures that \( \Omc \pt(L) \) is a sober object in the fiber
  of \( L \), which is necessarily spatial. Thus, we have proved that (i)
  implies (ii), (iii) and (iv).

  Now, to show (iii) implies (i), we let \( S \) be a sober object and 
  \( \theta \colon \Op(S) \cong L \) be an isomorphism, then we consider the
  composite 
  \begin{equation*}
    \begin{tikzcd}
      \ptc(S) \ar[r,"\xi_S"]
        & \pt \Op(S) \ar[r,"\pt(\theta^{-1})"]
        & \pt (L) 
    \end{tikzcd}
  \end{equation*}
  which witnesses that \( \pt(L) \in \Topc \). 

  Finally, to prove uniqueness, we let \( S \) be a spatial, sober object and
  let \( \theta \colon \Op(S) \cong L \) be an isomorphism. We have the
  following string of isomorphisms
  \begin{equation*}
    \begin{tikzcd}
      S \ar[r,"\eta^{\ca C}_S"]
        & \Omc\ptc(S) \ar[r,"\Omc(\xi^{-1}_S)"]
        & \Omc\pt \Op(S) \ar[r,"\Omc \pt(\theta)"]
        & \Omc\pt L,
    \end{tikzcd}
  \end{equation*}
  confirming our claim.
\end{proof}

\begin{remark}
  \label{r: omc pt L sober}
  We observe that, even if \( L \) is not spatial, having \( \pt(L) \) in \(
  \Topc \) still ensures that \( \pt(L) \in \Topc \) and that \( \Omc(\pt(L))
  \) is sober, by commutativity of \eqref{e: omc pt L sober}.
\end{remark}

We denote by $\ca C_\bd{Sob}$ the full subcategory of $\ca C$ consisting of
the sober objects.
\begin{corollary}
 The dual equivalence
 \begin{equation*}
    \begin{tikzcd}[column sep=large]
      \mathsf{Fix}(\ca C) \ar[r,"\ptc",shift left=2]
                          \ar[r,"\dashv"{rotate=-90},phantom]
        & \opp{\mathsf{Fix}(\Topc)}
            \ar[l,"\Omc",shift left=2]
    \end{tikzcd}
  \end{equation*}
  restricts to a dual equivalence
   \begin{equation*}
    \begin{tikzcd}[column sep=large]
      \mathsf{Fix}(\ca C)\cap \ca C_\bd{Sob}
      \ar[r,"\ptc",shift left=2]
                          \ar[r,"\dashv"{rotate=-90},phantom]
        & \opp{\mathsf{Fix}(\Topc)} \cap \opp{\bd{Sob}}
            \ar[l,"\Omc",shift left=2]
    \end{tikzcd}
  \end{equation*}
\end{corollary}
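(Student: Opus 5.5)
The plan is to show that both functors of the dual equivalence in Corollary~\ref{c: duality result} map the sober parts onto each other. Since a dual equivalence restricts to any pair of full replete subcategories that correspond under it, it suffices to prove two things. First, if $C\in\mathsf{Fix}(\ca C)$ is sober, then $\ptc(C)$ is a sober space. Second, if $X\in\mathsf{Fix}(\Topc)$ is a sober space, then $\Omc(X)$ is a sober object. Both subcategories are replete: sobriety of spaces is invariant under homeomorphism, and sobriety of objects is invariant under isomorphism in $\ca C$, because isomorphisms transport the lifting of points along $\Op$. So once the two claims hold, the restricted functors are mutually quasi-inverse, with the units and counits still invertible.

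\emph{First claim.} Let $C$ be sober. By the remark following Theorem~\ref{t: idempotence}, $\xi_C$ is surjective, and by Lemma~\ref{l: xi is a subspace embedding and basic facts about the units}\ref{xi subspace emb} it is a subspace embedding. Hence $\xi_C$ is a homeomorphism $\ptc(C)\cong\pt\Op(C)$. The space $\pt\Op(C)$ is a spectrum of a frame, so it is sober, and therefore $\ptc(C)$ is sober. This direction does not use the assumption $C\in\mathsf{Fix}(\ca C)$.

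\emph{Second claim.} Let $X\in\mathsf{Fix}(\Topc)$ be sober and put $L=\Om X$. Then $L$ is spatial, and $\sigma_X\colon X\to\pt(L)$ is a homeomorphism. Since $\Topc$ is replete, $\pt(L)\in\Topc$. Lemma~\ref{l: fibers of spatial frames whose spec is in topc} then tells us that $\Omc\pt(L)$ is sober. The functor $\Omc$ carries the homeomorphism $\sigma_X$ to an isomorphism $\Omc(X)\cong\Omc\pt(L)$, so $\Omc(X)$ is sober as well. Alternatively, one can argue directly with square~\eqref{e: omc pt L sober} at $X$: there $\sigma_X$, $\pt\zeta_X$ and $\sigma^{\ca C}_X$ are all invertible, so $\xi_{\Omc X}$ is invertible, and that is exactly sobriety of $\Omc X$.

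The main obstacle is keeping the identifications straight: we need to check that sobriety of objects is really isomorphism-invariant, and that $\Omc$ respects isomorphisms. The second is immediate from functoriality. For the first, given an isomorphism $g\colon C\to D$ and a point $p$ of $\Op D$, the point $p\circ\Op(g)$ of $\Op C$ lifts to some $\overline q\colon C\to\Tc$, and then $\overline q\circ g^{-1}$ lifts $p$. With both claims established, the equivalence restricts as stated.
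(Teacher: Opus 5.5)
Your proof is correct and follows the paper's argument. For sober $C\in\mathsf{Fix}(\ca C)$, the homeomorphism $\xi_C\colon\ptc(C)\cong\pt\Op(C)$ shows that $\ptc(C)$ is sober. For sober $X\in\mathsf{Fix}(\Topc)$, the square \eqref{e: omc pt L sober}, which is what Remark~\ref{r: omc pt L sober} invokes, gives sobriety of $\Omc(X)$.

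One minor citation point: surjectivity of $\xi_C$ for sober $C$ is just the definition of sobriety (every point of $\Op C$ lifts). It does not come from the remark after Theorem~\ref{t: idempotence}.
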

\begin{proof}
  By Remark~\ref{r: omc pt L sober}, if \( X \cong \pt(L) \) is a sober space
  in \( \mathsf{Fix}(\Topc) \), we have that \( \Omc(X) \) is a spatial, sober
  object. Conversely, if $C\in \mathsf{Fix}(\ca C)\cap \ca C_\bd{Sob}$, then
  the homeomorphism $\xi_C \colon \ptc (C) \to \pt \Op(C) $ witnesses that \(
  \ptc(C) \) is a sober space.
\end{proof}

\subsection{The ordered category of SFC-categories}\label{ss the ordered category sfc}

Naturally, SFC-categories can be equipped with a suitable notion of morphism,
elevating them to the status of a category. In fact, we will view this a
suitable ordered sub-category of the ordered category of \textit{pointed
FC-categories}, abbreviated as \emph{PFC-categories}. In turn, these are an
ordered sub-category of the ordered category of FC-categories. More precisely,
we will have inclusions given by faithful functors
\begin{equation*}
  \begin{tikzcd}[column sep=large]
    \mathbf{SFC} \ar[r,hookrightarrow]
      & \mathbf{PFC} \ar[r,hookrightarrow]
      & \mathbf{Con}(\Frm)
  \end{tikzcd}
\end{equation*}
whose underlying functions on the hom-preorders are monotone.

A \textit{pointed FC-category}, or \textit{PFC-category}, is a quadruple \(
(\Op_{\ca C}, \ca C, \Tc, \iota) \), where \( (\ca C, \Op_{\ca C}) \) is a
FC-category, $\Tc\in \ca C$ is an object and \( \iota \colon \Op(\Tc) \cong
\bd 2 \) is an isomorphism. 

If 
\begin{align}\label{e: c and d}
  (\Op_{\ca C},\ca C,\Tc,\iota_{\ca C}) && \text{ and } 
    && (\Op_{\ca D},\ca D,\bd 2_{\ca D}, \iota_{\ca D})
\end{align}
are PFC-categories, a \textit{PFC-functor} \( (H,\alpha,i) \colon (\Op_{\ca
C},\ca C,\Tc,\iota_{\ca C}) \to (\Op_{\ca D},\ca D,\bd 2_{\ca D}, \iota_{\ca
D}) \) is a functor \( H \colon \ca C \to \ca D \), together with 
\begin{enumerate}[label=(M\arabic*),ref=M\arabic*]
  \item 
    \label{Mxx} 
    a natural isomorphism $\alpha:\Op_{\ca D}{} H \to \Op_{\ca C}$,
    so that \( (H,\alpha) \) is a FC-functor,
  \item 
    \label{M2}
    and an isomorphism $i:H(\Tc) \to \bf 2_{\ca D}$ such that \( \iota_{\ca D}
    \circ \Op_{\ca D}(i) = \iota_{\ca C} \circ \alpha_{\Tc} \).
\end{enumerate}

The composite of a string of PFC-functors
\begin{equation*}
  \begin{tikzcd}
    (\ca C, \Op_{\ca C}, \Tc, \iota_{\ca C})
      \ar[r,"{(H,\alpha,i)}"]
    & (\ca D, \Op_{\ca D}, \bf 2_{\ca D}, \iota_{\ca D})
      \ar[r,"{(G,\gamma,k)}"]
    & (\ca E, \Op_{\ca E}, \bf 2_{\ca E}, \iota_{\ca E})
  \end{tikzcd}
\end{equation*}
is given by the PFC-functor \( (GH, \alpha \circ \gamma_H, k \circ G(i)) \),
and the identity PFC-functor on a PFC-category \( (\ca C, \Op_{\ca C}, \Tc,
\iota_{\ca C}) \) is simply \( (\id_{\ca C}, \id_{\Op_{\ca C}}, \id_{\Tc}) \).

For PFC-functors \( (H,\alpha,i),\, (K,\beta,j) \colon (\Op_{\ca C}, \ca C,
\Tc, \iota_{\ca C}) \to (\Op_{\ca D}, \ca D, \bd 2_{\ca D}, \iota_{\ca D}) \),
we have \( (H,\alpha,i) \leq (K,\beta,j) \) in \( \PFC \) if and only if we
have \( (H,\alpha) \leq (K,\beta) \) in \( \Con(\Frm) \), and, letting \( \phi
\colon H \to K \) be the underlying natural transformation, the following
triangle must be commutative:
\begin{equation*}
  \begin{tikzcd}[column sep=small]
    H(\Tc) \ar[rr,"\phi_{\Tc}"]
           \ar[rd,"i",swap]
      && K(\Tc) \ar[ld,"j"] \\
    & 2_{\ca D} 
  \end{tikzcd}
\end{equation*}
So, in particular, \( \phi_{\Tc} \) must be invertible.

For a PFC-functor
\begin{equation*} 
  (H,\alpha,i) \colon (\mathcal O_{\ca C},\ca C, \bd 2_{\ca C},\iota_{\ca C}) 
    \to (\mathcal O_{\ca D},\ca D, \bd 2_{\ca D},\iota_{\ca D} ),
\end{equation*}
we introduce the following condition:
\begin{enumerate}[label=(M\arabic*),ref=M\arabic*]\setcounter{enumi}{2}
  \item 
    \label{M3}
    For each $X\in \Topc$, the underlying FC-functor \( (H,\alpha) \)
    preserves the cartesian lifting w.r.t. \( |\Op_{\ca C}(-)| \) of the
    family \( \{|\iota^{-1}_{\ca C}| \circ \chi_x:\Topp(X,\mathbb S)\to
    |\Op_{\ca C}(\Tc)| \mid x\in X\} \).  
\end{enumerate}

We define $\SFC$ as the ordered sub-category of $\PFC$ whose objects are
SFC-categories and whose morphisms are the PFC-functors which satisfy
\eqref{M3} -- we call these \textit{SFC-functors}. Moreover, for SFC-functors 
\( (H,\alpha,i), (K,\beta,j) \colon (\Op_{\ca C}, \ca C, \Tc, \iota_{\ca C})
\to (\Op_{\ca D}, \ca D, \bd 2_{\ca D}, \iota_{\ca D}) \), we define \(
(H,\alpha,i) \leq (K,\beta,j) \) if and only if this is so in \( \PFC \), so
that the inclusion \( \SFC \to \PFC \) is homwise an order embedding.

Property \eqref{M3} is motivated by the fact that it induces a right
adjunction morphism between the induced natural dual adjunctions. In order to
prove this, we first verify that:
\begin{lemma}
  \label{l: m3 implies all topc are topd}
  If \( (H,\alpha,i) \colon (\Op_{\ca C}, \ca C, \Tc, \iota_{\ca C}) \to
  (\Op_{\ca D}, \ca D, \bf 2_{\ca D}, \iota_{\ca D}) \) is a PFC-functor
  satisfying \eqref{M3}, then we have \( \Topc \subseteq \Topp_{\ca D} \).
\end{lemma}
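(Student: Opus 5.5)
Fix \( X \in \Topc \), with cartesian lift \( (\Omc(X), \theta_X) \) of the family \( \{ |\iota^{-1}_{\ca C}| \circ \chi_x \mid x\in X\} \) with respect to \( |\Op_{\ca C}(-)| \), and lifted family \( \{ \overline{\chi_x} \colon \Omc X \to \Tc \} \). By \eqref{M3}, the pair \( \big(H\Omc(X), \theta_X \circ |\alpha_{\Omc X}|\big) \) is a cartesian lift, with respect to \( |\Op_{\ca D}(-)| \), of the transported family
\[
  \{ |\alpha^{-1}_{\Tc}| \circ |\iota^{-1}_{\ca C}| \circ \chi_x \colon \Topp(X,\mathbb S) \to |\Op_{\ca D}(H\Tc)| \mid x \in X \}.
\]
Its lifted family is \( \{ H(\overline{\chi_x}) \colon H\Omc X \to H\Tc \} \). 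The task is to turn this into a cartesian lift of the family \( \{ |\iota^{-1}_{\ca D}| \circ \chi_x \colon \Topp(X,\mathbb S) \to |\Op_{\ca D}(\bd 2_{\ca D})| \} \), whose existence is exactly membership \( X \in \Topp_{\ca D} \).

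The plan is to post-compose with the isomorphism \( i \colon H(\Tc) \to \bd 2_{\ca D} \). Condition \eqref{M2} gives \( \Op_{\ca D}(i) = \iota^{-1}_{\ca D} \circ \iota_{\ca C} \circ \alpha_{\Tc} \). Hence
\[
  |\Op_{\ca D}(i)| \circ |\alpha^{-1}_{\Tc}| \circ |\iota^{-1}_{\ca C}| \circ \chi_x = |\iota^{-1}_{\ca D}| \circ \chi_x .
\]
So the target family is obtained from the transported family by composing each member with the underlying map of the isomorphism \( i \) in \( \ca D \).

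The only real check is a general lemma: cartesian lifts are stable under post-composition with (the image of) an isomorphism in the total category. Let \( (\overline Y, \omega) \) be a cartesian lift of \( \{ f_x \colon Y \to U(C) \} \), and let \( j \colon C \to C' \) be an isomorphism. Then \( (\overline Y, \omega) \) is also a cartesian lift of \( \{ U(j) f_x \} \). To see this, take any \( h \colon U(D) \to U(\overline Y) \). If \( f_x \omega h \) lifts to some \( p \), then \( U(j) f_x \omega h \) lifts to \( jp \). Conversely, if \( U(j) f_x \omega h \) lifts to some \( q \), then \( f_x \omega h \) lifts to \( j^{-1} q \), using that \( U(j^{-1}) = U(j)^{-1} \). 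Since the two lifting conditions on \( h \) are equivalent, the cartesian property transfers.

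Applying this with \( U = \Op_{\ca D} \), \( Y = \Topp(X,\mathbb S) \) and \( j = i \) shows that \( \big(H\Omc(X), \theta_X \circ |\alpha_{\Omc X}|\big) \) is a cartesian lift of the family defining \( \Topp_{\ca D} \), with lifted family \( \{ i \circ H(\overline{\chi_x}) \} \). Hence \( X \in \Topp_{\ca D} \), which gives \( \Topc \subseteq \Topp_{\ca D} \). I do not expect a real obstacle; the only care needed is keeping the directions of \( \alpha \), \( \iota_{\ca C} \) and \( \iota_{\ca D} \) straight when matching the families via \eqref{M2}. As a by-product, the argument yields \( \Omega_{\ca D} X \cong H\Omc X \), naturally in \( X \), which should feed into the subsequent right adjunction morphism.
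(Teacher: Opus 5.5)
Your proposal is correct and follows the paper's proof. You use \eqref{M3} to make $\{H(\overline{\chi_x})\}$ a cartesian family on $H(\Omc X)$. You then use \eqref{M2} to rewrite $|\Op_{\ca D}H(\overline{\chi_x})|$ as $|\Op_{\ca D}(i^{-1})\circ\iota_{\ca D}^{-1}|\circ\chi_x\circ\theta_X\circ|\alpha_{\Omc X}|$, so post-composing with the isomorphism $i$ gives a cartesian lift of $\{|\iota_{\ca D}^{-1}|\circ\chi_x\}$. The only difference is that you explicitly prove that cartesian lifts are stable under post-composition with an isomorphism in the total category, a step the paper uses without comment.
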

\begin{proof}
  If \( X \in \Topc \), then the family \( \{ |\iota^{-1}_{\ca C}| \circ
  \chi_x \colon \Topp(X,\mathbb S) \to |\Op_{\ca C}(\Tc)| \mid x \in X \} \)
  has a cartesian lift \( (\Omega_{\ca C}(X), \theta_{\ca C, X}) \), which is
  preserved by \( H \). 

  Letting \( \{ \overline \chi_x \colon \Omc(X) \to \Tc \mid x \in X \} \) be
  the lifted family, it follows that \( \{ H(\overline \chi_x) \mid x \in X \} \)
  must be a cartesian family. Since we have
  \begin{align*}
    |\Op_{\ca D} H(\overline \chi_x)| 
      &= |\alpha_{\Tc}^{-1} \circ \iota_{\ca C}^{-1}| 
            \circ \chi_x \circ \theta_X \circ \alpha_{\Omc(X)} \\
      &= |\Op_{\ca D}(i^{-1}) \circ \iota_{\ca D}^{-1}| 
            \circ \chi_x \circ \theta_X \circ \alpha_{\Omc(X)}, 
  \end{align*}
  we find that \( \{ |\iota_{\ca D}^{-1}| \circ \chi_x \mid x \in X \} \) has
  a cartesian lift. Thus, we must have \( X \in \Topp_{\ca D} \).
\end{proof}

Now, we may define 
\begin{align*}
  \zeta^H_X: H\Omc X\to \Om_{\ca D} X 
    && \xi^H_C:\ptc C\to \pt_{\ca D}H C
\end{align*}
by defining their action on the carriers:
\begin{align*}
  |\Op _{\ca D}H\Om_{\ca C}X|&\to |\Op_{\ca D}\Om_{\ca D}X|
    & \ca C(C,\Tc)&\to \ca D(HC,\bd 2_{\ca D}) \\
   a &\mapsto \theta_{\ca D,X}^{-1} \theta_{\ca C,X} |\alpha_{\Omc(X)}|(a)
    & f &\mapsto i \circ H (f) 
\end{align*}
where we take \( (\Omega_{\ca D}(X),\theta_{\ca D,X}) \) to be the cartesian
lift of \( \{ |\iota^{-1}_{\ca D}| \circ \chi_x \mid x \in X \} \).

\begin{proposition}
  \label{p: dual on morphisms}
  If \( (H,\alpha,i) \colon (\ca C, \Op_{\ca C}, \Tc, \iota_{\ca C})
  \to (\ca D, \Op_{\ca D}, \bf 2_{\ca D}, \iota_{\ca D}) \) is an SFC-functor,
  then the following diagram
  \begin{equation}
    \label{d: morphisms of t0 dualities give morphisms between the adjunctions}
    \begin{tikzcd}[row sep=huge,column sep=large]
      \ca{C}
        \ar[r,shift left=2,"\ptc"{above}] 
        \ar[d,swap,"H"]
      & \bd{Top}_{\ca C}^{\mathsf{op}}
        \ar[l,shift left=2,"\Omc"{below}]
        \ar[l,phantom,"\dashv"{rotate=-90,anchor=center}]
        \ar[d,hookrightarrow] \\
      \ca D
        \ar[r,shift left=2,"\pt_{\ca D}"{above}]
      & \bd{Top}_{\ca D}^{\mathsf{op}}
        \ar[l,shift left=2,"\Om_{\ca D}"{below}]
        \ar[l,phantom,"\dashv"{rotate=-90,anchor=center}]
    \end{tikzcd}
  \end{equation}
  together with the natural transformations \( \zeta^H, \xi^H \), yields the
  data for a right adjunction morphism \( \mathsf{Dual}(\ca C, \Op_{\ca C},
  \Tc, \iota_{\ca C}) \to \mathsf{Dual}(\ca D, \Op_{\ca D}, \bf 2_{\ca D},
  \iota_{\ca D}) \).
\end{proposition}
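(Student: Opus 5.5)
The plan is to mirror the proof of Proposition~\ref{p: the zeta isomorphism and xi mate}. There are four steps: check that \( \zeta^H \) consists of isomorphisms in \( \ca D \), check that \( \xi^H \) consists of continuous maps, verify naturality, and verify one of the mate equations of \eqref{e: mate-pair-eqs-1}. The first thing to note is that the diagram \eqref{d: morphisms of t0 dualities give morphisms between the adjunctions} makes sense. By Lemma~\ref{l: m3 implies all topc are topd} we have \( \Topc \subseteq \Topp_{\ca D} \), so the right vertical inclusion exists. In particular \( \ptc(C) \in \Topp_{\ca D} \) for all \( C \in \ca C \), which is needed for terms such as \( \Om_{\ca D}(\xi^H_C) \) to be defined.

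For \( \zeta^H_X \), I would use \eqref{M3} together with the computation in the proof of Lemma~\ref{l: m3 implies all topc are topd}. These show that \( (H\Omc X, \theta_{\ca C,X}\circ|\alpha_{\Omc X}|) \) is a cartesian lift, with respect to \( \Op_{\ca D} \), of the family \( \{|\iota_{\ca D}^{-1}|\circ\chi_x\} \); the identification of the codomains is made via \eqref{M2}, i.e. \( \iota_{\ca D}\circ\Op_{\ca D}(i)=\iota_{\ca C}\circ\alpha_{\Tc} \). So does \( (\Om_{\ca D}X,\theta_{\ca D,X}) \). We then have two cartesian lifts of the same family, and the bijection \( \theta_{\ca D,X}^{-1}\theta_{\ca C,X}|\alpha_{\Omc X}| \) and its inverse both lift: composing either of them with the respective lifted family gives maps that lift (namely \( i\circ H(\overline{\chi_x}) \), respectively the \( \ca D \)-lifts). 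Lemma~\ref{l: mutually inverse frame isos that lift} then makes \( \zeta^H_X \) an isomorphism in \( \ca D \). Naturality in \( X \) follows from faithfulness of \( \Op_{\ca D} \), since both legs of the naturality square have the same underlying function (precomposition with \( f \), transported along the \( \theta \)'s).

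For \( \xi^H_C \), the function \( f\mapsto i\circ H(f) \) lands in \( \ca D(HC,\bd 2_{\ca D}) \). The topology of \( \pt_{\ca D}(HC) \) is initial with respect to the evaluations at elements \( a\in\Op_{\ca D}HC \), so for continuity it suffices to compute
\[
\iota_{\ca D}\,\Op_{\ca D}(i\circ Hf)(a)=\iota_{\ca C}\,\alpha_{\Tc}\,\Op_{\ca D}H(f)(a)=\iota_{\ca C}\,\Op_{\ca C}(f)(\alpha_C(a)).
\]
This uses \eqref{M2} and naturality of \( \alpha \), and shows that the preimage of a basic open is the open of \( \ptc C \) determined by \( \alpha_C(a) \). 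Naturality of \( \xi^H \) in \( C \) is immediate from functoriality of \( H \).

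Finally, by Lemma~\ref{l: mate-corresp} it suffices to check the analogue of \eqref{e: one of the mate relations}, namely
\[
\zeta^H_{\ptc C}\circ H(\eta^{\ca C}_C)=\Om_{\ca D}(\xi^H_C)\circ\eta^{\ca D}_{HC}\colon HC\to\Om_{\ca D}\ptc C .
\]
By faithfulness of \( \Op_{\ca D} \) and of \( |-| \), it is enough to compare underlying functions after applying \( \theta_{\ca D,\ptc C} \). Using \eqref{NA4} for both units and the computation above, both sides send \( a \) to the characteristic map of \( \{f\in\ptc C\mid \iota_{\ca C}\Op_{\ca C}f(\alpha_C a)=1\} \). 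The main obstacle I expect is this last bookkeeping step: threading \( \alpha \), \( \theta_{\ca C} \), \( \theta_{\ca D} \) and \( i \) consistently through the definitions of \( \eta^{\ca C} \), \( \eta^{\ca D} \) and \( \Om_{\ca D}(\xi^H_C) \), the latter being a lift of precomposition. I would handle it by expressing each of the three maps as a function into \( \Topp(\ptc C,\mathbb S) \) before comparing.
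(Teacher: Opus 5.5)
Your proposal is correct and follows essentially the paper's route. Invertibility of \( \zeta^H_X \) comes from comparing the two cartesian lifts and applying Lemma~\ref{l: mutually inverse frame isos that lift}, and the mate equation \( \zeta^H_{\ptc C}\circ H(\eta^{\ca C}_C)=\Om_{\ca D}(\xi^H_C)\circ\eta^{\ca D}_{HC} \) is checked on underlying functions using (M2), naturality of \( \alpha \) and faithfulness, exactly as the paper does. You additionally verify continuity and naturality of \( \xi^H \), naturality of \( \zeta^H \), and that the forward comparison map lifts; the paper leaves all of these implicit.
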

\begin{proof}
  First, we note that since \( H\Omega_{\ca C}(X) \) underlies a cartesian
  lift, it follows that the inverse of \( |\Op_{\ca D}(\zeta^H_X)| \) lifts as
  well, so \( \zeta^H_X \) must be invertible by Lemma~\ref{l: mutually
  inverse frame isos that lift}.

  Next, to prove that \( \zeta^H \) and \( \xi^H \) are mates, we note that
  the functions \( |\Op_{\ca D}(\zeta^H_{\ptc(C)} \circ H(\eta^{\ca C}_C))| \)
  and \( |\Op_{\ca D}(\Omega_{\ca D}(\xi^H_C) \circ \eta^{\ca D}_{H(C)})| \)
  both must be equal to
  \begin{align*}
    \theta_{\ca D,\ptc(C)}^{-1} \circ \theta_{\ca C,\ptc(C)} 
        \circ |\alpha_{\Omc\ptc(C)} \circ |\Op_{\ca D}H(\eta^{\ca C}_C)| 
    &= \theta_{\ca D,\ptc(C)}^{-1} \circ \theta_{\ca C,\ptc(C)} 
        \circ |\Op_{\ca C}(\eta^{\ca C}_C)| \circ |\alpha_C| \\
    &= \theta_{\ca D,\ptc(C)}^{-1} \circ \nu_{\Op_{\ca C},C} \circ |\alpha_C|
    \\
    &= \theta_{\ca D,\ptc(C)}^{-1} \circ \Topp(|\xi_C^H|,\mathbb S) 
                                   \circ \nu_{\Op_{\ca D},H(C)},
  \end{align*}
  where the last equality is a consequence of the following calculation:
  \begin{align*}
    \nu_{\ca C,|\alpha_C|(a)}(f)
      &= |\iota_{\ca C} \circ \Op_{\ca C}(f) \circ \alpha_C|(a) \\
      &= |\iota_{\ca C} \circ \alpha_{\Tc} \circ \Op_{\ca D}H(f)|(a) \\
      &= |\iota_{\ca D} \circ \Op_{\ca D}(i \circ H(f))|(a)
  \end{align*}
  Thus, our result follows by faithfulness.
\end{proof}

For an SFC-functor \( (H,\alpha,i) \), we call $\mathsf{Dual}(H,\alpha,i)$ the
right adjunction morphism
\[
  (H:\ca C\to \ca D,\Topc\subseteq \bd{Top}_{\ca D},\zeta,\xi)
\]
as described by Proposition~\ref{p: dual on morphisms}. We highlight that \(
\mathsf{Dual} \colon \SFC \to \bf{RADJ} \) defines a 2-functor; indeed, if \(
(H,\alpha,i) \leq (K,\beta,j) \), then there exists a (unique) natural
transformation \( \lambda \colon H \to K \) such that \( \beta \circ \Op_{\ca
D}(\lambda) = \alpha \). By inspecting the underlying functions on carrier
sets of \( \zeta^H \) and \( \zeta^K \), one is led to conclude that \(
\zeta^K \circ \lambda_{\Omega_{\ca C}} = \zeta^H \). Thus, the pair
\begin{equation*}
  (\lambda, \id_{\Topc \subseteq \Topp_{\ca D}}) 
    \colon \mathsf{Dual}(H,\alpha,i) \to \mathsf{Dual}(K,\beta,j)
\end{equation*}
defines a 2-cell in \( \bf{RADJ} \), where \( {\id}_{\Topc \subseteq
\Topp_{\ca D}} \) is the identity natural isomorphism on the inclusion functor
\( \Topc \subseteq \Topp_{\ca D} \).

\begin{lemma}\label{l: thr initial topology from xi}
  If $(H,\alpha,i):(\Op_{\ca C},\ca C,\Tc,\iota_{\ca C})\to (\Op_{\ca D},\ca
  D,\bd 2_{\ca D},\iota_{\ca D})$ is a SFC-functor, then the components of
  $\xi^H$ are subspace embeddings.
\end{lemma}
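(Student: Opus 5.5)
The plan is to reduce to Lemma \ref{l: xi is a subspace embedding and basic facts about the units}\ref{xi subspace emb} by factoring. For \( C \in \ca C \), we compare \( \xi^H_C \colon \ptc C \to \pt_{\ca D} HC \) with the embeddings \( \xi^{\ca C}_C \colon \ptc C \to \pt\Op_{\ca C} C \) and \( \xi^{\ca D}_{HC} \colon \pt_{\ca D} HC \to \pt \Op_{\ca D} HC \). We also use the homeomorphism \( \pt(\alpha_C) \colon \pt\Op_{\ca C}C \to \pt\Op_{\ca D}HC \), which sends a point \( p \) to \( p\circ\alpha_C \). The first step is to check, on carriers, that
\begin{equation*}
  \xi^{\ca D}_{HC} \circ \xi^H_C = \pt(\alpha_C) \circ \xi^{\ca C}_C .
\end{equation*}
For \( f \colon C \to \Tc \), the left side gives \( \iota_{\ca D}\circ\Op_{\ca D}(i\circ H f) \). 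By \eqref{M2} and naturality of \( \alpha \), this equals \( \iota_{\ca C}\circ\alpha_{\Tc}\circ\Op_{\ca D}Hf = \iota_{\ca C}\circ\Op_{\ca C}f\circ\alpha_C \), which is the right side. This is essentially the computation of \( \nu \) already carried out in the proof of Proposition \ref{p: dual on morphisms}.

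With the factorization in hand, the right-hand composite is a subspace embedding, since \( \xi^{\ca C}_C \) is one by Lemma \ref{l: xi is a subspace embedding and basic facts about the units}\ref{xi subspace emb} and \( \pt(\alpha_C) \) is a homeomorphism. The second step is then the elementary topological fact that if \( g\circ h \) is a subspace embedding and \( g \) is continuous, then \( h \) is a subspace embedding. Here \( h = \xi^H_C \) is continuous, being a morphism of \( \Topp \) produced by the adjunction data; concretely, preimages of basic opens of \( \pt_{\ca D}HC \) are basic opens of \( \ptc C \) via \( \alpha_C \). Injectivity of \( h \) follows from injectivity of \( g\circ h \). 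For initiality, every open of \( \ptc C \) has the form \( (g h)^{-1}(V) = h^{-1}(g^{-1}V) \), and \( g^{-1}V \) is open in \( \pt_{\ca D}HC \).

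Alternatively, one could argue directly that the opens of \( \ptc C \) are \( \{f \mid \Op_{\ca C}f(a)=1\} \). Writing \( a=\alpha_C(b) \), such a set equals \( (\xi^H_C)^{-1}\{g \mid \Op_{\ca D}g(b)=1\} \), and injectivity follows from faithfulness of \( H \) (Lemma \ref{l: morphisms in fc are faithful}) together with \( i \) being an isomorphism.

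The main obstacle is only bookkeeping: verifying the carrier identity above with the correct orientation of \( \alpha \) and the use of \eqref{M2}. Once that identity holds, the rest is formal.
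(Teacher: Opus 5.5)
Your proof is correct, but your main route differs from the paper's. The paper's proof is essentially your closing ``alternative'' paragraph.

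The paper argues directly. Using (M2), naturality of $\alpha$ and bijectivity of $\alpha_C$, the basic open of $\ptc C$ determined by $\alpha_C(b)$ is the preimage under $\xi^H_C$ of the basic open of $\pt_{\ca D}HC$ determined by $b$. So the topology of $\ptc C$ is the initial one with respect to $\xi^H_C$, and $\xi^H_C$ is an embedding as soon as it is injective. Injectivity follows from faithfulness of $H$ (Lemma~\ref{l: morphisms in fc are faithful}) together with invertibility of $i$.

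You instead prove the identity $\xi^{\ca D}_{HC}\circ\xi^H_C=\pt(\alpha_C)\circ\xi^{\ca C}_C$. You then cancel the continuous map $\xi^{\ca D}_{HC}$ from the embedding on the right, reusing part (a) of Lemma~\ref{l: xi is a subspace embedding and basic facts about the units}. Your carrier computation and the cancellation fact are both sound.

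What your route buys is that it makes explicit how $\xi^H$ fits with the comparison maps $\xi^{\ca C}$ and $\xi^{\ca D}$ into $\pt\Op$. It is, however, less economical. The cancellation step needs continuity of $\xi^H_C$, which you verify by exactly the basic-open computation above. Once $\alpha_C$ is surjective, that same computation already gives initiality, so the factorization is a detour rather than a shortcut.
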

\begin{proof}
  The topology on $\ptc C$ is the initial one induced by $\xi^H_C$. Then,
  the map is a subspace embedding exactly when it is injective. This holds
  for all $C\in \ca C$, since $H$ is faithful (see Lemma \ref{l: morphisms in fc are faithful}). 
\end{proof}

\begin{proposition}\label{p: the zeta generalized}
  Let $(H,\alpha,i):(\Op_{\ca C},\ca C,\Tc,\iota_{\ca C})\to (\Op_{\ca D},\ca
  D,\bd 2_{\ca D},\iota_{\ca D})$ be a morphism in $\SFC$.  If $H$ is full,
  $\mf{Dual}(H,\alpha,i)$ is a strong adjunction morphism.
\end{proposition}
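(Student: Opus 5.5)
Since \( \mf{Dual}(H,\alpha,i) \) is already a right adjunction morphism by Proposition~\ref{p: dual on morphisms}, \( \zeta^H \) is known to be invertible. What remains is to show that the mate \( \xi^H \colon \ptc \to \pt_{\ca D} H \) is invertible, and we will do this componentwise. By Lemma~\ref{l: thr initial topology from xi}, each component \( \xi^H_C \colon \ptc C \to \pt_{\ca D} HC \) is a subspace embedding. A surjective subspace embedding is a homeomorphism, so it suffices to prove that the underlying function
\[
  \ca C(C,\Tc) \to \ca D(HC,\bd 2_{\ca D}), \qquad f \mapsto i \circ H(f),
\]
is surjective for every \( C \in \ca C \).

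Surjectivity is where fullness of \( H \) enters. Let \( g \colon HC \to \bd 2_{\ca D} \) be a morphism of \( \ca D \). Since \( i \colon H(\Tc) \to \bd 2_{\ca D} \) is an isomorphism by \eqref{M2}, the composite \( i^{-1} \circ g \colon HC \to H(\Tc) \) is a morphism between objects in the image of \( H \). Because \( H \) is full, there is \( f \colon C \to \Tc \) with \( H(f) = i^{-1} \circ g \), and then \( \xi^H_C(f) = i \circ H(f) = g \). Hence \( \xi^H_C \) is bijective, and by Lemma~\ref{l: thr initial topology from xi} it is a homeomorphism.

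Finally, naturality of \( \xi^H \) is already part of Proposition~\ref{p: dual on morphisms}. Its componentwise inverse is therefore a natural inverse, so \( \xi^H \) is a natural isomorphism and \( \mf{Dual}(H,\alpha,i) \) is a strong adjunction morphism.

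I expect no real obstacle. The one point to be careful about is that the topology on \( \ptc C \) is initial with respect to \( \xi^H_C \), and not only with respect to \( \xi_C \); this is exactly what Lemma~\ref{l: thr initial topology from xi} provides, so bijectivity alone gives the homeomorphism.
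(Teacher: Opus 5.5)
Your proof is correct and follows the paper's own argument. The paper likewise uses Lemma~\ref{l: thr initial topology from xi} to see that each $\xi^H_C$ is a subspace embedding, and uses fullness of $H$ to get surjectivity. You additionally spell out two points the paper's one-line proof leaves implicit: lifting $i^{-1}\circ g$ through $H$ to obtain surjectivity, and the naturality of the inverse.
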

\begin{proof}
  If $H$ is full, by Lemma \ref{l: thr initial topology from xi}, $\xi^H_C$ is
  a subspace embedding which is surjective, hence a homeomorphism.
\end{proof}

\begin{lemma}
  In the ordered category of SFC-categories, the terminal object is
  \[
    (\mathds{1}_{\Frm},\Frm,\bd 2,\id_{\bd{2}}),
  \]
  and the morphism from an SFC-category $(\Op,\ca C,\Tc,\iota_{\ca C})$ to the
  terminal object is $(\Op,\id_{\Op},\iota_{\ca{C}})$.
\end{lemma}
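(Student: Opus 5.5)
The plan has two parts: show that the proposed arrow is an SFC-functor, and then show it is the only one up to the order of $\SFC$. The terminal object $(\mathds{1}_{\Frm},\Frm,\bd 2,\id_{\bd 2})$ is an SFC-category by Example~\ref{e: trivial example}.

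\textbf{Existence.} We check that $(\Op,\id_{\Op},\iota_{\ca C})$ is a PFC-functor into it. Condition \eqref{Mxx} holds because $\mathds{1}_{\Frm}\circ\Op=\Op$, with $\id_{\Op}$ as the witnessing natural isomorphism. For \eqref{M2} we take $i=\iota_{\ca C}\colon\Op(\Tc)\to\bd 2$. The required equation reads $\id_{\bd 2}\circ\iota_{\ca C}=\iota_{\ca C}\circ\id$, which is trivially true. For \eqref{M3}, fix $X\in\Topc$ with cartesian lift $(\Omc X,\theta_X)$ and lifted family $\{\overline{\chi_x}\}$. By the lemma characterizing preservation of cartesian liftings, it suffices to show that $\{\Op(\overline{\chi_x})\colon\Op\Omc X\to\Op\Tc\}$ is cartesian in $\Frm$ with respect to $|-|$. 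Up to the isomorphisms $\zeta_X$ of Lemma~\ref{l: pfc-cat implies zeta iso} and $\iota_{\ca C}$, this family is the family of characteristic maps $\widehat{\chi_x}\colon\Om X\to\bd 2$. So the claim reduces to a fact about ordinary frames: a function $h\colon|M|\to|\Om X|$ is a frame map whenever every $\chi_x\circ h$ is a frame map. This holds because joins and finite meets in $\Om X\subseteq\ca P X$ are computed pointwise, and are therefore detected by the points. Transporting cartesianness along isomorphisms is routine.

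\textbf{Uniqueness.} Let $(H,\alpha,i)$ be any SFC-functor from $(\Op,\ca C,\Tc,\iota_{\ca C})$ to the terminal object. We show $(H,\alpha,i)\le(\Op,\id,\iota_{\ca C})$ and conversely. The candidate 2-cell is $\alpha\colon H\to\Op$, since $H$ lands in $\Frm$ and $\mathds{1}_{\Frm}H=H$. The 2-cell condition $\id\circ\mathds{1}_{\Frm}(\alpha)=\alpha$ holds. For the triangle at $\Tc$ we need $\iota_{\ca C}\circ\alpha_{\Tc}=i$, and this is exactly \eqref{M2} for $(H,\alpha,i)$ with $\iota_{\ca D}=\id$. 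The reverse inequality uses $\alpha^{-1}$ in the same way. Consequently the hom-preorder is indiscrete and nonempty, which is what terminality means in an ordered category. Any strictly equal uniqueness would in any case fail, since $H$ can differ by natural isomorphism.

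\textbf{Main obstacle.} The step I expect to be delicate is \eqref{M3}: carefully matching the transported family $\theta^{-1}$, $\alpha$, $\iota$ with the characteristic functions. The essential content is the pointwise-topology argument that $\Om X$ carries the initial frame structure with respect to the points of $X$.
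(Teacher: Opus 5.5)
Your proof is correct and follows essentially the paper's route. The only nontrivial point is \eqref{M3}, which the paper dispatches in one line by invertibility of $\zeta$, just as you do by transporting along $\zeta_X$ and $\iota_{\ca C}$ to the cartesian family of characteristic maps on $\Om X$. Your additional check that every SFC-functor $(H,\alpha,i)$ into $(\mathds{1}_{\Frm},\Frm,\bd 2,\id_{\bd 2})$ is isomorphic to $(\Op,\id_{\Op},\iota_{\ca C})$ via $\alpha$ and $\alpha^{-1}$ (so the hom-preorder is nonempty and indiscrete) is also correct, and it supplies the uniqueness part that the paper leaves implicit.
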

\begin{proof}
  We only need to confirm that \( (\Op, \id_{\Op}, \iota_{\ca C}) \) is indeed
  an SFC-functor; that is, that this PFC-functor satisfies \eqref{M3}. This
  follows by invertibility of \( \zeta \).
\end{proof}

Thus, we obtain that diagram \eqref{d: the basic picture of a T0 duality} is
an instance of diagram \eqref{d: morphisms of t0 dualities give morphisms
between the adjunctions}, in the case where $(H,\alpha,i)$ a morphism to the
terminal object.

Since \( \PFC \) is an ordered category, we may consider the notion of
adjunction for PFC-functors; given PFC-functors 
\begin{equation}
  \label{e: lr-pfc-functors}
  \begin{tikzcd}
    (\ca C, \Op_{\ca C},\Tc,\iota_{\ca C}) 
      \ar[r,shift right=2,swap,"{(H.\alpha,i)}"]
    & (\ca D, \Op_{\ca D},\bd{2_{\ca D}},\iota_{\ca D}),
      \ar[l,shift right=2,swap,"{(K,\beta,j)}"]
  \end{tikzcd}
\end{equation}
we have a \textit{PFC-adjunction} \( (K,\beta,j) \dashv (H,\alpha,i) \) if and
only if we have
\begin{equation*}
  (K,\beta,j) \circ (H,\alpha,i) 
    \leq \id_{(\ca C, \Op_{\ca C}, \Tc, \iota_{\ca C})}
    \qquad \text{and} \qquad
  \id_{(\ca D, \Op_{\ca D}, \bf 2_{\ca D}, \iota_{\ca D})}
    \leq (H,\alpha,i) \circ (K,\beta,j).
\end{equation*}
in \( \PFC \).

\begin{remark}
  \label{r: pfc-adj redunancy}
  We highlight that PFC-functors as in \eqref{e: lr-pfc-functors} form a
  PFC-adjunction \( (K,\beta,j) \dashv (H,\alpha,i) \) if and only if we have
  an FC-adjunction \( (K,\beta) \dashv (H,\alpha) \) and the following
  equations hold
  \begin{equation*}
    j \circ K(i) = \epsilon_{\Tc}, \qquad
    i \circ H(j) \circ \eta_{\bf 2_{\ca D}} = \id,
  \end{equation*}
  where \( \epsilon \) and \( \eta \) are the underlying unit and counit.

  Besides the redundant isomorphisms and conditions present in an
  FC-adjunction as pointed out in Remark~\ref{r: redundant info in FC-adj}, we
  note that \(i\) and \(j\) determine each other as well, provided that either
  one of \( \epsilon_{\Tc} \), \( \eta_{\bf 2_{\ca D}} \) is invertible.
\end{remark}

Building on Lemma~\ref{l: when does fc-functor have adjoint}, we obtain the
following analogue for PFC-functors:

\begin{lemma}
  \label{l: when does pfc-functor have adjoint}
  Let \( (H,\alpha,i) \colon (\ca C, \Op_{\ca C},\Tc, \iota_{\ca C}) \to (\ca
  D, \Op_{\ca D}, \bf 2_{\ca D}, \iota_{\ca D}) \) be a PFC-functor. We have
  that:
  \begin{enumerate}[label=(\alph*)]
    \item
      \label{l: when has right pfc-adjoint}
      \( (H,\alpha,i) \) has a left PFC-adjoint \( (L,\lambda,j) \) if and only
      if we have an ordinary adjunction \( L \dashv F \) whose unit \( \eta
      \colon \id \to HL \) is such that \( \Op_{\ca D}(\eta) \) is invertible,
      and whose counit is invertible at \( \Tc \).
    \item
      \label{l: when has left pfc-adjoint}
      \( (H,\alpha,i) \) has a right PFC-adjoint \( (R,\rho,k) \) if and only
      if we have an ordinary adjunction \( H \dashv R \) whose counit \(
      \epsilon \colon HR \to \id \) is such that \( \Op_{\ca D}(\epsilon) \)
      is invertible, and whose unit is invertible at \( \bf 2_{\ca D} \). 
  \end{enumerate}
\end{lemma}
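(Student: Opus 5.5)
The plan is to reduce the statement to Lemma~\ref{l: when does fc-functor have adjoint}\ref{l: when has right adjoint} together with the explicit description of PFC-adjunctions in Remark~\ref{r: pfc-adj redunancy}. In the statement I read $L \dashv F$ as $L \dashv H$, so that the counit is $\epsilon \colon LH \to \id_{\ca C}$ and the relevant component is $\epsilon_{\Tc}$. By that remark, a PFC-adjunction $(L,\lambda,j) \dashv (H,\alpha,i)$ amounts to three things:
\begin{itemize}
  \item an FC-adjunction $(L,\lambda) \dashv (H,\alpha)$;
  \item the equation $j \circ L(i) = \epsilon_{\Tc}$;
  \item the equation $i \circ H(j) \circ \eta_{\bd 2_{\ca D}} = \id$.
\end{itemize}
Here $j \colon L(\bd 2_{\ca D}) \to \Tc$ is an isomorphism satisfying \eqref{M2}.

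\emph{Forward direction.} A PFC-adjunction has an underlying FC-adjunction, so Lemma~\ref{l: when does fc-functor have adjoint} gives an ordinary adjunction $L \dashv H$ with $\Op_{\ca D}(\eta)$ invertible. Moreover, $\epsilon_{\Tc} = j \circ L(i)$ is a composite of isomorphisms, since $i$ and $j$ are isomorphisms. Hence the counit is invertible at $\Tc$.

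\emph{Converse direction.} Suppose $L \dashv H$ is an adjunction with $\Op_{\ca D}(\eta)$ invertible and $\epsilon_{\Tc}$ invertible.
\begin{enumerate}
  \item Lemma~\ref{l: when does fc-functor have adjoint} gives $\lambda \colon \Op_{\ca C} L \cong \Op_{\ca D}$ making $(L,\lambda) \dashv (H,\alpha)$ an FC-adjunction. By Lemma~\ref{l: confrm-adj}(c), the counit satisfies $\Op_{\ca C}(\epsilon) = \alpha \circ \lambda_H$; up to the paper's conventions for the direction of $\lambda$, possibly with an inverse.
  \item The counit equation forces the choice $j := \epsilon_{\Tc} \circ L(i^{-1})$, which is an isomorphism by hypothesis. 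With this choice the counit equation holds by construction.
  \item To check \eqref{M2} for $j$, expand $\Op_{\ca C}(j)$ as $\alpha_{\Tc} \circ \lambda_{H\Tc} \circ \Op_{\ca C}L(i^{-1})$. Naturality of $\lambda$ rewrites this as $\alpha_{\Tc} \circ \Op_{\ca D}(i^{-1}) \circ \lambda_{\bd 2_{\ca D}}$. Condition \eqref{M2} for $(H,\alpha,i)$ says $\iota_{\ca C} \circ \alpha_{\Tc} \circ \Op_{\ca D}(i^{-1}) = \iota_{\ca D}$. Combining the two gives $\iota_{\ca C} \circ \Op_{\ca C}(j) = \iota_{\ca D} \circ \lambda_{\bd 2_{\ca D}}$, which is \eqref{M2} for $(L,\lambda,j)$.
  \item For the unit equation, write $H(j) = H(\epsilon_{\Tc}) \circ HL(i^{-1})$. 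Naturality of $\eta$ gives $HL(i^{-1}) \circ \eta_{\bd 2_{\ca D}} = \eta_{H\Tc} \circ i^{-1}$. The triangle identity gives $H(\epsilon_{\Tc}) \circ \eta_{H\Tc} = \id$. Hence $i \circ H(j) \circ \eta_{\bd 2_{\ca D}} = i \circ i^{-1} = \id$.
\end{enumerate}
By Remark~\ref{r: pfc-adj redunancy}, this yields the PFC-adjunction $(L,\lambda,j) \dashv (H,\alpha,i)$.

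\emph{Part \ref{l: when has left pfc-adjoint}.} This is the dual argument. Use Lemma~\ref{l: when does fc-functor have adjoint}\ref{l: when has left adjoint}, and set $k := R(i) \circ \eta_{\Tc}$, using invertibility of the unit at the pointed object to make $k$ an isomorphism.

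The main obstacle is the bookkeeping in step 3. One has to identify $\Op_{\ca C}(\epsilon)$ in terms of $\alpha$ and $\lambda$ with the correct directions and inverses, since the paper's conventions for the FC-isomorphisms are not entirely uniform. I would settle this first, directly from Lemma~\ref{l: confrm-adj}(c) and the formula for $\lambda$ in the proof of Lemma~\ref{l: when does fc-functor have adjoint}, before carrying out the computation.
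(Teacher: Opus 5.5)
Your proposal is correct and follows the paper's own argument: reduce to Lemma~\ref{l: when does fc-functor have adjoint} and Remark~\ref{r: pfc-adj redunancy}, set $j=\epsilon_{\Tc}\circ L(i^{-1})$, and get the unit equation from naturality of $\eta$ and the triangle identity; your explicit check of \eqref{M2} for $(L,\lambda,j)$ fills in a step the paper leaves implicit. The one slip is in part (b), where $R(i)\circ\eta_{\Tc}$ is a morphism $\Tc\to R(\bd 2_{\ca D})$, so the structure isomorphism must be its inverse $k=\eta_{\Tc}^{-1}\circ R(i^{-1})$; the hypothesis there must also be read as invertibility of the unit at $\Tc$, since the unit of $H\dashv R$ has components only at objects of $\ca C$.
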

\begin{proof}
  Indeed, this follows immediately by Lemma~\ref{l: when does fc-functor have
  adjoint} and Remark~\ref{r: pfc-adj redunancy}. We only check the argument
  for \ref{l: when has right pfc-adjoint}, as the other is analogous.

  The given data guarantees an FC-adjunction \( (L,\lambda) \dashv (H,\alpha)
  \), and if we let \( \delta \) be the counit of \( L \dashv H \), we define \(
  j = \delta_{\Tc} \circ K(i^{-1}) \). Then we note that
  \begin{align*}
    H(j^{-1}) \circ i^{-1} 
      &= HK(i) \circ H\delta^{-1}_{\Tc} \circ i^{-1} \\
      &= HK(i) \circ \eta_{H(\Tc)} \circ i^{-1} = \eta_{\bf 2_{\ca D}},
  \end{align*}
  so we indeed obtain a PFC-adjunction.
\end{proof}

We say that an adjoint functor is \emph{PFC-adjoint} if it is part of a
PFC-adjunction. We next show that all right PFC-adjoints satisfy \eqref{M3}.

\begin{lemma}
  \label{l: right pfc adjoint preserve cartesian lifts}
  If we have a PFC-adjunction
  \begin{equation*}
    \begin{tikzcd}
      (\ca C, \Op_{\ca C},\Tc,\iota_{\ca C}) 
        \ar[r,shift right=2,swap,"{(R.\alpha,i)}"]
        \ar[r,phantom,"\dashv"{rotate=-90}]
      & (\ca D, \Op_{\ca D},\bd{2_{\ca D}},\iota_{\ca D}),
        \ar[l,shift right=2,swap,"{(L,\lambda,j)}"]
    \end{tikzcd}
  \end{equation*}
  then \( (R,\alpha,i) \) satisfies \eqref{M3}. 
\end{lemma}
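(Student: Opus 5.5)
The plan is to reduce the statement to Lemma~\ref{l: radj-prsv-carts}, which says that right adjoint FC-functors preserve cartesian liftings of all families. A PFC-adjunction \( (L,\lambda,j) \dashv (R,\alpha,i) \) has an underlying FC-adjunction \( (L,\lambda) \dashv (R,\alpha) \) by Remark~\ref{r: pfc-adj redunancy}. So \( (R,\alpha) \) is a right FC-adjoint. Property \eqref{M3} asks that this FC-functor preserve the cartesian lifting of one specific kind of family, namely the family
\begin{equation*}
  \{ |\iota^{-1}_{\ca C}| \circ \chi_x \colon \Topp(X,\mathbb S) \to |\Op_{\ca C}(\Tc)| \mid x\in X\}
\end{equation*}
for each \( X \in \Topc \). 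Here \( \ca C \) is the domain of \( R \), so this is a family valued in the carriers of \( \Op_{\ca C} \).

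The key steps are as follows. First, fix \( X \in \Topc \), so this family has a cartesian lift \( (\Omc X, \theta_X) \) with respect to \( \Op_{\ca C} \). Second, identify \( (R,\alpha) \) as the right adjoint FC-functor \( (\ca C,\Op_{\ca C}) \to (\ca D,\Op_{\ca D}) \). Third, invoke Lemma~\ref{l: radj-prsv-carts} to conclude that the cartesian lifting is preserved.

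The one point of care is matching the generality. Lemma~\ref{l: radj-prsv-carts} is stated for families \( \{f_i \colon X \to \Op(D)\} \) whose codomain is the frame underlying an object. In \eqref{M3} the family lands in a carrier set \( |\Op_{\ca C}(\Tc)| \), and the lifting is taken with respect to \( |\Op_{\ca C}(-)| \) rather than \( \Op_{\ca C} \). I would handle this by observing that the proof of Lemma~\ref{l: radj-prsv-carts} only uses three things:
\begin{itemize}
  \item the defining ``lifts iff each composite lifts'' property of cartesian lifts;
  \item the transposes \( q^\flat_i = \epsilon_D \circ L(q_i) \) and \( k^\sharp = R(k) \circ \eta_C \);
  \item the identities \( \Op(\eta) \), \( \Op(\epsilon) \) expressed through \( \lambda, \alpha \), as in Lemma~\ref{l: confrm-adj}.
\end{itemize}
All three make equal sense after composing with the faithful forgetful functor \( |-| \colon \Frm \to \Set \). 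The composite \( |\Op_{\ca C}(-)| \) is again faithful, so the same argument goes through verbatim for families of functions between carrier sets. Concretely, let \( h \) be a function \( |\Op_{\ca D}(D')| \to |\Op_{\ca D} R\Omc X| \) whose composites with the images \( R(\overline{\chi_x}) \) lift to maps \( q_x \colon D' \to R\Tc \). Transposing gives maps \( L D' \to \Tc \) lifting the corresponding composites through \( \Omc X \). Cartesianness then gives a lift \( k \colon LD' \to \Omc X \), and transposing back yields \( k^\sharp \) with \( |\Op_{\ca D}(k^\sharp)| = h \).

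So the main obstacle is just this bookkeeping: checking that the carrier-level version of Lemma~\ref{l: radj-prsv-carts} holds with the correct isomorphisms \( \alpha_{\Omc X} \circ \theta_X \). Beyond that, \eqref{M3} does not refer to the points \( i, j \) at all, so no use of the pointed structure is needed.
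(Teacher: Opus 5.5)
Your proposal is correct and is essentially the paper's proof: the paper likewise passes to the underlying FC-adjunction \( (L,\lambda) \dashv (R,\alpha) \) (Remark~\ref{r: pfc-adj redunancy}) and concludes directly from Lemma~\ref{l: radj-prsv-carts}. You also check that the transposition argument carries over verbatim to liftings with respect to \( |\Op_{\ca C}(-)| \) rather than \( \Op_{\ca C} \); the paper leaves this detail implicit, and your computation of \( k^\sharp \) fills it in correctly.
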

\begin{proof}
  Indeed, we have an FC-adjunction \( (L,\lambda) \dashv (R,\alpha) \), as noted
  in the previous remark, so the result is a consequence of Lemma~\ref{l:
  radj-prsv-carts}.
\end{proof}

\begin{lemma}
  \label{l: induced spec iso}
  If we have a PFC-adjunction
  \begin{equation*}
    \begin{tikzcd}
      (\ca C, \Op_{\ca C},\Tc,\iota_{\ca C}) 
        \ar[r,shift right=2,swap,"{(H.\alpha,i)}"]
        \ar[r,phantom,"\dashv"{rotate=-90}]
      & (\ca D, \Op_{\ca D},\bd{2_{\ca D}},\iota_{\ca D}),
        \ar[l,shift right=2,swap,"{(L,\lambda,j)}"]
    \end{tikzcd}
  \end{equation*}
  then we have a natural isomorphism \( \omega \colon \pt_{\ca D} \to \ptc L
  \).
\end{lemma}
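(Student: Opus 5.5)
The natural isomorphism comes from the hom-set bijection of the underlying adjunction \( L \dashv H \), transported along the isomorphism \( j \colon L(\bd 2_{\ca D}) \to \Tc \). For \( D \in \ca D \), the carrier of \( \pt_{\ca D}(D) \) is \( \ca D(D,\bd 2_{\ca D}) \) and the carrier of \( \ptc L(D) \) is \( \ca C(LD,\Tc) \). I would set
\begin{equation*}
  \omega_D \colon \ca D(D,\bd 2_{\ca D}) \to \ca C(LD,\Tc),
  \qquad g \mapsto j \circ L(g).
\end{equation*}
Its candidate inverse is \( f \mapsto i \circ H(f) \circ \eta_D \). Using \( j \circ L(i) = \epsilon_{\Tc} \) and \( i \circ H(j) \circ \eta_{\bd 2_{\ca D}} = \id \) from Remark~\ref{r: pfc-adj redunancy}, this reduces to the usual transpose bijection \( \ca C(LD,\Tc) \cong \ca D(D,H\Tc) \) composed with \( i \). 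So the first step is a routine check that the two assignments are mutually inverse, using naturality of \( \eta,\epsilon \) and the triangle identities.

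Naturality in \( D \) is immediate. For \( h \colon D' \to D \), \( \pt_{\ca D}(h) = -\circ h \) and \( \ptc L(h) = -\circ L(h) \), and \( j \circ L(g \circ h) = (j\circ L(g))\circ L(h) \).

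The remaining step, which I expect to be the main point, is that \( \omega_D \) is a homeomorphism and not just a bijection. Both topologies are the initial ones described in \eqref{NA4}. Opens of \( \ptc L(D) \) are \( \{ f \mid \iota_{\ca C}\Op_{\ca C}(f)(b) = 1 \} \) for \( b \in \Op_{\ca C} L(D) \), and opens of \( \pt_{\ca D}(D) \) are \( \{ g \mid \iota_{\ca D}\Op_{\ca D}(g)(a)=1\} \) for \( a \in \Op_{\ca D}(D) \).

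I would compute \( \iota_{\ca C}\circ\Op_{\ca C}(j \circ L g) \) using the FC-isomorphism \( \lambda \colon \Op_{\ca C} L \cong \Op_{\ca D} \) and the compatibility of \( j \) with \( \iota_{\ca D} \) and \( \iota_{\ca C} \). The PFC-functor condition \eqref{M2} for \( (L,\lambda,j) \) gives \( \iota_{\ca C}\circ\Op_{\ca C}(j) = \iota_{\ca D}\circ \lambda_{\bd 2_{\ca D}} \). Naturality of \( \lambda \) then yields
\begin{equation*}
  \iota_{\ca C}\circ \Op_{\ca C}(j\circ L g) = \iota_{\ca D}\circ \Op_{\ca D}(g)\circ \lambda_D .
\end{equation*}
Hence the preimage under \( \omega_D \) of the basic open indexed by \( b \) is the basic open indexed by \( \lambda_D(b) \). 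Since \( \lambda_D \) is a bijection, every basic open of \( \pt_{\ca D}(D) \) arises this way, so \( \omega_D \) is continuous and open.

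The only subtlety is bookkeeping the direction of \( \lambda \) and the compatibility equation for \( j \). If the composite law is stated in the inverse form, I would instead use \( \lambda^{-1} \), with the same conclusion.
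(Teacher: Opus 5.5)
Your proposal is correct and follows the paper's proof: the paper defines $\omega$ as $i^{-1}\circ -$ followed by the adjunction transpose, and this is your $g\mapsto j\circ L(g)$ because $\epsilon_{\Tc}\circ L(i^{-1}) = j$ (from $j\circ L(i)=\epsilon_{\Tc}$). It then obtains the homeomorphism from the same identity $\iota_{\ca C}\circ\Op_{\ca C}(j\circ Lg)=\iota_{\ca D}\circ\Op_{\ca D}(g)\circ\lambda_D$, derived from \eqref{M2} and naturality of $\lambda$, phrased as a commuting square of the Stone-like maps rather than your preimage-of-basic-opens argument.
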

\begin{proof}
  We begin by noting that we have a natural isomorphism on the underlying
  carrier sets, given by
  \begin{equation*}
    \begin{tikzcd}
      {|\pt_{\ca D}|} \ar[r,equal]
        & \ca D(-,\bf 2_{\ca D}) \ar[r,"i^{-1} \circ -"]
        & \ca D(-,H(\Tc)) \ar[r,"\cong"]
        & \ca C(L(-),\Tc) \ar[r,equal]
        & {|\ptc L|}
    \end{tikzcd}
  \end{equation*}
  which we denote by \( \omega \). We also obtain a commutative square
  \begin{equation*}
    \begin{tikzcd}
      \Op_{\ca C}L \ar[r,"\eta^{\ca C}"]
                   \ar[d,"\lambda",swap]
        & \Set\big(\ca C(L(-),\Tc),2\big) \ar[d,"- \circ \omega"] \\
      \Op_{\ca D} \ar[r,swap,"\eta^{\ca D}"]
        & \Set\big(\ca D(-,\bf 2_{\ca D}),2\big)
    \end{tikzcd}
  \end{equation*}
  via the following calculation:
  \begin{align*}
    \iota_{\ca C} \circ \Op_{\ca C}(\epsilon_{\Tc} \circ L(i^{-1} \circ f))
      &= \iota_{\ca C} \circ \Op_{\ca C}(j \circ L(f)) \\
      &= \iota_{\ca C} \circ \Op_{\ca C}(j) \circ \Op_{\ca C}L(f) \\
      &= \iota_{\ca D} \circ \lambda_{\bf 2_{\ca D}} \circ \Op_{\ca C}L(f) \\
      &= \iota_{\ca D} \circ R(f) \circ \lambda_D.
  \end{align*}
  Since both vertical transformations are isomorphisms, it follows that
  the components of \( \omega \) lift to homeomorphisms, and thereby we obtain
  a natural isomorphism \( \pt_{\ca D} \to \ptc L \).
\end{proof}
 
\begin{theorem}
  \label{t: right pfc adjoints are continuous and they also give cartesian lifts}
  Suppose that $(R,\alpha,i):(\Op_{\ca C},\ca C,\Tc,\iota_{\ca C}) \to
  (\Op_{\ca D},\ca D,\bd 2_{\ca D},\iota_{\ca D})$ is a right PFC-adjoint.

  If $(\Op_{\ca C},\ca C,\Tc,\iota_{\ca C})$ is an SFC-category, then so is
  $(\Op_{\ca D},\ca D,\bd 2_{\ca D},\iota_{\ca D})$, and \( (R,\alpha,i) \) is
  an SFC-functor.
\end{theorem}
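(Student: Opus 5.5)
The plan is to reduce everything to three results already established: Lemma~\ref{l: right pfc adjoint preserve cartesian lifts}, Lemma~\ref{l: m3 implies all topc are topd}, and Lemma~\ref{l: induced spec iso}. Fix the PFC-adjunction $(L,\lambda,j)\dashv(R,\alpha,i)$. As noted in Remark~\ref{r: pfc-adj redunancy}, its underlying data is an FC-adjunction $(L,\lambda)\dashv(R,\alpha)$.

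First, I show that $(R,\alpha,i)$ satisfies \eqref{M3}. By Lemma~\ref{l: radj-prsv-carts}, via Lemma~\ref{l: right pfc adjoint preserve cartesian lifts}, the FC-functor $(R,\alpha)$ preserves the cartesian liftings of \emph{all} families that admit one. In particular, for each $X\in\Topc$ it preserves the cartesian lift $(\Omc X,\theta_{\ca C,X})$ of the family $\{|\iota_{\ca C}^{-1}|\circ\chi_x\mid x\in X\}$, which exists by definition of $\Topc$. This is exactly \eqref{M3}. Note that \eqref{M3} only involves the $\ca C$-side data, so no assumption on $\ca D$ is needed at this step.

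Second, I verify that $(\Op_{\ca D},\ca D,\bd 2_{\ca D},\iota_{\ca D})$ is an SFC-category. Since $(R,\alpha,i)$ is a PFC-functor satisfying \eqref{M3}, Lemma~\ref{l: m3 implies all topc are topd} gives $\Topc\subseteq\Topp_{\ca D}$. That lemma's computation is where the compatibility $\iota_{\ca D}\circ\Op_{\ca D}(i)=\iota_{\ca C}\circ\alpha_{\Tc}$ converts the preserved lift into a cartesian lift of $\{|\iota_{\ca D}^{-1}|\circ\chi_x\}$. Since $\ca C$ is SFC, $\mathbb S\in\Topc\subseteq\Topp_{\ca D}$. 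For an object $D\in\ca D$, Lemma~\ref{l: induced spec iso} (applied with the right adjoint in the role of $H$) gives a homeomorphism $\pt_{\ca D}(D)\cong\ptc(L D)$. The space $\ptc(LD)$ lies in $\Topc$ because $\ca C$ is SFC, and hence lies in $\Topp_{\ca D}$. Since $\Topp_{\ca D}$ is replete, it follows that $\pt_{\ca D}(D)\in\Topp_{\ca D}$. These are precisely the two conditions defining an SFC-category, and combined with \eqref{M3} they show that $(R,\alpha,i)$ is an SFC-functor.

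The only step needing care is the bookkeeping in the first step. Lemma~\ref{l: radj-prsv-carts} is phrased for an arbitrary family $\{f_i\colon X\to\Op_{\ca C}(C)\}$ and yields a lift of $\{\alpha^{-1}\circ f_i\}$ (in the paper's notation $\theta_D^{-1}\circ f_i$), while \eqref{M3} asks for the preservation notion defined just before that lemma. I expect these to match directly, since \eqref{M3} was defined via that same notion of preservation. The identification of the transported family with $\{|\iota_{\ca D}^{-1}|\circ\chi_x\}$, up to the isomorphism $i$, is handled inside Lemma~\ref{l: m3 implies all topc are topd}, so nothing new must be computed there.
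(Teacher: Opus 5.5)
Your proposal is correct and follows the paper's proof essentially step for step: \eqref{M3} from Lemma~\ref{l: right pfc adjoint preserve cartesian lifts}, then $\Topc\subseteq\Topp_{\ca D}$ via Lemma~\ref{l: m3 implies all topc are topd}, then $\mathbb S\in\Topp_{\ca D}$ and $\pt_{\ca D}(D)\cong\ptc(LD)$ via Lemma~\ref{l: induced spec iso}. Your explicit appeal to repleteness is only a small clarification of the paper's terser phrasing ``$\pt_{\ca D}(D)\in\Topp_{\ca C}$''.
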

\begin{proof}
  By Lemma~\ref{l: right pfc adjoint preserve cartesian lifts}, we have that
  right PFC-adjoints satisfy \eqref{M3}, so it follows that \( \Topc \subseteq
  \Topp_{\ca D} \), by Lemma~\ref{l: m3 implies all topc are topd}. 

  Thus, if \( (\ca C, \Op_{\ca C}, \Tc, \iota_{\ca C}) \) is an SFC-category,
  we get \( \mathbb S \in \Topp_{\ca D} \), and Lemma~\ref{l: induced spec
  iso} guarantees that \( \pt_{\ca D}(D) \in \Topp_{\ca C} \) for all \( D \in
  \ca D \). We conclude that \( (\ca D, \Op_{\ca D}, \bf 2_{\ca D}, \iota_{\ca
  D}) \) must be an SFC-category as well, and that \( (R,\alpha,i) \) is an
  SFC-functor.
\end{proof}

We now specialize our study to subcategory inclusions. For an SFC-category $(\Op,\ca C,\Tc,\iota_{\ca C})$, and for any subcategory inclusion $I:\ca D\subseteq \ca C$, 
\[
(I,\id_{\Op {I}}):(\Op I,\ca D)\to (\Op,\ca C)
\]
is an FC-functor. So, it suffices for there to be an object $\bd 2_{\ca D}\in
\ca D$ and an isomorphism $i:I\bd 2_{\ca D}\cong \Tc$ to obtain a PFC-functor
\[
(I,\id_{\Op{I}},i):(\Op I,\ca D,\bd 2_{\ca D},\iota_{\ca C}\circ\Op i)
    \to (\Op,\ca C,\Tc,\iota_{\ca C}).
\]
Even in the case where $(\Op I,\ca D,\bd 2_{\ca D},\iota_{\ca C}\circ\Op i)\in
\SFC$, this need not be an SFC-functor. We provide examples for both cases.

\begin{example}
  For the $T_D$-duality from \cite{banaschewskitd} (see Example \ref{e: td
  duality}), the inclusion $i_D:\bd{Frm_D}\subseteq \Frm$ gives a morphism of
  SFC-categories $(i_D,\id_{\Op i_D},\id_{\bd{2}})$, and the right map of adjunctions $\mf{Dual}(i_D,\id_{\Op i_D},\id_{\bd{2}})$ is depicted in Diagram \eqref{d: left map of adjunctions for
  TD}.
\end{example}

\begin{example}\label{e: the inclusion of sobers is not continuous}
 Recall the SFC-categories
\begin{align*}
& (\Om,\Topz^{\mathsf{op}},\{*\},\id_{\bd{2}})  & & (\Om,\bd{Sob}^{\mathsf{op}},\{*\},\id_{\bd{2}})
\end{align*}
from Examples \ref{e: t0 spaces example} and \ref{e: the sober example}.
Consider the inclusion $i_S:\bd{Sob}\subseteq \Topz$. The triple
$(i_S,\id_{\Op i_S},\id_{\{*\}})$ is a PFC-functor, but it does not
satisfy \eqref{M3}: for a space $X$, the object with the cartesian lift property is $X_0$ in the category $\Topz$ (see Example \ref{e: t0 spaces example}), and $\mf{sob}X$ in the category $\bd{Sob}$.
\end{example}

Let \( (\ca C, \Op) \) be an FC-category, and let \( I \colon \ca D \to \ca C
\) be a coreflective subcategory with coreflector \( R \colon \ca C \to \ca D
\), and counit \( \lambda \). We say that \( I \) is \textit{FC-coreflective}
if \( \Op(\lambda) \) is invertible. Indeed, this will imply that we have an
FC-adjunction
\begin{equation*}
  \begin{tikzcd}
    (\ca D, \Op I) \ar[r,shift left=2,"{(I,\id)}"]
                   \ar[r,"\dashv"{rotate=-90},phantom]
    & (\ca C, \Op) \ar[l,shift left=2,"{(R,\Op(\lambda))}"]
  \end{tikzcd}
\end{equation*}
by Lemma~\ref{l: when does fc-functor have adjoint}. In fact, an analogous
result also holds for PFC-categories:

\begin{proposition}
  \label{p: coreflective stable}
  Let \( (\ca C, \Op, \Tc, \iota) \) be a PFC-category, and let \( I \colon
  \ca D \to \ca C \) be an FC-coreflective subcategory. Writng \( R \colon \ca
  C \to \ca D \) for the coreflector and \( \lambda \) for the counit, we
  claim that the following hold:
  \begin{enumerate}[label=(\alph*)]
    \item
      \( \big(\ca D, \Op I, R(\Tc), \iota \circ \Op(\lambda_{\Tc})\big) \) is
      a PFC-category,
    \item
      \( (R,\Op(\lambda),\id) \) is a right PFC-functor, whose left
      PFC-adjoint is \( (I,\id,\lambda_{\Tc}) \),
    \item
      If \( (\ca C, \Op, \Tc, \iota) \) is an SFC-category, then so is \( (\ca
      D, \Op I, R(\Tc), \iota \circ \Op(\lambda_{\Tc})) \), and \(
      (R,\Op(\lambda),\id) \) is an SFC-functor.
  \end{enumerate}
\end{proposition}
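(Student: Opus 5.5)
The plan is to reduce everything to results already established: Lemma~\ref{l: when does fc-functor have adjoint}, Lemma~\ref{l: when does pfc-functor have adjoint} (with Remark~\ref{r: pfc-adj redunancy}), and Theorem~\ref{t: right pfc adjoints are continuous and they also give cartesian lifts}.

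For (a), we need a faithful functor, an object and an isomorphism. The functor \( \Op I \) is faithful because \( I \) is a subcategory inclusion and \( \Op \) is faithful. Take \( R(\Tc) \in \ca D \). By FC-coreflectivity, \( \Op(\lambda_{\Tc}) \colon \Op IR(\Tc) \to \Op(\Tc) \) is an isomorphism, so \( \iota \circ \Op(\lambda_{\Tc}) \colon \Op I R(\Tc) \cong \bd 2 \). Nothing more is needed.

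For (b), first get the FC-level data. The adjunction \( I \dashv R \) has counit \( \lambda \) with \( \Op(\lambda) \) invertible. Lemma~\ref{l: when does fc-functor have adjoint} then gives an FC-adjunction \( (I,\id) \dashv (R,\Op(\lambda)) \); here \( \Op(\lambda) \colon \Op I R \to \Op \) is indeed a natural isomorphism, so \( (R,\Op(\lambda)) \) is an FC-functor.

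Next, check the pointing data. \( (R,\Op(\lambda),\id_{R\Tc}) \) is a PFC-functor from \( (\ca C,\Op,\Tc,\iota) \) to \( (\ca D,\Op I,R\Tc,\iota\circ\Op(\lambda_{\Tc})) \). Condition \eqref{M2} reads \( \iota\circ\Op(\lambda_{\Tc})\circ \Op I(\id) = \iota \circ \Op(\lambda)_{\Tc} \), which holds trivially. Likewise \( (I,\id,\lambda_{\Tc}) \) is a PFC-functor, since \eqref{M2} becomes \( \iota \circ \Op(\lambda_{\Tc}) = \iota\circ\Op(\lambda_{\Tc})\circ \id \).

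It remains to verify the two equations of Remark~\ref{r: pfc-adj redunancy}, with \( K = I \), \( j=\lambda_{\Tc} \), \( i = \id \). The first is \( \lambda_{\Tc} \circ I(\id) = \lambda_{\Tc} \), which is trivial. The second is \( \id \circ R(\lambda_{\Tc}) \circ \eta_{R\Tc} = \id \), where \( \eta \) is the unit of \( I \dashv R \); this is exactly a triangle identity. Alternatively, since \( I \) is fully faithful, the unit is invertible, so Lemma~\ref{l: when does pfc-functor have adjoint}(a) applies directly.

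For (c), apply Theorem~\ref{t: right pfc adjoints are continuous and they also give cartesian lifts} to the right PFC-adjoint \( (R,\Op(\lambda),\id) \) from (b). This immediately yields that the target is an SFC-category and that \( R \) is an SFC-functor.

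The only step needing care is (b): keeping the direction conventions of the isomorphisms \( \alpha \) straight, and matching the triangle identity with the form of the second equation in Remark~\ref{r: pfc-adj redunancy}. I expect that bookkeeping to be the main, though minor, obstacle.
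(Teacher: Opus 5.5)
Your part (a) is correct, and better justified than the paper's version: FC-coreflectivity gives exactly that \( \Op(\lambda_{\Tc}) \) is invertible, and that is all (a) needs.

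The gap is in (b). Condition \eqref{M2} requires the third component of a PFC-functor to be an \emph{isomorphism in the category}. So for \( (I,\id,\lambda_{\Tc}) \) to be a PFC-functor, \( \lambda_{\Tc} \colon IR(\Tc) \to \Tc \) must be invertible in \( \ca C \). It is not enough that \( \Op(\lambda_{\Tc}) \) is invertible in \( \Frm \), because a merely faithful \( \Op \) need not reflect isomorphisms. You only checked the equation in \eqref{M2}.

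This is not just bookkeeping. In any PFC-adjunction \( (K,\beta,j) \dashv (H,\alpha,i) \), the first equation of Remark~\ref{r: pfc-adj redunancy}, \( j \circ K(i) = \epsilon_{\Tc} \), writes the counit at \( \Tc \) as a composite of isomorphisms. With \( K = I \), this forces \( \lambda_{\Tc} \) to be invertible. Calling that equation ``trivial'' silently presupposes this. Your alternative route via Lemma~\ref{l: when does pfc-functor have adjoint}(a) has the same hole: that item explicitly requires the counit to be invertible at \( \Tc \), and you only checked the unit.

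The paper's proof deals with this point only by asserting, in its argument for (a), that \( \lambda_{\Tc} \) is invertible. That does not follow from FC-coreflectivity. It amounts to requiring \( \Tc \) to lie in \( \ca D \) up to isomorphism, which does hold in the intended application where \( \ca D = \Ci \) and \( \Tc \) is fiber-initial by \eqref{T1}.

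Without that hypothesis, (b) can fail. Let \( \ca C \) have two objects \( A, B \) and a single non-identity arrow \( f \colon A \to B \). Let \( \Op \) send every object to \( \bd 2 \) and every arrow to the identity; this is faithful because all hom-sets have at most one element. Take \( \Tc = B \), \( \iota = \id \), and \( \ca D \) the full subcategory on \( A \). Then \( \ca D \) is FC-coreflective with \( \lambda_B = f \). But there is no isomorphism \( A \to B \), so \( (I,\id,\lambda_B) \) is not a PFC-functor. Moreover \( R \) has no left PFC-adjoint at all, since its only left adjoint is \( I \), whose counit at \( B \) is \( f \).

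Once you add the hypothesis that \( \lambda_{\Tc} \) is invertible, your remaining verification of (b) is fine: the second equation is indeed a triangle identity. Your derivation of (c) from Theorem~\ref{t: right pfc adjoints are continuous and they also give cartesian lifts} then goes through exactly as in the paper.
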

\begin{proof}
  For (a), we merely need to observe that
  \begin{equation*}
    \begin{tikzcd}
      \Op IR(\Tc) \ar[r,"\Op(\lambda_{\Tc})"]
        & \Op(\Tc) \ar[r,"\iota"]
        & \bf 2
    \end{tikzcd}
  \end{equation*}
  is invertible, since \( \lambda_{\Tc} \) is invertible.

  We obtain (b) by applying Lemma~\ref{l: when does pfc-functor have
  adjoint}.\ref{l: when has left pfc-adjoint}, highlighting that the unit of
  \( I \dashv R \) is invertible at all components.

  Finally, (c) is an application of Theorem~\ref{t: right pfc adjoints are
  continuous and they also give cartesian lifts}.
\end{proof}

\begin{remark}[SFC-adjunctions]
  If we have an \textit{SFC-adjunction} -- a PFC-adjunction such that the left
  adjoint also satisfies \eqref{M3} -- as in the following diagram
  \begin{equation*}
    \begin{tikzcd}
      (\ca C, \Op_{\ca C},\Tc,\iota_{\ca C}) 
        \ar[r,shift right=2,swap,"{(R.\alpha,i)}"]
        \ar[r,phantom,"\dashv"{rotate=-90}]
      & (\ca D, \Op_{\ca D},\bd{2_{\ca D}},\iota_{\ca D}),
        \ar[l,shift right=2,swap,"{(L,\lambda,j)}"]
    \end{tikzcd}
  \end{equation*}
  then we obtain the following results:
  \begin{itemize}
    \item
      By Lemma~\ref{l: m3 implies all topc are topd}, we conclude that \(
      \Topc = \Topp_{\ca D} \). 
    \item
      Since \( \mathsf{Dual} \) is a 2-functor, from Lemma~\ref{l: right adj
      comm implies left adj comm} we deduce that \( \xi^D \colon \pt_{\ca D}
      \to \ptc \ca F \) is invertible.
    \item
      By Proposition~\ref{p: dual on morphisms}, we obtain that \( \Omc \cong
      \ca F \Om_{\ca D} \).
  \end{itemize}

  We expect the third condition will seldom hold in practice (see, for
  instance Remark~\ref{r: why Ci into C does not satisfy M3}), 
  analogue of the second condition already holds for left PFC-adjoints
  (see Corollary~\ref{c: if frames are c, then sobers are topc} for a
  particular case), via an arguments similar to Lemma~\ref{l: right adj comm
  implies left adj comm}.
\end{remark}

\subsection{Pointfree sobrification}\label{ss: pointfree sobrification}

We now work towards constructing a pointfree version of sobrification. For an object $C\in \ca C$, we call $s_C:\mf{sob}(C)\to C$ the \emph{sobrification} of $C$ if it is a sober coreflection, and if that coreflection is an FC-adjunction. We introduce the property of $\Op:\ca C\to \Frm$ having a left FC-adjoint. We observe that if $\ca F:\Frm\to \ca C$ is a left FC-adjoint of $\Op$,
$\ca F[\Frm]$ is coreflective in $\ca C$, with the composition $\ca F \Op:\ca C\to
\ca F[\Frm]$ giving the coreflector.
\begin{example}
The $\Frm$-concrete categories $\Op_{\ca R}:\bd{Raney}\to \Frm$ and $\Op_{\ca S}:\bd{Skula}\to \Frm$ both satisfy this property, by Lemmas \ref{l: fse is left adjoint} and \ref{l: congruence frame gives left adjoint}.
\end{example}

\begin{lemma}
  \label{l: ptc of the initial of a spatial is sober}
Suppose that $(\Op,\ca C,\Tc,\iota_{\ca C})$ is a SFC-category where $\Op$ has a left FC-adjoint $\ca F$. For every $C\in \ca C$, the map 
    \[
    \xi_{\ca F\ca O C}:\ptc \ca F\Op C\to \pt\Op C
    \]
    is a homeomorphism. Hence, $\ca F\Op C$ is sober.
\end{lemma}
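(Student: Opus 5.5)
The plan is to use Lemma~\ref{l: xi is a subspace embedding and basic facts about the units}\ref{xi subspace emb}: $\xi_{\ca F\Op C}$ is already a subspace embedding, so it is a homeomorphism as soon as it is surjective. Surjectivity says precisely that every frame point of $\Op\ca F\Op C$ lifts along $\Op$ to a morphism $\ca F\Op C\to\Tc$, which is the definition of $\ca F\Op C$ being sober. So both claims reduce to showing that every point lifts, and the adjunction $\ca F\dashv\Op$ produces the lifts.

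First I fix the data. Write the left FC-adjoint as $(\ca F,\lambda)$ with $\lambda\colon\Op\ca F\cong\id_{\Frm}$. By Lemma~\ref{l: when does fc-functor have adjoint}, $\Op(\eta)$ is invertible, and by Lemma~\ref{l: confrm-adj}(c) the unit and counit $\epsilon$ are expressed through the structure isomorphisms. The consequence I need, which follows from the triangle identity and Remark~\ref{r: redundant info in FC-adj}, is that for every object $D$ of $\ca C$ and every frame map $g\colon L\to\Op D$,
\[
\Op\bigl(\epsilon_D\circ\ca F(g)\bigr)=g\circ\lambda_L .
\]
This holds because $\Op(\epsilon_D)\circ\Op\ca F(g)=\Op(\epsilon_D)\circ\lambda_{\Op D}^{-1}\circ g\circ\lambda_L$ by naturality of $\lambda$, and $\Op(\epsilon_D)=\lambda_{\Op D}$. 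This identification of $\Op(\epsilon)$ with $\lambda_{\Op}$, up to the paper's conventions on the isomorphisms, is the step I expect to need the most care. I would settle it by applying $\Op$ to the triangle identity $\epsilon_{\ca F L}\circ\ca F(\eta_L)=\id$ and using that $\Op(\eta)$ is invertible.

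Now let $p\colon\Op\ca F\Op C\to\bd 2$ be a frame point. Put $L=\Op C$ and
\[
g=\iota^{-1}\circ p\circ\lambda_L^{-1}\colon \Op C\to\Op\Tc ,
\]
and define $\overline p=\epsilon_{\Tc}\circ\ca F(g)\colon\ca F\Op C\to\Tc$. By the identity above,
\[
\iota\circ\Op(\overline p)=\iota\circ g\circ\lambda_L=p .
\]
So $\xi_{\ca F\Op C}(\overline p)=p$, which proves surjectivity and hence that $\xi_{\ca F\Op C}$ is a homeomorphism. The same computation shows that every point of $\Op\ca F\Op C$ lifts, so $\ca F\Op C$ is sober.

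Finally, composing with the homeomorphism $\pt(\lambda_{\Op C}^{-1})\colon\pt\Op\ca F\Op C\to\pt\Op C$ identifies the codomain with $\pt\Op C$, as written in the statement.
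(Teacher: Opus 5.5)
Your proof is correct and follows the paper's route: $\xi_{\ca F\Op C}$ is a subspace embedding by Lemma~\ref{l: xi is a subspace embedding and basic facts about the units}, so it is a homeomorphism once the adjunction $\ca F\dashv\Op$ makes it surjective; the paper asserts this bijectivity in one line, and you make it explicit with the lift $\epsilon_{\Tc}\circ\ca F(g)$. One small fix: you need $\Op(\epsilon_D)=\lambda_{\Op D}$ at $D=\Tc$, which need not be of the form $\ca F L$. So this identity comes from Lemma~\ref{l: confrm-adj}(c), or from the other triangle identity $\Op(\epsilon_D)\circ\eta_{\Op D}=\id$ together with $\eta=\lambda^{-1}$, and not from $\epsilon_{\ca F L}\circ\ca F(\eta_L)=\id$, which only controls the components of $\epsilon$ at objects $\ca F L$.
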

\begin{proof}
By FC-adjointness, the map is a bijection, and as $\xi$ is a subspace embedding at all components (\ref{xi subspace emb}), it is a homeomorphism.
\end{proof}

\begin{corollary}
  \label{c: if frames are c, then sobers are topc}
  If \( \Op \) has a left FC-adjoint, then \( \mathbf{Sob} \subseteq \Topc \).
\end{corollary}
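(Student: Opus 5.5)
Since \( \Topc \) is replete, it suffices to show that every sober space is homeomorphic to an object of \( \Topc \). Let \( X \) be sober and put \( L = \Om X \), a spatial frame with \( X \cong \pt(L) \) via \( \sigma_X \).

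By Lemma~\ref{l: fibers of spatial frames whose spec is in topc}, we have \( \pt(L) \in \Topc \) as soon as the fiber \( \Op^{-1}(L) \) contains a sober object. This reduces the statement to producing such an object.

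We use the left FC-adjoint \( (\ca F,\lambda) \) of \( \Op \), so that \( \lambda \colon \Op\ca F \cong \id_{\Frm} \). The candidate object is \( \ca F(L) \), which lies in the fiber of \( L \) via \( \lambda_L \colon \Op\ca F(L) \cong L \). It remains to check that \( \ca F(L) \) is sober, i.e.\ that every frame point \( p \colon \Op\ca F(L) \to \bd 2 \) lifts to a morphism \( \ca F(L) \to \Tc \).

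For the lift, take \( p\circ\lambda_L^{-1} \colon L \to \bd 2 \). Its composite \( \iota^{-1}\circ p\circ\lambda_L^{-1} \colon L \to \Op(\Tc) \) has an adjoint transpose \( \ca F(L) \to \Tc \). The FC-adjunction conditions (Lemma~\ref{l: when does fc-functor have adjoint}) ensure that \( \Op \) applied to this transpose recovers \( p \), up to the coherence isomorphisms \( \lambda \) and \( \iota \).

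This is the same argument as Lemma~\ref{l: ptc of the initial of a spatial is sober}. Applying that lemma to any \( C \) with \( \Op C \cong L \), for instance \( C = \ca F(L) \) itself, already gives that \( \ca F\Op C \) is sober. Since \( \ca F\Op\ca F(L) \cong \ca F(L) \) via \( \lambda \), the object \( \ca F(L) \) is sober.

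The only delicate point is the bookkeeping of the isomorphisms \( \lambda \) and \( \iota \) in identifying \( \Op \) of the transpose with \( p \). This is exactly what the FC-adjunction (rather than a mere adjunction) provides, so I expect no real obstacle.

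Concluding, condition (iii) of Lemma~\ref{l: fibers of spatial frames whose spec is in topc} holds for \( L \), hence \( \pt(\Om X) \in \Topc \). By repleteness and \( X \cong \pt\Om X \), we get \( X \in \Topc \).
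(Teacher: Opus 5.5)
Your argument is correct and is essentially the paper's. The paper writes down directly the homeomorphism $\ptc\ca F(L)\cong\pt\Op\ca F(L)\cong\pt(L)$, obtained from the invertibility of $\xi_{\ca F(L)}$ (via Lemma~\ref{l: ptc of the initial of a spatial is sober} and naturality); this is exactly the (iii)$\Rightarrow$(i) step of Lemma~\ref{l: fibers of spatial frames whose spec is in topc} applied to the sober object $\ca F(L)$ you exhibit. One small correction: the isomorphism $\ca F\Op\ca F(L)\cong\ca F(L)$ is $\ca F(\lambda_L)$ (or the counit at $\ca F(L)$), not $\lambda$ itself, though your direct transpose argument, using that the unit is invertible, already makes that detour unnecessary.
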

\begin{proof}
  We have \( \nu_L \colon L \cong \Op \ca F(L) \), so we have an isomorphism
  \begin{equation*}
    \begin{tikzcd}
      \ptc\ca F(L) \ar[r,"\xi_{\ca F(L)}"]
        & \pt \Op \ca F(L) \ar[r,"\pt(\nu_L^{-1})"]
        & \pt (L),
    \end{tikzcd}
  \end{equation*}
  highlighting that \( \xi_{\ca F(L)} \) and \( \xi_{\ca F\Op\ca F(L)} \) are
  part of a naturality square of isos, for all frames \( L \).
\end{proof}

\begin{lemma}\label{l: sober object lifts the frame spatialization}
  Suppose that $(\Op,\ca C,\Tc,\iota_{\ca C})$ is an SFC-category, and let \(
  C \in \ca C \) be such that \( \pt \Op(C) \in \Topc \). Then the following
  are equivalent:

  \begin{enumerate}[label=(\roman*)]
    \item
      There is a lift of the composition 
        \[
        \begin{tikzcd}[column sep=large]
            \Op C
            \ar[r,"\eta_{\Op C}"]
            & \Om \pt \Op C.
            \ar[r,"\zeta^{-1}_{\pt \Op C}"]
            & \Op \Omc \pt \Op C
        \end{tikzcd}
        \]
    \item
      $C$ is sober.
  \end{enumerate}
\end{lemma}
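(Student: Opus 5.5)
The plan is to prove the two implications separately. In both directions I will work on underlying frame maps and use faithfulness of \( \Op \): a morphism in \( \ca C \) is determined by its image under \( \Op \), so checking that a morphism lifts a given frame map reduces to an equation in \( \Frm \). The main tools are the mate relation \eqref{e: one of the mate relations}, the naturality of \( \zeta \), and the explicit description of the lifted family \( \overline{\chi_x} \) from \eqref{NA3}.

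For (ii) \( \Rightarrow \) (i), I assume \( C \) is sober.
\begin{itemize}
  \item By Lemma~\ref{l: xi is a subspace embedding and basic facts about the units}, \( \xi_C \colon \ptc C \to \pt\Op C \) is then a homeomorphism.
  \item Since \( \pt\Op C \in \Topc \) by hypothesis, \( \xi_C^{-1} \) is a morphism of \( \Topc \). So I may form the candidate lift
  \[
  \Omc(\xi_C^{-1}) \circ \eta^{\ca C}_C \colon C \to \Omc\ptc C \to \Omc\pt\Op C.
  \]
  \item Naturality of \( \zeta \) gives \( \zeta_{\pt\Op C} \circ \Op\Omc(\xi_C^{-1}) = \Om(\xi_C^{-1}) \circ \zeta_{\ptc C} \), keeping track of the contravariance.
  \item The mate relation \eqref{e: one of the mate relations} gives \( \zeta_{\ptc C} \circ \Op(\eta^{\ca C}_C) = \Om(\xi_C) \circ \eta_{\Op C} \).
  \item Combining these, \( \zeta_{\pt\Op C} \circ \Op\big(\Omc(\xi_C^{-1})\,\eta^{\ca C}_C\big) = \Om(\xi_C^{-1})\,\Om(\xi_C)\,\eta_{\Op C} = \eta_{\Op C} \).
\end{itemize}
Hence the candidate lifts \( \zeta^{-1}_{\pt\Op C} \circ \eta_{\Op C} \), which is (i).

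For (i) \( \Rightarrow \) (ii), I let \( g \colon C \to \Omc\pt\Op C \) be the lift, so that \( \Op(g) = \zeta^{-1}_{\pt\Op C} \circ \eta_{\Op C} \).
\begin{itemize}
  \item I fix a point \( p \colon \Op C \to \bd 2 \) and regard it as a point \( x = p \) of the space \( \pt\Op C \).
  \item Since \( \pt\Op C \in \Topc \), the lifted family supplies \( \overline{\chi_x} \colon \Omc\pt\Op C \to \Tc \) with \( \iota \circ \Op(\overline{\chi_x}) = \chi_x \circ \theta_{\pt\Op C} \). Here \( \theta_{\pt\Op C} \) is the underlying function of \( \zeta_{\pt\Op C} \).
  \item Then \( \iota \circ \Op(\overline{\chi_x} \circ g) = \chi_x \circ \zeta_{\pt\Op C} \circ \zeta^{-1}_{\pt\Op C} \circ \eta_{\Op C} = \chi_x \circ \eta_{\Op C} \).
  \item The right-hand side sends \( a \in \Op C \) to \( 1 \) exactly when \( p(a) = 1 \), so it equals \( p \).
\end{itemize}
Thus \( \iota^{-1} \circ p \) lifts to \( \overline{\chi_x} \circ g \colon C \to \Tc \). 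Since \( p \) was arbitrary, every point of \( \Op C \) lifts and \( C \) is sober. (Under the identification via \( \iota \), this is the statement that \( \xi_C \) is surjective.)

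The only delicate step is the bookkeeping in the first direction. One has to orient \( \zeta \)'s naturality square correctly for the contravariant functor \( \Omc \), and check that \( \xi_C^{-1} \) really is a morphism of \( \Topc \). The latter is exactly where the hypothesis \( \pt\Op C \in \Topc \) is needed. Everything else follows directly from the identities already established.
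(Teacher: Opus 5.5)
Your proof is correct and follows the paper's argument: for (ii)$\Rightarrow$(i) you use the same lift $\Omc(\xi_C^{-1})\circ\eta^{\ca C}_C$, checking it explicitly via naturality of $\zeta$ and the mate relation. For (i)$\Rightarrow$(ii) you compose $g$ with a lift of the point $\chi_x$ of $\Om\pt\Op C$, exactly as the paper does with $\overline{h_s}\circ g$ (there $h_s$ is precisely $\chi_h$); the only difference is that you take this lift directly from the lifted family $\overline{\chi_x}$ defining $\Omc\pt\Op C$, whereas the paper gets it by invoking sobriety of $\Omc\pt\Op C$, so your route is slightly more direct.
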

\begin{proof}
  It is obvious that sobriety implies that a lift is given by \( \Omega_{\ca
  C}(\xi_C^{-1}) \circ \eta^{\ca C}_C \). For the other direction, suppose \(
  \Op(g) = \zeta^{-1}_{\pt \Op(C)} \circ \eta_{\Op(S)} \), and \( \pt\Op C \in
  \mf{Fix}(\Topc)\). For every point $f:\Om \pt \Op C\to \bd 2$ there is
  $\overline{f}:\Omc \pt \Op C\to \Tc$ lifting it, as $\pt \Op C\in \Topc$
  implies that \( \Omc \pt \Op (C) \) is sober. Every point $h:\Op C\to \bd 2$
  is $h_s\circ \eta_{\Op C}$ for some point $h_s:\Om \pt \Op C\to \bd{2}$.
  Then, $\overline{h_s}\circ g$ is a lift of $h$, making $\xi_C$ surjective,
  hence a homeomorphism.
\end{proof}

For a sober object $C\in \ca C$, we will denote by $\eta^{\sigma}_{\Op C}$ the
lift of $\zeta^{-1}_{\pt \Op(C)} \circ \eta_{\Op(C)}$ given by the lemma
above.
\begin{lemma}\label{l: map from sober induces a commutative square}
  Suppose that $(\Op,\ca C,\Tc,\iota_{\ca C})$ is an SFC-category, and that
  $f:C\to D$ is in $\ca C$, and $D$ is sober. Then, there is a commuting
  square as follows.
\begin{equation}\label{d: pullback of this is sobrification}
    \begin{tikzcd}[row sep=large,column sep=large]
    D
    \ar[r,"f"]
    \ar[d,"\Omc \pt \Op f\circ \eta^{\sigma}_{\Op D}",swap]
    & C
    \ar[d,"\eta^{\ca C}_C"]\\
    \Omc \pt \Op C
    \ar[r,"\Omc(\xi_C)",swap]
    & \Omc \ptc C
    \end{tikzcd}
\end{equation}
\end{lemma}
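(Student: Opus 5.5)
The plan is to verify commutativity of \eqref{d: pullback of this is sobrification} after applying the faithful functor \( \Op \), and then to compare the two resulting frame maps after postcomposing with the isomorphism \( \zeta_{\ptc C} \). I read the square as the diagram displays it: \( f \) goes from the sober object \( D \) to \( C \), so that \( \Op f \colon \Op D \to \Op C \) and \( \Omc\pt\Op f \colon \Omc \pt \Op D \to \Omc \pt\Op C \). I also assume, as the statement implicitly does, that \( \pt\Op C \) and \( \pt \Op D \) lie in \( \Topc \), so that every object in the square exists. Since \( \Op \) is faithful and \( \zeta_{\ptc C} \) is invertible (Lemma~\ref{l: pfc-cat implies zeta iso}), it suffices to prove
\begin{equation*}
  \zeta_{\ptc C}\circ \Op(\eta^{\ca C}_C)\circ \Op f
  = \zeta_{\ptc C}\circ \Op\Omc(\xi_C)\circ \Op\Omc\pt\Op f\circ \Op(\eta^\sigma_{\Op D}).
\end{equation*}

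For the top-right path, I will use the mate relation \eqref{e: one of the mate relations} evaluated at \( C \). It says \( \zeta_{\ptc C}\circ\Op(\eta^{\ca C}_C) = \Om(\xi_C)\circ\eta_{\Op C} \). Hence the left-hand side equals \( \Om(\xi_C)\circ\eta_{\Op C}\circ\Op f \).

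For the left-bottom path, I will first move \( \zeta \) to the left using its naturality twice. Naturality with respect to \( \xi_C \) gives \( \zeta_{\ptc C}\circ\Op\Omc(\xi_C) = \Om(\xi_C)\circ \zeta_{\pt\Op C} \). Naturality with respect to \( \pt\Op f \) gives \( \zeta_{\pt\Op C}\circ\Op\Omc\pt\Op f = \Om\pt\Op f\circ\zeta_{\pt\Op D} \). Next, by the defining property of \( \eta^\sigma_{\Op D} \) from Lemma~\ref{l: sober object lifts the frame spatialization}, we have \( \zeta_{\pt \Op D}\circ \Op(\eta^\sigma_{\Op D}) = \eta_{\Op D} \). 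The right-hand side therefore becomes \( \Om(\xi_C)\circ\Om\pt\Op f\circ\eta_{\Op D} \). Finally, naturality of the frame unit \( \eta \) gives \( \Om\pt\Op f\circ \eta_{\Op D} = \eta_{\Op C}\circ \Op f \), which matches the top-right path.

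No step is a real obstacle, since everything reduces to naturality and the mate equation. The only point needing care is the bookkeeping of variances and the implicit hypotheses: that \( f \) is \( D\to C \) as drawn, and that the relevant spectra lie in \( \Topc \) so that \( \Omc\pt\Op C \) and \( \eta^\sigma_{\Op D} \) are defined.
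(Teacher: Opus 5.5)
Your argument is correct, but it works at a different level from the paper's. The paper's proof stays inside $\ca C$. It applies $\Omc$ to the naturality square of $\xi$ to rewrite $\Omc(\xi_C)\circ\Omc\pt\Op f$ as $\Omc\ptc f\circ\Omc(\xi_D)$. It then uses $\Omc(\xi_D)\circ\eta^{\sigma}_{\Op D}=\eta^{\ca C}_D$, which comes from the description $\eta^{\sigma}_{\Op D}=\Omc(\xi_D^{-1})\circ\eta^{\ca C}_D$ in Lemma~\ref{l: sober object lifts the frame spatialization}. It finishes with naturality of $\eta^{\ca C}$.

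You instead push everything down to $\Frm$ through the faithful $\Op$ and the isomorphism $\zeta_{\ptc C}$. There you use naturality of $\zeta$, the mate equation \eqref{e: one of the mate relations} at $C$, and naturality of the frame unit.

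The paper's chase is shorter and never invokes the mate equation explicitly. However, it rests on identifying $\eta^{\sigma}_{\Op D}$ with $\Omc(\xi_D^{-1})\circ\eta^{\ca C}_D$. The paper calls this obvious, and checking it takes exactly a frame-level computation of your kind: the mate equation at $D$ plus naturality of $\zeta$. Your route uses only the defining property $\zeta_{\pt\Op D}\circ\Op(\eta^{\sigma}_{\Op D})=\eta_{\Op D}$, so it is self-contained. In your version, sobriety of $D$ enters only through the existence of $\eta^{\sigma}_{\Op D}$.

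One small point: you need not assume $\pt\Op D\in\Topc$. It follows from sobriety, since $\xi_D$ is a homeomorphism and $\Topc$ is replete. The only genuinely implicit hypothesis is $\pt\Op C\in\Topc$.
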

\begin{proof}
We have:
    \begin{align*}
        \Omega_{\ca C}(\xi_C) \circ \Omc \pt \Op f \circ \eta^{\sigma}_{\Op D}&=\Omc \ptc f\circ \Omc(\xi_D)\circ \eta^{\sigma}_D &\text{ naturality of $\xi$}\\
        &=\Omc \ptc f\circ \eta^{\ca C}_D &\text{ Lemma \ref{l: sober
        object lifts the frame spatialization}}\\
        &=\eta_C^{\ca C}\circ f. & \text{ naturality of $\eta^{\ca C}$}
    \end{align*}
\end{proof}

\begin{proposition}\label{p: sobrification as a pullback}
Suppose that $(\Op,\ca C,\Tc,\iota_{\ca C})$ is an SFC-category where $\ca O$ is a right FC-adjoint. If the pullback of the following diagram exists:
\[
\begin{tikzcd}[row sep=large,column sep=large]
   & C  
   \ar[d,"\eta^{\ca C}_C"]\\
     \Omc \pt \Op C
     \ar[r,"\Omc (\xi_C)",swap]
     & \Omc \ptc C
\end{tikzcd}
\]
then the horizontal arrow is the sobrification of $C$.
    
\end{proposition}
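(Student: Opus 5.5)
The plan is to write the pullback as $P$, with projections $p\colon P\to C$ (the horizontal arrow) and $q\colon P\to \Omc\pt\Op C$, so that $\Omc(\xi_C)\circ q=\eta^{\ca C}_C\circ p$. I will show four things in turn: (1) $\Op(p)$ is invertible; (2) $P$ is sober; (3) every morphism $f\colon D\to C$ with $D$ sober factors through $p$; (4) that factorization is unique. Together these say that $p$ is a sober coreflection of $C$ whose counit becomes an isomorphism under $\Op$, which is the definition of $s_C=p$ given at the start of this subsection. Throughout I use the left FC-adjoint $\ca F\dashv\Op$ with counit $\varepsilon\colon \ca F\Op\to\id$, where $\Op(\varepsilon)$ is invertible. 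By Corollary~\ref{c: if frames are c, then sobers are topc}, every space $\pt\Op(-)$ lies in $\Topc$, so Lemma~\ref{l: sober object lifts the frame spatialization} is available for every object.

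\emph{Steps (3) and (1).} For (3), given a sober $D$ and $f\colon D\to C$, Lemma~\ref{l: map from sober induces a commutative square} gives a commuting square with $f$ on top and $\Omc\pt\Op f\circ\eta^\sigma_{\Op D}$ on the left. The pullback property then yields $u_f\colon D\to P$ with $p\,u_f=f$. For (1), apply (3) to $\varepsilon_C\colon \ca F\Op C\to C$; here $\ca F\Op C$ is sober by Lemma~\ref{l: ptc of the initial of a spatial is sober}. This gives $u\colon\ca F\Op C\to P$ with $\Op(p)\Op(u)=\Op(\varepsilon_C)$, which is invertible, so $\Op(p)$ is a split epimorphism. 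For the other half, I want to show that $\Op(p)$ is injective, or equivalently that $\Op(u)$ is surjective. I intend to derive this from faithfulness: I would show that $p$ is a monomorphism in $\ca C$, by the same argument as in step (4) below, and then compare $u\circ\varepsilon'$ with $\id_P$, where $\varepsilon'$ is the counit at $P$.

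\emph{Step (2).} Once $\Op(p)$ is an isomorphism, $\pt\Op(p)$ is a homeomorphism. The morphism $\Omc\pt\Op(p)^{-1}\circ q\colon P\to\Omc\pt\Op P$ should then be a lift of $\zeta^{-1}_{\pt\Op P}\circ\eta_{\Op P}$. I would check this on underlying frames by combining the pullback square, the mate relation \eqref{e: one of the mate relations} at $C$, and naturality of $\eta$ and $\zeta$. Lemma~\ref{l: sober object lifts the frame spatialization} then gives that $P$ is sober.

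\emph{Step (4), the main obstacle.} Suppose $g,h\colon D\to P$ satisfy $pg=ph$. I must show $qg=qh$, and the difficulty is that $\Omc(\xi_C)$ need not be a monomorphism. I will argue with frames. By faithfulness it suffices to show $\Op(qg)=\Op(qh)$. Now $\Op\Omc\pt\Op C\cong\Om\pt\Op C$ via $\zeta$, and $\eta_{\Op C}\colon\Op C\to\Om\pt\Op C$ is a surjection. So it is enough to show that $\Op(q)$ restricted along this surjection is determined by $\Op(p)$. Concretely, I aim to prove that, up to $\zeta$ and $\eta$, $\Op(q)$ is $\Op(p)$ pushed to spatializations: the composite $\zeta\circ\Op(q)$ should agree with $\Om\pt\Op(p)$ composed with a canonical surjection, by evaluating \eqref{e: one of the mate relations} and naturality of $\zeta$ on generators $\eta_{\Op C}(a)$. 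If this direct identification is too slippery, the fallback is to compare the two maps on points. Since $D$ is sober, points of $\Op D$ are lifts, and $\Om\pt\Op C$ is spatial, so $qg$ and $qh$ are separated by points of $D$. These points are determined through $pg=ph$ and the sobriety lift $\eta^\sigma$.
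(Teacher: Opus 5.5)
Your outline follows the paper's own route: its Claims 1 and 2 are your steps (1) and (2), and universality comes from Lemma~\ref{l: map from sober induces a commutative square}. The gap is exactly where you place the obstacle. The injectivity half of step (1) and the uniqueness in step (4) are only announced, and they cannot be proved, because they fail in general.

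Since $\ca F\dashv\Op$, the functor $\Op$ preserves the pullback. Transporting along $\zeta$, $\Op(P)$ is therefore the set-theoretic pullback of $\zeta_{\ptc C}\circ\Op(\eta^{\ca C}_C)$ and $\Om(\xi_C)$. The first map is surjective, and $(a,\eta_{\Op C}(a))$ always lies in the pullback by \eqref{e: one of the mate relations}. Hence $\Op(p)$ is invertible if and only if $\Om(\xi_C)$ is injective. The same obstruction blocks the frame identity your step (4) is after. The pullback square gives $\zeta_{\pt\Op C}\circ\Op(q)=\eta_{\Op C}\circ\Op(p)$ only after postcomposing with $\Om(\xi_C)$, and neither faithfulness nor sobriety of the test object lets you cancel that map.

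Here is a concrete failure in $\bd{Raney}$. Take $L=[0,1]$. It is a spatial complete chain and all its meets are exact, so $\fe(L)$ consists of the principal filters, none of which is completely prime. For $C=(L,\fe(L))$ this gives $\ptc C=\emptyset$, so $\Omc\ptc C$ is the terminal Raney extension on the one-element frame. The pullback is then the product $P=C\times\Omc\pt L$, which exists: it is $(L\times L,\ \fe(L)\times\ca U^*(\pt L))$, identifying $\Om\pt L\cong L$, with $p=\pi_1$. In this example:
\begin{itemize}
\item $\Op(p)$ is not injective.
\item $P$ is not even sober, since no point $y\circ\pi_1$ lifts.
\item $p$ is not a monomorphism, even against the sober object $(L,\fse(L))$: for a non-identity automorphism $\phi$ of $L$ such as $x\mapsto x^2$, the maps $\langle\id,\id\rangle$ and $\langle\id,\phi\rangle$ have the same composite with $p$.
\end{itemize}

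The paper's Claim 1 breaks at the same point. It infers that $\Op(u)$ is surjective from $u=\lambda_D\circ\tilde u$, which would require $\Op(\tilde u)$ to be surjective; in this example $\Op(u)$ is the diagonal $L\to L\times L$. Both arguments do go through under the additional hypothesis that $\Om(\xi_C)$ is injective. Then $\Op(p)$ is invertible, $p$ is mono because $\Op$ is faithful, and your steps (2) and (3) complete the proof.
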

\begin{proof}
    Suppose that the pullback is given by \begin{equation*}
    \begin{tikzcd}[row sep=large,column sep=large]
    D
    \ar[r,"h_1"]
    \ar[d,"h_2",swap]
    & C
    \ar[d,"\eta^{\ca C}_C"]\\
    \Omc \pt \Op C
    \ar[r,"\Omc(\xi_C)",swap]
    & \Omc \ptc C
    \end{tikzcd}
\end{equation*}

\begin{claim}\label{claim 2}
$\Op h_1$ is a frame isomorphism.
\end{claim}
\begin{proof}(Of Claim 1).
Because $\ca F\Op C$ is sober (Lemma \ref{l: ptc of the initial of a spatial is sober}), it induces a commutative square as in \ref{l: map from sober induces a commutative square}, and using the pullback universal property we have this diagram:
\begin{equation*}
 \begin{tikzcd}
   \ca F \ca O C 
   \ar[dr,"u"]
    \ar[drr,"\lambda_C",bend left=20]
     \ar[ddr,"\Omc \pt \Op \lambda_C\circ \eta^{\sigma}_{\Op \ca F \Op C}",swap,bend right=20]
    \\& D
    \ar[r,"h_1"]
    \ar[d,"h_2",swap]
    & C
    \ar[d,"\eta^{\ca C}_C"]\\
    & \Omc \pt \Op C
    \ar[r,"\Omc(\xi_C)",swap]
    & \Omc \ptc C
    \end{tikzcd}
\end{equation*}
Since $\Op \lambda_{C}$ is a frame isomorphism, $\Op h_1$ is a frame surjection. Using adjointness $\ca F\dashv \Op$, we get some map $\Tilde{u}:\ca F\ca O C\to \ca F \Op D$ such that $\lambda_D\circ \Tilde{u}=u$, and as $\Op \lambda_D$ is an isomorphism, it is a surjective frame map, which gives that $\Op u$ is surjective, but as it is split mono ($\Op h_1\circ \Op u$ is an isomorphism), then it must be a frame isomorphism. Then, $\Op h_1$, too, is an isomorphism. 
\end{proof}

\begin{claim}\label{claim 1}
$D$ is sober.
\end{claim}

\begin{proof}(Of Claim 2). We use the characterization in \ref{l: sober object lifts the frame spatialization}. Indeed, $\pt \Op D$ must be in $\Topc$, as it is in $\mf{Fix}(\Topc)$ by Lemma \ref{l: ptc of the initial of a spatial is sober}. We claim that the required lift is $h_2$ followed by the isomorphism $\Omc\pt\Op h_1^{-1}$ (we know $\Op h_1$ to be invertible by \ref{claim 1}). We have 
\begin{align*}
    \Op \Omc \pt \Op h_1^{-1}\circ \Op h_2\circ \Op u
    &= \Op \Omc \pt \Op h_1^{-1}\circ \Op \Omc\pt\Op \lambda_C\circ \Op \eta^{\sigma}_{\Op \ca F\Op C} & \text{ left triangle }\\
    &= \Op \Omc \pt \Op h_1^{-1}\circ \Op \Omc\pt\Op \lambda_C\circ \zeta^{-1}_{\pt \Op \ca F \Op C}\circ \eta_{\Op \ca F \Op C}& \text{ definition of $\eta^{\sigma}$}\\
    &= \Op \Omc \pt \Op u\circ \zeta^{-1}_{\pt \Op \ca F \Op C}\circ \eta_{\Op \ca F \Op C}&\text{ right triangle}\\
    &=\zeta^{-1}_{\pt \Op D}\circ \Om \pt \Op u\circ \eta_{\Op \ca F \Op C}& \text{ naturality of $\zeta^{-1}$}\\
     &=\zeta^{-1}_{\pt \Op D}\circ \eta_{\Op D}\circ \Op u & \text{ naturality of $\eta$}
\end{align*}
Since $\Op u$ is an isomorphism, the desired equality holds.
\end{proof}
Then $h_1:D\to C$ is a map from a sober object. Lemma \ref{l: map from sober induces a commutative square} gives universality and Claim 1 gives that the associated adjunction is an FC-adjunction. 
\end{proof}

\begin{corollary}\label{c: sufficient condition for sobrification to exist}
    If $(\Op,\ca C,\Tc,\iota_{\ca C})$ is an SFC-category where $\ca O$ is a right FC-adjoint, and if $\ca C$ has all pullbacks, sober objects are a coreflective subcategory of $\ca C$.
\end{corollary}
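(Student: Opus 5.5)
The corollary follows from Proposition \ref{p: sobrification as a pullback}: existence of all pullbacks gives, for every object, a sobrification $h_1\colon D\to C$, and objectwise coreflections assemble into a coreflective subcategory. Fix $C\in\ca C$. Since $\ca C$ has all pullbacks, the cospan formed by $\eta^{\ca C}_C\colon C\to\Omc\ptc C$ and $\Omc(\xi_C)\colon \Omc\pt\Op C\to\Omc\ptc C$ has a pullback. Note that $\Omc\pt\Op C$ makes sense because $\pt\Op C\in\Topc$; this holds by Corollary \ref{c: if frames are c, then sobers are topc}, since $\Op$ has a left FC-adjoint. Proposition \ref{p: sobrification as a pullback} then says the horizontal arrow $h_1\colon D\to C$ of this pullback is the sobrification of $C$.

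Concretely, this gives three facts.
\begin{enumerate}
  \item $D$ is sober (Claim \ref{claim 1} of that proof).
  \item $\Op h_1$ is an isomorphism (Claim \ref{claim 2}).
  \item Every morphism $f\colon S\to C$ with $S$ sober factors uniquely through $h_1$.
\end{enumerate}
For the third fact, existence comes from the commuting square of Lemma \ref{l: map from sober induces a commutative square} together with the pullback property. Uniqueness comes from the pullback property, or more directly from faithfulness of $\Op$: since $\Op h_1$ is invertible, $h_1$ is a monomorphism. Hence $h_1$ is a universal arrow from the full subcategory $\ca C_{\bd{Sob}}$ of sober objects to $C$.

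It remains to apply the standard pointwise criterion: if every $C\in\ca C$ has a universal arrow from the inclusion $\ca C_{\bd{Sob}}\hookrightarrow\ca C$, then the inclusion has a right adjoint $C\mapsto D$. The functor action on a morphism $g\colon C\to C'$ is the unique factorization of $g\circ h_1^C$ through $h_1^{C'}$, and functoriality follows from uniqueness. So $\ca C_{\bd{Sob}}$ is coreflective, with counit $h_1$. Since $\Op$ sends the counit to isomorphisms, the coreflection is moreover an FC-coreflection, consistent with the definition of sobrification.

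The only point needing care is that the universal property produced by the pullback holds for all sober objects, not merely for those of the form $\ca F\Op C$ used inside the proof of the proposition. This is handled by Lemma \ref{l: map from sober induces a commutative square}, which applies to an arbitrary $f\colon S\to C$ with $S$ sober. Beyond that, the argument is routine assembly.
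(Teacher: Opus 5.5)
\paragraph{Your reduction is the paper's, but the proposition it rests on fails.}
You read the corollary off Proposition~\ref{p: sobrification as a pullback} object by object, exactly as the paper does. Your final step, that pointwise universal arrows from the sober objects assemble into a right adjoint, is fine. The problem is the proposition itself: the two facts you import from it, that $\Op h_1$ is invertible and that $D$ is sober, fail in general.

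\paragraph{Why $\Op h_1$ need not be invertible.}
Since $\Op$ has a left adjoint, it preserves the pullback. By the mate relation and naturality of $\zeta$,
\[
\zeta_{\ptc C}\circ\Op(\eta^{\ca C}_C)=\Om(\xi_C)\circ\eta_{\Op C}
\quad\text{and}\quad
\zeta_{\ptc C}\circ\Op\Omc(\xi_C)=\Om(\xi_C)\circ\zeta_{\pt\Op C}.
\]
So $\Op h_1$ is, up to isomorphism, the projection of the pullback in $\Frm$ of $\Om(\xi_C)$ along $\Om(\xi_C)\circ\eta_{\Op C}$. Because $\eta_{\Op C}$ is onto, this projection is invertible if and only if $\Om(\xi_C)$ is injective. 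Nothing in the hypotheses guarantees that. In the paper's proof, the faulty step is the inference ``$u=\lambda_D\circ\tilde u$ with $\Op\lambda_D$ invertible, hence $\Op u$ is surjective''.

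\paragraph{A counterexample in $\bd{Raney}$.}
Recall that $\bd{Raney}$ is an SFC-category whose forgetful functor has the left FC-adjoint $\fse$. Take $L=[0,1]$ and $C=(L,\fe(L))$.
\begin{enumerate}
\item In a chain every meet is exact, so the exact filters are exactly the principal ones.
\item The completely prime filters $(p,1]$ are not principal. Hence $\ptc C=\emptyset$, and $\Omc\ptc C$ is the terminal Raney extension on the one-element frame.
\item The pullback therefore exists and is the product $(L\times L,\fe(L)\times\ca U^*(\pt L))$, with $h_1$ the first projection.
\item Consequently $\Op h_1$ is not injective, and $\Op u$ is the diagonal map.
\item $D$ is not even sober: the completely prime filter $(p,1]\times L$ is not in its filter collection.
\end{enumerate}
So your appeals to ``$\Op h_1$ invertible'' (used both for uniqueness and for FC-ness) and to ``$D$ sober'' are unsupported. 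Neither your argument nor the paper's proves the corollary as stated.

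\paragraph{What would be needed.}
The argument does go through if you additionally know that $\Om(\xi_C)$ is injective for every $C$. Then $\Op h_1$ is a pullback of an isomorphism, and the rest is as you wrote it. Without that hypothesis you need a different construction. In $\bd{Raney}$, for instance, the sober coreflection of $(L,\ca F)$ does exist, but it is the identity-on-frames map $(L,\ca F^{s})\to(L,\ca F)$, where $\ca F^{s}$ consists of the intersections of members of $\ca F$ and completely prime filters. That is not the pullback.
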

We recall the following result in point-set topology (\cite{suarez22}, Lemma 4.7).
\begin{theorem}
    If $X$ is a sober space, and $Y\subseteq X$ is a subspace, the sobrification of $Y$ is given by the inclusion of $Y$ into
    \[
    \bigcap \{Z\subseteq X\mid Y\subseteq Z,X\text{ is sober}\}
    \]
\end{theorem}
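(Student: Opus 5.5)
The plan is to construct the sobrification of $Y$ explicitly inside $X$, and then show that this explicit subspace coincides with the intersection in the statement. I read the condition in the statement as ``$Z$ is sober'', with $X$ sober throughout.

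\textbf{Step 1: construct the candidate subspace.} The subspace inclusion $j\colon Y\subseteq X$ induces a frame surjection $\Om j\colon \Om X\to \Om Y$, given by $U\mapsto U\cap Y$. Applying $\pt$ gives a subspace embedding $\pt\Om Y\to \pt\Om X$. Composing with the inverse of the homeomorphism $\sigma_X\colon X\to \pt\Om X$, which exists since $X$ is sober, we may regard $\pt\Om Y$ as a subspace $\widehat Y\subseteq X$. Explicitly,
\[
\widehat Y=\{x\in X\mid \text{for all } U,V\in\Om X,\ U\cap Y=V\cap Y \text{ implies } (x\in U\iff x\in V)\}.
\]
These are exactly the points whose characteristic frame map $\Om X\to\bd 2$ factors through $\Om j$.

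\textbf{Step 2: $Y\subseteq \widehat Y$ is a sobrification.} Clearly $Y\subseteq\widehat Y$. The space $\widehat Y\cong\pt\Om Y$ is sober. Under this identification the inclusion $Y\subseteq\widehat Y$ is $\sigma_Y$, so it is the sobrification of $Y$. Equivalently, restriction $\Om\widehat Y\to\Om Y$ is an isomorphism, since it is $\Om(\sigma_Y)$.

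\textbf{Step 3: $\widehat Y\subseteq Z$ for every sober $Z$ with $Y\subseteq Z\subseteq X$.} Take $x\in\widehat Y$, and let $p\colon\Om Y\to\bd 2$ be the point corresponding to $x$. The inclusions give frame maps $\Om X\to\Om Z\to\Om Y$, with $U\mapsto U\cap Z\mapsto U\cap Y$. Composing $p$ with $\Om Z\to\Om Y$ yields a point of $\Om Z$. Since $Z$ is sober, this point is realized by some $z\in Z$. Then for every $U\in\Om X$ we have
\[
z\in U \iff z\in U\cap Z \iff p(U\cap Y)=1 \iff x\in U.
\]
Because $X$ is sober, hence $T_0$, this forces $x=z\in Z$.

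\textbf{Conclusion.} Since $\widehat Y$ is itself a sober subspace containing $Y$ (Step 2), it is one of the sets being intersected. By Step 3 it is contained in all of them, so $\widehat Y$ equals the intersection. Step 2 then identifies the inclusion of $Y$ into this intersection as the sobrification.

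The main obstacle is Step 1: the correct identification of $\pt\Om Y$ with a subspace of $X$. It must be checked that the subspace topology on $\widehat Y$ agrees with the spectral topology of $\pt\Om Y$, and that the embedding restricts to $\sigma_Y$ on $Y$. I would handle this through the naturality of $\sigma$ with respect to $j$, together with the fact that $\pt$ sends frame surjections to subspace embeddings.
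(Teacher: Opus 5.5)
Your proof is correct. You also read the typo correctly: the condition should be that $Z$ is sober. The step you flag as the main obstacle is routine. For a frame surjection, $\pt$ gives an injective map whose domain carries the initial topology, so $\pt\Om(j)$ is a subspace embedding. Naturality, $\sigma_X\circ j=\pt\Om(j)\circ\sigma_Y$, then identifies the inclusion $Y\subseteq\widehat Y$ with $\sigma_Y$.

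The paper itself gives no proof of this statement. It quotes it from \cite{suarez22} (Lemma 4.7) as motivation and instead proves the pointfree analogue, Theorem \ref{t: sobrification as a sober interior}, by a somewhat different route:
\begin{enumerate}
\item It assumes that the sobrification $s_C$ exists.
\item One inequality holds because $\mf{sob}(C)$ is one of the sober objects in the fiber.
\item The other comes from factoring the map out of the join through $s_C$. This needs the join of sober objects to be sober again (Lemma \ref{l: sober interior well defined}).
\end{enumerate}
Transported to spaces, that route would need the fact that an intersection of sober subspaces of a sober space is sober.

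You instead build the sobrification explicitly inside $X$ and show it is the \emph{least} sober subspace containing $Y$. It is a member of the family. Your Step 3 then places it inside every member; that step is just the universal property of $\sigma_Y$ applied to $Y\subseteq Z$, unwound on points and combined with the $T_0$ property of $X$.

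Your approach gives existence of the sobrification inside $X$ for free. It also makes closure of sober subspaces under intersection unnecessary. The paper's formulation, in exchange, uses only fibers and universal properties. It therefore applies in any SFC-category where $\Op$ is a right FC-adjoint, even when no concrete model of the sobrification is available.
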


We now prove a pointfree, categorical version of the theorem. 

\begin{lemma}
  \label{l: sober interior well defined}
  Let $(\Op,\ca C,\Tc,\iota_{\ca C})$ be an SFC-category. The collection of
  sober objects in a fiber is downclosed. 

  Furthermore, let $(D_i,\theta^i)\in \Op^{-1}(\Op C)$ be a family such that
  each $D_i$ is sober. If the join $\bigvee_i (D_i,\theta^i)$ exists, then it
  is sober. 
\end{lemma}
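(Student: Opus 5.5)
Throughout, sobriety of an object $C$ means that every frame point $p\colon\Op C\to\bd 2$ lifts to some morphism $C\to\Tc$, i.e.\ $\xi_C$ is surjective.

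\emph{Downclosure.} Let $p\colon(C,\theta^C)\to(D,\theta^D)$ be a morphism in the fiber $\Op^{-1}(L)$ with $D$ sober. Then $\theta^D\circ\Op(p)=\theta^C$, so $\Op(p)=(\theta^D)^{-1}\theta^C$ is a frame isomorphism. Take a point $h\colon\Op C\to\bd 2$. The composite $h\circ\Op(p)^{-1}\colon \Op D\to\bd 2$ is a point of $\Op D$. By sobriety of $D$ it lifts to some $\overline{g}\colon D\to\Tc$ with $\iota\circ\Op(\overline g)=h\circ\Op(p)^{-1}$. Consider $\overline g\circ p\colon C\to\Tc$. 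Applying $\Op$ and composing with $\iota$ gives $h\circ\Op(p)^{-1}\circ\Op(p)=h$. So $h$ lifts, and $C$ is sober.

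\emph{Joins.} Let $(E,\theta)=\bigvee_i(D_i,\theta^i)$ in the fiber, with coprojections $p_i\colon(D_i,\theta^i)\to(E,\theta)$. Since the fiber is a preorder, these are just witnesses of $D_i\le E$. Take a point $h\colon \Op E\to\bd 2$. I want a lift $E\to\Tc$. The natural tool is the SFC structure: the key point is that an object is sober precisely when $\xi$ is a homeomorphism. So it suffices to produce some object above $E$ in the fiber which is sober, and then apply downclosure.

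I would try to build that object using a cartesian lift. Consider the spatial object $\Omc\pt(L)$. This is available when $\pt(L)\in\Topc$. By Lemma~\ref{l: fibers of spatial frames whose spec is in topc} and Remark~\ref{r: omc pt L sober}, $\Omc\pt(L)$ is sober whenever $\pt L\in\Topc$. That remark makes no spatiality requirement, but it lives over $\Om\pt L$, not over $L$, so this route only handles the spatial case.

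The better approach is direct, and it is what I would actually do. Given $h$, compose with $\theta^{-1}\colon L\cong\Op E$ and then with each $\Op(p_i)$. This yields points $h\circ\Op(p_i)$ of $\Op D_i$, which lift to maps $g_i\colon D_i\to\Tc$. The family $\{g_i\}$ is compatible over the common frame point $h\theta^{-1}$. I would then use the join to glue them. Namely, form the object $E'$ obtained as the cartesian lift of the family $\{\theta, \iota^{-1}\circ h\}$, i.e.\ the initial structure on $L$ making both the identity into $E$ and $h$ liftable. I would obtain it by pulling back along $E$ and $\Tc$, or via a fiber-level argument.

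Each $D_i$ lies below $E'$: a morphism $D_i\to E'$ exists because both $p_i$ and $g_i$ lift, which is the cartesian property. Hence $E\le E'$ by the join property. Composing $E\to E'$ with the lifted map $E'\to\Tc$ gives the lift of $h$.

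The main obstacle is the existence of this cartesian lift $E'$. It is not automatic in a general FC-category. If it is unavailable, I would instead argue that the join is computed so that a map out of $E$ to $\Tc$ exists whenever all $h\circ\Op(p_i)$ lift. That amounts to showing that the joins in the fiber are "final" with respect to maps into $\Tc$. I expect the authors intend the join to be used exactly through this finality property, and I would verify it from the definition of fiber joins together with faithfulness of $\Op$.
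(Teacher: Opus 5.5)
Your downclosure argument is correct, and it is the paper's own argument: compose the lift on the sober upper object with the fiber morphism. You merely make the transport along $\Op(p)^{-1}$ explicit. The join part, however, has a genuine gap: you never actually produce a morphism $E\to\Tc$. The object $E'$ is a cartesian lift of a family with two different codomains, and nothing in the definition of an SFC-category provides such lifts. Your fallback also cannot be ``verified from the definition of fiber joins together with faithfulness of $\Op$''. The join in $\Op^{-1}(L)$ has a universal property only with respect to objects of that fiber, while $\Tc$ lies over $\bd 2$. So the lifts $g_i\colon D_i\to\Tc$ give no information about morphisms out of $E$. You are right that the paper's proof uses exactly this finality: it obtains $\overline p\colon\bigvee_i D_i\to\Tc$ from ``the universal property of the colimit''. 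That step, however, tacitly assumes the fiber join is a colimit (a final lift) in $\ca C$. This is an extra hypothesis, and without it the claim fails.

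\emph{Empty family.} Consider the SFC-category $(i_D,\bd{Frm_D},\bd 2,\id)$ of Example~\ref{e: td duality}. Fiber morphisms there are frame isomorphisms, hence D-morphisms, so each $\Op^{-1}(L)$ is a single object up to isomorphism, and its empty join is $L$ itself. This is sober only if every point of $L$ is a D-morphism. That fails, for instance, for the frame of Example~\ref{e: some fibers of omega have no terminal objects}, which has a completely prime filter that is not exact (see Proposition~\ref{p: td points of a frame}).

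\emph{Nonempty family.} Let $P$ be the poset with $a,b<e<u$ and $a,b<t<u$. Let $\ca C$ have objects $(L,c)$ with $L$ a frame and $c\in P$. The morphisms $(L,c)\to(M,d)$ are all frame maps $L\to M$ if $c\le d$, and there are none otherwise. Put $\Tc=(\bd 2,t)$. Finally, delete the objects $(L,t)$ with $L$ isomorphic to a fixed non-spatial frame $L_0$ that has a point. The cartesian lifts are $(\Om X,t)$ for $X\neq\emptyset$ and $(\Om\emptyset,u)$ for $X=\emptyset$. Neither is deleted, since $L_0$ is not spatial, so this is an SFC-category with $\Topc=\Topp$. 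Yet in $\Op^{-1}(L_0)$ the join of the sober objects $(L_0,a)$ and $(L_0,b)$ is $(L_0,e)$, which admits no morphism to $\Tc$ at all.

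So the statement needs an additional hypothesis. One option is that the fiber maps $p_i\colon D_i\to E$ form a final family for $\Op$: a frame map $\Op E\to\Op Z$ lifts whenever all its composites with the $\Op(p_i)$ lift. This holds, for example, in $\bd{Raney}$, where fiber joins are intersections. Another option is that your $E'$ exists. Under either hypothesis your gluing argument, like the paper's, goes through immediately.
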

\begin{proof}
  Suppose that $(D,\theta^D),(E,\theta^E)\in \Op^{-1}(\Op C)$, that $E$ is
  sober, and that there is a morphism $f:D\to E$ in the fiber. If $p:\Op C\to
  \bd{2}$ is a point, we call $\overline{p}:E\to \Tc$ the lift of $p$ given by
  sobriety of $E$; then $\overline{p}\circ f:D\to \Tc$ is a lift. 

  For a family $(D_i,\theta^i)\in \Op^{-1}(\Op C)$ where each $D_i$ is sober,
  if $\bigvee_i (D_i,\theta^i)$ exists, then the lift of a point $p:\Op C\to
  \bd{2}$ is given by the universal property of the colimit, as the map
  $\overline{p}:\bigvee_i D_i\to \Tc$ corresponding to the collection of lifts
  $\overline{p_i}:D_i\to \Tc$. 
\end{proof}

\begin{theorem}\label{t: sobrification as a sober interior}
Let $(\Op,\ca C,\Tc,\iota_{\ca C})$ be an SFC-category where $\Op$ is a right PCF-adjoint. If the sobrification of $C\in \ca C$ exists, it is given by: 
\[
\mathit{int}_{\mf{sob}}(C):=\bve \{D\in \Op^{-1}(\Op C)\mid D\text{ is sober}\}
\]
\end{theorem}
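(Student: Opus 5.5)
Assume the sobrification $s_C\colon \mf{sob}(C)\to C$ exists. It is a sober coreflection arising from an FC-adjunction, so $\Op(s_C)$ is invertible. Hence $(\mf{sob}(C),\Op(s_C))$, or more precisely the pair obtained by composing with the identification $\Op C$, is an object of the fiber $\Op^{-1}(\Op C)$, and $s_C$ is a fiber morphism into $(C,\id)$. The plan is to show that this object is the greatest sober element of the fiber. Then it is the join $\mathit{int}_{\mf{sob}}(C)$, and the join exists, so no separate existence hypothesis is needed.

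\textbf{Step 1: $\mf{sob}(C)$ is sober and in the fiber.} This is immediate from the definition of sobrification together with the FC-adjunction condition. Since $\Op(s_C)$ is invertible, we equip $\mf{sob}(C)$ with $\theta=\Op(s_C)$, and then $s_C$ is a fiber map $(\mf{sob}(C),\theta)\to (C,\id)$.

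\textbf{Step 2: upper bound.} Let $(D,\theta^D)$ be a sober object of the fiber. Its structure map $p\colon D\to C$ satisfies $\Op p=(\theta^D)$ composed appropriately, so it is a morphism from a sober object into $C$. By the coreflection property it factors uniquely as $p=s_C\circ \bar p$ with $\bar p\colon D\to \mf{sob}(C)$. Applying $\Op$ gives $\Op(s_C)\circ\Op(\bar p)=\Op(p)$, and rearranging the isomorphisms gives $\theta\circ \Op(\bar p)=\theta^D$. So $\bar p$ is a morphism in the fiber, and $(D,\theta^D)\le(\mf{sob}(C),\theta)$. Fibers are preorders by Proposition~\ref{p: hom cat is a preorder}, so this is all that is needed.

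\textbf{Step 3: conclude.} Since $\mf{sob}(C)$ is itself one of the sober members of the fiber and bounds all of them above, it is the join $\bve\{D\in\Op^{-1}(\Op C)\mid D \text{ sober}\}$. This is consistent with Lemma~\ref{l: sober interior well defined}, and the collection of sober objects turns out to be the principal down-set of $\mf{sob}(C)$. The hypothesis that $\Op$ is a right FC-adjoint is not really needed beyond guaranteeing the setting of Proposition~\ref{p: sobrification as a pullback}, where the existence of sobrification is typically established.

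The main obstacle is bookkeeping in Step 2. We must check that the factorization $\bar p$ respects the fiber isomorphisms, i.e.\ that the triangle over $\Op C$ commutes. This follows because the isomorphism $\theta^D$ is exactly $\Op(p)$ read through the identification. There is also a small subtlety: the fiber as defined consists of pairs with arbitrary isomorphisms $\theta^D\colon \Op D\cong \Op C$, not necessarily realised by a map $D\to C$. If $\theta^D$ does not lift to a map $D\to C$, I would first try to show that every sober member does map to $C$. Failing that, I would interpret the join within the sub-preorder of elements below $(C,\id)$, which is the evident intended reading of ``interior''.
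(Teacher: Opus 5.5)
Your argument is correct provided the join is taken only over the sober members $(D,\theta^D)$ with $(D,\theta^D)\le(C,\id)$. You should adopt that reading from the outset rather than as a fallback. The first option in your last paragraph, showing that every sober member of the fiber maps to $C$, cannot succeed, and under the literal reading the join is in general the wrong object.

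For instance, in $\bd{Raney}$ let $C=(\Om\mathbb R,\fse(\Om\mathbb R))$. It is fiber-initial and sober, since completely prime filters are strongly exact, so $\mf{sob}(C)=C$. However, $(\Om\mathbb R,\ca U^*\mathbb R)$ is a sober member of the same fiber that is not below $C$: the filter of dense open sets is strongly exact, but it is not of the form $\{U\mid S\subseteq U\}$. Hence $C$ is not the join of all sober members.

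The paper's proof makes the same tacit restriction when it invokes ``the universal map $\mathit{int}_{\mf{sob}}(C)\to C$''. That map exists only if $(C,\id)$ is an upper bound of the family.

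With that reading fixed, your route is a mild but genuine streamlining of the paper's.
\begin{itemize}
\item The paper assumes the join exists and uses Lemma~\ref{l: sober interior well defined} to know it is sober. It then factors the single map $\mathit{int}_{\mf{sob}}(C)\to C$ through $s_C$ to get $\mathit{int}_{\mf{sob}}(C)\le\mf{sob}(C)$; the reverse inequality comes from membership.
\item You instead factor each member's map to $C$ through $s_C$ and check the triangle over $\Op C$. This shows directly that $\mf{sob}(C)$ is the greatest element of the family.
\end{itemize}
Your version needs no lemma about joins of sober objects, and it proves that the join exists rather than presupposing it. As you observe, neither argument uses that $\Op$ is a right FC-adjoint; only the invertibility of $\Op(s_C)$ matters.
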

\begin{proof}

Let $s_C:\mf{sob}(C)\to C$ be the sobrification. As sobrification is a FC-adjoint, $(\mf{sob}(C),\Op(s_C))\in \Op^{-1}(\Op C)$ and as $\mf{sob}(C)$ is sober we must have $\mf{sob}(C)\leq \mathit{int}_{\mf{sob}}(C)$. On the other hand, the universal map
\[
f:\mathit{int}_{\mf{sob}}(C)\to C
\]
is a map to $C$ from a sober object, hence it factors through the sobrification:
\begin{equation*}
    \begin{tikzcd}
& \mf{sob}(C)
\ar[dr,"s_C"]\\
\mathit{int}_{\mf{sob}}(C)
\ar[rr,"f"]
\ar[ur,"\Tilde{f}"]
&& C.
    \end{tikzcd}
\end{equation*}
By our assumption, $\Op (s_C)$ and $\Op(f)$ are frame isomorphism; hence, so is $\Op (\Tilde{f})$. We then get the reverse inequality $\mathit{int}_{\mf{sob}}(C)\leq \mf{sob}(C)$ in $\Op^{-1}(\Op C)$.
\end{proof}

\subsection{Concrete examples}\label{ss concrete examples 1}
We prove the main natural adjunction in \cite{suarez25raney} between Raney extensions and topological spaces as a special case of Theorem \ref{t: main theorem in porst tholen adapted}.

\begin{theorem}\label{t: duality for raney extensions}
    \label{l: raney extensions are a quasi t0 duality}
The PFC-category $(\Op_{\ca R},\bd{Raney},\bd 2_{\ca R},\id_{\bd{2}})$ is spatializable. The adjunction 
\[
\mf{Dual}(\Op_{\ca R},\bd{Raney},\bd 2_{\ca R},\id_{\bd{2}})
\]
is such that $\Topc=\Topp$ and $\mf{Fix}(\Topc)=\Topz$.
\end{theorem}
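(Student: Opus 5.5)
The plan is to verify the two defining conditions of an SFC-category for $(\Op_{\ca R},\bd{Raney},\bd 2_{\ca R},\id_{\bd{2}})$: that $\mathbb S\in\Topc$ and that $\ptc(L,\ca F)\in\Topc$ for every Raney extension. Both follow at once if we show the stronger claim $\Topc=\Topp$, namely that every space $X$ lies in $\Topc$. Here $\bd 2_{\ca R}$ is the Raney extension $(\bd 2,\mf{Filt}(\bd 2))$, whose only proper filter is $\{1\}$. For an arbitrary space $X$ we take the candidate lift to be $\ca U X=(\Om X,\ca U^*X)$ from Example \ref{e: the raney extension UX}, with $\theta_X$ the identity on $|\Om X|$.

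First, each characteristic map $\chi_x:\Om X\to\bd 2$ lifts to a Raney morphism. The only nontrivial preimage is $\chi_x^{-1}(\{1\})=\{U\in\Om X\mid x\in U\}$, which is the filter $\{U\mid {\up}x\subseteq U\}$ corresponding to the saturated set ${\up}x$; hence it lies in $\ca U^*X$.

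Next comes the cartesian property. Let $(M,\ca G)$ be a Raney extension and $h:M\to\Om X$ a function such that every $\chi_x\circ h$ lifts to a Raney morphism. Then $h$ preserves finite meets and arbitrary joins, because these are tested pointwise by the frame maps $\chi_x\circ h$ and the points of $X$ separate opens; so $h$ is a frame map. We must show that $h^{-1}(G)\in\ca G$ for every $G\in\ca U^*X$. Such a $G$ has the form $\{U\mid S\subseteq U\}$ with $S=\bigcap_{x\in S}{\up}x$, so
\[
G=\bigcap_{x\in S}\{U\mid x\in U\}.
\]
In $\mf{Filt}(\Om X)$, ordered by reverse inclusion, $G$ is therefore the join of the filters $\chi_x^{-1}(\{1\})$. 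Since preimage commutes with intersections,
\[
h^{-1}(G)=\bigcap_{x\in S}(\chi_x h)^{-1}(\{1\}),
\]
and each term lies in $\ca G$ by hypothesis. It remains to check that $\ca G$ is closed under arbitrary intersections, i.e.\ under joins in reverse inclusion. This holds because $\ca G$ is a subcolocale of $\mf{Filt}(M)$, and subcolocales are closed under all joins, dually to sublocales being closed under meets. This is the main step to get right, namely confirming that the variance of ``subcolocale'' gives closure under intersections of filters. So $h$ lifts, $\ca U X$ is a cartesian lift, and $\Topc=\Topp$.

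Finally, with the SFC structure established, Theorem \ref{t: main theorem in porst tholen adapted} yields the natural dual adjunction, and by \eqref{NA2} the functor $\Omc$ coincides with $\ca U$. For the fixpoints, the unit $\sigma^{\ca C}_X$ sends $x$ to the lift $\overline{\chi_x}$. This map is injective exactly when $X$ is $T_0$, since by faithfulness of $\Op_{\ca R}$ distinct lifts correspond to distinct $\chi_x$. For surjectivity when $X$ is $T_0$, take a Raney morphism $p:\ca U X\to\bd 2_{\ca R}$. Then $p^{-1}(\{1\})\in\ca U^*X$ is the filter of a saturated set $S$, and it is prime because $p$ is a frame map. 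Primeness of $\{U\mid S\subseteq U\}$ forces $S={\up}x$ for a single $x$: if $S$ contained no least point, its complement could be covered by finitely many opens in a way that splits the filter. Hence $p=\chi_x$. The unit is then a bijection, and since it is initial with respect to the opens it is a homeomorphism. Therefore $\mf{Fix}(\Topc)=\Topz$. I expect this primeness argument to need the most care: one should use that a union of two opens contains $S$ only if one of them does, and argue via the specialization order.
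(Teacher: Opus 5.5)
Your argument for spatializability and for $\Topc=\Topp$ is the paper's: take $(\Om X,\ca U^*X)$ as the cartesian lift, see that $h$ is a frame map because points separate opens, write $h^{-1}(\{U\mid S\subseteq U\})=\bigcap_{x\in S}(\chi_x h)^{-1}(\{1\})$, and use closure of the subcolocale under intersections. Your check that each $\chi_x$ lifts is needed; the paper leaves it implicit. One slip: $S=\bigcap_{x\in S}{\up}x$ should read $S=\bigcup_{x\in S}{\up}x$, though your displayed formula for $G$ is correct. The paper's proof stops at $\Topc=\Topp$ and takes $\mf{Fix}(\Topc)=\Topz$ from the duality of \cite{suarez25raney} recalled in the preliminaries. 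Your direct proof of the fixpoints therefore goes beyond the paper, but its surjectivity step has a genuine gap.

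You try to show that $p^{-1}(\{1\})=\{U\mid S\subseteq U\}$ forces $S={\up}x$. You use only primeness (``a union of two opens contains $S$ only if one of them does'') together with a finite covering argument, and finite primeness is not enough. Here is a counterexample. Let $X=\mathbb N$ with opens $\emptyset$, $\mathbb N$ and $\{0,\dots,n\}$ for $n\in\mathbb N$. This space is $T_0$, and $\Om X$ is a chain, so every filter is prime. The filter $\{\mathbb N\}=\{U\mid \mathbb N\subseteq U\}$ lies in $\ca U^*X$. Yet $\mathbb N$ is not the saturation $\{0,\dots,n\}$ of any point.

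What excludes such filters is complete primeness, which you have because $p$ preserves all joins. Let $W=\bigcup\{U\in\Om X\mid p(U)=0\}$. Then $p(W)=0$, so $S\not\subseteq W$; pick $x\in S\setminus W$. Every open $U\ni x$ satisfies $U\not\subseteq W$, hence $p(U)=1$, hence $S\subseteq U$. So $S\subseteq{\up}x$, and ${\up}x\subseteq S$ because $S$ is saturated. Thus $S={\up}x$ and $p=\chi_x$. With this step replaced, the rest of your fixpoint argument goes through: injectivity of the unit holds exactly for $T_0$ spaces, and initiality of the topology turns the bijection into a homeomorphism.
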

\begin{proof}

We will show that $(\Op_{\ca R},\bd{Raney},\bd 2_{\ca R},\id_{\bd{2}})$ is spatializable, with $\Topc=\Topp$, by showing that for every space $X\in \Topp$, the Raney extension $(\Om X,\mathcal{U}^*X)$ provides the cartesian lift for the family of characteristic functions. Let $(L,\ca F)$ be a Raney extension and $f:|L|\to |\Om X|$ be a function. Suppose that for all $x\in X$ the function $\chi_x{\circ} f$ lifts to a map 
\[
\chi_x\circ f:(L,\ca F)\to \bd 2_{\ca R}
\]
of Raney extensions. This means $(\chi_x\circ f)^{-1}(1)\in \ca F$. In particular, this gives lifts in $\bd{Frm}$ of each $\chi_x\circ f$, and so, by the cartesian lift property of $\Om X$ for frames, the map $f$ is a frame map. To show the desired result, it suffices to show that it lifts to a map $f:(L,\ca F)\to (\Om X,\ca U ^*X)$ in $\bd{Raney}$. Let $S$ be a saturated set. For the desired lift to exist, we have to show that
\[
f^{-1}(\{U\in \Om X\mid S\subseteq U\})\in \ca F.
\]
For $U\in \Om X$, $S\subseteq U$ if and only if $\chi_x(U)=1$ for all $x\in S$, and so:
\[
f^{-1}(\{U\in \Om X\mid S\subseteq U\})=f^{-1}(\bigcap_{x\in S}\chi_x^{-1}(1))=\bigcap_{x\in S}(\chi_x\circ f)^{-1}(1)
\]
Since by our initial assumption $(\chi_x\circ f)^{-1}(1)\in \ca F$ for each $x\in X$, and as $\ca F$ is closed under arbitrary intersections (as it is a subcolocale).
\end{proof}

Similarly as in Theorem \ref{t: duality for raney extensions}, one can prove the adjunctions in \cite{bezhanishvili23} and that in \cite{manuell15}, too, as special cases of Theorem \ref{t: main theorem in porst tholen adapted}. We provide sketches for both proofs.

\begin{theorem}\label{t: duality for skula extensions}\label{l: szd are a quasi t0 duality}
  The PFC-category $(\Op_{\ca S},\bd{Skula},\bd 2_{\ca S},\id_{\bd{2}})$ is spatializable. The adjunction 
  \[
   \mf{Dual}(\Op_{\ca S},\bd{Skula},\bd 2_{\ca S},\id_{\bd{2}})
  \]
  is such that $\Topc=\Topp$ and $\mf{Fix}(\Topc)=\Topz$. 
\end{theorem}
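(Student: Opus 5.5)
The proof follows the pattern of Theorem \ref{t: duality for raney extensions}, with the Skula extension $(\Om X,\ca{SKC}^*X)$ of Example \ref{e: skula of a space} as the candidate cartesian lift of the characteristic functions $\{\chi_x \mid x\in X\}$ for every space $X$. Take a Skula extension $(L,\ca D)$ and a function $f\colon|L|\to|\Om X|$ such that each $\chi_x\circ f$ lifts to a morphism $(L,\ca D)\to\bd 2_{\ca S}$. Here $\bd 2_{\ca S}=(\bd 2,\ca S\bd 2)$; note $\ca S\bd 2=\ca F\bd 2$, which is the only possible $\ca D$ over $\bd 2$. In particular each $\chi_x\circ f$ is a frame map, so by the cartesian lifting property of $\Om X$ in $\Frm$, $f$ is a frame map. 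It remains to show that $\ca S f^*(E)\in\ca D$ for every $E\in\ca{SKC}^*X$.

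Write $E=\bigvee\{b(x)\mid x\in S\}$ with $S$ Skula-closed. Since $\ca S f^*$ is a left adjoint, it preserves joins, so
\[
\ca S f^*(E)=\bigvee_{x\in S}\ca S f^*(b(x)).
\]
Observe that $b(x)$ is exactly the image of the unique sublocale of $\bd 2$ under $\ca S\chi_x^*$. Indeed, the two-element sublocale $\bd 2\subseteq\bd 2$ is sent by the left adjoint of $\ca S\chi_x$ to the smallest sublocale whose image under $\ca S\chi_x$ is everything, which is the point sublocale $\{X\setminus\overline{\{x\}},X\}=b(x)$. I would verify this through the localic map $\bd 2\to\Om X$ corresponding to $\chi_x$. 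Consequently, functoriality gives
\[
\ca S f^*(b(x))=\ca S(\chi_x\circ f)^*(\top),
\]
and this lies in $\ca D$ because $\chi_x\circ f$ is a Skula morphism. Since $\ca D$ is a subcolocale of $\ca S L$, it is closed under joins, so $\ca S f^*(E)\in\ca D$. Hence $f$ lifts and $(\Om X,\ca{SKC}^*X)$ is the cartesian lift. This shows $\Topc=\Topp$, and in particular $\mathbb S\in\Topc$ and every $\ptc(C)\in\Topc$, so the PFC-category is spatializable.

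For the fixpoints, I would invoke the known result of \cite{manuell15} recalled above: the $T_0$-spaces are the fixpoints. To identify $\mf{Dual}$ with the adjunction $\ca{SK}\dashv\pt_{\ca S}$, I would note that $\ptc$ coincides with $\pt_{\ca S}$ by definition, and that $\Omc$ coincides with $\ca{SK}$ on objects by the preceding paragraph. Alternatively, I would argue directly. The unit $\sigma^{\ca C}_X$ sends $x$ to $\overline{\chi_x}$, and $\overline{\chi_x}$ is determined by $\chi_x$ by faithfulness, so $\sigma^{\ca C}_X$ is injective exactly when $X$ is $T_0$. For surjectivity onto Skula points, a morphism $p\colon(\Om X,\ca{SKC}^*X)\to\bd 2_{\ca S}$ has $\ca S p^*(\top)$ equal to a point sublocale lying in $\ca{SKC}^*X$. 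This forces it to be induced by a Skula-closed set whose join of $b(y)$ is a single $b(x)$-type sublocale, so $p=\chi_x$.

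The main obstacle is the identification $\ca S\chi_x^*(\top)=b(x)$ together with the final surjectivity step. The latter needs the fact that a prime-element sublocale in $\ca{SKC}^*X$ comes from an actual point. I would handle it through the isomorphism $\ca{SKC}^*X\cong\ca{SKC}X$ and the fact that the Skula closure of a point meeting the relevant locally closed sets pins it down.
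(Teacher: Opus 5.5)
Your proposal is correct and follows the paper's sketched proof essentially step by step. Both use the candidate lift $(\Om X,\ca{SKC}^*X)$, get that $f$ is a frame map from the cartesian property of $\Om X$, show $\ca S f^*(b(x))=\ca S(\chi_x\circ f)^*(\top)\in\ca D$, and conclude from join-preservation of $\ca S f^*$ together with closure of $\ca D$ under joins, with $\mf{Fix}(\Topc)=\Topz$ taken from \cite{manuell15}.

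Two small remarks. First, $\bd 2$ has two sublocales, not one; it is the top one, $\{0,1\}$, that $\ca S\chi_x^*$ sends to $b(x)$, and this is what you in fact use. Second, the surjectivity step you flag as the obstacle is easy. If $\{P,X\}\in\ca{SKC}^*X$ with $P$ prime is written as $\bigvee\{b(y)\mid y\in Y\}$, then $Y\neq\emptyset$ because $P\neq X$. Moreover, each $b(y)\subseteq\{P,X\}$ forces $X\setminus\overline{\{y\}}=P$, so the point is $\chi_y$ for any $y\in Y$.
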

\begin{proof}(Sketch). 
    We show that for every topological space $X$ the Skula extension $(\Om X,\ca{SKC}^* X)$ provides the required cartesian lift. For Skula extension $(L,\ca D)$, we assume there is a function $f:|L|\to |\Om X|$ where every composite $\chi_x\circ f$ lifts. We observe that $\ca S\chi_x^*(\{1\})=b(x)$ for every $x\in X$, and so this assumption means 
    \begin{equation}\label{e: Sf^* of the points}
        \text{$\ca S f^*b(x)\in \ca D$ for every $x\in X$.}
    \end{equation}
 The function $f$ is a frame map, by the cartesian lift property of $\Om X$. We show that $f$ is a map in $\bd{Skula}$; for this it suffices to show $\ca S f^*(S)\in \ca D$ for every $S\in \ca{SKC}^*X$. If $S\in \ca{SKC}^*X$, then there is some Skula-closed set $Y\subseteq X$ such that
    \[
    S=\bigvee \{b(x)\mid x\in Y\}
    \]
    As $\ca S f^*$ is a left adjoin, it preserves joins, and so 
    \[
    \ca S f^*(S)=\bigvee \{\ca S f^*(b(x))\mid x\in Y\}.
    \]
   By \ref{e: Sf^* of the points}, $\ca S f^*(b(x))\in \ca D$ for all $x\in X$, and as $\ca D$ is a subcolocale it is closed under all joins, so $S\in \ca D$, as desired.
\end{proof}

\begin{theorem}\label{t: duality for mt}
    \label{l: mt algebras are a quasi t0 duality}
 The PFC-category $(\Op_{\ca M},\bd{MT},\bd 2_{\ca M},\id_{\bd{2}})$ is spatializable. The adjunction 
 \[
 \mf{Dual}(\Op_{\ca M},\bd{MT},\bd 2_{\ca M},\id_{\bd{2}})
 \]
 is such that $\Topc=\Topp$ and $\mf{Fix}(\Topc)=\Topz$.
\end{theorem}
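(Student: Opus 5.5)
To prove Theorem~\ref{t: duality for mt}, I would follow the pattern of Theorems~\ref{t: duality for raney extensions} and~\ref{t: duality for skula extensions}. The plan is to show that for every space $X$ the MT-algebra $\ca P X=(\Om X,\ca P X)$ is the cartesian lift of the family of characteristic functions $\{\chi_x:|\Om X|\to|\bd 2|\mid x\in X\}$ with respect to $\Op_{\ca M}$. Here $\bd 2_{\ca M}=(\bd 2,\bd 2)$, and the lift of $\chi_x$ is the complete Boolean algebra map $\ca P X\to \bd 2$ given by $A\mapsto 1$ iff $x\in A$. This map clearly restricts to $\chi_x$ on opens.

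The first step is the lifting argument. Let $(L,M)$ be a ($T_0$) MT-algebra and $f:|L|\to|\Om X|$ a function such that each $\chi_x\circ f$ lifts to an MT-morphism $g_x:(L,M)\to \bd 2_{\ca M}$. By the cartesian property of $\Om X$ in $\Frm$, $f$ is a frame map. I then define $\overline f:M\to \ca P X$ by
\[
\overline f(m)=\{x\in X\mid g_x(m)=1\}.
\]
Because each $g_x$ is a complete Boolean algebra map, $\overline f$ preserves arbitrary meets, joins and complements, being computed pointwise. On opens $a\in L$ we have $\overline f(a)=\{x\mid \chi_x(f(a))=1\}=f(a)$, so $\overline f$ restricts to $f$ and is an MT-morphism lifting $f$. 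Conversely, if $f$ lifts, then each $\chi_x\circ f$ lifts by composition. So $(\Om X,\ca P X)$ is a cartesian lift, and we need $\ca P X$ to be $T_0$ in the MT sense for it to be an object of $\bd{MT}$.

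The main obstacle is that $\ca P X$ need not be a $T_0$ MT-algebra when $X$ is not $T_0$: points of $X$ are atoms of $\ca PX$, and an atom is a join of elements $\bwe_i a_i\wedge\neg a$ only if the point is a locally closed singleton intersected with saturations, which requires separating points. I would handle this by replacing $X$ with its $T_0$-reflection $X_0$ and taking $(\Om X_0,\ca P X_0)\cong(\Om X,\ca P X_0)$ as the lift. The $\overline f$ construction above then goes through after checking that $g_x=g_y$ whenever $x,y$ have the same neighbourhoods. Indeed, both $g_x$ and $g_y$ restrict to the same frame map on $L$, and MT-morphisms out of a $T_0$ MT-algebra are determined by their restriction to $L$, by faithfulness (Theorem 5.8 of \cite{bezhanishvili23}). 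For $X_0$ itself, $T_0$-ness of $\ca P X_0$ holds because each singleton $\{x\}$ is the intersection of the saturated set ${\uparrow}x=\bwe\{U\mid x\in U\}$ with the complement of the union of open sets not containing $x$. This gives $\Topc=\Topp$ and shows $\mathbb S\in\Topc$.

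Next, $\ptc(C)\in\Topc$ is automatic since $\Topc=\Topp$, so the PFC-category is spatializable. For the fixpoints, the unit $\sigma^{\ca C}_X$ sends $x$ to the lift of $\chi_x$, that is, the atom $\{r(x)\}$ of $\ca P X_0$. Every MT-morphism $\ca PX_0\to\bd 2$ is a complete Boolean algebra map, hence determined by an atom. So $\ptc\Omc X$ has underlying set $X_0$ with the topology coming from $\Om X$, which gives $\ptc\Omc X\cong X_0$. Hence $\sigma^{\ca C}_X$ is invertible exactly when $X$ is $T_0$, and $\mf{Fix}(\Topc)=\Topz$.
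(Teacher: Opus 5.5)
Your proof is correct, and its core is the same as the paper's sketch. You lift each $\chi_x\circ f$ to a complete Boolean map $g_x$ and set $\overline f(m)=\{x\mid g_x(m)=1\}$, which is a complete Boolean homomorphism restricting to $f$ because everything is computed pointwise.

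Where you depart from the paper, you are more accurate than it is. The paper's sketch takes $(\Om X,\ca P X)$ as the cartesian lift for every space $X$. Since $\bd{MT}$ here consists only of $T_0$ MT-algebras, this is not even an object of the category when $X$ is not $T_0$. Moreover, if it were the lift, your atom computation would give $\ptc\Omc X\cong X$ for every $X$, so every space would be a fixpoint, contradicting $\mf{Fix}(\Topc)=\Topz$.

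Your repair supplies exactly the steps that are needed:
\begin{itemize}
\item replacing $X$ by its $T_0$-reflection $X_0$, as the paper itself does in Example~\ref{e: t0 spaces example};
\item checking that $\ca P X_0$ is $T_0$ via $\{p\}={\uparrow}p\cap\overline{\{p\}}$;
\item using faithfulness of $\Op_{\ca M}$ on $T_0$ MT-algebras to get $g_x=g_y$ when $r(x)=r(y)$;
\item identifying $\ptc\Omc X$ with $X_0$ through the atoms of $\ca P X_0$.
\end{itemize}
Together these give both $\Topc=\Topp$ and the fixpoint claim. The paper's sketch does not address the fixpoint claim at all.
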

\begin{proof}(Sketch). We show that for every topological space $X$ the MT-algebra $(\Om X,\ca P X)$ provides the required cartesian lift. For an MT-algebra $(L,M)$, we assume there is a function $f:|L|\to |\Om X|$ where every composite $\chi_x\circ f$ extends to a map of Boolean algebras $\overline{f_x}:M\to \bd 2$. By the cartesian lift property of $\Om X$, $f$ is a frame map. We define the desired extension of $f$ as
\[
\overline{f}(m)=\{x\in X\mid \overline{f_x}(m)=1\}
\]
for all $m\in M$. Let us show this is a map of Boolean algebras. To see $\overline{f}(\bve_i m_i)\subseteq \bigcup_i \overline{f}(m_i)$ for $m_i\in M$, we notice that if for $x\in X$ we have $\overline{f_x}(\bve_i m_i)=1$ then we must also have $\overline{f_x}(m_i)=1$ for some $i\in I$, as $\overline{f_x}$ preserves all joins. Preservation of meets is proved analogously.
\end{proof}

We aim to define a functor $\ca S_{\ca S}:(\Op_{\ca S},\bd{Skula},\bd{2}_{\ca S},\id_{\bd{2}})\to (\Op_{\ca R},\bd{Raney},\bd{2}_{\ca R},\id_{\bd{2}})$. The assignment we will describe is studied in \cite{suarez26zero}, but the correspondence is heavily based on the description of Raney extensions as collections of sublocales. We adapt the relevant results to our setting. The definition of the functor relies on the map $\mathit{ker}:\ca S L\to \mf{Filt}(L)$ from \ref{l: the properties of ker}. The assignments $L\mapsto \mf{Filt}(L)$ and $L\mapsto \ca S L$ are both functorial; for a frame map $f:L\to M$, we note that the left adjoint of the unique coframe map $\mf{Filt}(f)$ extending $f$ is $f^{-1}$. Then, the assignments $f\mapsto \ca S f^*$ and $f\mapsto f^{-1}$ extend the assignments $\ca S(-)$ and $\mf{Filt}(-)$ to functors:
\begin{align*}
  \ca S:\bd{Frm}\to \bd{CoLoc}  &&   \mf{Filt}:\bd{Frm}\to \bd{CoLoc} 
\end{align*}
 We now strengthen the result in \cite{moshier20} and show that the map $\mathit{ker}:\ca S L\to \mf{Filt}(L)$ is a natural transformation $\mathit{ker}:\ca S\implies \mf{Filt}$.

\begin{proposition}\label{p: ker is a natural transformation}
For every frame map $f:L\to M$, the following square commutes in $\bd{CoLoc}$. 
\[
\begin{tikzcd}
\ca S L
\ar[r,"\mathit{ker}_L"]
& \mf{Filt}(L)
\\
\ca S M
\ar[u,"\ca S f^*"]
\ar[r,"\mathit{ker}_M"]
& \mf{Filt}(M)
\ar[u,"f^{-1}"]
\end{tikzcd}
\]
Then, there is a natural transformation $\ca S\implies \mf{Filt}$ evaluated as $\mathit{ker}_L:\ca S L\to \mf{Filt}(L)$ at each $L\in \Frm$.
\end{proposition}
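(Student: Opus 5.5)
We must show that for every sublocale $S\in\ca S M$ we have $\mathit{ker}_L(\ca S f^*(S)) = f^{-1}(\mathit{ker}_M(S))$. Both composites in the square are left adjoints (maps in $\bd{CoLoc}$), so both preserve all joins in the coframe $\ca S M$. By Lemma \ref{l: how L generates SL} every sublocale is a meet of sublocales $e^{\ca S}_M(a)\vee\neg e^{\ca S}_M(b)$. Joins are easier to handle than meets here, so I would avoid this generation result and instead check the equation directly from the definition of $\mathit{ker}$, combined with the adjunction $\ca S f^*\dashv \ca S f$.

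The key steps are as follows. Let $S\in \ca S M$ and $x\in L$. By definition, $x\in\mathit{ker}_L(\ca S f^*(S))$ if and only if $\ca S f^*(S)\leq \neg e^{\ca S}_L(x)$. By the adjunction this holds if and only if $S\leq \ca S f(\neg e^{\ca S}_L(x))$. By the defining formula of $\ca S f$ on generators, $\ca S f(\neg e^{\ca S}_L(x))=\neg e^{\ca S}_M(f(x))$ (reading the paper's formula with the subscript corrected to $M$). So the condition becomes $S\leq \neg e^{\ca S}_M(f(x))$, which says exactly that $f(x)\in\mathit{ker}_M(S)$, i.e.\ $x\in f^{-1}(\mathit{ker}_M(S))$. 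This proves the equality pointwise.

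Naturality then follows: functoriality of $\ca S$ and $\mf{Filt}$ into $\bd{CoLoc}$ was already noted, and Lemma \ref{l: the properties of ker} says each $\mathit{ker}_L$ is a $\bd{CoLoc}$ morphism. Hence the family $\{\mathit{ker}_L\}$ is a natural transformation $\ca S\implies\mf{Filt}$.

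The main point to verify carefully is the orientation of the order and of the adjunction. $\ca S L$ is ordered by inclusion, and $\ca S f^*$ is the left adjoint of $\ca S f$ with respect to this order. The inequality $S\leq\neg e^{\ca S}(x)$ in the definition of $\mathit{ker}$ must be read in the same order, so that the adjunction applies with $\ca S f^*(S)$ on the left. If the conventions turned out reversed, I would instead use that $\ca S f^*$ preserves joins and check the equation on the points $b$ of the generating meets.
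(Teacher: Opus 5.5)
Your proof is correct and is essentially the paper's argument: the paper also does a pointwise element chase using the definition of $\mathit{ker}$, the generator formula $\ca S f(\neg e^{\ca S}_L(x))=\neg e^{\ca S}_M(f(x))$, and the adjunction $\ca S f^*\dashv \ca S f$, only written in the opposite order (starting from $f^{-1}(\mathit{ker}_M(S))$). Your reading of the order conventions matches the paper's, and your correction of the subscript from $L$ to $M$ fixes a typo that also appears in the paper's own displayed computation.
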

\begin{proof}
 For a sublocale $S\in \ca S M$, we have 
 \begin{align*}
     f^{-1}(\mathit{ker}_M(S)))&=f^{-1}(\{x\in L\mid S\subseteq \neg e^{\ca S}_L(x) \})\\
     & =\{x\in L\mid S\subseteq \neg e^{\ca S}_L(f(x)) \}\\
     & =\{x\in L\mid S\subseteq \ca S f(\neg e^{\ca S}_L(x))\}& \text{ by definition of $\ca S f$}\\
     & =\{x\in L\mid \ca S f^*(S)\subseteq \neg e^{\ca S}_L(x)\}& \text{ by adjointness}\\
     & =\mathit{ker}_L(\ca S f^*(S)).\qedhere
 \end{align*}

\end{proof}

\begin{lemma}\label{l: assignment from skula to raney is well defined}
    If $L$ is a frame and $\ca D\subseteq \ca S L$ is a subcolocale with $\ca F L\subseteq \ca D$, 
    \[
    \fe(L)\subseteq \mathit{ker}_L[\ca D].
    \]
    Then, $(L,\mathit{ker}_L[\ca D])$ is a Raney extension.
\end{lemma}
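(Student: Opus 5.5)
Two things have to be shown: the inclusion $\fe(L)\subseteq \mathit{ker}_L[\ca D]$, and that $\mathit{ker}_L[\ca D]$ is a subcolocale of $\mf{Filt}(L)$ squeezed between $\fe(L)$ and $\fse(L)$. The plan is to obtain both from the fact that $\mathit{ker}_L$ is a map in $\bd{CoLoc}$ whose image is exactly $\fse(L)$ (Lemma \ref{l: the properties of ker}), together with Lemma \ref{l: ker of SbL is fe}.

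For the first inclusion, $\ca F L\subseteq \ca D$ by hypothesis, so $\mathit{ker}_L[\ca F L]\subseteq \mathit{ker}_L[\ca D]$. By Lemma \ref{l: ker of SbL is fe} the left-hand side is $\fe(L)$, which gives $\fe(L)\subseteq \mathit{ker}_L[\ca D]$. For the upper bound, $\mathit{ker}_L[\ca D]\subseteq \mathit{ker}_L[\ca S L]=\fse(L)$, again by Lemma \ref{l: the properties of ker}.

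The remaining step, which I expect to be the main obstacle, is showing that $\mathit{ker}_L[\ca D]$ is a subcolocale of $\mf{Filt}(L)$. The approach is to use that images of subcolocales under morphisms of $\bd{CoLoc}$ are again subcolocales, the order dual of the standard fact that images of sublocales under localic maps are sublocales. Concretely, $\mathit{ker}_L$ is the left adjoint of a coframe map $k\colon \mf{Filt}(L)\to \ca S L$. Since $\mathit{ker}_L$ is a left adjoint, $\mathit{ker}_L[\ca D]$ is closed under the joins of $\mf{Filt}(L)$. For the coHeyting (difference) operation, the plan is to use the coframe-map analogue of the frame identity $h_*(h(a)\to b)=a\to h_*(b)$ for a frame map $h$ with right adjoint $h_*$. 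Dually, for $G\in\mf{Filt}(L)$ and $S\in\ca D$ this gives
\[
\mathit{ker}_L\bigl(S\smallsetminus k(G)\bigr)=\mathit{ker}_L(S)\smallsetminus G,
\]
where $\smallsetminus$ denotes the coHeyting difference. The left-hand side lies in $\mathit{ker}_L[\ca D]$ because $\ca D$ is a subcolocale and hence stable under $-\smallsetminus A$ for every $A$.

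If the duality bookkeeping proves delicate, I would fall back on verifying that identity directly from the adjunction $\mathit{ker}_L\dashv k$ and the fact that $k$ preserves finite joins. This is precisely where care with reverse inclusion and the dual operations is needed. Combining the three steps, $(L,\mathit{ker}_L[\ca D])$ satisfies $\fe(L)\subseteq \mathit{ker}_L[\ca D]\subseteq\fse(L)$ with $\mathit{ker}_L[\ca D]$ a subcolocale, so it is a Raney extension.
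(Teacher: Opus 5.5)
Your proof is correct. For the subcolocale property it agrees with the paper, and for the upper bound $\mathit{ker}_L[\ca D]\subseteq\fse(L)$ it makes explicit what the paper leaves implicit. On the subcolocale property, the paper simply cites the standard fact that images of subcolocales under $\bd{CoLoc}$-maps are subcolocales, and your identity $\mathit{ker}_L(S\smallsetminus k(G))=\mathit{ker}_L(S)\smallsetminus G$ supplies the verification.

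Where you differ is the inclusion $\fe(L)\subseteq\mathit{ker}_L[\ca D]$. You import it from Lemma~\ref{l: ker of SbL is fe} via $\mathit{ker}_L[\ca F L]\subseteq\mathit{ker}_L[\ca D]$, so in your argument it does not depend on the subcolocale step at all. The paper instead uses only that the open sublocales $\neg e^{\ca S}_L(a)\in\ca F L\subseteq\ca D$ have kernels $\up a$ (Example~\ref{e: kernel of an open sublocale}). Hence $\mathit{ker}_L[\ca D]$ is a subcolocale containing all principal filters, and the paper concludes by the minimality of $\fe(L)$ (Lemma~\ref{l: fe is smallest sl containing the principals}).

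Your route is a one-line reduction, but it rests on the nontrivial external result $\mathit{ker}_L[\ca F L]=\fe(L)$, of which you use only the half $\fe(L)\subseteq\mathit{ker}_L[\ca F L]$. The paper's route needs only an elementary computation. In fact, taking $\ca D=\ca F L$ in the paper's argument re-derives exactly the half of Lemma~\ref{l: ker of SbL is fe} that you invoke.
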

\begin{proof}
Since $\ca F L$ contains $\neg e_L^{\ca S}(a)\in \ca D$ for all $a\in L$, by Example \ref{e: kernel of an open sublocale} $\mathit{ker}_L[\ca D]$ contains at least the principal filters. As it is a coframe map, its image is a subcolocale of $\mf{Filt}(L)$, and so by Lemma \ref{l: fe is smallest sl containing the principals} it must contain $\fe(L)$.  
\end{proof}
\begin{example}\label{e: saturated of a szdbifrm}
   We claim that, for a space $X$, the Raney extension $\ca S_{\ca S}(\Om X,\ca{SKC}^*X)$ is $(\Om X,\ca U^*X)$. An arbitrary element of $\mathit{ker}_{\Om X}[\ca{SKC}^*X]$ can be written as follows for some $Y\subseteq X$:
   \begin{align*}
       \mathit{ker}_{\Om X}(\bigvee \{b(x)\mid x\in Y\})&=\bigcap\{\mathit{ker}_{\Om X}(b(x))\mid x\in Y\}\\
       &=\bigcap_{x\in Y}\{U\in \Om X\mid b(x)\subseteq \neg e^{\ca S}_{\Om X}(U)\}\\
       &= \bigcap_{x\in Y}\{U\in \Om X\mid x\in U\}\\
       &=\{U\in \Om X\mid Y\subseteq U\},
   \end{align*}
which is in $\ca U^*X$; furthermore, all elements of $\ca U^*X$ are of this form for some $Y\subseteq X$.

\end{example}
 We now show that the assignment from Lemma \ref{l: assignment from skula to raney is well defined} can be extended to a functor.

\begin{lemma}\label{l: assignment from skula to raney is well defined 2}
    If $f:(L,\ca D)\to (L,\ca E)$ is a map of Skula extensions, $f:(L,\mathit{ker}_L[\ca D])\to (M,\mathit{ker}_M[\ca E])$ is a map of Raney extensions.
\end{lemma}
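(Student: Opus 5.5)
The plan is to unwind the definition of a Raney morphism and push it through the naturality square of Proposition~\ref{p: ker is a natural transformation}. (The statement writes the domain as $(L,\ca D)$ and the codomain as $(L,\ca E)$; I read it as $f:(L,\ca D)\to(M,\ca E)$.) We know $f:L\to M$ is a frame map with $\ca S f^*(E)\in\ca D$ for every $E\in\ca E$. By Lemma~\ref{l: assignment from skula to raney is well defined}, both $(L,\mathit{ker}_L[\ca D])$ and $(M,\mathit{ker}_M[\ca E])$ are Raney extensions. So it remains to show the morphism condition: $f^{-1}(G)\in\mathit{ker}_L[\ca D]$ whenever $G\in\mathit{ker}_M[\ca E]$.

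Take $G\in\mathit{ker}_M[\ca E]$ and choose $E\in\ca E$ with $G=\mathit{ker}_M(E)$. The commutative square of Proposition~\ref{p: ker is a natural transformation} gives
\[
f^{-1}(\mathit{ker}_M(E))=\mathit{ker}_L(\ca S f^*(E)).
\]
Since $f$ is a morphism of Skula extensions, $\ca S f^*(E)\in\ca D$, and hence $f^{-1}(G)\in\mathit{ker}_L[\ca D]$. This is exactly the condition for $f$ to be a morphism of Raney extensions, and it completes the argument.

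There is essentially no obstacle here. The only point needing care is that the computation in the proof of Proposition~\ref{p: ker is a natural transformation} uses the adjunction $\ca S f^*\dashv\ca S f$ together with $\ca S f(\neg e^{\ca S}_L(x))=\neg e^{\ca S}_M(f(x))$; that proof should be read with the subscripts corrected to $M$ on the left-hand side. Functoriality of the assignment is then immediate, because it is the identity on underlying frame maps and $\Op_{\ca R}$ is faithful.
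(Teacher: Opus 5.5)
Your proposal is correct and is essentially the paper's own proof: you rewrite $f^{-1}(\mathit{ker}_M(E))$ as $\mathit{ker}_L(\ca S f^*(E))$ via the naturality square of Proposition~\ref{p: ker is a natural transformation}, then use $\ca S f^*(E)\in\ca D$. Your reading of the codomain as $(M,\ca E)$ and your correction of the subscripts to $M$ in that proposition's computation are both right.
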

\begin{proof}
 Suppose that $E\in \ca E$. By Proposition \ref{p: ker is a natural transformation}, $f^{-1}(\mathit{ker}_M(E))=\mathit{ker}_L(\ca S f^*(E))$. Since $f$ is a morphism in $\bd{Skula}$, $\ca S f^*(E)\in \ca D$. The desired result follows.
\end{proof}

Lemmas \ref{l: assignment from skula to raney is well defined} and \ref{l: assignment from skula to raney is well defined 2} show that there exist a functor as desired, which we call $\ca S_{\ca S}:\bd{Skula}\to \bd{Raney}$.

\begin{proposition}\label{p: SS is morphism in SFC}
 The triple $(\ca S_{\ca S},\id_{\Op_{\ca S}},\id_{\bd{2}})$ is a morphism of SFC-categories. Furthermore, the map $\mf{Dual}(\ca S_{\ca S},\id_{\Op_{\ca S}},\id_{\bd{2}})$: 
      \begin{equation*}
        \begin{tikzcd}[row sep=huge,column sep=large]
          \bd{Skula}
            \ar[r,shift left=2,"\pt_{\ca S}"{above}] 
            \ar[d,"\ca S_{\ca S}"]
          & \bd{Top}^{\mathsf{op}}
            \ar[l,shift left=2,"\ca{SKC}"{below}]
            \ar[l,phantom,"\dashv"{rotate=-90,anchor=center}]
            \ar[d,equal] \\
          \bd{Raney}
            \ar[r,shift left=2,"\pt_{\ca R}"{above}]
          & \bd{Top}^{\mathsf{op}}
            \ar[l,shift left=2,"\ca U"{below}]
            \ar[l,phantom,"\dashv"{rotate=-90,anchor=center}]
       \end{tikzcd}
      \end{equation*}
      is in $\bd{SADJ}$.

\end{proposition}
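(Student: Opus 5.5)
Two things need checking. First, $(\ca S_{\ca S},\id_{\Op_{\ca S}},\id_{\bd 2})$ is a PFC-functor satisfying \eqref{M3}. Second, the induced right adjunction morphism has invertible $\xi$.

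For the PFC part, $\Op_{\ca R}\ca S_{\ca S}=\Op_{\ca S}$ holds on the nose, since the functor is the identity on underlying frames and frame maps. So $\id_{\Op_{\ca S}}$ is a valid natural isomorphism. We also need $\ca S_{\ca S}(\bd 2_{\ca S})=\bd 2_{\ca R}$. For the two-element frame, $\ca S\bd 2$ is the two-element coframe and $\mathit{ker}$ sends it onto $\{\{1\},\bd 2\}$, which is the unique Raney extension over $\bd 2$. Hence $\id_{\bd 2}$ works, and the compatibility in \eqref{M2} is trivial.

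For \eqref{M3}, Theorems \ref{t: duality for skula extensions} and \ref{t: duality for raney extensions} give that for every space $X$ the cartesian lifts of the characteristic-function families are $(\Om X,\ca{SKC}^*X)$ in $\bd{Skula}$ and $(\Om X,\ca U^*X)$ in $\bd{Raney}$, in both cases with $\theta_X$ the identity on $|\Om X|$. By Example \ref{e: saturated of a szdbifrm}, $\ca S_{\ca S}(\Om X,\ca{SKC}^*X)=(\Om X,\ca U^*X)$. Thus $\ca S_{\ca S}$ sends the cartesian lift to the cartesian lift, compatibly with the identifications of carriers. Equivalently, the image of the lifted family $\{\overline{\chi_x}\}$ is again the lifted family of the Raney cartesian lift, and is therefore cartesian. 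So \eqref{M3} holds, and Proposition \ref{p: dual on morphisms} yields the right adjunction morphism $\mf{Dual}(\ca S_{\ca S},\id,\id)$, with $\zeta$ invertible.

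It remains to show that $\xi^{\ca S_{\ca S}}_{(L,\ca D)}\colon \pt_{\ca S}(L,\ca D)\to \pt_{\ca R}(L,\mathit{ker}_L[\ca D])$ is invertible. By Lemma \ref{l: thr initial topology from xi} it is a subspace embedding, so it suffices to prove surjectivity. Concretely: a frame map $p\colon L\to\bd 2$ that is a Raney morphism into $\bd 2_{\ca R}$ should also be a Skula morphism into $\bd 2_{\ca S}$. This is the main obstacle.

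The Raney condition says $p^{-1}(\{1\})=\mathit{ker}_L(\ca S p^*(\{1\}))\in \mathit{ker}_L[\ca D]$. Here I use Proposition \ref{p: ker is a natural transformation} and $\mathit{ker}_{\bd 2}(\{1\})=\{1\}$. The Skula condition instead requires $\ca S p^*(\{1\})\in\ca D$. For a point $p$ with prime element $q$, $\ca S p^*(\{1\})$ is the two-element sublocale $\{q,1\}$, and its kernel is $\{x\mid x\not\le q\}$.

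The approach is to pick $E\in\ca D$ with $\mathit{ker}_L(E)=\mathit{ker}_L(\{q,1\})$ and show that $\{q,1\}$ is then obtainable in $\ca D$. First, $E\subseteq\{q,1\}$ should fail to be forced in general, so I would replace $E$ by the largest sublocale of $\ca D$ with that kernel. I would then intersect it with the closed sublocale $e^{\ca S}_L(q)=\up q$, which lies in $\ca F L\subseteq \ca D$. A meet in the coframe $\ca S L$ of elements of a subcolocale need not stay in $\ca D$, so this step must be handled with care.

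Instead I would aim to show that $\{q,1\}$ equals $\up q\wedge\bigvee$ of open-type elements in $\ca D$. Using that one-point sublocales are $b$-type and Lemma \ref{l: how L generates SL}, I expect $\{q,1\}=\bigvee\{S\in\ca F L : S\le \{q,1\}\}$ fails in general. The fallback is therefore to use that points of $\ca S L$ are exactly sublocales with kernel $\{x\mid x\not\le q\}$ that contain $q$. I would check that a kernel-preserving sublocale $E\in\ca D$ forces $q\in E$, and then use the closed sublocale $\up q\in\ca D$ together with join-closure of $\ca D$, via the left adjoint $\ca S p^*$, to get $\{q,1\}\in\ca D$.

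Once surjectivity is established, $\xi$ is a homeomorphism componentwise, and the morphism lies in $\bd{SADJ}$.
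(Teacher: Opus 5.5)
Your verification of \eqref{Mxx}, \eqref{M2} and \eqref{M3} is the paper's proof, carried out a little more carefully (for instance, you check $\ca S_{\ca S}(\bd 2_{\ca S})=\bd 2_{\ca R}$). The paper's proof stops at that point and gives no argument that $\xi$ is invertible. Your reduction of the $\bd{SADJ}$ claim is correct: since $\xi$ is a subspace embedding, you need that $P_q=\{x\mid x\not\le q\}\in\mathit{ker}_L[\ca D]$ forces $\{q,1\}\in\ca D$.

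However, your argument for that step does not go through. Join-closure of $\ca D$ cannot produce $\{q,1\}$ from $E$ and $\up q$, because $\{q,1\}$ lies below both. Your ``fallback'' characterization is also false. In $L=\{0<a<1\}$ with $q=a$, the sublocale $L$ itself has kernel $P_a=\{1\}$ and contains $a$, yet it is not $\{a,1\}$. So the key step is left open.

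The missing idea is that a subcolocale is closed not only under joins but also under the coframe difference $S\mapsto S\smallsetminus T$, for arbitrary $T\in\ca S L$. This is the dual of the closure of sublocales under $a\to -$.

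\begin{enumerate}
\item In $\ca S L$ the open sublocale $\neg e^{\ca S}_L(q)$ is complemented, with complement $e^{\ca S}_L(q)=\up q$. Hence $E\smallsetminus \neg e^{\ca S}_L(q)=E\cap\up q$. So for any $E\in\ca D$ with $\mathit{ker}_L(E)=P_q$ you get $E\cap\up q\in\ca D$.
\item $E\cap\up q\subseteq\{q,1\}$. Take $e\in E$ with $e\geq q$ and $e\neq q$. Then $e\in P_q=\mathit{ker}_L(E)$, so $e\in E\subseteq\neg e^{\ca S}_L(e)$. This gives $e=e\to e=1$.
\item $E\cap\up q\neq\{1\}$. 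Otherwise distributivity of $\ca S L$ gives $E=(E\cap\neg e^{\ca S}_L(q))\vee(E\cap\up q)\subseteq\neg e^{\ca S}_L(q)$. That would mean $q\in\mathit{ker}_L(E)=P_q$, which is absurd.
\end{enumerate}

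Therefore $E\cap\up q=\{q,1\}$, and so $\{q,1\}\in\ca D$. Every Raney point of $(L,\mathit{ker}_L[\ca D])$ is then a Skula point of $(L,\ca D)$, $\xi$ is surjective, and the morphism lies in $\bd{SADJ}$. This argument needs neither $\ca F L\subseteq\ca D$ nor a choice of a largest $E$.
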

\begin{proof}
    Conditions \ref{Mxx} and \ref{M2} are immediate. For \ref{M3}, note that, as seen in Example \ref{e: saturated of a szdbifrm}, for every space $X$ we have $\ca S_{\ca S}(\Om X,\ca{SK}^*(X))\cong(\Om X,\ca U^*X)$. 
\end{proof}

The connection between MT-algebras and Raney extensions has been explored in \cite{bezhanishvili26raneyandmt}. For an MT-algebra $(L,M)$ we call $\ca{S}_{\ca M}(M)$ the subcoframe of $M$ obtained by closing the collection $L$ under all meets. By an argument similar to that used for the isomorphism $\ca{U}^*(X)\cong \ca{U}(X)$, one shows that there is a collection of filters of $L$ isomorphic to $\ca S_{\ca M}(M)$, which we call $\ca{S}_{\ca M}^*(M)$. In \cite{bezhanishvili26raneyandmt}, a functor $\ca{S}_{\ca M}:\bd{MT_R}\to \bd{Raney}$ is thus defined on objects, up to isomorphism, from a wide subcategory of $\bd{MT_R}$, which is then shown to be part of an equivalence. The definition of this functor can be extended to all $\bd{MT}$.
\begin{lemma}\label{l: functor sat}
The assignment $(L,M)\mapsto (L,\ca{S}^*_{\ca M}(M))$ extends to a functor $\ca{S}_{\ca M}:\bd{MT}\to \bd{Raney}$.
\end{lemma}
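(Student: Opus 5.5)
We need to show two things: that $(L,\ca S^*_{\ca M}(M))$ is a Raney extension for every (T$_0$) MT-algebra $(L,M)$, and that every MT-morphism $h:(L,M)\to(L',M')$ restricts to a frame map $f=h|_L:L\to L'$ which is a morphism of Raney extensions. Functoriality is then automatic: composition and identities are computed in $\Frm$, and $\Op_{\ca R}$ is faithful.

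For the objects, I would describe $\ca S^*_{\ca M}(M)$ explicitly as the set of filters of the form
\[
F_m=\{a\in L\mid m\le a\}
\]
with $m\in \ca S_{\ca M}(M)$, i.e. $m$ a meet of open elements. The assignment $m\mapsto F_m$ is the analogue of $S\mapsto\{U\mid S\subseteq U\}$. It is an order isomorphism onto its image, with inverse $F\mapsto\bigwedge F$, because every such $m$ is the meet of the opens above it.

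\begin{itemize}
\item Meets in $\ca S_{\ca M}(M)$ are computed as in $M$, so they go to intersections of filters. Hence the image is closed under arbitrary intersections, which are joins in $\mf{Filt}(L)$.
\item Stability under the co-Heyting operation of $\mf{Filt}(L)$ follows because the map is a coframe embedding fixing $L$. Note that $\ca S_{\ca M}(M)$ is a coframe, being closed under meets and finite joins in the distributive $M$.
\item Principal filters $\up a=F_a$ lie in the image, so Lemma \ref{l: fe is smallest sl containing the principals} gives $\fe(L)\subseteq \ca S^*_{\ca M}(M)$.
\item For $\ca S^*_{\ca M}(M)\subseteq\fse(L)$: if $S\subseteq F_m$ is strongly exact, then $\bigwedge S$ is preserved by the frame map $L\to M$, since strongly exact meets are preserved by all frame maps. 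So the meet of $S$ in $M$ is $\ge m$, and $\bigwedge S\in F_m$.
\end{itemize}

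Alternatively, for the last two points one could invoke the equivalence of \cite{bezhanishvili26raneyandmt} on objects, since the objects there are arbitrary. I would cite it if its hypotheses match and fall back on the argument above otherwise.

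For the morphisms, let $f=h|_L$ and fix $m'=\bigwedge_i a'_i\in\ca S_{\ca M}(M')$ with $a'_i\in L'$. We must show $f^{-1}(F_{m'})=F_m$ for some $m\in\ca S_{\ca M}(M)$. The natural candidate is $m=h_*(m')$, where $h_*$ is the right adjoint of the complete Boolean homomorphism $h$. For $a\in L$ we have
\[
a\in f^{-1}(F_{m'})\iff m'\le h(a)\iff h_*(m')\le a .
\]
So $f^{-1}(F_{m'})=\{a\in L\mid h_*(m')\le a\}$. It remains to replace $h_*(m')$ by an element of $\ca S_{\ca M}(M)$. Let $m=\bigwedge\{a\in L\mid h_*(m')\le a\}$. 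This lies in $\ca S_{\ca M}(M)$, and by construction $F_m$ is exactly the set above: $a\ge m$ implies $a\ge h_*(m')$ since $m\ge h_*(m')$, and the converse is immediate.

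The main obstacle is the object part, specifically that the image of $\ca S_{\ca M}(M)$ is a genuine subcolocale of $\mf{Filt}(L)$ and not merely a meet-closed family. This needs care with how co-Heyting differences in $\mf{Filt}(L)$ are computed, and I would reduce it to the coframe-embedding observation above. The morphism part is pure adjunction bookkeeping.
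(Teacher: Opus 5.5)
The paper states this lemma without proof; it only points to the analogous construction of $\ca U^*X$. Measured against that, most of your argument is fine:
\begin{itemize}
\item the principal filters are $\up a=F_a$;
\item the inclusion $\ca S^*_{\ca M}(M)\subseteq\fse(L)$, via preservation of strongly exact meets by the subframe inclusion $L\hookrightarrow M$, is correct;
\item the morphism computation works, except that the adjoint you need is the \emph{left} adjoint $h_!$ of $h$, characterized by $h_!(y)\le x\iff y\le h(x)$. It exists because a complete Boolean homomorphism preserves all meets. With the right adjoint, your displayed equivalence fails.
\end{itemize}

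The genuine gap is the step you flag yourself: that $\ca S^*_{\ca M}(M)$ is a subcolocale of $\mf{Filt}(L)$.
\begin{itemize}
\item The map $m\mapsto F_m$ is not a coframe embedding. In $(\Om\mathbb R,\ca P\mathbb R)$ the sets $S_n=(0,1/n)$ have meet $\emptyset$ in $\ca S_{\ca M}(M)$, and $F_\emptyset=\Om\mathbb R$ is the bottom of $\mf{Filt}(\Om\mathbb R)$. But the meet of the $F_{S_n}$ in $\mf{Filt}(\Om\mathbb R)$ is the filter of opens containing some $(0,1/n)$, which omits $\emptyset$.
\item Your first bullet has the same inversion. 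We have $\bigcap_i F_{m_i}=\{a\in L\mid \bigvee_M m_i\le a\}$, so intersections of the $F_m$ come from \emph{joins} in $\ca S_{\ca M}(M)$, not from meets. Closure under intersections is true, but for this reason.
\item Even a genuine coframe embedding would only make the image a subcoframe. A subcoframe need not be closed under arbitrary joins or co-Heyting differences, so the reduction would not suffice anyway.
\end{itemize}

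The missing idea is the one the paper uses for $\ca U^*X$. By the universal property of $\mf{Filt}(L)$, the lattice inclusion $L\subseteq\ca S_{\ca M}(M)$ extends to the coframe surjection $\phi\colon\mf{Filt}(L)\to\ca S_{\ca M}(M)$, $F\mapsto\bigwedge F$. Its left adjoint is precisely $\phi^*(m)=F_m$. Images of left adjoints of coframe maps are subcolocales: they are closed under joins, and $\phi^*(d)\smallsetminus c=\phi^*(d\smallsetminus\phi(c))$.

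You can also check this directly. For a filter $G$, the co-Heyting difference $F_m\smallsetminus G$ is the largest filter $H$ with $H\cap G\subseteq F_m$. Using $b\vee\bigwedge_M G=\bigwedge_{g\in G}(b\vee g)$ in $M$, it equals $\{a\in L\mid m\wedge\neg\bigwedge_M G\le a\}$. This is $F_{m'}$ for $m'$ the meet of the opens above $m\wedge\neg\bigwedge_M G$.

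With this step repaired, your appeal to the minimality of $\fe(L)$ and the rest of the proof go through.
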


\begin{example}\label{e: Sm maps PX to UX}
    For a space $X$, the Raney extension $\ca S_{\ca M}(\Om X,\ca P X)$ is, up to isomorphism, $(\Om X,\ca U^ *X)$.
\end{example}

\begin{proposition}\label{p: SM is a morphism in SFC}
 The triple $(\ca S_{\ca M},\id_{\Op_{\ca M}},\id_{\bd{2}})$ is a morphism of SFC-categories. Furthermore, the morphism $\mf{Dual}(\ca S_{\ca M},\id_{\Op_{\ca M}},\id_{\bd{2}})$: 
\begin{equation*}
  \begin{tikzcd}[row sep=huge,column sep=large]
    \bd{MT}
      \ar[r,shift left=2,"\pt_{\ca M}"{above}] 
      \ar[d,"\ca S_{\ca M}"]
    & \bd{Top}^{\mathsf{op}}
      \ar[l,shift left=2,"\ca P"{below}]
      \ar[l,phantom,"\dashv"{rotate=-90,anchor=center}]
      \ar[d,equal] \\
    \bd{Raney}
      \ar[r,shift left=2,"\pt_{\ca R}"{above}]
    & \bd{Top}^{\mathsf{op}}
      \ar[l,shift left=2,"\ca U"{below}]
      \ar[l,phantom,"\dashv"{rotate=-90,anchor=center}]
 \end{tikzcd}
\end{equation*}
is in $\bd{SADJ}$.

\end{proposition}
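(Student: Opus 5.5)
The plan mirrors the proof of Proposition \ref{p: SS is morphism in SFC}. Conditions \eqref{Mxx} and \eqref{M2} hold immediately. By construction $\ca S_{\ca M}$ keeps the frame of opens unchanged, so $\Op_{\ca R}\ca S_{\ca M}=\Op_{\ca M}$ and $\alpha=\id_{\Op_{\ca M}}$ is a natural isomorphism. Moreover $\bd 2_{\ca M}=(\bd 2,\bd 2)$ is sent to $(\bd 2,\ca S^*_{\ca M}(\bd 2))=(\bd 2,\mf{Filt}(\bd 2))=\bd 2_{\ca R}$, so $i=\id_{\bd 2}$ is compatible with the identifications $\iota$.

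The substantive condition is \eqref{M3}. By Theorem \ref{t: duality for mt} we have $\Topp_{\ca M}=\Topp$, and for each space $X$ the cartesian lift is $(\Om X,\ca P X)$ with lifted family the Boolean maps $\overline{\chi_x}:\ca P X\to \bd 2$, $S\mapsto [x\in S]$. By Example \ref{e: Sm maps PX to UX}, $\ca S_{\ca M}(\Om X,\ca P X)\cong(\Om X,\ca U^*X)$, and the identification restricts to the identity on $\Om X$. Theorem \ref{t: duality for raney extensions} says $(\Om X,\ca U^*X)$ is the cartesian lift in $\bd{Raney}$ of the same family of characteristic functions. I would then check that $\ca S_{\ca M}(\overline{\chi_x})$ equals the lifted Raney map $\overline{\chi_x}:(\Om X,\ca U^*X)\to\bd 2_{\ca R}$. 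By faithfulness of $\Op_{\ca R}$ it suffices that both have underlying frame map $\chi_x$, which holds since $\alpha$ is the identity. Hence $\{\ca S_{\ca M}(\overline{\chi_x})\}$ is cartesian, and \eqref{M3} holds. The only care needed is that the isomorphism in Example \ref{e: Sm maps PX to UX} is the identity on the underlying frame; this follows from its construction as the meet-closure of $\Om X$ inside $\ca PX$, which is exactly $\ca UX$, then transported to filters.

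For membership in $\bd{SADJ}$ I need the mate $\xi^{H}_{(L,M)}:\pt_{\ca M}(L,M)\to\pt_{\ca R}\ca S_{\ca M}(L,M)$ to be invertible. By Lemma \ref{l: thr initial topology from xi} it is a subspace embedding, so only surjectivity is needed. Concretely, every Raney point $p:(L,\ca S^*_{\ca M}(M))\to\bd 2_{\ca R}$ must extend to a complete Boolean algebra map $M\to\bd 2$, equivalently come from an atom of $M$; this is the main obstacle. The condition $p^{-1}(1)\in\ca S^*_{\ca M}(M)$ means there is $s\in\ca S_{\ca M}(M)$ with $p^{-1}(1)=\{a\in L\mid s\le a\}$, namely $s=\bigwedge p^{-1}(1)$. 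Primeness of $p$ gives $p^{-1}(0)$ a largest element $b=\bigvee p^{-1}(0)$. I would take the candidate element $s\wedge\neg b$. Writing $s=\bigwedge_i a_i$ with $a_i\in p^{-1}(1)$, this is an element of the form in the $T_0$ axiom, which I would show is an atom $m$ with $m\le a\iff p(a)=1$ for $a\in L$. It is nonzero because otherwise $s\le b$ forces $b\in p^{-1}(1)$, contradicting $p(b)=0$. For atomicity, $T_0$ writes every element as a join of elements $\bigwedge_j c_j\wedge\neg c$, and for any such element below $m$ one compares $c$ and the $c_j$ with $p$. Since $p$ is prime, $c\vee b$ lies in $p^{-1}(0)$ or $p^{-1}(1)$, so $\bigwedge_j c_j\wedge\neg c$ is either disjoint from $m$ or contains it. The induced map $M\to\bd 2$, $x\mapsto[m\le x]$, then lifts $p$, so $\xi^H$ is surjective and hence a homeomorphism, which makes the map strong.
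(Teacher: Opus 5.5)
Your proposal is correct. For \eqref{Mxx}, \eqref{M2} and \eqref{M3} you argue as the paper does. The paper just says these follow from Example~\ref{e: Sm maps PX to UX}. Your extra check supplies the detail the paper leaves implicit: the identification \(\ca S^*_{\ca M}(\ca PX)=\ca U^*X\) is the identity on \(\Om X\), so \(\{\ca S_{\ca M}(\overline{\chi_x})\}\) is literally the lifted family of the Raney cartesian lift.

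The genuine difference is in the strong part. The paper does not argue it; it cites Lemma 5.5 of \cite{BM+25} for commutativity of the left-adjoint square. You instead prove invertibility of \(\xi^{\ca S_{\ca M}}\) directly. You reduce to surjectivity via Lemma~\ref{l: thr initial topology from xi}, and then, for each Raney point \(p\) of \((L,\ca S^*_{\ca M}(M))\), you produce the atom \(m=s\wedge\neg b\) with \(s=\bigwedge p^{-1}(1)\) and \(b=\bigvee p^{-1}(0)\).

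This checks out. The fact that \(m\neq 0\) is exactly where the Raney condition \(p^{-1}(1)\in\ca S^*_{\ca M}(M)\) is used. Atomicity is exactly where the \(T_0\) axiom is used. For \(a\in L\) one has \(p(a)=1\Rightarrow m\le a\), and \(p(a)=0\Rightarrow a\le b\Rightarrow a\wedge m=0\). Hence each basic element \(\bigwedge_j c_j\wedge\neg c\) either lies above \(m\) or is disjoint from it, and any nonzero \(x\le m\) has a nonzero basic element below it. Primeness of \(p\) and the element \(c\vee b\) play no role here; you only need that \(p\) is a frame map into \(\bd 2\).

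What your route buys is a self-contained, elementary argument. It shows explicitly where the Raney condition on \(p\) and the \(T_0\) axiom on \(M\) enter, and it gives the inverse of \(\xi\) concretely as \(p\mapsto(x\mapsto[m\le x])\). The paper's route is shorter but outsources the only nontrivial step of the proof.
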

\begin{proof}
Conditions \ref{Mxx} and \ref{M2} are immediate. Property \ref{M3} follows from Example \ref{e: Sm maps PX to UX}, as the example shows that $\ca{S}_{\ca M}$ preserves cartesian lifts of families of characteristic functions. Commutativity of the left adjoint direction of the diagram is the content of Lemma 5.5 of \cite{BM+25}.
\end{proof} 

\section{Fiber-initial and fiber-terminal objects}

For an FC-category \( \Op \colon \ca C \to \Frm \), we say an
object \( C \) in \( \ca C \) is \textit{fiber-initial} (respectively,
\textit{fiber-terminal}) if \( (C,\id) \) is a bottom (top) element in
$\Op^{-1}(\Op (C))$.

In the sequel, for every fiber-initial object \( I \), and every object \(
(C,\theta^C) \) in \( \Op^{-1}(\Op(I)) \), we denote by \( \iota_C \colon I
\to C \) the underlying morphism of the universal map \( (I,\id) \to
(C,\theta^C) \). Similarly, for every fiber-terminal object \( T \) and every
object \( (C,\theta^C) \) in \( \Op^{-1}(\Op(T)) \), the underlying morphism
of the universal map \( (C,\theta^C) \to (T,\id) \) is denoted by \( \tau_C
\colon C \to T \). If the object \( C \) is clear from context, we will omit
the subscripts.

\begin{lemma}\label{l: iota and tau mono and epi}
  If $I$ is fiber-initial, and $(C,\theta^C) \in \Op^{-1}(\Op(I))$, then
  $\iota:I\to C$ is a monomorphism and an epimorphism. The same holds for
  $\tau:C\to T$ when $T$ is fiber-terminal and \( (C,\theta^C) \in
  \Op^{-1}(\Op(T)) \).
\end{lemma}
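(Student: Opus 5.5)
The argument rests on two facts. First, $\Op(\iota)$ is a frame isomorphism. Second, $\Op$ is faithful, so it reflects monomorphisms and epimorphisms.

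For the first fact: since $\iota$ underlies the morphism $(I,\id)\to(C,\theta^C)$ in the fiber $\Op^{-1}(\Op(I))$, we have $\theta^C\circ\Op(\iota)=\id_{\Op(I)}$. Because $\theta^C$ is an isomorphism, $\Op(\iota)=(\theta^C)^{-1}$, which is invertible.

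For the second fact, take parallel morphisms $g,h\colon D\to I$ with $\iota g=\iota h$. Then $\Op(\iota)\Op(g)=\Op(\iota)\Op(h)$. Cancelling the isomorphism $\Op(\iota)$ gives $\Op(g)=\Op(h)$, and faithfulness of $\Op$ yields $g=h$. So $\iota$ is a monomorphism. The epimorphism argument is symmetric: from $g\iota=h\iota$ we get $\Op(g)\Op(\iota)=\Op(h)\Op(\iota)$, hence $\Op(g)=\Op(h)$ and $g=h$.

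The statement for $\tau\colon C\to T$ is identical. Here the fiber condition reads $\id\circ\Op(\tau)=\theta^C$, so $\Op(\tau)=\theta^C$ is an isomorphism, and the same cancellation argument applies.

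There is no genuine obstacle. The only point needing care is reading off, from the definition of morphisms in the fiber, that $\Op(\iota)$ and $\Op(\tau)$ are exactly the inverses of (or equal to) the structure isomorphisms $\theta^C$.
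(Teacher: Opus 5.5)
Your proof is correct and is essentially the paper's argument. The paper just appeals to the general observation for $\Frm$-concrete categories (the one used in showing that hom-categories are preorders): a morphism sent to an isomorphism by a faithful functor is both monic and epic. You spell out exactly that, together with the correct reading of the fiber condition, $\Op(\iota)=(\theta^C)^{-1}$ and $\Op(\tau)=\theta^C$.
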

\begin{proof}
  This follows at once by our general observations for \( \Frm \)-concrete
  categories.
\end{proof}

We recall the following characterizations of $T_D$- and sober spaces. This is a rephrasing of Proposition 4.3 in \cite{banaschewskitd}.

\begin{theorem}[\cite{banaschewskitd}, Proposition 4.3]\label{t: sober and td as fiber minimal and maximals}
Consider the FC-category $\Om:\Topz^{\mf{op}}\to \Frm$. 
\begin{itemize}
    \item The spaces which are minimal in their fiber coincide with the sober spaces. 
    \item The spaces which are maximal in their fiber coincide with the $T_D$-spaces. 
\end{itemize}

\end{theorem}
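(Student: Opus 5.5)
This is Proposition 4.3 of \cite{banaschewskitd}, rephrased for the FC-category $\Om:\Topz^{\mf{op}}\to\Frm$; we reprove it through the sober and $T_D$ machinery already in place. First we unpack the fiber order. An object of $\Om^{-1}(L)$ is a $T_0$-space $X$ with an isomorphism $\theta:\Om X\cong L$. A morphism $(X,\theta^X)\to(Y,\theta^Y)$ in $\Topz^{\mf{op}}$ is a continuous $g:Y\to X$ with $\theta^X\circ\Om g=\theta^Y$, so $\Om g$ is a frame isomorphism. Because $Y$ is $T_0$, such a $g$ is injective and induces an isomorphism of topologies, hence it is a subspace embedding whose image is "dense" in the strong sense that the opens of $X$ correspond bijectively to their traces. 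Thus $(X,\theta^X)\le(Y,\theta^Y)$ means that $Y$ embeds into $X$ compatibly with the identifications with $L$. Minimal elements are the spaces admitting no proper such extension from below, and maximal elements are those admitting no proper such subspace.

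For the first bullet, let $X$ be sober and let $Y\to X$ be such an embedding. Every point of $Y$ gives a point of $\Om Y\cong\Om X$, so $X$ always sits in the fiber as the top point-set, $\pt(L)$. More precisely, for any $(Y,\theta^Y)$ the sobrification $\sigma_Y:Y\to\pt\Om Y$ is an embedding with $\Om\sigma_Y$ invertible, so $(\pt L,\eta_L)\le(Y,\theta^Y)$ for every $Y$ in the fiber. If $X$ is minimal, then $X\cong\pt L$ via $\sigma_X$, so $X$ is sober. Conversely, suppose $X$ is sober and $X\ge Z$ via an embedding $X\hookrightarrow Z$. The points of $Z$ give points of $\Om Z\cong\Om X$, and sobriety of $X$ realizes each of them as a point of $X$. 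By $T_0$-ness of $Z$ these coincide, so the embedding is surjective and hence a homeomorphism. Therefore sober spaces are minimal, in fact bottom.

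For the second bullet, let $X$ be $T_D$ and let $Y\subseteq X$ be a subspace with $\Om X\to\Om Y$ bijective. For each $x$, write $\{x\}=U\setminus V$ with $U,V$ open and $V\subseteq U$. Then $U\ne V$, so their traces on $Y$ differ, which forces $x\in Y$. Hence $Y=X$ and $X$ is maximal. Conversely, suppose $X$ is not $T_D$, with a point $x$ such that $\overline{\{x\}}\setminus\{x\}$ is not closed. Then $Y=X\setminus\{x\}$ induces a bijection on opens: $U\mapsto U\cap Y$ is injective because any open containing $x$ meets $Y$ inside $U$ in a way that determines $U$. This gives a proper element above $X$, so $X$ is not maximal. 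This last step is the main obstacle: we must check carefully that removing a non-$T_D$ point preserves the lattice of opens, and that $Y$ stays $T_0$ (automatic for subspaces).
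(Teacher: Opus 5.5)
Your argument is correct. Note that the paper gives no proof of this statement: it is quoted from Banaschewski--Pultr. What the paper proves itself is the strengthening in Theorem~\ref{t: sober and td as fiber initial and terminals}, whose remaining halves reduce to the cited result, since initial implies minimal and terminal implies maximal. The paper's route there is pointfree:
\begin{itemize}
\item sober spaces are fiber-initial by the universal property of the sobrification $\sigma_X$;
\item $T_D$-spaces are fiber-terminal because, by Lemma~\ref{l: exact cp filters are always induced.}, every $T_0$-space $Y$ in the fiber of a $T_D$-space contains $\pt_D\Om Y$ as a subspace inducing the same opens. That lemma rests on the exact meet of Lemma~\ref{l: a particular exact meet} and on $\bd{Frm_D}(L,\bd 2)=\bd{Frm}_{\ca E}(L,\bd 2)$.
\end{itemize}
You instead work entirely point-set. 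You identify $\le$ in the fiber with embeddings inducing bijections on opens. You get sober $\Leftrightarrow$ minimal (indeed bottom) from $\sigma_Y$ and the points of $\Om X$. You get $T_D$ $\Leftrightarrow$ maximal by comparing traces of $U$ and $U\setminus\{x\}$, and by deleting a non-$T_D$ point. This is self-contained and avoids the exactness machinery. What it does not deliver as written is terminality of $T_D$-spaces, which is stronger than maximality and is what the later fiber-terminal framework uses.

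A few repairs are needed.
\begin{itemize}
\item The compatibility condition should read $\theta^Y\circ\Om g=\theta^X$; as you wrote it, it does not typecheck.
\item The bottom element is $(\pt L,\eta_L^{-1})$, with $\eta_L$ invertible because nonempty fibers lie over spatial frames.
\item The step you flag as the obstacle should be written out, and it is one line. Suppose $U\cap Y=V\cap Y$ with $U\neq V$. Then, say, $x\in U\setminus V$, so $V=U\setminus\{x\}$ is open. But an open $U\ni x$ with $U\setminus\{x\}$ open forces $\overline{\{x\}}\cap U=\{x\}$. Then $\overline{\{x\}}\setminus\{x\}=\overline{\{x\}}\setminus U$ would be closed, contrary to the choice of $x$.
\item ``Proper'' needs justification. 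If also $Y\le X$ via some $h$, compatibility gives $\Om(j\circ h)=\id$ for the inclusion $j$. Hence $j\circ h=\id_X$ because $X$ is $T_0$, contradicting $x\notin Y$. The same faithfulness argument justifies your step from minimality to $X\cong\pt L$.
\end{itemize}
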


The result exhibits a symmetry between sobriety and the $T_D$-property. We now note that sober spaces are not just minimal, they are fiber-initial, with the sobrification map $\sigma_X:X\to \mf{sob}(X)$ of a $T_0$-space being the universal map witnessing this. \begin{proposition}\label{p: td points of a frame}
    For a frame $L$, we have 
    \[
    \bd{Frm_D}(L,\bd 2)=\bd{Frm}_{\ca E}(L,\bd 2)=\bd{Frm}_{\ca{LE}}(L,\bd 2).
    \]
\end{proposition}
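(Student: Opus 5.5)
We need to show that, for a point $p \colon L \to \bd 2$, the following are equivalent: $p$ is a D-morphism, $p$ is exact, and $p$ is locally exact. Write $P = p^{-1}(1)$; this is a completely prime filter. Write $a = \bigvee (L \setminus P)$; this is the prime element with $P = \{x \in L \mid x \not\leq a\}$.

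The plan is to prove the cycle of implications locally exact $\Rightarrow$ exact $\Rightarrow$ D-morphism $\Rightarrow$ locally exact.

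\emph{First implication.} Locally exact implies exact for every frame map, by Lemma \ref{l: fe is subcoframe of F}.

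\emph{Second implication.} Exact implies D-morphism is a direct comparison of definitions. Both conditions ask that $p^{-1}(F) \in \fe(L)$ for certain filters $F$ of $\bd 2$. Exactness asks this for all $F \in \fe(\bd 2)$, while being a D-morphism asks it only for $F \in \fcp(\bd 2) \cap \fe(\bd 2)$. The filters of $\bd 2$ are $\{1\}$ and $\bd 2$, and both are completely prime and exact, so the two conditions coincide. Hence for maps into $\bd 2$, exact and D-morphism are literally the same: the only nontrivial requirement is that $P = p^{-1}(\{1\})$ be an exact filter.

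\emph{Third implication.} It remains to show that if $P$ is exact then $p$ is locally exact, i.e.\ that $p$ extends to a complete Boolean algebra map $\ca F p \colon \ca F L \to \ca F \bd 2$. Here $\ca F \bd 2 \cong \ca S \bd 2$ is the four-element Boolean algebra, containing the sublocales $\{1\}$, $\{0,1\}$ and the open sublocale $\{0\}$ complementary to the closed $\{1\}$. Equivalently, viewing it as in Subsection \ref{ss: essentiality} via $\ca F L \subseteq \ca S L$, it is enough to show two things:
\begin{itemize}
\item the sublocale $b = \{a, 1\}$, which is the image of $\ca S p^*$ applied to the relevant point sublocale of $\bd 2$, lies in $\ca F L$;
\item the coframe map $\ca S p$ restricts to $\ca F L$.
\end{itemize}
The second is automatic once we know that the left adjoint $\ca S p^*$ sends the generators of $\ca S \bd 2$ into $\ca F L$, since a map out of a complete Boolean algebra that preserves the generating structure restricts. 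Those generators are just $\{1\}$, $\{0,1\}$ and the one-point sublocale, so the real content is: if $P$ is exact, then $b(a) = \{a, 1\} \in \ca F L$, i.e.\ $b(a)$ is a join of locally closed sublocales $e^{\ca S}_L(x) \wedge \neg e^{\ca S}_L(y)$.

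This is the main obstacle, and my first attempt goes as follows. By Lemma \ref{l: ker of SbL is fe}, $P = \mathit{ker}_L(S)$ for some $S \in \ca F L$. I would take the least such $S$; the restriction of $\mathit{ker}_L$ to $\ca F L$ is a left adjoint in $\bd{CoLoc}$, so this least preimage exists. Since $\mathit{ker}_L(b(a)) = P$, we get $S \leq b(a)$ by minimality. Now $b(a)$ has only the sublocales $\{1\}$ and itself below it, and $\mathit{ker}_L(\{1\}) = L \neq P$. Hence $S = b(a)$, so $b(a) \in \ca F L$.

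The delicate part is justifying that the minimal preimage sits below $b(a)$, i.e.\ that $\mathit{ker}_L$ restricted to $\ca F L$ has a left adjoint compatible with the inclusion into $\ca S L$. I expect to handle this using the Galois-connection description of $\mathit{ker}$ from \cite{moshier20}. The fallback is an explicit argument that $\bigcap$ of the exact family $\{y \mid y \not\leq a\}$ yields $a$ as a difference of a closed and an open sublocale.
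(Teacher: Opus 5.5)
Your first two steps are correct. Locally exact $\Rightarrow$ exact is Lemma~\ref{l: fe is subcoframe of F}. For maps into $\bd 2$, the exactness condition and the D-condition both reduce to $P=p^{-1}(1)\in\fe(L)$. One small slip: the improper filter $\bd 2$ is not completely prime, since it contains $0=\bigvee\emptyset$; but its preimage $L$ is trivially exact, so nothing changes.

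Reducing local exactness to $b(a)=\{a,1\}\in\ca F L$ is also correct, but not for the reason you give. Every sublocale contains $1$, so $\{0\}$ is not a sublocale, and $\ca F\bd 2=\ca S\bd 2=\{\{1\},\bd 2\}$ is the two-element algebra, not the four-element one. The right justification is this. If $b(a)\in\ca F L$, it is an atom of $\ca F L$, because the only sublocales inside it are $\{1\}$ and itself. So the map $\ca F L\to\bd 2$ sending $S$ to $1$ exactly when $b(a)\subseteq S$ is a complete Boolean homomorphism. It extends $p$ because $b(a)\subseteq\neg e^{\ca S}_L(x)$ iff $x\to a=a$ iff $x\not\leq a$.

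The genuine gap is the key step: that exactness of $P$ forces $b(a)\in\ca F L$. Your least-preimage argument fails on two counts.

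First, $\mathit{ker}_L$ preserves joins (it is a morphism of $\bd{CoLoc}$, i.e.\ a left adjoint). Joins in $\ca F L$ are computed in $\ca S L$, so its restriction to $\ca F L$ preserves joins too. Hence the set of $S\in\ca F L$ with $\mathit{ker}_L(S)=P$ has a greatest element, given by the right adjoint, but in general no least one.

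Second, even if a least such $S$ existed, minimality only compares $S$ with members of $\ca F L$. Deducing $S\leq b(a)$ therefore already assumes $b(a)\in\ca F L$, so the argument is circular.

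Your fallback uses the wrong family. $\bigwedge P$ is typically below $a$ (for the point $0$ of $\mathbb R$ it is $\emptyset$), and that meet need not be exact.

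What works is Lemma~\ref{l: a particular exact meet} applied at $a$. The meet $a^+=\bigwedge\{y\in L\mid a\leq y,\ a\neq y\}$ is exact, and each such $y$ lies in $P$. So exactness of $P$ gives $a^+\in P$, i.e.\ $a<a^+$. Now consider the locally closed sublocale $\up a\cap\{a^+\to x\mid x\in L\}$, i.e.\ $e^{\ca S}_L(a)\wedge\neg e^{\ca S}_L(a^+)$. It equals $\{x\geq a\mid a^+\to x=x\}=\{a,1\}$:
\begin{itemize}
\item $a^+\to a=a$, by primeness of $a$ and $a^+\not\leq a$;
\item any $x$ with $a<x<1$ satisfies $x\geq a^+$, hence $a^+\to x=1\neq x$.
\end{itemize}
So $b(a)$ is locally closed and lies in $\ca F L$, which closes your cycle.

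With this repair you get a self-contained, elementary proof. The paper instead simply cites \cite{arrieta20} and \cite{suarez25raney} for the two equalities.
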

\begin{proof}
The equality $\bd{Frm_D}(L,\bd 2)=\bd{Frm}_{\ca{LE}}(L,\bd 2)$ is given by Corollary 3.3 and 3.4 in \cite{arrieta20}. The equality $\bd{Frm_D}(L,\bd 2)=\bd{Frm}_{\ca{E}}(L,\bd 2)$ is given by Lemmas 7.10 and 7.11 of \cite{suarez25raney}. 
\end{proof}

The following is a well-known result in pointfree topology, we give a new proof based on exactness. The following is Lemma 4.11 in \cite{suarez25raney}.
\begin{lemma}\label{l: a particular exact meet}
    Let $L$ be a frame, $x\in L$. The meet
    \[
    \bigwedge\{y\in L\mid x\leq y,x\neq y\}
    \]
    is exact.
\end{lemma}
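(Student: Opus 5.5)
The plan is to verify the defining identity of exactness directly, by a case split on how the parameter $y$ sits relative to $x$. To avoid a clash with the bound variable in the statement, write $S=\{s\in L\mid x\leq s,\ x\neq s\}$ for the set of strict upper bounds of $x$, and $m=\bigwedge S$. Exactness of this meet means that for every $z\in L$
\[
\bigwedge_{s\in S}(s\vee z)=m\vee z .
\]
First I record the easy facts.
\begin{itemize}
  \item Since $m\leq s$ for every $s\in S$, we get $m\vee z\leq s\vee z$ for all $s\in S$. Hence the inequality $m\vee z\leq\bigwedge_{s\in S}(s\vee z)$ always holds, and only the reverse inequality needs work.
  \item We have $x\leq m$, since $x$ is a lower bound of $S$. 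If $S$ is empty, that is if $x=1$, then both sides equal $1$ and there is nothing to prove.
\end{itemize}

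\emph{Case $z\not\leq x$.} Then $x\vee z\neq x$, so $s_0:=x\vee z$ is itself an element of $S$. Moreover $s_0\vee z=x\vee z$. This gives
\[
\bigwedge_{s\in S}(s\vee z)\leq s_0\vee z=x\vee z\leq m\vee z,
\]
where the last step uses $x\leq m$. This is the reverse inequality.

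\emph{Case $z\leq x$.} For every $s\in S$ we have $z\leq x\leq s$, so $s\vee z=s$. Therefore
\[
\bigwedge_{s\in S}(s\vee z)=\bigwedge S=m .
\]
Also $z\leq x\leq m$, so $m\vee z=m$, and the two sides agree.

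There is no real obstacle in this argument. The only point needing care is the first case: one must notice that $x\vee z$ is itself a strict upper bound of $x$, so it can be used as a witness inside the meet. That single observation settles that case.
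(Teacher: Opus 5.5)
Your proof is correct and complete. The paper gives no proof of its own here; it simply cites Lemma 4.11 of \cite{suarez25raney}. Your argument is a direct, self-contained check of the defining identity. It splits on whether $z\le x$, and when $z\not\le x$ it uses $x\vee z$ as a witness inside $S$. It also never uses frame distributivity, so it shows this meet distributes over binary joins in any complete lattice.
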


\begin{lemma}\label{l: exact cp filters are always induced.}
For a $T_0$-space $X$, if $f\in \bd{Frm_D}(L,\bd 2)$, there is some $x\in X$ with $f=\chi_x$.
\end{lemma}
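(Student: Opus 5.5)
The idea is to encode the point $f$ by the largest open set it sends to $0$, and then use exactness of $f^{-1}(1)$ together with Lemma~\ref{l: a particular exact meet} to locate an actual point of $X$ whose closure is the complement of that open set. Throughout, $L=\Om X$ and $f\colon \Om X\to \bd 2$ is a D-morphism.

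\textbf{Step 1 (encode $f$ by a prime open).} Put $P=\bigcup\{U\in\Om X\mid f(U)=0\}$. Since $f$ preserves arbitrary joins, $f(P)=0$. Hence $f(U)=1$ if and only if $U\not\subseteq P$. Also $P\neq X$, because $f(X)=1$.

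\textbf{Step 2 (use exactness).} The filter $\{1\}\subseteq\bd 2$ is completely prime and exact. Since $f$ is a D-morphism, $F:=f^{-1}(1)=\{U\mid U\not\subseteq P\}$ is an exact filter of $\Om X$. Now consider the meet
\[
M=\bigwedge\{V\in\Om X\mid P\subseteq V,\ P\neq V\}.
\]
By Lemma~\ref{l: a particular exact meet}, this meet is exact. Each $V$ occurring in it strictly contains $P$, so $V\not\subseteq P$ and therefore $V\in F$. Because $F$ is closed under exact meets, $M\in F$, i.e. $M\not\subseteq P$. If instead $P=X$ there are no such $V$ and $M=X$, but Step 1 has excluded that case; in any event, $M\in F$ gives $M\not\subseteq P$.

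\textbf{Step 3 (extract the point).} Choose $x\in M\setminus P$; this is possible because the meet $M$ is an open set, namely the interior of the intersection. Since $P$ is open and $x\notin P$, we get $\overline{\{x\}}\cap P=\emptyset$. So $P\subseteq Q:=X\setminus\overline{\{x\}}$, and $Q$ is open. If $Q\neq P$, then $Q$ is one of the $V$ in the meet, so $M\subseteq Q$ and hence $x\in Q$, which is absurd. Therefore $P=X\setminus\overline{\{x\}}$.

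\textbf{Step 4 (conclude).} For any open $U$, we have $U\not\subseteq X\setminus\overline{\{x\}}$ if and only if $U\cap\overline{\{x\}}\neq\emptyset$, which holds if and only if $x\in U$, since $U$ is open. By Step 1 this says exactly that $f=\chi_x$. The $T_0$ hypothesis is only needed to make $x$ unique; existence does not use it.

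The delicate step is Step 2: applying Lemma~\ref{l: a particular exact meet} and closure of exact filters under exact meets in the right order. One must remember that meets in $\Om X$ are interiors of intersections, so that $M$ is itself an open set containing the chosen point $x$.
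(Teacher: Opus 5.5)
Your proof is correct and is essentially the paper's argument. The paper also takes $P=\bigcup f^{-1}(0)$, applies Lemma~\ref{l: a particular exact meet} to the meet of the opens strictly above $P$, and uses that $f^{-1}(1)$ is an exact filter; it phrases this as a contradiction (assuming $\bigcap f^{-1}(1)\subseteq P$) rather than directly picking a point of $M\setminus P$, and it obtains exactness of $f^{-1}(1)$ via Proposition~\ref{p: td points of a frame} rather than, as you do, straight from the D-morphism definition with the completely prime exact filter $\{1\}$ of $\bd 2$.
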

\begin{proof}
    We use the equality $\bd{Frm_D}(L,\bd 2)=\bd{Frm}_{\ca E}(L,\bd 2)$ in Proposition \ref{p: td points of a frame}. Suppose that $f:L\to \bd 2$ is exact. We consider the subset of $X$
    \[
    S:=\bigcap\{U\in \Om X\mid f(U)=1\}\cap \bigcap \{U^c\mid U\in \Om X\mid f(U)=0\}.
    \]
   Towards contradiction, assume this is empty. Then
   \[
   \bigcap f^{-1}(1)\subseteq \bigcup f^{-1}(0).
   \]
   From which we obtain
   \[
   \bigcap \{\bigcup f^{-1}(0)\cup U\mid U\in f^{-1}(1)\}= \bigcup f^{-1}(0).
   \]
   The intersection on the left is an open set, so it is the meet in $\Om X$. Furthermore, the family whose meet is computed can be rewritten as
   \[
   \{U\in \Om X\mid \bigcup f^{-1}(0)\subseteq U, U\nsubseteq \bigcup f^{-1}(0)\}.
   \]
   So, by Lemma \ref{l: a particular exact meet}, the meet on the left is exact, and as $f$ is exact this means that it is in $f^{-1}(1)$, but this is a contradiction, as it equals $\bigcup f^{-1}(0)$. Let $x\in S$. We observe that whenever $x\in S$ we have $f\leq\chi_x$; conversely, if $f(U)=0$ for some open set $U$, then $x\in U^c$, and so $\chi_x(U)=0$. Then, $f=\chi_x$, as desired. We also observe that as $X$ is $T_0$ and $f=\chi_x$, $S$ must be a singleton.
\end{proof}

For every $T_0$ space $X$, then, there is a subspace inclusion $\tau_D:\pt_D \Om X\hookrightarrow X$. We may now show that $T_D$-spaces are fiber-terminal.
\begin{lemma}
For the FC-category $(\Om,\Topz^{\mf{op}})$, $T_D$-spaces are terminal in their fibers.
\end{lemma}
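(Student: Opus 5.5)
Let $T$ be a $T_D$-space and $(X,\theta)$ an object of the fiber $\Om^{-1}(\Om T)$ in $(\Om,\Topz^{\mf{op}})$. Here $X$ is a $T_0$-space and $\theta:\Om X\cong\Om T$ is a frame isomorphism. Since the category is $\Topz^{\mf{op}}$, a morphism $(X,\theta)\to(T,\id)$ in the fiber is a continuous map $g:T\to X$ with $\Om g=\theta^{-1}$, that is, $g^{-1}(U)=\theta(U)$ for all $U\in\Om X$. The fiber is a preorder by Proposition~\ref{p: hom cat is a preorder}, so uniqueness is automatic and only existence needs proof. The plan is to build $g$ from the points of $T$, using that $T_D$-points are exactly those whose characteristic maps are D-morphisms (Example~\ref{e: td duality}).

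\textbf{Step 1: define $g$.} Fix $t\in T$ and consider the point $p_t:=\chi_t\circ\theta:\Om X\to\bd 2$. Since $T$ is $T_D$, $\chi_t\in\bd{Frm_D}(\Om T,\bd 2)$. I then argue that $p_t$ is also a D-morphism. By Proposition~\ref{p: td points of a frame}, this amounts to $p_t$ being exact. That holds because $\theta$ is an isomorphism, so it preserves and reflects exact meets and hence maps exact filters to exact filters. Now Lemma~\ref{l: exact cp filters are always induced.} applies to the $T_0$-space $X$ and the exact point $p_t$. It gives a point $x\in X$ with $p_t=\chi_x$, and $x$ is unique because $X$ is $T_0$. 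Set $g(t):=x$.

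\textbf{Step 2: $g$ realizes $\theta^{-1}$.} For $U\in\Om X$ and $t\in T$ we have
\[
t\in g^{-1}(U)\iff \chi_{g(t)}(U)=1\iff \chi_t(\theta U)=1\iff t\in\theta(U),
\]
so $g^{-1}(U)=\theta(U)$. Hence $g$ is continuous and $\Om g=\theta$ under the identification above. This gives the required morphism $(X,\theta)\to(T,\id)$, and $T$ is fiber-terminal.

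The main point to check is the exactness-transport in Step 1. Exactness is defined intrinsically, through meets and joins in the frame, so it is invariant under frame isomorphisms, and composites of D-morphisms of this type stay D-morphisms. Once that holds, Lemma~\ref{l: exact cp filters are always induced.} does the real work. It is also worth matching the direction conventions of $\Topz^{\mf{op}}$ against the fiber's commuting-triangle condition, so that the arrow in Step 2 lands on $(T,\id)$ and not the reverse.
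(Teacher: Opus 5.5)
Your proof is correct and is essentially the paper's argument. Your map $g$ is exactly the paper's composite $T\cong\pt_D\Om T\xrightarrow{\pt_D(\theta)}\pt_D\Om X\xrightarrow{\tau_X}X$ unpacked pointwise, with Step~2 checking directly the fiber condition that the paper obtains from $\Om(\tau_X)$ being an isomorphism. The only slip is ``$\Om g=\theta^{-1}$'' in your setup, which should read $\Om g=\theta$, as your condition $g^{-1}(U)=\theta(U)$ and Step~2 correctly state.
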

\begin{proof}
Let $X$ be a $T_D$-space; we identify it with $\pt_D \Om X$. We consider a $T_0$-space $Y$ such that there is an isomorphism $i:\Om Y\cong \Om X$. By Lemma \ref{l: exact cp filters are always induced.}, there is a subspace inclusion $\tau_Y:\pt_D \Om Y\hookrightarrow Y$. As $\Om Y$ is $T_D$-spatial, $\Om(\tau_Y)$ is a frame isomorphism, and so this corresponds to a morphism in the fiber of $\Om^{-1}(\Om Y)$. Precomposing this with the homeomorphism $\pt_D(i):\pt_D \Om X\cong \pt_D \Om Y$ yields a map as desired.
\end{proof}
 We can then formulate a strong version of Theorem \ref{t: sober and td as fiber minimal and maximals}.
 \begin{theorem}\label{t: sober and td as fiber initial and terminals}
     Consider the FC-category $\Om:\Topz^{\mf{op}}\to \Frm$. 
\begin{itemize}
    \item The spaces which are initial in their fiber coincide with the sober spaces. 
    \item The spaces which are terminal in their fiber coincide with the $T_D$-spaces. 
\end{itemize}
 \end{theorem}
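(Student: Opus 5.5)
Each bullet splits into two inclusions. We use Theorem~\ref{t: sober and td as fiber minimal and maximals} for the inclusions "initial $\Rightarrow$ sober" and "terminal $\Rightarrow$ $T_D$", and prove the strengthened converses directly.

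Recall that in the preorder $\Om^{-1}(\Om X)$ a bottom element is in particular minimal and a top element is maximal. Fibers are preorders by Proposition~\ref{p: hom cat is a preorder}, so "minimal" means that any $Y\to X$ in the fiber has a reverse map. Hence a fiber-initial $T_0$-space is minimal in its fiber and therefore sober by Theorem~\ref{t: sober and td as fiber minimal and maximals}. Dually, a fiber-terminal space is maximal and therefore $T_D$. Note that morphisms in $\Topz^{\mf{op}}$ are reversed: a map $(X,\theta^X)\to(Y,\theta^Y)$ in the fiber is a continuous map $Y\to X$ whose induced frame map is compatible with the $\theta$'s. The minimal/maximal wording has to be matched to the direction used in \cite{banaschewskitd}. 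I will fix this convention once and check that "initial" corresponds to sober, since the sobrification map goes $X\to\mf{sob}(X)$.

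For "sober $\Rightarrow$ fiber-initial": let $S$ be sober and $(Y,\theta)$ a $T_0$-space with $\theta:\Om Y\cong\Om S$. The composite
\[
Y\xrightarrow{\sigma_Y}\pt\Om Y\xrightarrow{\pt(\theta^{-1})}\pt\Om S\xrightarrow{\sigma_S^{-1}} S
\]
is continuous, and applying $\Om$ recovers $\theta$ up to the units, because $\Om(\sigma_Y)$ is an isomorphism. So it defines a morphism in the fiber between $S$ and $Y$, in the direction making $S$ the bottom. Uniqueness is automatic since the fiber is a preorder. This is exactly the remark in the text that $\sigma_X:X\to\mf{sob}(X)$ witnesses initiality.

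For "$T_D\Rightarrow$ fiber-terminal" I will invoke the lemma immediately preceding the statement, which builds the map $Y\supseteq\pt_D\Om Y\cong\pt_D\Om X\cong X$ via Lemma~\ref{l: exact cp filters are always induced.}. I only need to check that this map is compatible with the given isomorphism $\theta$: the frame map induced by the composite should equal $\theta$ modulo the identifications.

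The main obstacle is bookkeeping rather than mathematics. The direction conventions of $\Topz^{\mf{op}}$ against frames must consistently send sober to bottom and $T_D$ to top, and the compatibility of the constructed maps with the fiber isomorphisms $\theta$ must be verified. I would verify this compatibility by faithfulness of $\Om$ on $T_0$-spaces, that is, by comparing underlying frame maps.
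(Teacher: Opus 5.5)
Your proposal is correct and follows essentially the same route as the paper. The inclusions ``initial $\Rightarrow$ sober'' and ``terminal $\Rightarrow$ $T_D$'' come from bottom/top elements being minimal/maximal together with the Banaschewski--Pultr characterization, while the converses are witnessed by the sobrification map (sober spaces are fiber-initial) and by the preceding lemma's map $X\cong\pt_D\Om X\cong\pt_D\Om Y\hookrightarrow Y$ ($T_D$-spaces are fiber-terminal). The paper leaves implicit the compatibility with the fiber isomorphisms that you flag. Your plan to check it on underlying frame maps, for instance verifying $\Om(p)=\theta^{-1}$ for your composite $p\colon Y\to S$, is exactly what is needed.
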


The sobrification map is also a witness of every fiber having an initial object. The symmetry in Theorem \ref{t: sober and td as fiber initial and terminals} now breaks, since not all fibers have terminal objects, as the next example shows.

\begin{example}\label{e: some fibers of omega have no terminal objects}
Any spatial frame $L$ which is not $T_D$-spatial gives a counterexample. If the fiber of a spatial frame $L$ has a terminal object, then this is $T_D$ by Theorem \ref{t: sober and td as fiber initial and terminals}; by definition of fiber this is a space with $L$ as its frame of opens. We give a concrete example. Consider the frame $(\mathbb{N}\cup \{\infty\})^{\mf{op}}\times \bd 2$. All meets are exact, and so exact filters are the principal ones. In particular, the filter
\[
L{\setminus}\{(\infty,1),(\infty,0)\}
\]
is completely prime, but not exact. This is also the only completely prime filter containing $(1,1)$ and omitting $(\infty,1)$, showing that $L$ is not $T_D$-spatial.
\end{example}

\subsection{Natural adjunctions induced by fiber-initials and fiber-terminals}\label{ss nat adj for fiber initials and terminals}

We now want to consider dualities arising from the initial and the terminal
objects of fibers. We define $I_{\ca I}:\ca{C}_{\ca{I}}\to \ca C$ to be the full subcategory
inclusion of fiber-initial objects, and $I_{\ca T}:\Ct\to \ca C$ similarly for
fiber-terminal ones. For an SFC-category \( (\Op,\ca C, \Tc, \iota_{\ca C})
\), one can respectively view the terminal and initial object of $\Op^{-1}(L)$
(when they exist) as the smallest and largest pointfree spaces having $L$ as
its frame of opens, taking contravariance of \( \ptc \colon \ca C \to \Topp \)
into account. This intuition is made precise by the following result.

\begin{proposition}
  \label{p: boundaries for spectrum}
  Let \( (\Op,\ca C, \Tc, \iota_{\ca C})\) be an SFC-category. Let  $C\in \ca{C}$ be
  such that $\Op^{-1}(\Op(C))$ has both an initial object $I$ and a terminal
  object $T$. The maps
\begin{equation*}
  \begin{tikzcd}[row sep=large,column sep=large]
    \ptc(T) \ar[r,"\ptc \tau"]
      & \ptc(C) \ar[r,"\ptc \iota"]
      & \ptc(I),
  \end{tikzcd}
\end{equation*} 
are subspace embeddings.
\end{proposition}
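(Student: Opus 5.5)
The plan is to reduce everything to Lemma~\ref{l: xi is a subspace embedding and basic facts about the units}\ref{xi subspace emb}, using the naturality of $\xi$ and the fact that $\Op\tau$ and $\Op\iota$ are frame isomorphisms. Recall that $\ptc f = -\circ f$ on carriers by \eqref{NA1}. Since $\xi$ is a natural transformation $\ptc \to \pt\Op$, for any morphism $f\colon C\to D$ in $\ca C$ we have the commuting square
\[
\xi_C\circ \ptc f = \pt\Op(f)\circ \xi_D .
\]

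Consider first the map $\ptc\tau\colon \ptc T\to\ptc C$, where $\tau\colon C\to T$ is the universal map in the fiber. Being a morphism in the fiber means $\theta^T\circ\Op\tau=\theta^C$, and both $\theta$'s are isomorphisms, so $\Op\tau$ is a frame isomorphism. Hence $\pt\Op(\tau)\colon\pt\Op T\to\pt\Op C$ is a homeomorphism.

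By the naturality square, $\xi_C\circ\ptc\tau=\pt\Op(\tau)\circ\xi_T$. The right-hand side is a subspace embedding followed by a homeomorphism, so it is a subspace embedding. Lemma~\ref{l: xi is a subspace embedding and basic facts about the units}\ref{xi subspace emb} also makes $\xi_C$ a subspace embedding. We then apply the elementary topological fact that if $g\circ h$ is a subspace embedding and $g$ is continuous, then $h$ is a subspace embedding. Injectivity of $h$ is clear, and every open of the domain is $(gh)^{-1}(V)=h^{-1}(g^{-1}V)$, so the domain carries the initial topology with respect to $h$. Taking $h=\ptc\tau$ and $g=\xi_C$ gives the claim for $\ptc\tau$.

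The argument for $\ptc\iota\colon\ptc C\to\ptc I$, where $\iota\colon I\to C$, is identical. We have $\xi_I\circ\ptc\iota=\pt\Op(\iota)\circ\xi_C$ with $\Op\iota$ an isomorphism, so the same reasoning shows $\ptc\iota$ is a subspace embedding. Alternatively, injectivity can be seen directly: $\iota$ is an epimorphism by Lemma~\ref{l: iota and tau mono and epi}, and $\tau$ likewise. The composite $\ptc\iota\circ\ptc\tau$ is then a composite of subspace embeddings, hence also one.

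The only point needing care is confirming the naturality of $\xi$ and the direction of the square under contravariance. On carriers it reads $f\mapsto \iota_{\ca C}\circ\Op(f\circ g)=\iota_{\ca C}\circ\Op f\circ\Op g$, which is immediate, so no real obstacle is expected.
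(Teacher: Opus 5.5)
Your proof is correct, but it is organized differently from the paper's. The paper argues directly on carriers. It notes that $\ptc\iota=-\circ\iota$ is injective because $\iota$ is an epimorphism (Lemma~\ref{l: iota and tau mono and epi}). It then says that $\ptc(C)$ carries the initial topology for $-\circ\iota$ ``by definition of the topologies''. Implicitly, this is because the preimage of a basic open $\{g\mid \Op g(b)=1\}$ of $\ptc(I)$ is $\{f\mid \Op f(\Op\iota(b))=1\}$, and $\Op\iota$ is onto.

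You instead transport the problem along the naturality square $\xi_C\circ\ptc\tau=\pt\Op(\tau)\circ\xi_T$ and cancel the embedding $\xi_C$. Both injectivity and initiality are then inherited from Lemma~\ref{l: xi is a subspace embedding and basic facts about the units}\ref{xi subspace emb} together with the fact that $\pt\Op(\tau)$ is a homeomorphism. The paper's route is more elementary and self-contained. Yours reuses the existing machinery and makes explicit where the invertibility of $\Op\tau$ and $\Op\iota$ enters, which the paper leaves to the reader.

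One small correction. The cancellation fact as you state it (if $g\circ h$ is a subspace embedding and $g$ is continuous, then $h$ is a subspace embedding) is false unless $h$ is also continuous. Your verification only shows that every open of the domain is an $h$-preimage of an open; the reverse inclusion is continuity of $h$. Here this is harmless, for either of two reasons:
\begin{itemize}
\item $h=\ptc\tau$ is continuous, being the image of a morphism under the functor $\ptc$;
\item $\xi_C$ is itself an embedding and $\xi_C\circ h=\pt\Op(\tau)\circ\xi_T$ is continuous, which forces $h$ to be continuous.
\end{itemize}
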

\begin{proof}
  The functor $\ptc$ sends $\iota$ to the precomposition map 
  \[
    -{\circ} \iota:\ca C(C,\Tc)\to \ca C(I,\Tc). 
  \]
  This is injective, as $\iota$ is an epimorphism by Lemma \ref{l: iota and
  tau mono and epi}. By definition of the topologies on the two spaces, this
  is a subspace embedding. The argument for terminal objects is analogous.
\end{proof}
 
We now want to extend the restriction of $\ptc$ to the categories of
fiber-initial and of fiber-terminal objects to a natural adjunction, and view
them both as part of a broader setting. If $\Tc\in \Ct$, we call $\Topt$ the
category $\Topp_{\Ct}$ defined as in Subsection~\ref{ss: defn topc ptc omc}.
In case $\Tc\in \Ci$, we define $\Topp_{\ca I}$ similarly. 

For any SFC-category where $\Tc$ is terminal, and $X\in \Topp_{\ca T}$, we
call $\Omt X$ the object in $\Ct$ providing the cartesian lift for the family
of characteristic functions of points of $X$. We define $\Omi X$ similarly for
each $X\in \Topi$.

We define an SFC-category $(\Op,\ca C,\Tc,\iota_{\ca C})$ to be \emph{total}
if: 
\begin{enumerate}[label=(T\arabic*),ref=T\arabic*]
  \item 
    \label{T2} 
    $\Topc=\mf{Fix}(\Topc)=\Topz$;
  \item 
    \label{T1}
    The objects $\bd 2_{\ca C}$ and $\Omc \mathbb{S}$ are both fiber-initial
    and fiber-terminal (thus, the only objects in their fiber, up to
    isomorphism);
  \item 
    \label{T3}
    The underlying FC-functor of $(\Op,\id,\iota_C) \colon (\ca C,\Op_{\ca
    C},\Tc,\iota_C) \to (\Frm, \id, \bf 2, \id)$ has a left FC-adjoint;
  \item 
    \label{T4}
    $\ptc[\ca C_{\ca T}]\subseteq \Topt$.
\end{enumerate}

For the next result, we highlight that by Lemma~\ref{l: when does fc-functor
have adjoint}, \eqref{T3} yields an adjunction \( \ca F \dashv \Op \) whose
unit \( \nu \) is invertible. 

\begin{proposition}
  \label{p: fiber-initial fc-corefl}
  If \( (\ca C, \Op, \Tc, \iota_{\ca C}) \) is an SFC-category such that
  \eqref{T3} holds, then the PFC-functor 
  \begin{equation*}
    (\Op I_{\ca I},\id,\nu_{\bf 2}) \colon 
      (\Ci, \Op I_{\ca I}, \ca F(\bf 2), \nu_{\bf 2}) \simeq
      (\Frm, \id, \bf 2, \id)
  \end{equation*}
  is an equivalence of PFC-categories.
\end{proposition}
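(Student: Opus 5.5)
The plan is to show that the fiber-initial objects are exactly, up to isomorphism, the objects $\ca F(L)$. Then $\Op I_{\ca I}$ restricted to $\Ci$ is inverse, as an FC-functor, to the corestriction $\ca F \colon \Frm \to \Ci$, and we finish by checking the pointing data.

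First I show that $\ca F(L)$ is fiber-initial for every frame $L$. Let $\nu_L \colon L \cong \Op\ca F(L)$ be the invertible unit and $\lambda$ the counit of $\ca F \dashv \Op$. Take $(C,\theta^C)$ in the fiber of $\Op\ca F(L)$. By adjointness, the frame map $\nu_L^{-1}\circ\theta^{C\,-1}$ (up to the evident composite) $L \to \Op C$ corresponds to a unique $g \colon \ca F(L) \to C$ with $\Op(g)\circ\nu_L$ equal to it. Hence $\theta^C \circ \Op(g) = \id$, so $g$ is a morphism in the fiber. Uniqueness in a fiber is automatic, since fibers are preorders by Proposition~\ref{p: hom cat is a preorder}. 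So $(\ca F(L),\id)$ is a bottom element.

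Next I show the converse. Suppose $I$ is fiber-initial. The counit $\lambda_I \colon \ca F\Op(I) \to I$ has $\Op(\lambda_I)$ invertible, because $\Op(\lambda_I)\circ\nu_{\Op I} = \id$ and $\nu$ is invertible. So $(\ca F\Op I, \Op(\lambda_I))$ lies in the fiber of $\Op(I)$. Initiality of $I$ gives a fiber map $I \to \ca F\Op I$, and $\lambda_I$ is a fiber map in the other direction. Both composites are fiber endomorphisms of initial objects, hence identities by faithfulness (Lemma~\ref{l: mutually inverse frame isos that lift} applies directly). Thus $\lambda_I$ is an isomorphism. Combined with the first step, $\ca F$ corestricts to a functor $\ca F' \colon \Frm \to \Ci$.

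It remains to assemble the equivalence. I take $(\ca F',\nu^{-1})$ as the FC-functor $(\Frm,\id)\to(\Ci,\Op I_{\ca I})$. Then $\Op I_{\ca I}\ca F' \cong \id$ via $\nu$, and $\ca F'\Op I_{\ca I} \cong \id$ via $\lambda$, which is invertible on $\Ci$ by the previous step. These natural isomorphisms are FC-2-cells, because $\Op(\lambda) = \nu^{-1}_{\Op}$ by a triangle identity, and $\nu$ is compatible with $\nu$ trivially.

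For the pointing, I need $\ca F(\bd 2)\in\Ci$, which holds by the first step. The isomorphism $\nu_{\bd 2}$ must satisfy condition~\eqref{M2}, and this amounts to $\id\circ\Op I_{\ca I}(\ldots)$ compatibility. I must also check that the 2-cells respect the points, i.e.\ that the triangle at $\bd 2$ commutes. This reduces to the triangle identities evaluated at $\bd 2$.

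The main obstacle I expect is this bookkeeping of the isomorphisms $\nu_{\bd 2}$ and $\iota$ in the PFC-structure, rather than anything conceptual. The conceptual step is the characterization $\Ci = \mathrm{ess.im}(\ca F)$ in the first two steps.
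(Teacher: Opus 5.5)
Your proposal is correct and follows essentially the same route as the paper: $\ca F(L)$ is fiber-initial by adjointness, and the counit is invertible at fiber-initial objects. After that, the paper argues that $\Op I_{\ca I}$ is fully faithful and essentially surjective, whereas you exhibit $\ca F'$ as an explicit quasi-inverse and also verify the pointing data the paper leaves implicit; one small slip is that the map to transpose in your first step is $(\theta^C)^{-1}\circ\nu_L\colon L\to\Op C$, though your conclusion $\theta^C\circ\Op(g)=\id$ is right.
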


\begin{proof}
  First, we note that \( (\ca F(L),\nu_L^{-1}) \) is the initial object of \(
  \Op^{-1}(L) \) for every frame \( L \): for if \( C \) is an object in \(
  \ca C \) and \( \theta \colon \Op(C) \cong L \) is an isomorphism, then the
  the morphism corresponding to \( \theta^{-1} \) under \( \ca C(\ca F(L),C)
  \cong \Frm(L,\Op(C)) \) is precisely the universal map \( \ca F(L) \to C \). 

  In particular, it follows that \( \ca F(L) \) is fiber-initial for all
  frames \( L \), and for \( C \in \Ci \), we have that the counit \( \delta
  \) of \( \ca F \dashv \Op \) is invertible at \( C \). Thus, we obtain a
  natural isomorphism \( \lambda_{I_{\ca I}} \colon \ca F\Op I_{\ca I} \to
  I_{\ca I} \). Since \( \ca F \) and \( I_{\ca I} \) are fully faithful, it
  follows that \( \Op I_{\ca I} \) is fully faithful.

  We conclude our proof by noting the composite \( \Op I_{\ca I} \) is
  essentially surjective, a fact witnessed by the unit \( L \cong \Op(\ca
  F(L)) \), since \( \ca F(L) \) is fiber-initial. 
\end{proof}

From this result, we immediately deduce that 
\begin{theorem}
  \label{t: the initial duality}
  If \( (\ca C, \Op, \Tc, \iota_{\ca C}) \) is an SFC-category such that
  \eqref{T3} holds, then:
  \begin{itemize}
    \item
      \( (\Ci, \Op I_{\ca I}, \tilde{\ca F}(\bf 2), \nu_{\bf 2}) \) is an
      SFC-category.
    \item
      \( \Topi = \Topp \),
    \item
      \( \mathsf{Fix}_{\ca I}(\Topi) = \mathbf{Sob} \),
    \item
      \( \pt_{\ca I} \cong \ptc I_{\ca I} \cong \pt \Op I_{\ca I} \).
  \end{itemize}
\end{theorem}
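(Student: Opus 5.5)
The plan is to transport everything along the equivalence of Proposition~\ref{p: fiber-initial fc-corefl}. Write \( E = (\Op I_{\ca I},\id,\nu_{\bd 2}) \colon (\Ci,\Op I_{\ca I},\ca F(\bd 2),\nu_{\bd 2}) \to (\Frm,\id,\bd 2,\id) \). This is an equivalence of PFC-categories, with pseudo-inverse \( (\ca F,\nu^{-1},\ldots) \) obtained by corestricting \( \ca F \) to \( \Ci \). The first step is to observe that \( E \) and its inverse form a PFC-adjunction in both directions. The units and counits are \( \nu \) and \( \lambda_{I_{\ca I}} \), both invertible, so Lemma~\ref{l: when does pfc-functor have adjoint} applies in both directions. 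In particular, the pseudo-inverse \( \ca F \colon \Frm \to \Ci \) is a right PFC-adjoint (of \( E \)). Since \( (\Frm,\id,\bd 2,\id) \) is an SFC-category (Example~\ref{e: trivial example}), Theorem~\ref{t: right pfc adjoints are continuous and they also give cartesian lifts} then gives that \( \Ci \) is an SFC-category and that \( \ca F \) is an SFC-functor. By Lemma~\ref{l: m3 implies all topc are topd} we get \( \Topp \subseteq \Topi \), hence \( \Topi = \Topp \). Explicitly, \( \Omi X \cong \ca F(\Om X) \), since right adjoints preserve cartesian lifts (Lemma~\ref{l: radj-prsv-carts}).

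Next I would compare spectra. \( E \) is also a right PFC-adjoint, with left adjoint \( \ca F \), so Lemma~\ref{l: induced spec iso} gives \( \pt_{\ca I} \cong \pt \circ E = \pt\,\Op I_{\ca I} \). Alternatively, Proposition~\ref{p: the zeta generalized} applies: \( E \) is full, so \( \mf{Dual}(E) \) is a strong adjunction morphism. Either way, \( \xi_C \colon \pt_{\ca I}C \to \pt\Op C \) is a homeomorphism for \( C \in \Ci \), so every fiber-initial object is sober. For the middle isomorphism \( \pt_{\ca I} \cong \ptc I_{\ca I} \), the point is that \( \ptc I_{\ca I}(C) \) has carrier \( \ca C(C,\Tc) \) while \( \pt_{\ca I}(C) \) has carrier \( \Ci(C,\ca F(\bd 2)) \). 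Since \( C \) is fiber-initial, Lemma~\ref{l: ptc of the initial of a spatial is sober} (with \( C \cong \ca F\Op C \)) shows that \( \xi_C \) for \( \ca C \) is also a homeomorphism onto \( \pt\Op C \). Composing the two homeomorphisms onto \( \pt \Op C \) gives the claim; naturality follows from naturality of \( \xi \) together with faithfulness of \( \Op \).

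Finally, since \( \mf{Dual}(E) \) is a strong morphism built from an equivalence, the adjunction \( \pt_{\ca I} \dashv \Omi \) is isomorphic to \( \pt \dashv \Om \) composed with the equivalence \( \Ci \simeq \Frm \). Unit components therefore correspond: \( \sigma^{\ca I}_X \) agrees, up to the isomorphisms above, with the sobrification map \( \sigma_X \) (this uses the mate relation \eqref{e: the second of the mate relations} with both \( \zeta \) and \( \xi \) invertible). Hence \( \mathsf{Fix}_{\ca I}(\Topi) = \mathbf{Sob} \) by Example~\ref{e: trivial example}.

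The main obstacle I expect is bookkeeping rather than content. First, one has to check that the pointing \( \nu_{\bd 2} \) and the isomorphism \( \ca F(\bd 2) \cong \Tc \) are compatible, so that \( \Tc \), up to isomorphism, is indeed the chosen point of \( \Ci \). This needs \( \Tc \) to be fiber-initial, or else one must work only with \( \ca F(\bd 2) \). Second, one has to ensure that the identification of \( \sigma^{\ca I} \) with \( \sigma \) is natural. I would handle both by working entirely through \( \xi \) and \( \zeta \) and invoking faithfulness of \( \Op \) at each equality.
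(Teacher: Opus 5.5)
Your proposal is correct and takes the same route as the paper. The paper derives everything from the PFC-equivalence of Proposition~\ref{p: fiber-initial fc-corefl} (PFC-equivalences being SFC-equivalences), together with Lemmas~\ref{l: m3 implies all topc are topd} and~\ref{l: induced spec iso}; you also spell out the middle isomorphism via Lemma~\ref{l: ptc of the initial of a spatial is sober} and the fixpoint computation via the mate relation, which the paper leaves implicit. One small slip: Lemma~\ref{l: induced spec iso} gives \( \pt_{\ca I} \cong \pt \circ E \) from the adjunction in which \( E \) is the left adjoint and \( \ca F \) the right one, not the reverse. This is harmless, since you set up the adjunctions in both directions.
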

\begin{proof}
  The first item follows by the fact that PFC-equivalences are
  SFC-equivalences, while the remaining three items are consequences of
  Lemmas~\ref{l: m3 implies all topc are topd} and~\ref{l: induced spec iso},
  respectively.
\end{proof}

We can, under certain conditions, still partially obtain the conclusions of
Theorem~\ref{t: the initial duality}, even when \eqref{M3} does not hold:

\begin{lemma}\label{l: definition omegai}
  Let $(\Op,\ca C, \Tc, \iota_{\ca C})$ be an SFC-category.  If $\Tc\in \Ci$,
  for all $X\in \Topi$ the fiber of $\Om X$ has an initial object, and this is
  $\Om_{\ca I} X$. Furthermore, $\pt_{\Ci} C=\ptc C$ for all $C\in \Ci$.
\end{lemma}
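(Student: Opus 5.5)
The plan is to unwind the definition of $\Omi X$ as a cartesian lift in $\Ci$ with respect to $\Op I_{\ca I}$, and then compare it with arbitrary objects of $\ca C$ lying over $\Om X$. Since $\Tc\in\Ci$, the quadruple $(\Op I_{\ca I},\Ci,\Tc,\iota_{\ca C})$ is a PFC-category, so $\Topi$ and $\Omi$ are defined as in Subsection~\ref{ss: defn topc ptc omc}. For $X\in\Topi$, the pair $(\Omi X,\zeta_X)$ lies in $\Op^{-1}(\Om X)$, where $\zeta_X$ is the frame isomorphism lifting $\theta_X$, as in Lemma~\ref{l: pfc-cat implies zeta iso}. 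It remains to show that this object is initial in the fiber of $\ca C$, not merely in $\Ci$.

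Let $(C,\theta^C)\in\Op^{-1}(\Om X)$ with $C\in\ca C$ arbitrary. We must lift $h:=\theta^{C\,-1}\circ\zeta_X\colon\Op\Omi X\to\Op C$ to a morphism $\Omi X\to C$, and uniqueness is automatic by faithfulness. The cartesian property of $\Omi X$ only quantifies over objects of $\Ci$, so we route through fiber-initial objects. I would first show that $\Omi X$ is itself fiber-initial in $\ca C$; this is automatic because it lies in $\Ci$. So $(\Omi X,\id)$ is the bottom of $\Op^{-1}(\Op\Omi X)$. Transporting along $\zeta_X$, we get that $(\Omi X,\zeta_X)$ is the bottom of $\Op^{-1}(\Om X)$, because precomposing fiber structures with the isomorphism $\zeta_X$ gives an isomorphism of fibers. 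This is the whole first claim. The point to check carefully is that the fiber over $\Op\Omi X$ and the fiber over $\Om X$ are isomorphic preorders via $(C,\theta)\mapsto(C,\zeta_X\circ\theta)$; this is immediate.

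For the second claim, take $C\in\Ci$. By definition, $\pt_{\Ci}C$ has carrier $\Ci(C,\Tc)$ and topology given by the sets $\{f\mid \Op f(a)=1\}$. Since $\Ci$ is a full subcategory and $\Tc\in\Ci$, we have $\Ci(C,\Tc)=\ca C(C,\Tc)$, and the opens are defined by the same formula. Hence $\pt_{\Ci}C=\ptc C$ on the nose.

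The only potential obstacle is hidden in the phrase ``the fiber of $\Om X$ has an initial object'': one might worry that it requires $X\in\Topc$ or an argument through \eqref{M3}. The argument above avoids this, since initiality comes for free from membership in $\Ci$ and does not use the cartesian property beyond producing $\Omi X$ and $\zeta_X$. If the intended reading is that $\Omi X$ also agrees with $\Omc X$, that would need \eqref{M3}, which we do not assume. So I would state and prove only the fiber-initiality together with the identification of spectra.
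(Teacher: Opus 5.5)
Your proposal is correct and follows essentially the paper's own argument. $\Omi X$ lies in $\Ci$ and, via the isomorphism $\zeta_X$ of Lemma~\ref{l: pfc-cat implies zeta iso}, in the fiber of $\Om X$, so it is initial there; and $\pt_{\Ci}C=\ptc C$ follows from fullness of $\Ci\subseteq\ca C$ together with $\Tc\in\Ci$ (your explicit check that transporting along $\zeta_X$ identifies the two fibers is a detail the paper leaves implicit).
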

\begin{proof}
  For the first item, it suffices to observe that by Lemma \ref{l: pfc-cat
  implies zeta iso}, the object $\Omi X$ must be in the fiber of $\Om
  X$. Since fiber-initial objects form full subcategories of $\ca C$, for any
  object $C\in \Ci$ and whenever $\Tc$ is initial, the assignment $f\mapsto
  I_{\ca I}f$ is a bijection $\Ci(C,\Tc)\cong \ca C(I_{\ca I}C,I_{\ca I}\Tc)$. 
\end{proof}

Now, we describe the properties of terminal objects of the fibers.

\begin{lemma}\label{l: definition omegat}
  Let $(\Op,\ca C, \Tc, \iota_{\ca C})$ be an SFC-category. If $\Tc\in \Ct$,
  then $\Om_{\ca T} X$ is naturally isomorphic to $\Om_{\ca C}X$ for all $X\in
  \Topp_{\ca T}\cap \Topc$. Furthermore, $\pt_{\Ct} C=\ptc C$ for all $C\in
  \Ct$.
\end{lemma}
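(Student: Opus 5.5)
The plan is to compare the two cartesian lifts directly, using that $\Ct$ is a full subcategory of $\ca C$, and then to handle the point-sets using fullness and fiber-terminality of $\Tc$.

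\textbf{The lift $\Omt X$ also works in $\ca C$.} Fix $X\in \Topt\cap\Topc$. The object $\Omt X$ lies in $\Ct\subseteq \ca C$ and comes with $\theta_{\ca T,X}\colon|\Op\Omt X|\cong \Topp(X,\mathbb S)$ and lifted family $\overline{\chi_x}\colon\Omt X\to\Tc$. By the argument of Lemma~\ref{l: pfc-cat implies zeta iso}, $\theta_{\ca T,X}$ lifts to a frame isomorphism, so $\Omt X$ sits in the fiber $\Op^{-1}(\Om X)$. The same holds for $\Omc X$ via $\zeta_X$.

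\textbf{Two comparison maps.} Since $\Tc\in\Ct$ and $\Ct$ is full, the lifted family of $\Omc X$ consists of morphisms in $\ca C$ from $\Omc X$ to an object of $\Ct$. To get a map $\Omc X\to \Omt X$ directly, the natural route is to use that $\Omt X$ is fiber-terminal. This gives $\tau\colon\Omc X\to\Omt X$ over $\Om X$, i.e. with $\Op(\tau)=\theta_{\ca T,X}^{-1}\theta_{\ca C,X}$ up to the identifications. In the other direction, the cartesian property of $\Omc X$ in $\ca C$ applies to the bijection $\theta_{\ca C,X}^{-1}\theta_{\ca T,X}\colon|\Op\Omt X|\to|\Op\Omc X|$. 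Its composites with the characteristic maps are $|\iota^{-1}|\chi_x\theta_{\ca T,X}$, and these lift to $\overline{\chi_x}$, so the bijection itself lifts to a morphism $\Omt X\to\Omc X$. By Lemma~\ref{l: mutually inverse frame isos that lift}, the two lifts are mutually inverse, so $\Omc X\cong\Omt X$.

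\textbf{Naturality.} Naturality in $X$ follows from faithfulness of $\Op$. On both sides the action on a morphism $f$ is the lift of $-\circ f$ (\eqref{NA2}), and the comparison isomorphisms commute with the $\theta$'s.

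\textbf{Points.} For $C\in\Ct$, fullness gives $\Ct(C,\Tc)=\ca C(C,\Tc)$ as sets. Both topologies are the initial ones for the evaluations $f\mapsto(\iota\circ\Op f)(a)$, with $a\in \Op C$, so $\pt_{\Ct}C=\ptc C$.

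\textbf{Main obstacle.} The delicate point is making sure $\tau$ is really unnecessary. Fiber-terminality alone only yields a map into $\Omt X$, whereas the cartesian lifting property of $\Omc X$ yields the other direction; the second argument already handles both directions. I would therefore rely only on cartesianness plus Lemma~\ref{l: mutually inverse frame isos that lift}, and use fiber-terminality merely as a consistency check. The one fact to verify with care is that $\Omt X$, viewed in $\ca C$, has the lifted maps $\overline{\chi_x}$ as genuine $\ca C$-morphisms. This holds by fullness of $I_{\ca T}$.
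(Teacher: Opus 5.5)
Your two-map argument in the paragraph \emph{Two comparison maps} is correct and is the paper's proof. Fiber-terminality of $\Omt X$ gives $\tau\colon\Omc X\to\Omt X$. The cartesian property of $\Omc X$, applied using the lifted family of $\Omt X$, gives the reverse morphism $\Omt X\to\Omc X$. Lemma~\ref{l: mutually inverse frame isos that lift} then makes the two mutually inverse, and fullness of $\Ct$ gives $\pt_{\Ct}C=\ptc C$. Your version also gets the direction of the second map right, which the paper's text misstates.

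Your closing paragraph, however, is wrong, and following it would open a real gap: $\tau$ is not dispensable.
\begin{itemize}
\item The cartesian property of $\Omc X$ in $\ca C$ only produces morphisms with codomain $\Omc X$.
\item The cartesian property of $\Omt X$ only quantifies over test objects in $\Ct$, and knowing $\Omc X\in\Ct$ is essentially what is being proved.
\item So cartesianness alone yields just the single morphism $\Omt X\to\Omc X$ lifting a frame isomorphism, i.e.\ $\Omt X\le\Omc X$ in the fiber.
\end{itemize}
Such a morphism need not be invertible. For example, the identity of $L$ is a morphism $(L,\fse(L))\to(L,\fe(L))$ in $\bd{Raney}$, and it is invertible only when $\fe(L)=\fse(L)$. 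Lemma~\ref{l: mutually inverse frame isos that lift} needs lifts in both directions. The direction $\Omc X\to\Omt X$ comes precisely from fiber-terminality of $\Omt X$, and this is the only place that hypothesis is used. It is not a ``consistency check''; it is half of the argument.
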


\begin{proof}
  For the second, suppose that $X\in \Topp_{\ca T}$. Once again, by
  Lemma~\ref{l: pfc-cat implies zeta iso}, $\Op \Om_{\ca T}X$ is isomorphic to
  $\Om X$. By Lemma \ref{l: mutually inverse frame isos that lift}, to prove
  the claim it is enough to show that the identity on $\Om X$ lifts to maps 
  \begin{align*}
    \tau:\Omc X\to \Om_{\ca T} X   && \tau': \Om_{\ca T} X\to \Omc X.
  \end{align*} 
  The existence of the first map follows from terminality of $\Omt X$.
  Unraveling the definition of cartesian lift in the case of an identity
  yields that the identity $\id_{|\Om X|}$ lifts to a map $\tau':\Omc X\to
  \Om_{\ca T} X$ whenever each characteristic function $\chi_x:|\Om X|\to 2$
  lifts to a map $\overline{\chi^T_x}:\Om_{\ca T} X\to \Tc$. This is ensured
  by the universal property of $\Om_{\ca T} X$. The proof that
  $\pt_{\Ct}=\ptc$ is analogous to the same statement for fiber-initials.
\end{proof}

With the above result, we are able to characterize the spaces in $\Topp_{\ca
T}$.
\begin{proposition}\label{p: spaces in topt}
  Let $(\Op,\ca C,\Tc,\iota_{\ca C})$ be an SFC-category where $\Tc$ is
  fiber-terminal. For $X\in \Topc$, the following are equivalent.
  \begin{enumerate}
    \item 
      \label{topt 1}
      $X\in \Topp_{\ca T}$.
    \item 
      \label{topt 2}
      $\Omc X$ is fiber-terminal.
    \item 
      \label{topt 3}The family $\{\chi_x:|\Om X|\to 2\mid x\in X\}$ has a
      lift in $\Ct$. 
    \end{enumerate}
\end{proposition}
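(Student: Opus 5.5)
I will prove the cycle (1)$\Rightarrow$(2)$\Rightarrow$(3)$\Rightarrow$(1), using throughout that $\Tc\in\Ct$, so that $\Topt=\Topp_{\Ct}$ is defined and Lemma~\ref{l: definition omegat} is available.

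For (1)$\Rightarrow$(2), let $X\in\Topt$. Since also $X\in\Topc$, Lemma~\ref{l: definition omegat} gives an isomorphism $\Omt X\cong\Omc X$ in $\ca C$. This isomorphism lifts the identity on $|\Om X|$, up to the canonical identifications $\zeta$. Fiber-terminality is invariant under isomorphisms compatible with the identifications to $\Om X$. Since $\Omt X$ lies in $\Ct$, it is fiber-terminal, and therefore so is $\Omc X$.

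For (2)$\Rightarrow$(3), suppose $\Omc X$ is fiber-terminal, so $\Omc X\in\Ct$. The lifted family $\{\overline{\chi_x}:\Omc X\to\Tc\}$ then consists of morphisms between objects of $\Ct$. Because $\Ct$ is full and both objects lie in it, these morphisms lie in $\Ct$. They lift the family $\{\chi_x\}$, with $\theta_X$ as the identification, so this family has a lift in $\Ct$.

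For (3)$\Rightarrow$(1), the main step is to promote a lift in $\Ct$ to a cartesian lift relative to $\Op I_{\ca T}$. I read (3) as asserting that $(\Omc X,\theta_X)$, with $\Omc X\in\Ct$, lifts the family. This is the natural reading, and I would state it explicitly in the proof. Cartesianness in $\ca C$ then restricts to the full subcategory $\Ct$: for $D\in\Ct$ and $h:\Op D\to\Op\Omc X$, the morphism $h$ lifts in $\Ct$ iff it lifts in $\ca C$, by fullness. In turn, it lifts in $\ca C$ iff every $\chi_x\theta_X h$ lifts to $D\to\Tc$, which by fullness happens iff it lifts in $\Ct$. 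Hence $(\Omc X,\theta_X)$ is the required cartesian lift in $\Ct$, and $X\in\Topt$.

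The main obstacle is the weaker reading of (3), where some object $T\in\Ct$ in the fiber of $\Om X$ merely admits lifts of all $\chi_x$. To handle it, I would show directly that $T$ is cartesian. Given $D\in\Ct$ and $h$ such that each $\chi_x\theta h$ lifts, cartesianness of $\Omc X$ in $\ca C$ yields a map $D\to\Omc X$. Composing with the fiber-terminal map $\tau:\Omc X\to T$ gives a lift $D\to T$, whose underlying frame map is $h$ after identification. The converse direction holds because lifts of $\chi_x$ compose with any lift of $h$. Hence $T$ is a cartesian lift in $\Ct$, and in fact this argument also yields $T\cong\Omc X$, recovering (2).
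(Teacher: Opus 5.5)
Your proposal is correct and follows the paper's proof. The paper also gets (1)$\Rightarrow$(2) from Lemma~\ref{l: definition omegat} and (2)$\Rightarrow$(3) from the lifted family lying in $\Ct$, and it proves (3)$\Rightarrow$(1) exactly as in your ``weaker reading'' paragraph: lift through the cartesian object $\Omc X$, then post-compose with the fiber-terminal map $\tau\colon\Omc X\to T$. Your extra treatment of the stronger reading, and of the trivial converse half of cartesianness, are harmless additions the paper leaves implicit.
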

\begin{proof}
   Item \ref{topt 1} implies Item \ref{topt 2} by Lemma \ref{l: definition omegat}. If Item \ref{topt 2} holds, then the lift $\overline{\chi_x}:\Omc X\to \Tc$ is in $\Ct$, and so Item \ref{topt 3} follows. Let \ref{topt 3} hold, and let $T\in \Ct$ be the object providing a lift
   \[
   \{\overline{\chi_x}^T:T\to \Tc\mid x\in X\}
   \]
   for the family of characteristic functions. Let us show it is cartesian. We take $U\in \Ct$, and assume that there is a function $f:|\Op U|\to |\Om X|$ such that $\chi_x{\circ} f$ lifts for every $x\in X$.
   By the universal property of $\Omc X$, there is $\overline{f}:U\to \Omc X$ lifting $f$. As $\Op (\tau)$ is an isomorphism, the map $\tau {\circ} \overline{f}$ is the required lift.
\end{proof}

We observe that by the characterization of spaces in $\Topt$ of Proposition \ref{p: spaces in topt} above, in the definition of total SFC-category, axiom \ref{T4} may be equivalently replaced with 
  
\begin{enumerate}[label=(T\arabic*)',ref=T\arabic*']\setcounter{enumi}{3}
  \item 
    $\Omc \ptc C\in \Ct$ whenever $C\in \Ct$.
\end{enumerate}

We turn to proving some basic consequences of totality.

\begin{proposition}\label{p: spaces in topi}
  If $(\Op,\ca C,\Tc,\iota_{\ca C})$ be a total SFC-category, then,
  $\Topt=\mf{Fix}_{\ca T}(\Topt)$.
\end{proposition}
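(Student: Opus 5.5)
The plan is to reduce the fixpoint question for $\mf{Dual}(\Op I_{\ca T},\Ct,\Tc,\iota_{\ca C})$ to the corresponding question for $\mf{Dual}(\Op,\ca C,\Tc,\iota_{\ca C})$. There, axiom \eqref{T2} already tells us that every space of $\Topc$ is a fixpoint. Note first that by \eqref{T1} the object $\Tc$ is fiber-terminal, so $\Topt$, $\Omt$ and $\pt_{\Ct}$ are defined as in Subsection~\ref{ss nat adj for fiber initials and terminals}. Fix $X\in\Topt$; we must show that the unit $\sigma^{\ca T}_X\colon X\to \pt_{\Ct}\Omt X$ is a homeomorphism.

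\textbf{Step 1: $X\in\Topc$.} I would show that $X$ lies in $\Topc$, so that Lemma~\ref{l: definition omegat} applies. By \eqref{T2}, $\Topc=\Topz$, so this amounts to showing that $X$ is $T_0$. I expect this to be the main obstacle.

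\begin{itemize}
  \item First attempt: if $x,y\in X$ have the same neighbourhoods, then $\chi_x=\chi_y$. Hence the lifts $\overline{\chi^{\ca T}_x}$ and $\overline{\chi^{\ca T}_y}$ coincide, and $\sigma^{\ca T}_X$ is not injective. This is consistent with a fixpoint statement only if such pairs cannot occur. So I would try to exploit the cartesian property of $\Omt X$, together with the $T_0$-reflection $X_0$ (which has the same family of characteristic functions), to show that $X_0\in\Topt$ and that the statement is really about the replete image.
  \item Fallback: if this does not work, I would use that $\Topc$ in \eqref{T2} is to be read up to $T_0$-reflection (as in Examples~\ref{e: t0 spaces example} and \ref{e: the sober example}), and replace $X$ by $X_0$ throughout.
\end{itemize}

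\textbf{Step 2: identify the two adjunctions at $X$.} Granting $X\in\Topt\cap\Topc$, Lemma~\ref{l: definition omegat} gives an isomorphism $\Omt X\cong\Omc X$ lifting $\id_{\Om X}$. It also gives $\pt_{\Ct}C=\ptc C$ for $C\in\Ct$; this applies to $C=\Omt X$, which lies in $\Ct$.

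Combining these, I obtain a homeomorphism $\pt_{\Ct}\Omt X\cong \ptc\Omc X$. I would then check that this homeomorphism carries $\sigma^{\ca T}_X$ to $\sigma^{\ca C}_X$. By \eqref{NA3}, both units send $x$ to the lift of $\chi_x$ (to $\overline{\chi^{\ca T}_x}$ and $\overline{\chi_x}$ respectively). Precomposing $\overline{\chi_x}$ with the isomorphism $\Omt X\cong\Omc X$ yields a morphism whose image under $\Op$ is $\iota^{-1}\circ\chi_x$ (after the identification of $\Op\Omt X$ with $\Om X$). By faithfulness of $\Op$, lifts are unique, so this morphism equals $\overline{\chi^{\ca T}_x}$, and the triangle commutes.

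\textbf{Step 3: conclude.} By \eqref{T2}, $X\in\mf{Fix}(\Topc)$, so $\sigma^{\ca C}_X$ is a homeomorphism. By Step~2, $\sigma^{\ca T}_X$ is therefore a homeomorphism as well, and $X\in\mf{Fix}_{\ca T}(\Topt)$. This gives $\Topt\subseteq\mf{Fix}_{\ca T}(\Topt)$; the reverse inclusion holds by definition.
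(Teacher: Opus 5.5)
Your Steps 2 and 3 are exactly the paper's argument: \eqref{T2} makes $\sigma^{\ca C}_X$ a homeomorphism, and Lemma~\ref{l: definition omegat} identifies it with the unit of $\mf{Dual}(\Op I_{\ca T},\Ct,\Tc,\iota_{\ca C})$; your uniqueness-of-lifts check just spells this identification out. The concern in Step 1 is genuine, and the paper's one-line proof passes over it silently, but your first attempt cannot succeed: $\Topt$ is closed under passing to any space with the same $T_0$-reflection, because the family of characteristic functions is merely reindexed, so $\Topt$ contains non-$T_0$ spaces (for instance those whose $T_0$-reflection is $\mathbb S$), and their units are not injective. The proposition therefore holds only with $T_0$ spaces understood. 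That is your fallback reading, and it is also the convention the paper tacitly uses.
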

\begin{proof}
  Consider $X\in \Topt$. By \ref{T2}, the unit
  $\sigma_X^{\ca C}:X\to \ptc \Omc X$ is a homeomorphism. By Lemma \ref{l:
  definition omegat}, this is also the unit for $\mf{Dual}(\Op I_{\ca
  T},\Ct,\Tc,\iota_{\ca C})$, and so $X\in \mf{Fix}_{\ca T}(\Topt)$.
\end{proof}

\begin{example}
   The SFC-category $(\Topz^{\mathsf{op}},\Om,\{*\},\id_{\bd{2}})$ satisfies
   \eqref{T3}, with a left PFC-adjoint $\pt:\Frm\to \Topz^{\mf{op}}$. As seen
   in Example \ref{e: the inclusion of sobers is not continuous}, the
   inclusion $\bd{Sob}^{\mf{op}}\subseteq \Topz^{\mf{op}}$ is a PFC-functor
   which does not satisfy \ref{M3}. However, by Theorem \ref{t: the initial
   duality}, the composite $\pt\Om:\Topz^{\mf{op}}\to \bd{Sob}^{\mf{op}}$
   does, and so it is an SFC-functor.
\end{example}

We now study the natural adjunction for terminal objects.

\begin{theorem}\label{t: the terminal duality}
  Let $(\Op,\ca C,\Tc,\id_{\bd{2}})$ be a total SFC-category. Then:
  \begin{enumerate}
    \item 
      $(\Op I_{\ca T},\Ct,\Tc,\iota_{\ca C})\in \SFC$.
    \item 
      \label{terminal duality 2}
      $(I_{\ca T},\id_{\Op I_\ca T},\id_{\Tc}):(\Op I_{\ca T},\Ct,\Tc,\iota_{\ca C})\hookrightarrow (\Op,\ca C,\Tc,\iota_{\ca C})$ is a SFC-functor.
    \item 
      \label{terminal duality 3}
      The morphism $\bd{Dual}(I_{\ca T},\id_{\Op I_\ca T},\id_{\Tc})$:
      \begin{equation*}\label{d: morphism from CT to C dualities}
        \begin{tikzcd}[row sep=huge,column sep=large]
          \ca{C}_{\ca T}
              \ar[r,shift left=2,"\ptc"{above}] 
              \ar[d,"I_{\ca T}"]
          & \bd{Top}_{\ca T}^{\mathsf{op}}
              \ar[l,shift left=2,"\Omc"{below}]
              \ar[l,phantom,"\dashv"{rotate=-90,anchor=center}]
              \ar[d,hookrightarrow] \\
          \ca C
              \ar[r,shift left=2,"\ptc"{above}]
          & \bd{Top}^{\mathsf{op}}
              \ar[l,shift left=2,"\Omc"{below}]
              \ar[l,phantom,"\dashv"{rotate=-90,anchor=center}]
        \end{tikzcd}
      \end{equation*}
      is in $\bd{SADJ}$.
  \end{enumerate}
\end{theorem}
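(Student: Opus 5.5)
Throughout, the two SFC-axioms are checked for \( (\Op I_{\ca T},\Ct,\Tc,\iota_{\ca C}) \), then \eqref{M3} for the inclusion, and finally invertibility of the mate \( \xi^{I_{\ca T}} \).

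For item 1, note first that \( \Tc \in \Ct \) by \eqref{T1}, so \( (\Op I_{\ca T},\Ct,\Tc,\iota_{\ca C}) \) is a PFC-category, \( \Topt \) makes sense, and Lemma~\ref{l: definition omegat} and Proposition~\ref{p: spaces in topt} apply. We need \( \mathbb S\in\Topt \) and \( \pt_{\Ct}(C)\in\Topt \) for all \( C\in\Ct \). By \eqref{T2}, \( \mathbb S\in\Topz=\Topc \), and by \eqref{T1} the object \( \Omc\mathbb S \) is fiber-terminal, so Proposition~\ref{p: spaces in topt} (item \ref{topt 2} implies item \ref{topt 1}) gives \( \mathbb S\in\Topt \). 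For \( C\in\Ct \), Lemma~\ref{l: definition omegat} gives \( \pt_{\Ct}C=\ptc C \), and \eqref{T4} says \( \ptc C\in\Topt \). Hence the quadruple is an SFC-category.

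For item 2, conditions \eqref{Mxx} and \eqref{M2} hold trivially because the structure isomorphisms are identities. For \eqref{M3}, take \( X\in\Topt \). Then \( X\in\Topc \): either Proposition~\ref{p: spaces in topt} is read as giving this, or one argues directly that \( \Topt\subseteq\Topz=\Topc \), since the lifted family \( \overline{\chi_x}^T \) separates points via faithfulness. This containment is the step most likely to need care. I would argue that if \( x\ne y \) have the same characteristic function, then the cartesian lift \( \Omt X \) cannot distinguish them; but \( T_0 \)-ness is not automatic from this. So instead I would prove \eqref{M3} only in the form actually needed: the cartesian lift \( (\Omt X,\theta) \) in \( \Ct \) is sent by \( I_{\ca T} \) to a cartesian lift in \( \ca C \). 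By Lemma~\ref{l: definition omegat}, \( \Omt X\cong\Omc X \) compatibly with the \( \theta \)'s whenever \( X\in\Topt\cap\Topc \). Since \( \Topc=\Topp\cap\Topz \) by \eqref{T2}, what remains is to show that spaces in \( \Topt \) are \( T_0 \). For this I would use that \( \sigma^{\ca C} \) is built from lifts of \( \chi_x \) through the object \( \Omt X \). Two points with the same \( \chi \) give the same lift, so they must be identified by \( \ptc \). Then the cartesian property, tested against the object \( \Tc \) with the constant maps, forces \( X \) to embed, and I expect this to close the gap.

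For item 3, Proposition~\ref{p: dual on morphisms} already gives a right adjunction morphism, so it remains to show that \( \xi^{I_{\ca T}} \) is invertible. Since \( I_{\ca T} \) is a full inclusion, Proposition~\ref{p: the zeta generalized} applies directly: \( \xi^{I_{\ca T}}_C\colon\pt_{\Ct}C\to\ptc C \) is a surjective subspace embedding, hence a homeomorphism (indeed the identity, by Lemma~\ref{l: definition omegat}). This places the morphism in \( \bd{SADJ} \).
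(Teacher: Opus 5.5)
\paragraph{Items 1 and 3.} These are correct and follow the paper's proof. You get $\mathbb S\in\Topt$ from \eqref{T1} and Proposition~\ref{p: spaces in topt}, and $\pt_{\Ct}C=\ptc C\in\Topt$ from Lemma~\ref{l: definition omegat} and \eqref{T4}. Strongness comes from fullness of $I_{\ca T}$ and Proposition~\ref{p: the zeta generalized}.

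\paragraph{The gap: condition \eqref{M3} in item 2.} To apply Lemma~\ref{l: definition omegat} you need $\Topt\subseteq\Topc$. Proposition~\ref{p: spaces in topt} does not give this, since it is stated only for $X\in\Topc$. Reading \eqref{T2} as $\Topc=\Topz$, you then try to prove that every space in $\Topt$ is $T_0$. That claim is false, so your sketch (equal characteristic functions, ``the cartesian property forces $X$ to embed'') cannot be completed.

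The reason is that whether an indexed family has a cartesian lift depends only on the \emph{set} of its members. Let $r\colon X\to X_0$ be the $T_0$-reflection. The bijection $-\circ r\colon\Topp(X_0,\mathbb S)\cong\Topp(X,\mathbb S)$ carries $\{\chi_y\mid y\in X_0\}$ onto $\{\chi_x\mid x\in X\}$. Hence $X\in\Topt$ if and only if $X_0\in\Topt$, and likewise for $\Topc$.

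Concretely, take the indiscrete two-point space. It has $\Om X\cong\bd 2$, and both characteristic functions are the identity under this identification. So $\Tc$, with the evident $\theta$, is a cartesian lift of its family in $\Ct$ by \eqref{T1}. This space lies in $\Topt$ without being $T_0$.

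\paragraph{The fix.} The same invariance shows that $\Topc=\Topz$ can never hold literally. So \eqref{T2} must be read as $\Topc=\Topp$ together with $\mf{Fix}(\Topc)=\Topz$. This is what Theorems~\ref{t: duality for raney extensions}, \ref{t: duality for skula extensions} and~\ref{t: duality for mt} give, and it matches the bottom row $\Topp$ of the diagram in the statement.

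This is how the paper's proof uses the axiom. Any $X\in\Topt$ is then in $\Topc$, so $\Omc X$ exists. Lemma~\ref{l: definition omegat} identifies $I_{\ca T}\Omt X$ with $\Omc X$ via an isomorphism lifting $\id_{\Om X}$, which is exactly preservation of the cartesian lift. Replace your $T_0$ argument with this and the proof goes through.
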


\begin{proof}
  By \ref{T1} and \ref{T4}, the triple is an SFC-category (in particular,
  \ref{T1} shows that $\mathbb{S}\in \Topt$ as it satisfies the
  characterization in Proposition \ref{p: spaces in topt}). the inclusion
  immediately satisfies \ref{M2} and \ref{Mxx}. For $X\in \Topt$, by \ref{T2}
  the family of its characteristic functions has a cartesian lift in $\ca C$;
  by Lemma \ref{l: definition omegat} this is the same as the cartesian lift
  in $\Ct$, and so the inclusion $I_{\ca T}$ satisfies \eqref{M3}. Item 2 is
  then proven. Finally, as $I_{\ca T}$ is full and faithful, Proposition
  \ref{p: the zeta generalized} establishes that the maps in the diagram yield
  a strong adjunction morphism. 
\end{proof} 

\begin{remark}\label{r: why Ci into C does not satisfy M3}
  We have seen that the composition $\ca F\Op:\ca C\to \Ci$ gives an
  SFC-functor. The condition that the inclusion $I_{\ca I}:\Ci\to \ca C$ is a morphism
  is somewhat unnatural: by \ref{M3}, and by Lemma \ref{l: definition omegai},
  this would imply that for $X\in \Topi$ the object $\Omc X$ is a free object,
  namely $\ca F(\Om X)$. But spatialization is often akin to a quotient (this
  is also confirmed by $\Op(\eta^{\ca C})$ being a frame surjection, see Lemma
  \ref{l: xi is a subspace embedding and basic facts about the units}), and so
  it will not in general be a free object. In contrast, we will see below that
  the inclusion $\Ct\to \ca C$ is always a SFC-functor. Theorem \ref{t: the
  initial duality} and \ref{t: the terminal duality}, then, together suggest
  that the SFC-category of fiber-initials is akin to a quotient of $(\Op,\ca
  C,\Tc,\iota_{\ca C})$, whereas that of fiber-terminals is akin to a
  subobject. 
\end{remark}

If $(\Op,\ca C,\Tc,\iota_{\ca C})$ is a total SFC-category, we define the
category $i_{\ca T}:\bd{Frm}_{\ca{T}}\subseteq \Frm$ as the inclusion into
$\Frm$ of the essential image of $\Op I_{\ca T}:\Ct\to \Frm$, and call $\Op
I_{\ca T}'$ the corestriction of this functor to its essential
image\footnote{The analogue for fiber-initial objects is simply all of $\Frm$,
by \ref{T3}, and so in that setting the coming discussion becomes trivial.}.
For a frame $L\in \Frm_{\ca T}$ we define $\pt_{\ca T}(L)$ to be the set
$\Frm_{\ca T}(L,\bd 2)$, topologized in the standard way. 
\begin{proposition}\label{p: adding frmt to the picture}
    If $(\Op,\ca C,\Tc,\iota_{\ca C})\in \SFC$ is total:
    \begin{enumerate}
        \item $(i_{\ca T},\Frm_{\ca T},\Tc,\iota_{\ca C})\in \SFC$;
        \item $(\Op I'_{\ca T},\id_{\Op I_{\ca T}'},\id_{\Tc}):(\Op I_{\ca T},\Ct,\Tc,\iota_{\ca C})\cong (i_{\ca T},\Frm_{\ca T},\Tc,\iota_{\ca C})$ is a morphism in $\SFC$;
        \item There is a diagram in $\bd{SADJ}$:
\[
\begin{tikzcd}[row sep=large]
     \mf{Dual}(i_{\ca T},\Frm_{\ca T},\mathbf{2},\iota_{\ca C})
     \\
     \mf{Dual}(\Op I_{\ca T},\Ct,\mathbf{2},\iota_{\ca C})
     \ar[u,"\mf{Dual}(\Op I_{\ca T}'{,}\id_{\Op I_{\ca T}'}{,}\id_{\Tc})"]
      \ar[d,"\mf{Dual}(I_{\ca T}{,}\id_{\Op I_{\ca T}}{,}\id_{\Tc})"]
     \\
     \mf{Dual}(\Op,\ca C,\mathbf{2},\iota_{\ca C})
\end{tikzcd}
\]      
\end{enumerate}
\end{proposition}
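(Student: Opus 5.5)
The plan is to obtain everything by transporting along the PFC-equivalence between $\Ct$ and $\Frm_{\ca T}$, and then composing the resulting right adjunction morphisms.

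\textbf{Step 1: $\Op I'_{\ca T}$ is an equivalence.} Since $\Ct$ is a full subcategory of a faithful FC-category, $\Op I_{\ca T}$ is faithful. For fullness, take $C,D\in\Ct$ and a frame map $g:\Op C\to\Op D$. I do not expect this to be automatic. Either the definition of $\Frm_{\ca T}$ should be read as the image subcategory, whose morphisms are those of the form $\Op f$, or one needs a lifting argument. I would take the first reading. Then $\Frm_{\ca T}$ is a (non-full) subcategory of $\Frm$, and $\Op I'_{\ca T}$ is full onto it by construction. It is essentially surjective by definition of essential image. Isomorphic objects of $\Ct$ have isomorphic frames, and fiber-terminals are unique in their fiber, so $\Op I'_{\ca T}$ is injective on objects up to isomorphism. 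Hence it is an equivalence of categories.

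\textbf{Step 2: transport the PFC structure.} By \eqref{T1}, $\Tc$ is fiber-terminal, so $\Op\Tc\cong\bd 2$ lies in $\Frm_{\ca T}$. With identity comparison $\alpha=\id$ and $i=\iota_{\ca C}$, the triple $(\Op I'_{\ca T},\id,\id_{\Tc})$ is a PFC-functor, and it is a PFC-equivalence. Its pseudo-inverse is again an FC-functor, since the comparison isomorphisms compose.

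\textbf{Step 3: item 1 and item 2.} Here the key step is that an equivalence of PFC-categories is simultaneously a left and a right PFC-adjoint, with invertible units and counits. Theorem~\ref{t: right pfc adjoints are continuous and they also give cartesian lifts} then applies to the pseudo-inverse $\Frm_{\ca T}\to\Ct$, viewed as a right PFC-adjoint from an SFC-category. Its source is $\Ct$, which is an SFC-category by Theorem~\ref{t: the terminal duality}. This transfers spatializability to $(i_{\ca T},\Frm_{\ca T},\Tc,\iota_{\ca C})$, giving item 1. The same theorem, applied to $\Op I'_{\ca T}$ as a right adjoint, shows it satisfies \eqref{M3}, giving item 2. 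One should also check that $\pt_{\ca T}(L)$, defined via $\Frm_{\ca T}(L,\bd 2)$, agrees with $\ptc$ of the corresponding object. This is Lemma~\ref{l: induced spec iso}.

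\textbf{Step 4: item 3.} Apply $\mf{Dual}$ to the two SFC-functors, namely the one from item 2 and the one from Theorem~\ref{t: the terminal duality}(2). For each, Proposition~\ref{p: the zeta generalized} applies because the underlying functor is full: $I_{\ca T}$ is full since it is a full inclusion, and $\Op I'_{\ca T}$ is full by Step 1. So both morphisms lie in $\bd{SADJ}$.

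\textbf{Main obstacle.} I expect the difficulty to be the fullness of $\Op I'_{\ca T}$, that is, whether $\Frm_{\ca T}$ is the image or the full essential image. If it must be full, I would try to lift a frame map $g:\Op C\to\Op D$ with $C,D$ fiber-terminal by spatial means. The idea is to test $g$ against the points $\ptc D\subseteq\pt\Op D$ and use $\Omc\ptc D\in\Ct$ from \eqref{T4}. I expect this could fail in general, in which case the image reading is the intended one.
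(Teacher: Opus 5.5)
Your proposal is correct and follows essentially the same route as the paper. $\Op I'_{\ca T}$ is an equivalence, SFC-structure transfers along it, and fullness of $\Op I'_{\ca T}$ and of $I_{\ca T}$ makes both $\mf{Dual}$-morphisms strong via Proposition~\ref{p: the zeta generalized}; your image reading of $\Frm_{\ca T}$ is the intended one (e.g.\ $\Frm_{\ca T}=\Frm_{\ca E}$ for Raney extensions), and it is exactly what makes the fullness the paper asserts hold.

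One small slip. Theorem~\ref{t: right pfc adjoints are continuous and they also give cartesian lifts} must be applied to $\Op I'_{\ca T}$ itself, viewed as a right PFC-adjoint of its pseudo-inverse with SFC source $\Ct$. It should not be applied to the pseudo-inverse, whose source is $\Frm_{\ca T}$, so that application would run in the wrong direction. The single application to $\Op I'_{\ca T}$ already gives both items 1 and 2, and it makes explicit the paper's unstated step that PFC-equivalences are SFC-equivalences.
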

\begin{proof}
 $\Op I_{\ca T}':\Ct\to \Frm_{\ca T}$ is an equivalence of categories. Then, as $(\Op I_{\ca T},\Ct,\Tc,\iota_{\ca C})$ is in $\SFC$ by Theorem \ref{t: the terminal duality}, $(i_{\ca T},\Frm_{\ca T},\Tc,\iota_{\ca C})\in \SFC$. The inclusion $i_{\ca T}$ is, up to isomorphism, the inclusion $\Op I_{\ca T}:\Ct\to \Frm$, and this is a morphism in $\SFC$ by Theorem \ref{t: the terminal duality}. Since $\Op I_{\ca T}'$ is full, by Proposition \ref{p: the zeta generalized} the map $\mf{Dual}(\Op I_{\ca T}'{,}\id_{\Op I_{\ca T}'}{,}\id_{\Tc})$ is both left and right; while $\mf{Dual}(I_{\ca T}{,}\id_{\Op I_{\ca T}}{,}\id_{\Tc})$ is both left and right by Theorem \ref{t: the terminal duality}.
\end{proof}

We expand the diagram in Proposition \ref{p: adding frmt to the picture} above in the case where every fiber has a terminal object (hence $i_{\ca T}$ is wide), adding also the adjunction $\mf{Dual}(\mathds{1}_{\Frm},\Frm,\bd 2,\id_{\bd{2}})$ to the picture. One obtains a faithful pointfree representation of $\Topp_{\ca T}$ simultaneously a full subcategory of $\ca C$ and a wide subcategory of $\Frm$. Note that $\mf{Dual}(i_{\ca T},\id_{i_{\ca{T}}},\id_{\bd{2}})$ is in $\bd{RADJ}$, but not in general in $\bd{SADJ}$.
\begin{figure}[htbp]\label{d: duality with frmt}
    \centering\
    \begin{tikzcd}[row sep=huge, column sep=large]
        \ca{C}
            \ar[r,shift left=2,"\ptc"{above}]
            \ar[r,phantom,"\dashv"{rotate=-90,anchor=center}]
        & \bd{Top}_{\ca C}^{\mathsf{op}}
            \ar[l,shift left=2,"\Omc"{below}] \\
        \ca{C}_{\ca{T}}
            \ar[u,hookrightarrow,red,dashed]
            \ar[r,shift left=2,"\ptc"{above}]
            \ar[r,phantom,"\dashv"{rotate=-90,anchor=center}]
            \ar[d,"\Op{} I_{\ca T}'",no head,dotted,swap]
        & \bd{Top}_{\ca{T}}^{\mathsf{op}}
            \ar[u,hookrightarrow,red,dashed]
            \ar[l,shift left=2,"\Omc"{below}]
             \ar[d, equal] \\
        \bd{Frm}_{\ca{T}}
            \ar[r,shift left=2,"\ptt"{above}]
            \ar[r,phantom,"\dashv"{rotate=-90,anchor=center}]
            \ar[d,hookrightarrow,blue,squiggly]
        & \bd{Top}_{\ca{T}}^{\mathsf{op}}
            \ar[l,shift left=2,"\Om"{below}]
            \ar[d,hookrightarrow,red,dashed] \\
        \Frm
            \ar[r,shift left=2,"\pt"{above}]
            \ar[r,phantom,"\dashv"{rotate=-90,anchor=center}]
        & \bd{Top}^{\mathsf{op}}
            \ar[l,shift left=2,"\Om"{below}]
    \end{tikzcd}
\caption{Dashed arrows represent full subcategory inclusions, squiggly ones represent wide inclusions, and dotted ones represent equivalences. }
\end{figure}

 We will see that the $\bd{Raney}$, $\bd{Skula}$, and $\bd{MT}$ settings each provide an instance of this phenomenon. Motivated by Theorem \ref{t: sober and td as fiber initial and terminals} for topological spaces, we study the relation between fiber-initiality and sobriety, and fiber-terminality and the $T_D$-property in our general setting.

\begin{lemma}\label{l: if it is in TopT it is TD}
If $(\Op,\ca C,\Tc,\iota_{\ca C})$ is an SFC-category satisfying \ref{T2}, \ref{T1}, and \ref{T4}. Then, $\ptc[\Ct]\subseteq \bd{Top_D}$, and so:
\[
\mf{Fix}_{\ca T}(\Topt)\subseteq \bd{Top_D}.
\]
\end{lemma}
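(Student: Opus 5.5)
The plan is to reduce the statement to the topological characterization in Theorem~\ref{t: sober and td as fiber initial and terminals}. Concretely, I would show that for $C\in\Ct$ the space $X:=\ptc C$ is terminal in its fiber for the FC-category $\Om:\Topz^{\mf{op}}\to\Frm$. Recall what this means in $\Topz^{\mf{op}}$: for every $T_0$-space $Y$ with an isomorphism $\theta:\Om Y\cong\Om X$, there is a continuous map $g:X\to Y$ with $\theta\circ\Om g=\id$ up to the fiber identification. Uniqueness is automatic, since fibers are preorders. The second assertion then follows formally. If $X\in\mf{Fix}_{\ca T}(\Topt)$, then $X\cong\ptc\Omt X$ with $\Omt X\in\Ct$ (using $\pt_{\Ct}=\ptc$ on $\Ct$ by Lemma~\ref{l: definition omegat}). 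Since $\bd{Top_D}$ is replete, $X$ is $T_D$.

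First, I collect the facts about $X=\ptc C$. By \ref{T4} we have $X\in\Topt$. By Proposition~\ref{p: spaces in topt}, the object $\Omc X$ is fiber-terminal. By \ref{T2}, $X$ is $T_0$ and $\sigma^{\ca C}_X:X\to\ptc\Omc X$ is a homeomorphism.

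Next, I fix a $T_0$-space $Y$ and an isomorphism $\theta:\Om Y\cong\Om X$. By \ref{T2}, $Y\in\Topc$ and $\sigma^{\ca C}_Y$ is a homeomorphism. By Lemma~\ref{l: pfc-cat implies zeta iso}, the pair $(\Omc Y,\ \zeta_X^{-1}\theta\zeta_Y)$ lies in the fiber $\Op^{-1}(\Op\Omc X)$. Fiber-terminality of $\Omc X$ then gives a morphism $\tau:\Omc Y\to\Omc X$ with $\Op\tau=\zeta_X^{-1}\theta\zeta_Y$. I then define
\[
g:=(\sigma^{\ca C}_Y)^{-1}\circ\ptc(\tau)\circ\sigma^{\ca C}_X : X\to Y .
\]

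The main obstacle is checking that $\Om g=\theta$, so that $g$ really is a morphism in the fiber. For this I would use the mate relation \eqref{e: the second of the mate relations}, namely $\xi_{\Omc Z}\circ\sigma^{\ca C}_Z=\pt(\zeta_Z)\circ\sigma_Z$, together with naturality of $\xi$ and $\sigma$. Composing with $\xi_{\Omc Y}$, which is injective by Lemma~\ref{l: xi is a subspace embedding and basic facts about the units}, these reduce the claim to showing that the composite $X\to\pt\Om X\to\pt\Om Y$ given by $\pt(\theta)\circ\sigma_X$ agrees with $\sigma_Y\circ g$. This is exactly the statement that $\Om g=\theta$, because $\sigma$ is the unit of $\Om\dashv\pt$ and the relevant points are the characteristic maps $\chi_x$.

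Alternatively, I would compute directly on points. For $x\in X$, the map $g(x)$ is the point of $Y$ whose lifted character $\overline{\chi_{g(x)}}$ equals $\overline{\chi_x}\circ\tau$. Applying $\Op$ and faithfulness yields $\chi_{g(x)}=\chi_x\circ\theta$, which is the pointwise form of $\Om g=\theta$. Hence $X$ is fiber-terminal in $\Topz^{\mf{op}}$, and therefore $T_D$ by Theorem~\ref{t: sober and td as fiber initial and terminals}.
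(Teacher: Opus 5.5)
Your proof is correct, but it is organized differently from the paper's. The paper uses the Banaschewski--Pultr \emph{maximality} characterization directly. It starts from an arbitrary continuous map $i\colon Y\to X$ of $T_0$-spaces with $\Om(i)$ invertible, and uses fiber-terminality of $\Omc X$ to obtain $\tau\colon \Omc Y\to\Omc X$ with $\Op(\tau)=\Op\Omc(i)^{-1}$. Lemma~\ref{l: mutually inverse frame isos that lift} then gives that $\Omc(i)$ is an isomorphism. Transporting back along $\sigma^{\ca C}$, whose components are homeomorphisms on $T_0$-spaces by \ref{T2}, shows that $i$ is a homeomorphism.

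You instead invoke the \emph{terminal} version, Theorem~\ref{t: sober and td as fiber initial and terminals}. This means you must build $g\colon X\to Y$ yourself and then verify $\Om g=\theta$. Your pointwise argument does this correctly: from $\Op(\overline{\chi_x})=\iota^{-1}\circ\chi_x\circ\zeta_X$ and $\overline{\chi_{g(x)}}=\overline{\chi_x}\circ\tau$ you get $\chi_{g(x)}=\chi_x\circ\theta$, hence $g^{-1}(U)=\theta(U)$. Two small slips: in the mate-relation variant, the cancellation is really by the isomorphism $\pt(\zeta_Y)$, not by injectivity of $\xi_{\Omc Y}$. And the condition in your first paragraph should read $\Om g=\theta$.

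What your route buys is a sharper, more structural statement: $\ptc$ carries fiber-terminal objects of $\ca C$ to fiber-terminal objects of $\Om\colon \Topz^{\mf{op}}\to\Frm$, mirroring the hypothesis exactly. What the paper's route buys is economy: faithfulness of $\Op$ does all the work, and no computation on opens is needed. Note also that the ``terminal implies $T_D$'' direction you rely on reduces to the maximal characterization anyway, since a top element of a preorder is maximal.
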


\begin{proof}
We use the characterization of $T_D$-spaces in Theorem \ref{t: sober and td as fiber minimal and maximals}. Let $T\in \Ct$ and set $\ptc T=X$. Let $i: Y
  \to X$ be a map of $T_0$-spaces such that $\Omega(i)$
  is a frame isomorphism, in particular $i$ is a subspace inclusion. We show that $i$ is a homeomorphism. By Proposition \ref{p: the zeta isomorphism and the subspace inclusion ptc to ptO}, $\Om$ is isomorphic to $\Op\Omc$, and so $\Op\Omc(i)$ is an isomorphism. This means that $(\Omc(Y),\Op\Omc(i)^{-1})$ is an object in $\Op^{-1}(\Om(X))$. By Lemma \ref{l: definition omegat}, $(\Omc(X),\id_{\Om (X)})$ is a terminal object in the fiber. Let $\tau:\Omega_{\mathcal C}(Y) \to \Omega_{\mathcal C}(X)$, then, be the universal map to it. By definition of morphisms in this category, $\Op(\tau)=\Op\Omega_{\mathcal C}(i)^{-1}$. By Lemma \ref{l: mutually inverse frame isos that lift}, $\Omc(i)$ and $\tau$ are mutual inverses. Thus, $\ptc\Omc i:\ptc \Omc Y\to \ptc \ \Omc X$ is a homeomorphism. As by our assumption $\mf{Fix}(\Topc)=\Topz$, this is, up to isomorphism, the map $i:Y\to X$. Then, $i$ is an isomorphism, as desired.
\end{proof}

\begin{theorem}\label{t: main initial terminal theorem}
Let $(\Op,\ca C,\Tc,\iota_{\ca C})$ be a total SFC-category. 
\begin{enumerate}
    \item \label{main it 3}$\mf{Fix}_{\ca I}(\Topi)=\bd{Sob}$. 
    \item \label{main it 4}$\mf{Fix}_{\ca T}(\Topt)\subseteq \bd{Top_D}$.
\end{enumerate}
\end{theorem}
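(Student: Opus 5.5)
The two items will be proved separately, each by assembling results already established.

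For item \ref{main it 3}, totality gives \eqref{T3}, so Theorem~\ref{t: the initial duality} applies directly. It states that $(\Ci,\Op I_{\ca I},\ca F(\bd 2),\nu_{\bd 2})$ is an SFC-category with $\Topi=\Topp$ and $\mathsf{Fix}_{\ca I}(\Topi)=\bd{Sob}$. If one wants this spelled out rather than cited, the route is as follows.
\begin{itemize}
\item Proposition~\ref{p: fiber-initial fc-corefl} shows that $\Op I_{\ca I}$ is a PFC-equivalence onto $(\Frm,\id,\bd 2,\id)$.
\item Equivalences of PFC-categories are SFC-functors in both directions, since they preserve all cartesian liftings.
\item $\mf{Dual}$ applied to this equivalence then yields a strong adjunction morphism, using Proposition~\ref{p: the zeta generalized} and fullness.
\item That morphism identifies $\mf{Dual}(\Op I_{\ca I},\Ci,\ldots)$ with the classical $\Om\dashv\pt$ of Example~\ref{e: trivial example}, whose fixpoints in $\Topp$ are exactly the sober spaces.
\end{itemize}
The only point needing care is that the identification is compatible with the units. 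This follows because $\pt_{\ca I}\cong\pt\Op I_{\ca I}$ by Lemma~\ref{l: induced spec iso}, and, under this isomorphism, $\sigma^{\ca I}_X$ corresponds to the classical $\sigma_X$, by the mate relation \eqref{e: the second of the mate relations} applied to the strong morphism.

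For item \ref{main it 4}, I will invoke Lemma~\ref{l: if it is in TopT it is TD}. A total SFC-category satisfies \eqref{T2}, \eqref{T1} and \eqref{T4}, so that lemma gives $\ptc[\Ct]\subseteq\bd{Top_D}$ and hence $\mf{Fix}_{\ca T}(\Topt)\subseteq\bd{Top_D}$.

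To make the last implication explicit, take $X\in\mf{Fix}_{\ca T}(\Topt)$.
\begin{itemize}
\item By Theorem~\ref{t: the terminal duality}, $(\Op I_{\ca T},\Ct,\Tc,\iota_{\ca C})$ is an SFC-category, and by Lemma~\ref{l: definition omegat} its spectrum functor is the restriction of $\ptc$.
\item The unit then gives $X\cong\pt_{\Ct}\Omt X=\ptc\Omt X$.
\item Since $\Omt X\in\Ct$, this exhibits $X$ as a member of $\ptc[\Ct]\subseteq\bd{Top_D}$.
\item $\bd{Top_D}$ is closed under homeomorphism, so $X$ is $T_D$.
\end{itemize}

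I expect no serious obstacle, as the content lies in the cited lemmas. The step I would check most carefully is item \ref{main it 3}: the fixpoints of the transported adjunction must really coincide with those of $\Om\dashv\pt$, and not merely up to some comparison map. For this I would use directly that a strong adjunction morphism whose vertical functors are an equivalence and an identity carries units to units, and then read off invertibility of $\sigma^{\ca I}_X$ as invertibility of $\sigma_X$.
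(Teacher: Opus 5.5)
Your proposal is correct and follows the paper's proof. Item~\ref{main it 3} comes from Theorem~\ref{t: the initial duality} via \eqref{T3}, and item~\ref{main it 4} from Lemma~\ref{l: if it is in TopT it is TD} via \eqref{T2}, \eqref{T1} and \eqref{T4}. Your checks on the units (through the mate relation \eqref{e: the second of the mate relations} for the strong morphism $\Ci\to\Frm$) and on the passage from $\ptc[\Ct]\subseteq\bd{Top_D}$ to the fixpoints only make explicit what the paper leaves implicit.
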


\begin{proof}
  The first item follows by Proposition~\ref{t: the initial duality}

  For \eqref{main it 4}, we consider $\ptc(\ca F \Op C)$ for some $C\in \ca
  C$. By Lemma \ref{l: ptc of the initial of a spatial is sober},
  \[
    \xi_{\ca F\ca O C}:\ptc \ca F\ca O C\to \pt \Op C
  \]
  is a homeomorphism. Hence $\ptc \ca F\ca O C$ is sober. The inclusion
  $\mf{Fix}_{\ca T}(\Topt)\subseteq\bd{Top_D}$ is the content of Lemma \ref{l:
  if it is in TopT it is TD}.
\end{proof}

Property \ref{T3} is not necessary for connecting sobriety and
fiber-initiality, as the next results show.

\begin{lemma}\label{l: if it is sober it is in Topi}
  Let $(\Op,\ca C,\Tc,\iota_{\ca C})$ be an SFC-category satisfying \ref{T2}
  and where $\Tc$ is fiber-initial. If $X$ is a sober space, and $I$ is
  initial in the fiber $\Op^{-1}(\Om (X))$, then $X$ is homeomorphic to
  $\ptc(I)$ and to $\pt \Op I$.
\end{lemma}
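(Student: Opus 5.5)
The plan is to compare three spaces through the frame-level spectrum $\pt\Op(-)$, where sobriety of $X$ can be used directly. Since $X$ is sober it is $T_0$, so by \eqref{T2} we have $X\in\mf{Fix}(\Topc)$; hence $\sigma^{\ca C}_X\colon X\to\ptc\Omc X$ is a homeomorphism. By Lemma~\ref{l: pfc-cat implies zeta iso}, the pair $(\Omc X,\zeta_X)$ lies in the fiber $\Op^{-1}(\Om X)$. Initiality of $I$ (with structure isomorphism $\theta^I\colon\Op I\cong\Om X$) then gives a universal morphism $\iota\colon I\to\Omc X$. It satisfies $\zeta_X\circ\Op(\iota)=\theta^I$, so $\Op(\iota)$ is a frame isomorphism.

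I will then consider the composite
\[
X\xrightarrow{\ \sigma^{\ca C}_X\ }\ptc\Omc X\xrightarrow{\ \ptc\iota\ }\ptc I\xrightarrow{\ \xi_I\ }\pt\Op I\xrightarrow{\ \pt((\theta^I)^{-1})\ }\pt\Om X .
\]
By naturality of $\xi$ we have $\xi_I\circ\ptc\iota=\pt\Op(\iota)\circ\xi_{\Omc X}$. Evaluating the mate relation \eqref{e: the second of the mate relations} at $X$ identifies $\xi_{\Omc X}\circ\sigma^{\ca C}_X$ with $\pt(\zeta_X)\circ\sigma_X$. Combining these with $\zeta_X\circ\Op(\iota)=\theta^I$, the whole composite collapses to $\sigma_X$. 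This step is a routine diagram chase, checked on underlying functions using faithfulness. Because $X$ is sober, $\sigma_X$ is a homeomorphism.

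Now both $\xi_I$ (Lemma~\ref{l: xi is a subspace embedding and basic facts about the units}\ref{xi subspace emb}) and $\ptc\iota$ (Proposition~\ref{p: boundaries for spectrum}, using that $\iota$ is epi by Lemma~\ref{l: iota and tau mono and epi}) are subspace embeddings, i.e.\ injective and initial. In the chain above every other factor is a homeomorphism, so $\xi_I\circ\ptc\iota$ is a homeomorphism. It follows that $\xi_I$ is surjective, hence a homeomorphism, and then $\ptc\iota$ is a bijective embedding, hence also a homeomorphism. This yields
\[
X\cong\ptc\Omc X\cong\ptc I\cong\pt\Op I ,
\]
as required.

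The main obstacle is making the diagram-chase identification with $\sigma_X$ precise; I would verify it pointwise: a point $x$ is sent to $\overline{\chi_x}\circ\iota$, whose underlying frame map is $\chi_x\circ\theta^I$. I expect the hypothesis that $\Tc$ is fiber-initial is not essential for this argument. If needed, it would only enter to guarantee that points $I\to\Tc$ behave well inside the fiber. I would double-check whether it is used implicitly anywhere, for instance in identifying $\ptc$ on fiber-initials as in Lemma~\ref{l: definition omegai}.
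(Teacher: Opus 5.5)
Your proof is correct and is essentially the paper's argument. The paper evaluates the mate square \eqref{e: the second of the mate relations} as in the proof of Lemma~\ref{l: fibers of spatial frames whose spec is in topc} to show that $\Omc X$ is sober, and then transfers sobriety to $I$ by precomposing lifts with the fiber map $I\to\Omc X$ (downward closure, Lemma~\ref{l: sober interior well defined}); your single chase through naturality of $\xi$ along $\iota$ packages these two steps into one, and you are right that fiber-initiality of $\Tc$ is never used.
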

\begin{proof}
  This follows by the proof of Lemma~\ref{l: fibers of spatial frames whose
  spec is in topc}, since \( X \cong \pt \Om (X) \).
\end{proof}

\begin{lemma}\label{l: ptc of the initial of a spatial is sober-2}
  Suppose that $(\Op,\ca C,\Tc,\iota_{\ca C})$ is an SFC-category, and let \(
  L \) be a spatial frame such that \( \pt L \in \Topc \). If $(I,\theta)$ is
  initial in $\Op^{-1}(L)$, then \( I \) is sober.
\end{lemma}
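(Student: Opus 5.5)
The plan is to use the sober object $\Omc\pt(L)$ supplied by Lemma~\ref{l: fibers of spatial frames whose spec is in topc}, together with the downward closure of sober objects in a fiber (Lemma~\ref{l: sober interior well defined}). Since $L$ is spatial and $\pt L\in\Topc$, Lemma~\ref{l: fibers of spatial frames whose spec is in topc} (implication (i)$\Rightarrow$(iii)) yields a sober object $S=\Omc\pt(L)$ lying in the fiber. Its fiber structure is the isomorphism
\[
\theta^S\colon \Op\Omc\pt(L)\xrightarrow{\ \zeta_{\pt(L)}\ }\Om\pt(L)\xrightarrow{\ \eta_L^{-1}\ } L .
\]
So $(S,\theta^S)\in\Op^{-1}(L)$ and $S$ is sober.

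Next I use initiality. Because $(I,\theta)$ is the bottom element of $\Op^{-1}(L)$, there is a fiber morphism $\iota_S\colon (I,\theta)\to (S,\theta^S)$, that is, $\theta^S\circ\Op(\iota_S)=\theta$. The first part of Lemma~\ref{l: sober interior well defined} states that sober objects in a fiber are downclosed. Its proof works for any fiber $\Op^{-1}(L)$, not only those of the form $\Op^{-1}(\Op C)$. Applying it to $\iota_S$ shows that $I$ is sober.

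To be careful I will also spell out the downclosure argument in this setting, since the fiber is indexed by $L$ rather than $\Op I$. Let $p\colon \Op I\to\bd 2$ be a point. Then $p\circ\Op(\iota_S)^{-1}$ is a point of $\Op S$. Here $\Op(\iota_S)=(\theta^S)^{-1}\theta$ is an isomorphism, so this composite makes sense. By sobriety of $S$ it lifts to some $\overline q\colon S\to\Tc$. Then $\overline q\circ\iota_S\colon I\to\Tc$ satisfies
\[
\iota\circ\Op(\overline q\circ\iota_S)=p\circ\Op(\iota_S)^{-1}\circ\Op(\iota_S)=p,
\]
up to the identification $\iota_{\ca C}\colon\Op\Tc\cong\bd 2$, so $p$ lifts.

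The only step needing any care is checking that the isomorphism in Lemma~\ref{l: fibers of spatial frames whose spec is in topc} really puts $\Omc\pt(L)$ in $\Op^{-1}(L)$. This is where spatiality of $L$ enters: it makes $\eta_L$ invertible. Everything else is formal.
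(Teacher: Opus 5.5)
Your proposal is correct and follows essentially the same route as the paper. The paper's one-line proof cites only the downclosure of sober objects in a fiber (Lemma~\ref{l: sober interior well defined}). It implicitly relies, as you do explicitly, on the sober object $\Omc\pt(L)$ lying in $\Op^{-1}(L)$, which Lemma~\ref{l: fibers of spatial frames whose spec is in topc} provides. Your explicit lifting computation through the fiber isomorphisms fills in details the paper leaves unstated.
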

\begin{proof}
  Immediate consequence of Lemma~\ref{l: sober interior well defined}
\end{proof}

\begin{theorem}
    If $(\Op,\ca C,\Tc,\iota_{\ca C})$ is an SFC-category satisfying \ref{T2} and \ref{T1}, $\ptc[\Ci]\subseteq \bd{Sob}$. If for every spatial frame $\Om X$ the fiber of $\Om X$ has an initial object, $\ptc[\Ci]=\bd{Sob}$.
\end{theorem}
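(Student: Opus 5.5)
The plan is to prove the two claims separately. Throughout, fiber-initiality of $I$ in $\Op^{-1}(\Op I)$ is the only structural input, and we use the characterization of sober objects from Lemma~\ref{l: fibers of spatial frames whose spec is in topc}.

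\emph{First claim: $\ptc[\Ci]\subseteq \bd{Sob}$.} Let $I\in\Ci$ and set $X=\ptc I$.
\begin{enumerate}
\item By \ref{T2}, $X\in\Topc=\mf{Fix}(\Topc)$, so $\sigma^{\ca C}_X$ is a homeomorphism. By Lemma~\ref{l: xi is a subspace embedding and basic facts about the units}(b), the map $\Op(\eta^{\ca C}_I)\colon \Op I\to\Op\Omc X$ is a frame surjection.
\item Put $L=\Om X$. By \ref{T2}, $\mathsf{sob}(X)\in\Topc$. Since $\pt L\cong \mathsf{sob}(X)$ and $\Topc$ is replete, $\pt L\in\Topc$, and $L$ is spatial. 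Lemma~\ref{l: fibers of spatial frames whose spec is in topc} then gives the sober spatial object $S:=\Omc\pt L$ with $\Op S\cong L\cong\Op\Omc X$.
\item The goal is to show $X\cong\ptc S$, which equals $\pt L$ and is sober. For this I want $\Omc X\cong S$.
\item The idea is to pass to the fiber of $\Op I$. This needs $\Op(\eta^{\ca C}_I)$ to be an isomorphism, and this is the main obstacle.
\item First attempt: the sober space $\mathsf{sob}(X)$ has frame $L$. Its points, viewed through $\xi$, give lifts of every point of $\Op\Omc X$. Pulling these back along $\eta^{\ca C}_I$ gives points of $\Op I$, and by definition these lift to $I$.
\item Conversely, consider $(S,\ldots)$ in the fiber over $\Op\Omc X$, and compare it with $\Omc X$ via $\sigma$ and the mate square \eqref{e: the second of the mate relations}.
\item I expect the cleaner route to be this. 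Because $I$ is fiber-initial, there is a map $\iota\colon I\to D$ for every object $D$ of its fiber; combined with sobriety being downclosed in fibers (Lemma~\ref{l: sober interior well defined}), this suggests showing $I$ itself is sober.
\item Then $\xi_I$ is a homeomorphism, so $X=\ptc I\cong\pt\Op I$, which is sober.
\end{enumerate}

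Fiber-initiality does not directly give a sober object below $I$. Instead, I would produce a sober object \emph{above} $I$ in its fiber and use that every lift from $D$ precomposes along $\iota\colon I\to D$. This goes as follows.
\begin{enumerate}
\item Any sober object $D$ in $\Op^{-1}(\Op I)$ makes $I$ sober: given a point $p$ of $\Op I$, its lift to $D$ precomposed with $\iota$ is a lift to $I$.
\item So it suffices that the fiber of $\Op I$ contains a sober object.
\item When $\Op I$ is spatial, Lemma~\ref{l: fibers of spatial frames whose spec is in topc} provides one, using $\pt\Op I\in\Topc$, which holds by \ref{T2} since $\Topc=\Topz$.
\item For non-spatial $\Op I$, I would use \ref{T1} to handle points. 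Each point $p$ of $\Op I$ factors through the spatial reflection as $\Op I\to\Om\pt\Op I$. I would then lift the composite $\zeta^{-1}\circ\eta_{\Op I}$ to $I\to\Omc\pt\Op I$ by fiber-initiality, in the manner of Lemma~\ref{l: sober object lifts the frame spatialization}. Making this lift exist is the technical crux.
\end{enumerate}

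\emph{Second claim: equality when every $\Om X$ has a fiber-initial.} Let $X$ be sober and let $I$ be initial in $\Op^{-1}(\Om X)$. Then Lemma~\ref{l: if it is sober it is in Topi} gives $X\cong\ptc(I)$. Its hypothesis that $\Tc$ is fiber-initial is supplied by \ref{T1}. Hence $\bd{Sob}\subseteq\ptc[\Ci]$, which combined with the first claim yields equality.
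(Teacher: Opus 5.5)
Your second claim, and your first claim when $\Op I$ is spatial, coincide with the paper's proof. The paper cites Lemma~\ref{l: if it is sober it is in Topi} and Lemma~\ref{l: ptc of the initial of a spatial is sober-2}. The latter is exactly your argument: the fiber of a spatial $L$ with $\pt L\in\Topc$ contains the sober object $\Omc\pt L$, sober objects are down-closed in fibers, so the initial $I$ is sober and $\xi_I\colon\ptc I\cong\pt\Op I$.

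The genuine gap is the case where $\Op I$ is not spatial, which you leave open as ``the technical crux''. Your proposed way through it cannot work. Fiber-initiality only yields morphisms out of $I$ that lift isomorphisms $\Op I\cong\Op C$. When $\Op I$ is not spatial, $\zeta^{-1}_{\pt\Op I}\circ\eta_{\Op I}$ is a non-injective surjection onto a spatial frame, so $\Omc\pt\Op I$ is not in the fiber of $\Op I$ and nothing provides a lift. Also, \ref{T1} only constrains the fibers of $\Tc$ and $\Omc\mathbb S$. Your first block of steps is circular: step 5 presupposes that every point of $\Op\Omc X\cong\Om X$ lifts to $\Omc X$, which for a fixpoint $X$ is the same as $X$ being sober. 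Note that Lemma~\ref{l: ptc of the initial of a spatial is sober-2} also assumes spatiality, so the paper's proof does not cover this case either.

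In fact the gap cannot be closed: without spatiality (or \ref{T3}) the inclusion $\ptc[\Ci]\subseteq\bd{Sob}$ fails. Here is a counterexample.
\begin{itemize}
\item Let $\ca C$ have as objects the pairs $(M,Q)$ with $M$ a frame and $Q\subseteq\pt M$ separating the elements of $M$ (this part is just $\Topz^{\mathsf{op}}$), plus one extra object $(L,P)$.
\item A morphism $(M,Q)\to(M',Q')$ is a frame map $f$ with $q\circ f\in Q$ for all $q\in Q'$. Put $\Op(M,Q)=M$, $\Tc=(\bd 2,\{\id\})$ and $\iota_{\ca C}=\id$.
\item Take $L=A\times\Om(\mathbb N_{\mathrm{cof}})$, where $\mathbb N_{\mathrm{cof}}$ is $\mathbb N$ with the cofinite topology and $A$ is a pointless frame (say, a complete atomless Boolean algebra). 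Then $\pt L\cong\mf{sob}(\mathbb N_{\mathrm{cof}})$.
\item Let $P=\{\chi_n\circ\pi_2\mid n\in\mathbb N\}$, the non-generic points of $\pt L$.
\end{itemize}
Every space $X$ has the cartesian lift $(\Om X,\{\chi_x\mid x\in X\})$, so $\Topc=\Topp$ and $\mf{Fix}(\Topc)=\Topz$. The fibers over $\bd 2$ and $\Om\mathbb S$ are singletons up to isomorphism. So \ref{T2} and \ref{T1} hold in exactly the sense in which the paper verifies them for $\bd{Raney}$. Read literally, \ref{T2} is unsatisfiable, since $X\in\Topc$ iff its $T_0$-reflection is.

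Since $L$ is not spatial, the only objects over $L$ are copies of $(L,P)$ reindexed by automorphisms of $L$. Each automorphism preserves $P$, because the induced homeomorphism of $\pt L$ fixes the unique generic point. Hence $(L,P)\in\Ci$. Yet $\ptc(L,P)$ is $P$ with the subspace topology, i.e.\ $\mathbb N_{\mathrm{cof}}$, which is not sober.

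So the first inclusion holds only for fiber-initials over spatial frames, or under \ref{T3}. In the latter case fiber-initials are the objects $\ca F(L)$, which are sober by Lemma~\ref{l: ptc of the initial of a spatial is sober}.
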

\begin{proof}
    By Lemmas \ref{l: if it is sober it is in Topi} and \ref{l: ptc of the
    initial of a spatial is sober-2}.
\end{proof}

\subsection{Concrete examples}\label{ss concrete examples 2}

In the coming subsection, when the rest of the data is clear from the context we will sometimes identify morphisms $(H,\alpha,i)$ in $\bd{SFC}$ with the functor $H$, a similarly we will identify an SFC-category $(\Op,\ca C,\Tc,\iota_{\ca C})$ with $\ca C$. We show that the SFC-categories of Raney extensions, Skula extensions, and MT-algebras provide instances of several of our results for the natural adjunctions of fiber-initial and fiber-terminal objects. In particular, we will see instances of diagram \eqref{d: duality with frmt} for Raney extensions, Skula extensions, and MT-algebras.

For the functor $\Or:\bd{Raney}\to \Frm$, fibers have both initial and terminal objects, which we can characterize explicitly. The following result is essentially Theorem 3.9 in \cite{suarez25raney}; it is now a direct consequence of our definition of objects and morphisms in $\bd{Raney}$.

\begin{lemma}\label{l: exact filters are terminal and strongly exact initial}
A Raney extension $(L,\ca F)$ is fiber-initial if and only if $\ca F=\fse(L)$, and it is fiber-terminal if and only if $\ca F=\fe(L)$.
\end{lemma}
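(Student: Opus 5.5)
The plan is to unwind what a morphism in the fiber $\Or^{-1}(L)$ is. Objects of this fiber are, up to isomorphism, pairs $(L,\ca F)$ with $\fe(L)\subseteq\ca F\subseteq\fse(L)$ a subcolocale of $\mf{Filt}(L)$, each equipped with the identity on $L$. A morphism $(L,\ca F)\to(L,\ca G)$ in the fiber is a Raney morphism whose underlying frame map is $\id_L$. By definition of morphisms in $\bd{Raney}$, such a map exists exactly when $\id_L^{-1}(G)=G\in\ca F$ for every $G\in\ca G$, that is, exactly when $\ca G\subseteq\ca F$. So the fiber is order-isomorphic to the poset of admissible $\ca F$ under reverse inclusion.

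Next I reduce an arbitrary fiber object $(C,\theta)$, with $C=(M,\ca H)$ and $\theta\colon M\cong L$, to one of the form $(L,\ca F)$. Transporting $\ca H$ along $\theta$ gives $\ca F:=\{\theta[H]\mid H\in\ca H\}$. This is again a subcolocale lying between $\fe(L)$ and $\fse(L)$, because exactness and strong exactness are invariant under frame isomorphisms. Both $\theta$ and $\theta^{-1}$ lift to Raney morphisms, so $(C,\theta)\cong((L,\ca F),\id)$ in the fiber. It therefore suffices to work in the poset described above.

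In that poset, $(L,\fse(L))$ is the bottom element, since every admissible $\ca F$ is contained in $\fse(L)$, and $(L,\fe(L))$ is the top, since every admissible $\ca F$ contains $\fe(L)$. Both are genuinely Raney extensions, by Examples \ref{e: re of strongly exact} and \ref{e: re of exact}.

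This gives the "if" directions. For the "only if" directions, suppose $(L,\ca F)$ is fiber-initial. Then it is isomorphic in the fiber to the bottom element $(L,\fse(L))$. An isomorphism in the fiber over $\id_L$ yields morphisms in both directions, which forces $\fse(L)\subseteq\ca F\subseteq\fse(L)$, and hence $\ca F=\fse(L)$. The terminal case is symmetric and gives $\ca F=\fe(L)$.

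The main point needing care is the transport step. I need the Raney-extension data to be invariant under frame isomorphism, in particular that $\theta$ preserves exact and strongly exact meets. This follows because frame isomorphisms preserve all meets, joins and implications, so the defining conditions transfer verbatim. Everything else is a direct unwinding of the definitions.
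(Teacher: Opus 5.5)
Your proof is correct. It is the same approach as the paper, which gives no separate argument and simply notes that the lemma (essentially Theorem 3.9 of \cite{suarez25raney}) is a direct consequence of the definitions of objects and morphisms in $\bd{Raney}$. Your unwinding supplies exactly that: fiber morphisms over $\id_L$ correspond to reverse inclusion of the filter collections, and the transport along $\theta$ handles arbitrary fiber objects.
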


\begin{lemma}\label{l: topt is topd for raney}
    For the SFC-category $(\Op_{\ca R},\bd{Raney},\bd 2_{\ca R},\id_{\bd{2}})$, $\Topt=\bd{Top_D}$.

\end{lemma}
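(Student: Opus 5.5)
We need to show that for Raney extensions $\Topt=\bd{Top_D}$. By Proposition \ref{p: spaces in topt}, and since $\Topc=\Topp$ by Theorem \ref{t: duality for raney extensions}, a space $X$ lies in $\Topt$ precisely when $\Omc X=(\Om X,\ca U^*X)$ is fiber-terminal. Two preliminary checks are needed. First, $\bd 2_{\ca R}=(\bd 2,\mf{Filt}(\bd 2))$ is fiber-terminal: on the finite frame $\bd 2$ every meet is exact, so $\fe(\bd 2)=\mf{Filt}(\bd 2)$, and Lemma \ref{l: exact filters are terminal and strongly exact initial} applies. Second, Raney extensions have underlying $T_0$ spaces, so it is harmless to restrict attention to $T_0$ spaces where needed.

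By Lemma \ref{l: exact filters are terminal and strongly exact initial}, $(\Om X,\ca U^*X)$ is fiber-terminal if and only if $\ca U^*X=\fe(\Om X)$. Now, $\ca U^*X$ is order-isomorphic to $\ca U X$ via $S\mapsto\{U\mid S\subseteq U\}$, and this isomorphism sends the principal filter $\up U$ to $U$. So the condition $\ca U^*X=\fe(\Om X)$ says that the embedding $\Om X\subseteq\ca U X$ is, up to isomorphism, $\up:\Om X\to\fe(\Om X)$. For $T_0$ spaces this is exactly item 3 of Theorem \ref{t: many chactacterizations of td spaces}, which is equivalent to $X$ being $T_D$. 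Since $T_D$ spaces are $T_0$, this gives $\bd{Top_D}\subseteq\Topt$.

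For the reverse inclusion there are two routes. One is to note that for non-$T_0$ spaces the map $\Om X\to\ca U X$ still factors through the $T_0$ reflection. The other is to use the total-category route: if $X\in\Topt$, then $X\cong\ptc\Omc X$ with $\Omc X\in\Ct$, and Lemma \ref{l: if it is in TopT it is TD} puts $X$ in $\bd{Top_D}$. This second route requires (T2), (T1) and (T4) for Raney extensions. Of these, (T2) holds by Theorem \ref{t: duality for raney extensions}, and (T1) follows from the computation above together with the analogous check that $\ca U^*\mathbb S$ is both $\fe$ and $\fse$ on the three-element chain.

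The main obstacle is making precise that the isomorphism $\ca U^*X\cong\ca U X$ matches the identification of item 3 of Theorem \ref{t: many chactacterizations of td spaces}, so that equality of subcolocales of $\mf{Filt}(\Om X)$ corresponds to the isomorphism of extensions. I would handle this by noting that both are subcolocales containing the principal filters. By Lemma \ref{l: fe is smallest sl containing the principals}, $\fe\subseteq\ca U^*X$ always holds. The theorem's condition then forces the canonical comparison map to be an isomorphism commuting with $\up$, which is equality of the subsets.
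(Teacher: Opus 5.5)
Restricted to $T_0$ spaces, your argument is the paper's. Proposition~\ref{p: spaces in topt} reduces $X\in\Topt$ to fiber-terminality of $(\Om X,\ca U^*X)$. Lemma~\ref{l: exact filters are terminal and strongly exact initial} turns this into $\ca U^*X=\fe(\Om X)$, and item 3 of Theorem~\ref{t: many chactacterizations of td spaces} finishes. You also correctly supply two details the paper leaves implicit. First, $\bd 2_{\ca R}$ is fiber-terminal, which is needed to invoke Proposition~\ref{p: spaces in topt}. Second, the abstract isomorphism in item 3 is literally equality of subsets. A more direct way to see this: an order-isomorphism $\ca U^*X\cong\fe(\Om X)$ that fixes principal filters is the identity, because $U\in F$ iff $F\le\up U$. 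Since the chain consists of equivalences, it already gives both inclusions among $T_0$ spaces.

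The gap is the non-$T_0$ case. You call it harmless and then try two routes; neither works, and none can. For $\bd{Raney}$ one has $\Topc=\Topp$, so $\Topt$ is a priori a class of arbitrary spaces.
\begin{itemize}
\item Your first route only shows that the $T_0$-reflection $X_0$ is $T_D$, because $\ca U^*X=\ca U^*X_0$.
\item Your second route needs $X\cong\ptc\Omc X$, which holds only when $X$ is $T_0$: here $\mf{Fix}(\Topc)=\Topz\neq\Topc$. So the totality axiom $\Topc=\mf{Fix}(\Topc)=\Topz$, which you cite as following from Theorem~\ref{t: duality for raney extensions}, is not literally true for $\bd{Raney}$. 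That route also uses (T4), which you do not check; the paper itself deduces (T4) from the present lemma.
\end{itemize}

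In fact the indiscrete two-point space $X$ is a counterexample to the literal statement. We have $\Om X\cong\bd 2$ and $\ca U^*X=\mf{Filt}(\Om X)=\fe(\Om X)$, so $\Omc X\cong\bd 2_{\ca R}$ is fiber-terminal and $X\in\Topt$. But $X$ is not $T_0$, hence not $T_D$.

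Item 3 of Theorem~\ref{t: many chactacterizations of td spaces} characterizes $T_D$ only among $T_0$ spaces. So what your argument (and the paper's) actually proves is $\Topt\cap\Topz=\bd{Top_D}$, equivalently $X\in\Topt$ iff $X_0$ is $T_D$. You should state the lemma in that form and drop the attempt at the reverse inclusion for arbitrary spaces.
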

\begin{proof}
    By Proposition \ref{p: spaces in topt}, $X\in \Topt$ if and only if $\Om_{\ca R}(X)$ is fiber-terminal. By Lemma \ref{l: exact filters are terminal and strongly exact initial}, the fiber-terminal objects are those of the form $\fe(L)$ for some frame $L$. The result follows by Theorem \ref{t: many chactacterizations of td spaces}.
\end{proof}

\begin{lemma}\label{l: raney is total}
  The SFC-category $(\Op_{\ca R},\bd{Raney},\bd 2_{\ca R},\id_{\bd{2}})$ is total.
\end{lemma}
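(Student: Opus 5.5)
We check the four axioms of totality in turn for $(\Op_{\ca R},\bd{Raney},\bd 2_{\ca R},\id_{\bd 2})$.

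\eqref{T2} is Theorem~\ref{t: duality for raney extensions}, which gives $\Topc=\Topp$ and $\mf{Fix}(\Topc)=\Topz$. In the paper's reading of (T2) the fixpoints are what matters, so $\mf{Fix}(\Topc)=\Topz$ is the substantive content.

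\eqref{T3} is Lemma~\ref{l: fse is left adjoint}: $\fse\dashv\Or$. The unit at $L$ is the identity frame map $L\to\Or\fse(L)=L$. It is therefore invertible after applying the identity, so Lemma~\ref{l: when does fc-functor have adjoint}\ref{l: when has right adjoint} upgrades this to a left FC-adjoint.

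\eqref{T1}: the frames $\Op(\bd 2_{\ca R})=\bd 2$ and $\Op\Omc\mathbb S=\Om\mathbb S=\bd 3$ (the three-element chain) are finite. In a finite frame every meet is finite, and finite meets are exact, so $\fe(L)$ is all filters. Strongly exact filters lie between $\fe$ and $\mf{Filt}$, hence $\fe(L)=\fse(L)=\mf{Filt}(L)$. Since a Raney extension on $L$ is a subcolocale $\ca F$ with $\fe(L)\subseteq\ca F\subseteq\fse(L)$, there is exactly one Raney extension on each such frame. By Lemma~\ref{l: exact filters are terminal and strongly exact initial}, it is both fiber-initial and fiber-terminal. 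It remains to note that $\bd 2_{\ca R}$ and $\Omc\mathbb S=(\bd 3,\ca U^*\mathbb S)$ are these unique extensions, which is automatic since each lives over the corresponding frame.

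\eqref{T4}: we use the equivalent form (T4)$'$ and show that $\Omc\ptc C\in\Ct$ whenever $C\in\Ct$. By Lemma~\ref{l: exact filters are terminal and strongly exact initial}, $C=(L,\fe(L))$. Its points are the frame maps $L\to\bd 2$ whose preimage of $\{1\}$ is exact, i.e.\ the exact points. By Proposition~\ref{p: td points of a frame} these are the D-points, so $\ptc C=\pt_D(L)$ (Banaschewski--Pultr), which is a $T_D$-space. Lemma~\ref{l: topt is topd for raney} gives $\Topt=\bd{Top_D}$, hence $\ptc C\in\Topt$, equivalently $\Omc\ptc C\in\Ct$ by Proposition~\ref{p: spaces in topt}.

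The main obstacle is verifying that $\pt_D(L)$ is $T_D$. We plan to cite \cite{banaschewskitd}. Failing that, we would argue directly: a D-point $p$ has an open $U_p$ in which $p$ is isolated from the closure points (using that $\fe$ and D-points correspond via Lemma~\ref{l: a particular exact meet}), so $\{p\}=U_p\cap\overline{\{p\}}$.
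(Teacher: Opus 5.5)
Your proposal is correct and follows the paper's proof axiom by axiom. You get \eqref{T2} from Theorem~\ref{t: duality for raney extensions} and \eqref{T1} from the fibers over $\bd 2$ and $\Om\mathbb S$ containing a single Raney extension up to isomorphism; \eqref{T3} comes from $\fse\dashv\Op_{\ca R}$ with identity unit; and \eqref{T4} comes from the points of $(L,\fe(L))$ being the exact, hence D-points (Proposition~\ref{p: td points of a frame}), so the spectrum is $T_D$ and lies in $\Topt$ by Lemma~\ref{l: topt is topd for raney}. You add details the paper leaves implicit, such as the finiteness argument giving $\fe(L)=\fse(L)=\mf{Filt}(L)$ for \eqref{T1}, and citing Banaschewski--Pultr for $\pt_D(L)$ being $T_D$ is legitimate (it is also built into Example~\ref{e: td duality}).
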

\begin{proof}
Property \ref{T2} follows from Theorem \ref{t: duality for raney extensions}. The fiber $\bd 2$ only contains $\bd 2_{\ca R}$, up to isomorphism, and similarly for $\Om \mathbb{S}$, so \ref{T1} is satisfied. Property \ref{T3} follows by $\fse$ being left adjoint to $\Op$; this is immediately seen to be stable as $\Op_{\ca R}(L,\fse(L))=L$. For item \ref{T4}, we recall that spectra of fiber-terminal objects are $T_D$ spaces, by Proposition \ref{p: td points of a frame}, and so they are in $\Topt$ by Lemma \ref{l: topt is topd for raney}.
\end{proof}

In the sequel, $\pt_{\ca E}:\Frm_{\ca E}\to \bd{Top_D}$ is the functor $\ptt:\Frm_{\ca T}\to \Topp_{\ca T}$.
\begin{theorem}\label{t: terminal duality for raney}
There are morphisms in $\SFC$:
\[
\begin{tikzcd}[row sep=large,column sep=large]
    \bd{Raney}_{\ca T}
    \ar[rr,"I_{\ca T}",hookrightarrow]
    && \bd{Raney}
    \ar[rr,"\fse\Op_{\ca R}",twoheadrightarrow]
    &&  \bd{Raney}_{\ca I}.
\end{tikzcd}
\]
Applying the $\mf{Dual}$ functor yields the following diagram in $\bd{RADJ}$, where $\mf{Dual}(\Op_{\ca R}I_{\ca T})$ and $\mf{Dual}(I_{\ca T})$ are also in $\bd{SADJ}$.
\begin{equation*}
    \begin{tikzcd}[row sep=huge, column sep=large]
        \bd{Raney}_{\ca I}
            \ar[r,shift left=2,"\pt_{\ca R}"{above}]
            \ar[r,phantom,"\dashv"{rotate=-90,anchor=center}]
        & \bd{Top}^{\mathsf{op}}          
            \ar[l,shift left=2,"\fse\Op_{\ca R}\ca{U}"{below}]
         \\
        \bd{Raney}
            \ar[u,"\fse \Op_{\ca R}",twoheadrightarrow]
            \ar[r,shift left=2,"\pt_{\ca R}"{above}]
            \ar[r,phantom,"\dashv"{rotate=-90,anchor=center}]
        & \bd{Top}^{\mathsf{op}}
            \ar[l,shift left=2,"\ca{U}"{below}] 
            \ar[u,equal]\\
        \bd{Raney}_{\ca T}
            \ar[u,hookrightarrow,red,dashed,"I_{\ca T}"]
            \ar[r,shift left=2,"\pt_{\ca R}"{above}]
            \ar[r,phantom,"\dashv"{rotate=-90,anchor=center}]
            \ar[d,"\ca O_{\ca R} I_{\ca T}",dotted,no head,swap]
        & \bd{Top_D}^{\mathsf{op}}
            \ar[u,hookrightarrow,red,dashed]
            \ar[l,shift left=2,"\ca{U}"{below}]
             \ar[d,equal] \\
        \bd{Frm}_{\ca{E}}
            \ar[r,shift left=2,"\pt_{\ca{E}}"{above}]
            \ar[r,phantom,"\dashv"{rotate=-90,anchor=center}]
            \ar[d,hookrightarrow,blue,squiggly]
        & \bd{Top_D}^{\mathsf{op}}
            \ar[l,shift left=2,"\Om"{below}]
            \ar[d,hookrightarrow,red,dashed] \\
        \Frm
            \ar[r,shift left=2,"\pt"{above}]
            \ar[r,phantom,"\dashv"{rotate=-90,anchor=center}]
        & \bd{Top}^{\mathsf{op}}
            \ar[l,shift left=2,"\Om"{below}]
    \end{tikzcd}
\end{equation*}
\end{theorem}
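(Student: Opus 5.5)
The plan is to assemble the statement from the general results of Subsection~\ref{ss nat adj for fiber initials and terminals}, using Lemma~\ref{l: raney is total} as the key input. Since $(\Op_{\ca R},\bd{Raney},\bd 2_{\ca R},\id_{\bd 2})$ is total, Theorem~\ref{t: the terminal duality} shows that $(\Op_{\ca R}I_{\ca T},\bd{Raney}_{\ca T},\bd 2_{\ca R},\id_{\bd 2})$ is an SFC-category and that $I_{\ca T}$ is an SFC-functor with $\mf{Dual}(I_{\ca T})$ in $\bd{SADJ}$. By Lemma~\ref{l: topt is topd for raney}, the relevant category of spaces $\Topt$ is $\bd{Top_D}$.

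For the right-hand arrow, I will use \eqref{T3} with left FC-adjoint $\fse$ (Lemma~\ref{l: fse is left adjoint}). By Lemma~\ref{l: exact filters are terminal and strongly exact initial}, its image lands in $\bd{Raney}_{\ca I}$. The coreflector $\fse\Op_{\ca R}\colon \bd{Raney}\to\bd{Raney}_{\ca I}$ is right PFC-adjoint to the inclusion $I_{\ca I}$. This inclusion is FC-coreflective, because the counit $\fse\Op_{\ca R}C\to C$ is sent by $\Op_{\ca R}$ to an identity. Proposition~\ref{p: coreflective stable}(c) then makes $\fse\Op_{\ca R}$ an SFC-functor, and Theorem~\ref{t: the initial duality} identifies the top adjunction. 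The left adjoint on the $\bd{Raney}_{\ca I}$ side is $\Om_{\ca I}\cong\fse\Op_{\ca R}\ca U$, since the cartesian lift is preserved by the SFC-functor (via \eqref{M3} and Proposition~\ref{p: dual on morphisms}). Its right adjoint is $\pt_{\ca I}\cong\pt_{\ca R}I_{\ca I}$ by Lemma~\ref{l: induced spec iso}.

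For the lower part, Lemma~\ref{l: exact filters are terminal and strongly exact initial} shows that fiber-terminal objects are exactly the pairs $(L,\fe(L))$. A frame map lifts between such pairs precisely when it is exact, as recalled in the preliminaries. Hence the essential image $\Frm_{\ca T}$ is $\Frm_{\ca E}$, which is wide in $\Frm$, and $\Op_{\ca R}I_{\ca T}'$ is an equivalence onto $\Frm_{\ca E}$. Proposition~\ref{p: adding frmt to the picture} then supplies the equivalence and the remaining squares, including that $\mf{Dual}(\Op_{\ca R}I_{\ca T})$ is in $\bd{SADJ}$. The squiggly square comes from $\mf{Dual}(i_{\ca T})$, which is a right map by Proposition~\ref{p: dual on morphisms}; it needs $i_{\ca E}$ to be an SFC-functor, which follows from the fact that in $\Frm$ every space lies in $\Topp$.

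I expect the main obstacle to be the identification of the essential image with $\Frm_{\ca E}$ and of $\ptt$ with $\pt_{\ca E}$. This requires checking that $\Frm_{\ca E}(L,\bd 2)=\bd{Frm}_{\ca D}(L,\bd 2)$, which holds by Proposition~\ref{p: td points of a frame}, and that the exact-map characterization of Raney morphisms between the pairs $(L,\fe(L))$ matches the definition of exactness via preimages. Everything else is bookkeeping of earlier theorems.
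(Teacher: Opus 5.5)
Your proposal is correct and follows essentially the paper's proof. Totality (Lemma~\ref{l: raney is total}) makes Theorems~\ref{t: the initial duality} and~\ref{t: the terminal duality} and Proposition~\ref{p: adding frmt to the picture} applicable; $\Topt=\bd{Top_D}$ comes from Lemma~\ref{l: topt is topd for raney}, and $\Frm_{\ca T}=\Frm_{\ca E}$ follows directly from the preimage definition of exact maps.

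The differences are minor. You obtain the SFC-functor $\fse\Op_{\ca R}$ explicitly from Proposition~\ref{p: coreflective stable}(c), whereas the paper leans on the PFC-equivalence $\Ci\simeq\Frm$ underlying Theorem~\ref{t: the initial duality}. For $i_{\ca E}$, the precise reason it satisfies \eqref{M3} is that it is the canonical SFC-functor to the terminal object $(\mathds{1}_{\Frm},\Frm,\bd 2,\id_{\bd 2})$, by invertibility of $\zeta$; the fact that $\bd{Top}_{\Frm}=\Topp$ alone does not give preservation of cartesian lifts.
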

\begin{proof}
  By Lemma \ref{l: raney is total}, the SFC-category is total, so Theorems \ref{t: the initial duality} and \ref{t: the terminal duality}, as well as Proposition \ref{p: adding frmt to the picture}, apply. By Lemma \ref{l: topt is topd for raney}, $\Topt=\bd{Top_D}$, and $\Frm_{\ca T}=\Frm_{\ca E}$ by our definition of exact frame map.
\end{proof}

We obtain a pointfree version of Theorem \ref{t: sober and td as fiber initial and terminals} as a corollary.

\begin{corollary}\label{c: sober and td as fiber minimal and maximals FOR RANEY}
For the FC-category $(\Op_{\ca R},\bd{Raney})$:
\begin{itemize}
    \item A space $X$ is sober if and only if it is the spectrum of some fiber-initial object.
    \item A space $X$ is $T_D$ if and only if it is the spectrum of some fiber-terminal object.
\end{itemize}
\end{corollary}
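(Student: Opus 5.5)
Both bullets will follow from the general machinery applied to the total SFC-category of Raney extensions (Lemma \ref{l: raney is total}), together with Lemma \ref{l: exact filters are terminal and strongly exact initial}, which identifies the fiber-initials as the $(L,\fse(L))$ and the fiber-terminals as the $(L,\fe(L))$.

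\emph{Sober case.} If $X$ is sober, let $L=\Om X$ and take the fiber-initial object $I=(L,\fse(L))$ provided by \eqref{T3}. By Lemma \ref{l: ptc of the initial of a spatial is sober}, $\xi_I\colon \pt_{\ca R}(I)\to \pt\Op_{\ca R}(I)=\pt(L)$ is a homeomorphism. Since $X$ is sober, $\pt(L)\cong X$, so $X$ is the spectrum of $I$. Conversely, suppose $I$ is fiber-initial. Then $I\cong \fse\Op_{\ca R}(I)$, and applying Lemma \ref{l: ptc of the initial of a spatial is sober} again gives $\pt_{\ca R}(I)\cong \pt\Op_{\ca R}(I)$. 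This is the spectrum of a frame, hence sober. Equivalently, one can cite item \ref{main it 3} of Theorem \ref{t: main initial terminal theorem} together with $\pt_{\ca I}\cong \pt\Op I_{\ca I}$ from Theorem \ref{t: the initial duality}.

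\emph{$T_D$ case.} The direction ``spectrum of a fiber-terminal implies $T_D$'' is Lemma \ref{l: if it is in TopT it is TD}, which gives $\ptc[\Ct]\subseteq\bd{Top_D}$; its hypotheses \ref{T2}, \ref{T1}, \ref{T4} hold by totality. For the converse, let $X$ be $T_D$. Lemma \ref{l: topt is topd for raney} gives $X\in\Topt$, so by Proposition \ref{p: spaces in topt} the object $\Om_{\ca R}X=\ca U(X)=(\Om X,\ca U^*X)$ is fiber-terminal. Theorem \ref{t: duality for raney extensions} gives $\mf{Fix}(\Topc)=\Topz$, and $X$ is $T_0$, so $X\cong \pt_{\ca R}\,\ca U(X)$. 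Thus $X$ is the spectrum of a fiber-terminal object. One can also see this directly: Theorem \ref{t: many chactacterizations of td spaces} identifies $\ca U^*X$ with $\fe(\Om X)$ for $T_D$ spaces, and Lemma \ref{l: exact filters are terminal and strongly exact initial} then shows this object is fiber-terminal.

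There is no real obstacle; the only care needed is in bookkeeping. ``Spectrum'' means $\pt_{\ca R}$, and in the sober case one must pass through $\xi$ to $\pt$ of the underlying frame. One should also check that Lemma \ref{l: if it is in TopT it is TD} is stated for $\ptc$ of fiber-terminals in $\ca C$ and not only for fixpoints, which it is.
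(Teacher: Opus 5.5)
Your proposal is correct and follows essentially the paper's route. The paper's one-line proof invokes the restricted dualities of Theorem~\ref{t: terminal duality for raney} together with $\mf{Fix}_{\ca I}(\Topp)=\bd{Sob}$, $\mf{Fix}_{\ca T}(\Topt)=\Topt$ and $\Topt=\bd{Top_D}$; you have unpacked these into the results that prove them, namely Lemma~\ref{l: ptc of the initial of a spatial is sober}, Lemma~\ref{l: if it is in TopT it is TD} (which needs \ref{T4}), Proposition~\ref{p: spaces in topt} and Lemma~\ref{l: topt is topd for raney}, making explicit the bookkeeping the paper leaves implicit.
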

\begin{proof}
The results follows from the duality for fiber-initial and fiber-terminal objects in \ref{t: terminal duality for raney}, and from having $\mf{Fix}_{\ca I}(\Topp)=\bd{Sob}$ and $\mf{Fix}(\Topt)=\Topt$ by \ref{t: main initial terminal theorem} and $\Topt=\bd{Top_D}$ (Lemma \ref{l: topt is topd for raney}).
\end{proof}

The following is Corollary 3.8 in \cite{manuell15}.

\begin{lemma}\label{l: fiber initials and terminals for biframes}
 A Skula extension $(L,\ca D)$ is fiber-initial if and only if $\ca D=\ca S L$. It is fiber-terminal if and only if $\ca D=\ca F L$.
\end{lemma}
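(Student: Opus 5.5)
The plan is to reduce the statement to an inclusion order on subcolocales of $\ca S L$, and then read off the bottom and top elements of that order.

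First I would describe the fiber $\Op_{\ca S}^{-1}(L)$ concretely. Take an object $((M,\ca E),\theta)$ of the fiber, with $\theta\colon M\cong L$. Since $\ca S$ is a functor $\Frm\to\bd{CoLoc}$ and $\theta$ is an isomorphism, $\ca S\theta^*$ is an isomorphism of coframes $\ca S M\cong\ca S L$. It carries $\ca F M$ onto $\ca F L$, because Lemma~\ref{l: funayama smallest subloc} characterizes $\ca F$ intrinsically by the image of $e^{\ca S}$. Hence $(L,\ca S(\theta^{-1})^*[\ca E])$ is a Skula extension, and $\theta$ lifts to an isomorphism between it and $(M,\ca E)$ in the fiber. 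So every object of the fiber is isomorphic to one of the form $((L,\ca D),\id_L)$ with $\ca F L\subseteq\ca D\subseteq\ca S L$.

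Next I would compute the morphisms between such normalized objects. A morphism $((L,\ca D),\id)\to((L,\ca E),\id)$ in the fiber is a Skula morphism whose underlying frame map is $\id_L$. The identity is such a morphism precisely when $\ca S(\id_L)^*(E)=E\in\ca D$ for all $E\in\ca E$, that is, when $\ca E\subseteq\ca D$. So, up to equivalence, the fiber is the poset of subcolocales $\ca D$ with $\ca F L\subseteq\ca D\subseteq\ca S L$, ordered by reverse inclusion. Examples~\ref{e: frame of congruences} and~\ref{e: funayama} show that both endpoints $\ca S L$ and $\ca F L$ belong to this poset.

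With this description, the bottom element is the largest such $\ca D$, namely $\ca S L$, and the top element is the smallest, namely $\ca F L$. For the converse direction of the first claim, suppose $(L,\ca D)$ is fiber-initial. Then there is a fiber morphism $(L,\ca D)\to(L,\ca S L)$, which gives $\ca S L\subseteq\ca D$, so $\ca D=\ca S L$. Dually, if $(L,\ca D)$ is fiber-terminal, the map $(L,\ca F L)\to(L,\ca D)$ gives $\ca D\subseteq\ca F L$, so $\ca D=\ca F L$. For the forward direction, one must check that $((L,\ca S L),\id)$ is actually bottom among all objects of its fiber, not only the normalized ones; the same holds for $((L,\ca F L),\id)$ at the top. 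This follows from the normalization step, since the comparison maps compose with the lifted isomorphisms.

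The main obstacle is the transport step. One must verify that conjugating by $\ca S\theta^*$ respects both the condition $\ca F\subseteq\ca D$ and the morphism condition defined via $\ca S f^*$, so that the lifted isomorphism is genuinely a morphism of Skula extensions in both directions. I would handle this using functoriality of $\ca S(-)^*$ together with Lemma~\ref{l: mutually inverse frame isos that lift}. An alternative route to the first claim would be to invoke Lemma~\ref{l: congruence frame gives left adjoint} and argue as in Proposition~\ref{p: fiber-initial fc-corefl}, since $(L,\ca S L)$ is the free object and hence fiber-initial.
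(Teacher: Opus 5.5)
Your argument is correct, but it takes a different route from the paper. The paper gives no argument of its own: it simply cites Corollary 3.8 of \cite{manuell15}, where the statement is phrased for strictly zero-dimensional biframes.

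You prove it directly from the paper's presentation of $\bd{Skula}$, in two steps.
\begin{enumerate}
\item Transporting along the coframe isomorphism induced by $\theta$ normalizes every object of $\Op_{\ca S}^{-1}(L)$ to one of the form $((L,\ca D),\id_L)$. This uses Lemma~\ref{l: funayama smallest subloc} to see that $\ca F$ is carried to $\ca F$.
\item A fiber morphism between normalized objects must have underlying map $\id_L$. The Skula condition for such a map reduces to reverse inclusion.
\end{enumerate}
So the fiber is the interval of subcolocales between $\ca F L$ and $\ca S L$, ordered by $\supseteq$. Its bottom and top are $\ca S L$ and $\ca F L$, and the converse directions follow by comparing with these endpoints. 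This is exactly the Skula analogue of the paper's own remark that the Raney version (Lemma~\ref{l: exact filters are terminal and strongly exact initial}) is a direct consequence of the definitions.

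What your route buys is self-containment and transparency. Once objects are taken to be subcolocales containing $\ca F L$, the lemma is purely order-theoretic. The terminality of $(L,\ca F L)$ is then just the defining requirement $\ca F L\subseteq\ca D$.

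What the citation buys is a result about Manuell's original biframe category. Transferring your proof to that setting goes through the equivalence between strictly zero-dimensional biframes and Skula extensions, which the paper asserts but does not prove. In the biframe language, identifying the objects over $L$ with subcolocales between $\ca F L$ and $\ca S L$ is precisely where the content lies.

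Your alternative route for the initial half also works. Freeness via Lemma~\ref{l: congruence frame gives left adjoint}, argued as in Proposition~\ref{p: fiber-initial fc-corefl}, shows that $(L,\ca S L)$ is fiber-initial. The converse still needs uniqueness of initial objects together with your inclusion computation.
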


\begin{lemma}\label{l: topt is topd for skula}
    For the SFC-category $(\Op_{\ca S},\bd{Skula},\bd 2_{\ca S},\id_{\bd{2}})$, $\Topt=\bd{Top_D}$.

\end{lemma}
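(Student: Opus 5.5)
We follow the proof of Lemma \ref{l: topt is topd for raney} step for step. By Theorem \ref{t: duality for skula extensions} we have $\Topc=\Topp$, and the cartesian lift of the characteristic functions of a space $X$ is $\Omc X=(\Om X,\ca{SKC}^*X)$. The fiber of $\bd 2$ consists only of $\bd 2_{\ca S}$, since $\ca F\bd 2=\ca S\bd 2$. So $\bd 2_{\ca S}$ is fiber-terminal, and Proposition \ref{p: spaces in topt} applies: a space $X$ lies in $\Topt$ if and only if $(\Om X,\ca{SKC}^*X)$ is fiber-terminal. By Lemma \ref{l: fiber initials and terminals for biframes}, this happens exactly when $\ca{SKC}^*X=\ca F\Om X$ as subcolocales of $\ca S\Om X$. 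It therefore remains to show that $\ca{SKC}^*X=\ca F\Om X$ holds if and only if $X$ is $T_D$.

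The first step is to record that we always have $\ca F\Om X\subseteq \ca{SKC}^*X$, which is Example \ref{e: skula of a space}. Next we use the isomorphism $\ca{SKC}^*X\cong \ca{SKC}X$, under which an element $\bigvee\{b(x)\mid x\in Y\}$ corresponds to the Skula-closed set $Y$. Under this correspondence, $e^{\ca S}_{\Om X}(U)$ goes to $U^c$ and $\neg e^{\ca S}_{\Om X}(U)$ goes to $U$, by Lemma \ref{l: open and closed means induced}. Joins in $\ca{SKC}^*X$ correspond to unions, because the map to $\ca{SKC}X$ is a coframe map. It follows that $\ca F\Om X$ corresponds to the family of all unions of locally closed sets $U^c\cap V$.

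The equivalence then comes down to point-set topology. Suppose $X$ is $T_D$. Every point is locally closed, so every subset of $X$ is a union of locally closed sets. Every Skula-closed set is therefore in the image of $\ca F\Om X$, which gives equality. Conversely, suppose $\ca{SKC}^*X=\ca F\Om X$. Every Skula-closed set is then a union of locally closed sets. Take a point $x$ and the Skula-closed set $\overline{\{x\}}$, which is closed. Some locally closed set $C\cap V\subseteq \overline{\{x\}}$ contains $x$. We claim that $\overline{\{x\}}\cap V\setminus\{x\}$ is closed in $V$, which is the standard characterization of $T_D$ at $x$. This fails to be immediate as stated, so we will use a different Skula-closed set instead: the closed set $\overline{\{x\}}\setminus\{x\}$ is not available in general. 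For this direction we would rather invoke Theorem \ref{t: many chactacterizations of td spaces}(2). The Funayama embedding is the maximal essential extension, and $\ca P X$ is generated as a Boolean algebra by $\Om X$ exactly when the Skula topology is discrete. Equality $\ca{SKC}X=\ca F\Om X$ yields $\ca{SKC}X\cong \ca F\Om X$, which is a complete Boolean algebra. So every Skula-closed set has a Skula-closed complement, which makes the Skula topology (whose closed sets are closed under finite unions and arbitrary intersections, and are all complemented) discrete. Hence $X$ is $T_D$.

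The step we expect to be the main obstacle is this converse direction. It requires carefully checking that $\ca F\Om X$ being Boolean transfers through the isomorphism to complementation of Skula-closed sets inside $\ca{SKC}X$. Once that holds, a topology in which every closed set is clopen and the $T_0$ separation is available forces singletons to be Skula-open, and hence discreteness.
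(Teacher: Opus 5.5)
Your proof is correct, and its skeleton is the paper's. You check that $\bd 2_{\ca S}$ is fiber-terminal (the paper leaves this to Lemma~\ref{l: skula is total}), then use Proposition~\ref{p: spaces in topt} and Lemma~\ref{l: fiber initials and terminals for biframes} to reduce to showing $\ca{SKC}^*X=\ca F\Om X$ if and only if $X$ is $T_D$.

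The difference is in this last step. The paper simply cites Theorem~\ref{t: many chactacterizations of td spaces}, but its item 2 is phrased for $\ca P X$, not $\ca{SKC}X$. Applying it tacitly requires two facts:
\begin{itemize}
\item $\ca{SKC}X=\ca PX$ for $T_D$ spaces;
\item conversely, $\ca{SKC}^*X=\ca F\Om X$ already forces the Skula topology to be discrete.
\end{itemize}
The second fact is exactly what you prove. Your route identifies $\ca F\Om X$ inside $\ca{SKC}^*X\cong\ca{SKC}X$ via locally closed sets, so local closedness of points gives one inclusion. For the converse, Booleanness of $\ca F\Om X$ makes every Skula-closed set Skula-clopen, hence the Skula topology is discrete, hence $X$ is $T_D$. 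This is self-contained and makes explicit the bridge between $\ca{SKC}X$ and $\ca PX$ that the paper's citation skips. The paper's version is shorter but rests on the maximal-essential-extension characterization of $T_D$-spaces.

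Some tidying is needed:
\begin{itemize}
\item Delete the abandoned attempt with $\overline{\{x\}}$ and the unused remark about $\ca PX$ being generated by $\Om X$. The direct attempt can in fact be rescued by using the Skula-closed set $\{x\}$ instead. Since $b(x)$ is an atom of $\ca S\Om X$, $b(x)\in\ca F\Om X$ forces $b(x)=e^{\ca S}_{\Om X}(U)\wedge\neg e^{\ca S}_{\Om X}(V)$ for some opens $U,V$. Transporting through the correspondence then gives $\{x\}=U^c\cap V$.
\item Infinite joins in $\ca{SKC}X$ are Skula-closures of unions, not unions. So $\ca F\Om X$ corresponds to Skula-closures of unions of locally closed sets. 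This is harmless for your forward direction, since a Skula-closed set is its own closure.
\item Your final step uses that $X$ is $T_0$, so that the Skula topology is $T_1$. Here $\Topc=\Topp$, so a non-$T_0$ space whose $T_0$-reflection is $T_D$ also has fiber-terminal $\Omc X$, yet is not $T_D$. The equality $\Topt=\bd{Top_D}$ must therefore be read among $T_0$ spaces, or up to $T_0$-reflection. The paper makes the same tacit identification, but you should state the assumption.
\end{itemize}
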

\begin{proof}
    By Proposition \ref{p: spaces in topt}, $X\in \Topt$ if and only if $\ca{SK}(X)$ is fiber-terminal. By Lemma \ref{l: fiber initials and terminals for biframes}, the fiber-terminal objects are those of the form $(L,\ca F L)$ for some frame $L$. The result follows by Theorem \ref{t: many chactacterizations of td spaces}.
\end{proof}

\begin{lemma}\label{l: skula is total}
    The SFC-category $(\Op_{\ca S},\bd{Skula},\bd 2_{\ca S},\id_{\bd{2}})$ is total.
\end{lemma}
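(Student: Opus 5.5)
We check the four axioms (T1)--(T4) one at a time, following the proof of Lemma~\ref{l: raney is total} for Raney extensions and replacing filters by sublocales.

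For (T2), Theorem~\ref{t: duality for skula extensions} already gives $\Topc=\Topp$ and $\mf{Fix}(\Topc)=\Topz$. It remains to note the intended reading of (T2): the spaces relevant for the duality are the $T_0$ ones, and these are exactly the fixpoints.

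For (T1), we show that the fibers of $\bd 2$ and of $\Om\mathbb S$ each contain only one Skula extension up to isomorphism. By Lemma~\ref{l: fiber initials and terminals for biframes}, it suffices to check $\ca F L=\ca S L$ for $L=\bd 2$ and for $L=\Om\mathbb S\cong\bd 3$. Then the fiber-initial extension $(L,\ca S L)$ and the fiber-terminal extension $(L,\ca F L)$ coincide, and every $(L,\ca D)$ in the fiber lies between them, so equals both. For these finite chains this is a direct computation.
\begin{itemize}
\item For $\bd 2$, the sublocales are $\{1\}$ and $\{0,1\}$. Both are of the form $e^{\ca S}_L(a)$.
\item For $\bd 3=\{0<m<1\}$, the sublocales are $\{1\}$, $\{m,1\}$, $\{0,1\}$ and $\bd 3$. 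Here $\{0,1\}=\neg e^{\ca S}_L(m)$ is open, and every sublocale is a finite join of closed and open ones.
\end{itemize}
So $\ca F L=\ca S L$ in both cases. More generally, every sublocale of a finite frame is complemented-generated, so this could instead be cited from Boolean-ness of $\ca S L$ for finite $L$.

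For (T3), Lemma~\ref{l: congruence frame gives left adjoint} gives the adjunction $\ca S\dashv\Op_{\ca S}$. Its unit is the identity $L=\Op_{\ca S}(L,\ca S L)$, hence invertible. Lemma~\ref{l: when does fc-functor have adjoint} then upgrades this to a left FC-adjoint.

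For (T4), let $C\in\Ct$. By Lemma~\ref{l: fiber initials and terminals for biframes}, $C\cong(L,\ca F L)$, and we must show that $\ptc C\in\Topt$. By Lemma~\ref{l: topt is topd for skula}, $\Topt=\bd{Top_D}$, so it suffices to see that $\ptc(L,\ca F L)$ is $T_D$. The points of $(L,\ca F L)$ are exactly the frame maps $L\to\bd 2$ that lift to maps $(L,\ca F L)\to(\bd 2,\ca F\bd 2)$, that is, the locally exact ones. By Proposition~\ref{p: td points of a frame}, these are precisely the D-points $\bd{Frm_D}(L,\bd 2)$. Hence $\ptc C\cong\pt_D L$ as subspaces of $\pt L$, and $\pt_D L$ is $T_D$ by Banaschewski--Pultr (Example~\ref{e: td duality}).

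The main obstacle is this last identification. One must make sure that liftability of a point to a Skula map $(L,\ca F L)\to\bd 2_{\ca S}$ really coincides with local exactness, i.e.\ extension to $\ca F L\to\ca F\bd 2$. The Skula morphism condition only requires $\ca S f^*(E)\in\ca F L$ for $E\in\ca F\bd 2=\ca S\bd 2$, while local exactness was defined as extension to the Funayama construction. I would reconcile the two via the remark that Skula morphisms are equivalently maps extending to coframe maps between the chosen subcolocales. If that route gets stuck, an alternative is to argue through the functor $\ca S_{\ca S}$: it sends $(L,\ca F L)$ to $(L,\fe(L))$ by Lemma~\ref{l: ker of SbL is fe}, and is a morphism in $\SFC$ with $\xi$ invertible (Proposition~\ref{p: SS is morphism in SFC}). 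The point set then reduces to exact points, to which Proposition~\ref{p: td points of a frame} applies.
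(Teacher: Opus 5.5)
Your proposal is correct and follows the paper's proof step for step. It derives (T2) from Theorem~\ref{t: duality for skula extensions}, (T1) from the fibers of $\bd 2$ and $\Om\mathbb S$ containing a single object up to isomorphism, (T3) from $\ca S\dashv\Op_{\ca S}$ having identity unit, and (T4) by identifying spectra of the fiber-terminal objects $(L,\ca F L)$ with $\pt_D L$ via Proposition~\ref{p: td points of a frame} and then invoking Lemma~\ref{l: topt is topd for skula}.

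You also supply details the paper leaves implicit: the computation $\ca F L=\ca S L$ for $\bd 2$ and $\bd 3$, and the fact that Skula maps $(L,\ca F L)\to\bd 2_{\ca S}$ are exactly the locally exact points. Your caveat about the literal reading of (T2), where $\Topc=\Topp\neq\Topz$, applies equally to the paper's one-line treatment.
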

\begin{proof}
Property \ref{T2} follows from Theorem \ref{t: duality for skula extensions}. The fiber $\bd 2$ only contains $\bd 2_{\ca S}$, up to isomorphism, and similarly for $\Om \mathbb{S}$ so \ref{T1} is satisfied. Property \ref{T3} follows by existence of the left adjoint $\ca C:\bd{Frm}\to \bd{Skula}$ (Lemma \ref{l: congruence frame gives left adjoint}), and its stability, which we observe is ensured by $\Op_{\ca S}(L,\ca C L)=L$. For item \ref{T4}, we recall that spectra of fiber-terminal objects are $T_D$ spaces, by Proposition \ref{p: td points of a frame}, and so they are in $\Topt$ by Lemma \ref{l: topt is topd for skula}.
\end{proof}

\begin{theorem}\label{t: terminal duality for skula}
There are morphisms in $\SFC$:
\[
\begin{tikzcd}[row sep=large,column sep=large]
    \bd{Skula}_{\ca T}
    \ar[rr,"I_{\ca T}",hookrightarrow]
    && \bd{Skula}
    \ar[rr,"\ca S \Op_{\ca S}",twoheadrightarrow]
    &&  \bd{Skula}_{\ca I}.
\end{tikzcd}
\]
Applying the $\mf{Dual}$ functor yields the following diagram in $\bd{RADJ}$, where $\mf{Dual}(\Op_{\ca S}I_{\ca T})$ and $\mf{Dual}(I_{\ca T})$ are also in $\bd{SADJ}$.
\begin{equation*}
    \begin{tikzcd}[row sep=huge, column sep=large]
        \bd{Skula}_{\ca I}
            \ar[r,shift left=2,"\pt_{\ca S}"{above}]
            \ar[r,phantom,"\dashv"{rotate=-90,anchor=center}]
        & \bd{Top}^{\mathsf{op}}          
            \ar[l,shift left=2,"\ca S \Op_{\ca S}\ca{SK}"{below}]
         \\
        \bd{Skula}
            \ar[u,"\ca S\Op_{\ca S}",twoheadrightarrow]
            \ar[r,shift left=2,"\pt_{\ca S}"{above}]
            \ar[r,phantom,"\dashv"{rotate=-90,anchor=center}]
        & \bd{Top}^{\mathsf{op}}
            \ar[l,shift left=2,"\ca{SK}"{below}] 
            \ar[u,equal]\\
        \bd{Skula}_{\ca T}
            \ar[u,hookrightarrow,red,dashed,"I_{\ca T}"]
            \ar[r,shift left=2,"\pt_{\ca S}"{above}]
            \ar[r,phantom,"\dashv"{rotate=-90,anchor=center}]
            \ar[d,"\ca O_{\ca S} I_{\ca T}",dotted,no head,swap]
        & \bd{Top_D}^{\mathsf{op}}
            \ar[u,hookrightarrow,red,dashed]
            \ar[l,shift left=2,"\ca{SK}"{below}]
             \ar[d,equal] \\
        \bd{Frm}_{\ca{LE}}
            \ar[r,shift left=2,"\pt_{\ca{LE}}"{above}]
            \ar[r,phantom,"\dashv"{rotate=-90,anchor=center}]
            \ar[d,hookrightarrow,blue,squiggly]
        & \bd{Top_D}^{\mathsf{op}}
            \ar[l,shift left=2,"\Om"{below}]
            \ar[d,hookrightarrow,red,dashed] \\
        \Frm
            \ar[r,shift left=2,"\pt"{above}]
            \ar[r,phantom,"\dashv"{rotate=-90,anchor=center}]
        & \bd{Top}^{\mathsf{op}}
            \ar[l,shift left=2,"\Om"{below}]
    \end{tikzcd}
\end{equation*}
\end{theorem}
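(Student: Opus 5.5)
The plan mirrors the proof of Theorem~\ref{t: terminal duality for raney}, with Skula extensions in place of Raney extensions. By Lemma~\ref{l: skula is total}, the SFC-category $(\Op_{\ca S},\bd{Skula},\bd 2_{\ca S},\id_{\bd{2}})$ is total. So Theorem~\ref{t: the initial duality}, Theorem~\ref{t: the terminal duality} and Proposition~\ref{p: adding frmt to the picture} all apply to it. It remains to identify the abstract data concretely: $\Topt$, the category $\Frm_{\ca T}$, the left adjoint $\ca F$, and the functor $\Omc$.

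First, the left FC-adjoint of $\Op_{\ca S}$ is $\ca S$, by Lemma~\ref{l: congruence frame gives left adjoint}. Hence $\Ci=\bd{Skula}_{\ca I}$ consists, up to isomorphism, of the extensions $(L,\ca S L)$, by Lemma~\ref{l: fiber initials and terminals for biframes}. The coreflector $\ca F\Op=\ca S\Op_{\ca S}$ is a right PFC-adjoint by Proposition~\ref{p: coreflective stable}, hence an SFC-functor by Theorem~\ref{t: right pfc adjoints are continuous and they also give cartesian lifts}. This yields the top arrow and the right map of adjunctions, whose left adjoint is $\ca S\Op_{\ca S}\ca{SK}$. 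By Theorem~\ref{t: the initial duality}, $\Topi=\Topp$.

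Second, Theorem~\ref{t: the terminal duality} shows that $I_{\ca T}$ is an SFC-functor and that $\mf{Dual}(I_{\ca T})$ lies in $\bd{SADJ}$. Lemma~\ref{l: topt is topd for skula} gives $\Topt=\bd{Top_D}$, and by Lemma~\ref{l: definition omegat} the restricted $\Omc$ agrees with $\ca{SK}$.

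The main step is identifying $\Frm_{\ca T}$ with $\Frm_{\ca{LE}}$. By Lemma~\ref{l: fiber initials and terminals for biframes}, the fiber-terminal objects are the $(L,\ca F L)$, so the essential image of $\Op I_{\ca T}$ contains all frames. A frame map $f:L\to M$ underlies a morphism $(L,\ca F L)\to(M,\ca F M)$ precisely when $\ca S f^*$ maps $\ca F M$ into $\ca F L$. That is equivalent to $f$ extending to a coframe map $\ca F L\to\ca F M$, which is the definition of a locally exact map. Here I would invoke the equivalence stated after the definition of Skula morphisms (lifting $\Leftrightarrow$ extending to $\ca D\to\ca E$), which is the one point requiring care. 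Thus $\Frm_{\ca T}=\Frm_{\ca{LE}}$ and $\ptt=\pt_{\ca{LE}}$; Proposition~\ref{p: td points of a frame} confirms that the points agree with the D-points.

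Finally, Proposition~\ref{p: adding frmt to the picture} supplies the equivalence $\Op_{\ca S}I_{\ca T}$ and shows that $\mf{Dual}(\Op_{\ca S}I_{\ca T})$ is in $\bd{SADJ}$. Assembling these pieces gives the diagram.
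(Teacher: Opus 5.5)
Your proposal is correct and follows essentially the same route as the paper. The paper's proof invokes Lemma~\ref{l: skula is total} to apply Theorems~\ref{t: the initial duality} and~\ref{t: the terminal duality} and Proposition~\ref{p: adding frmt to the picture}, then gets $\Topt=\bd{Top_D}$ from Lemma~\ref{l: topt is topd for skula} and $\Frm_{\ca T}=\Frm_{\ca{LE}}$ from the definition of locally exact maps; you additionally spell out steps the paper leaves implicit, namely that $\ca S\Op_{\ca S}$ is an SFC-functor via Proposition~\ref{p: coreflective stable} and that lifting to a Skula morphism between Funayama extensions is equivalent to extending to $\ca F L\to\ca F M$.
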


\begin{proof}
 By Lemma \ref{l: skula is total}, the SFC-category is total, so Theorems \ref{t: the initial duality} and \ref{t: the terminal duality}, as well as Proposition \ref{p: adding frmt to the picture}, apply. By Lemma \ref{l: topt is topd for skula}, $\Topt=\bd{Top_D}$, and $\Frm_{\ca T}=\Frm_{\ca{LE}}$ by our definition of locally exact frame map. 
\end{proof}

We have then obtained a new natural duality for $T_D$-spaces.

\begin{corollary}
    There is a natural contravariant adjunction 
    \[
    \pt_{\ca{LE}}:\bd{Frm}_{\ca{LE}}\leftrightarrows \bd{Top_D}:\Om
    \]
    for which the fixpoints are all $T_D$-spaces. 
\end{corollary}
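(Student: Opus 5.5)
The plan is to read the corollary off Theorem~\ref{t: terminal duality for skula}, specifically its bottom square involving $\bd{Frm}_{\ca{LE}}$ and $\bd{Top_D}$, using Proposition~\ref{p: adding frmt to the picture} applied to the total SFC-category $\bd{Skula}$ (Lemma~\ref{l: skula is total}).

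\textbf{Step 1: the adjunction exists and is natural.} By Proposition~\ref{p: adding frmt to the picture}(1), $(i_{\ca T},\Frm_{\ca T},\bd 2,\id_{\bd 2})$ is an SFC-category. The proof of Theorem~\ref{t: terminal duality for skula} gives $\Frm_{\ca T}=\Frm_{\ca{LE}}$: by Lemma~\ref{l: fiber initials and terminals for biframes}, the fiber-terminal Skula extensions are exactly the $(L,\ca F L)$, and a frame map lifts between two such objects precisely when it extends to the Funayama constructions, i.e.\ when it is locally exact. So $\Frm_{\ca T}$ is the wide subcategory $\bd{Frm}_{\ca{LE}}$ of $\Frm$. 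Theorem~\ref{t: main theorem in porst tholen adapted} then yields a natural dual adjunction $\pt_{\ca{LE}}\dashv\Om$ between $\bd{Frm}_{\ca{LE}}$ and $\Topp_{\ca T}^{\mf{op}}$. By Lemma~\ref{l: mutually inverse frame isos that lift}, the cartesian lift for $X$ is $\Om X$ itself, since $\Op_{\ca S}$ of the lift is $\Om X$.

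\textbf{Step 2: identify the category of spaces.} The equivalence $\Op_{\ca S}I'_{\ca T}\colon\bd{Skula}_{\ca T}\simeq\bd{Frm}_{\ca{LE}}$ is a strong morphism by Proposition~\ref{p: adding frmt to the picture}(2),(3). Hence both SFC-categories have the same $\Topp_{\ca T}$, and by Lemma~\ref{l: topt is topd for skula} this is $\bd{Top_D}$.

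\textbf{Step 3: the fixpoints are all of $\bd{Top_D}$.} Proposition~\ref{p: spaces in topi} gives $\Topt=\mf{Fix}_{\ca T}(\Topt)$ for $\bd{Skula}_{\ca T}$. The strong adjunction morphism from Step~2 transports this along the equivalence: for $X\in\bd{Top_D}$, the unit of the $\bd{Frm}_{\ca{LE}}$-adjunction corresponds under $\xi$ (a homeomorphism, since $\Op_{\ca S}I'_{\ca T}$ is full) to the unit of the $\bd{Skula}_{\ca T}$-adjunction, and is therefore a homeomorphism. Explicitly, $\pt_{\ca{LE}}\Om X=\bd{Frm}_{\ca{LE}}(\Om X,\bd 2)$ consists exactly of the characteristic maps $\chi_x$. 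This follows from Proposition~\ref{p: td points of a frame} together with Lemma~\ref{l: exact cp filters are always induced.}, and from the fact that each $\chi_x$ is locally exact when $X$ is $T_D$.

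The main obstacle is Step~3, where one must check carefully that the units correspond under the strong adjunction morphism. I would handle it through the explicit description of points given above rather than through abstract transport.
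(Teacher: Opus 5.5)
Your proposal is correct and matches how the paper obtains this corollary: it is read off the bottom square of Theorem~\ref{t: terminal duality for skula}, using Proposition~\ref{p: adding frmt to the picture}, the identification $\Frm_{\ca T}=\Frm_{\ca{LE}}$, Lemma~\ref{l: topt is topd for skula}, and $\Topt=\mf{Fix}_{\ca T}(\Topt)$ from Proposition~\ref{p: spaces in topi}. Your explicit computation $\Frm_{\ca{LE}}(\Om X,\bd 2)=\{\chi_x\mid x\in X\}$ via Proposition~\ref{p: td points of a frame} and Lemma~\ref{l: exact cp filters are always induced.} is a valid hands-on replacement for transporting the units; note, however, that in Step~2 the equality of the two categories of spaces comes from $\Op_{\ca S}I'_{\ca T}$ being a full equivalence over $\Frm$ (so cartesian lifts transfer in both directions), not from strongness alone, since e.g.\ $\mf{Dual}(I_{\ca T})$ is strong while $\bd{Top_D}\subsetneq\Topp$.
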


We will now characterize the spectra of fiber-initial and fiber-terminal objects for $(\Op_{\ca S},\bd{Skula})$. 

\begin{corollary}\label{c: sober and td as fiber minimal and maximals FOR SKULA}
    For the FC-category $(\Op_{\ca S},\bd{Skula})$:
\begin{itemize}
    \item A space is sober if and only if it is the spectrum of some fiber-initial object.
    \item A space is $T_D$ if and only if it is the spectrum of some fiber-terminal object.
\end{itemize}
\end{corollary}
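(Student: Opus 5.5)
The plan is to copy the argument of Corollary~\ref{c: sober and td as fiber minimal and maximals FOR RANEY}, using the Skula-side results in place of the Raney-side ones. By Lemma~\ref{l: skula is total}, the SFC-category $(\Op_{\ca S},\bd{Skula},\bd 2_{\ca S},\id_{\bd{2}})$ is total. Hence Theorems~\ref{t: the initial duality}, \ref{t: the terminal duality} and \ref{t: main initial terminal theorem} all apply, and Theorem~\ref{t: terminal duality for skula} packages them for Skula extensions.

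\emph{Sober case.} I will use Theorem~\ref{t: the initial duality}, which gives $\Topi=\Topp$ and $\pt_{\ca I}\cong\ptc I_{\ca I}\cong\pt\Op I_{\ca I}$.

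For the forward direction, let $X$ be sober. By Lemma~\ref{l: fiber initials and terminals for biframes}, the Skula extension $(\Om X,\ca S\Om X)$ is fiber-initial. Its spectrum is homeomorphic to $\pt\Om X$, and $\pt\Om X\cong X$ because $X$ is sober.

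For the converse, let $I$ be fiber-initial. Then $\ptc(I)\cong\pt\Op I$, and spaces of the form $\pt L$ are always sober. This is the content of $\mf{Fix}_{\ca I}(\Topi)=\bd{Sob}$ in Theorem~\ref{t: main initial terminal theorem}.

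\emph{$T_D$ case.} For the converse direction, if $T\in\Ct$, then $\ptc(T)$ is $T_D$ by Lemma~\ref{l: if it is in TopT it is TD}, since the total category satisfies \ref{T2}, \ref{T1} and \ref{T4}.

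For the forward direction, let $X$ be $T_D$. By Lemma~\ref{l: topt is topd for skula}, $X\in\Topt$. By Proposition~\ref{p: spaces in topi}, $\Topt=\mf{Fix}_{\ca T}(\Topt)$, so $X\cong\ptc\Omc X$. By Proposition~\ref{p: spaces in topt}, $\Omc X=\ca{SK}(X)$ is fiber-terminal, so $X$ is the spectrum of a fiber-terminal object. Concretely, by Theorem~\ref{t: many chactacterizations of td spaces}, this object is $(\Om X,\ca F\Om X)$.

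\emph{Main obstacle.} The only delicate point is the identification in the forward $T_D$ direction. It relies on Lemma~\ref{l: topt is topd for skula}, which uses that $\ca{SKC}^*X=\ca F\Om X$ exactly when $X$ is $T_D$. That fact is the equivalence of items 1 and 2 in Theorem~\ref{t: many chactacterizations of td spaces}: under $\ca{SKC}X\cong\ca{SKC}^*X$, the discreteness of the Skula topology matches the Funayama embedding. I will take this from the cited results rather than reprove it.
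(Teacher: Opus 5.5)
Your proposal is correct and follows essentially the paper's argument. The paper also derives the corollary from the totality of $\bd{Skula}$, using $\mf{Fix}_{\ca I}(\Topi)=\bd{Sob}$ and $\mf{Fix}_{\ca T}(\Topt)=\Topt$ (Theorem~\ref{t: main initial terminal theorem}, Proposition~\ref{p: spaces in topi}) together with $\Topt=\bd{Top_D}$ (Lemma~\ref{l: topt is topd for skula}); you simply unpack these steps more explicitly, for example by naming the fiber-initial object $(\Om X,\ca S\Om X)$ and invoking Lemma~\ref{l: if it is in TopT it is TD} for spectra of fiber-terminal objects.
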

\begin{proof}
Follows from the dualities in \ref{t: terminal duality for raney}, and from having $\mf{Fix}_{\ca I}(\Topp)=\bd{Sob}$ and $\mf{Fix}(\Topt)=\Topt$ (Theorem \ref{t: main initial terminal theorem}), and $\Topt=\bd{Top_D}$ (Lemma \ref{l: topt is topd for skula}).
\end{proof}

Fiber-initial and fiber-terminal objects for MT-algebras have not been explicitly studied. We now give some preliminary results. The initial object of a fiber $\Op^{-1}_{\ca M}(L)$ does not exist in general: this would amount to a free complete Boolean algebra on $L$. We now study terminal objects.

\begin{lemma}\label{l: fiber terminal implies funayama}
    If $M$ is an MT-algebra which is terminal in $\Op_{\ca M}^{-1}(\Op_{\ca M}M)$, then it is isomorphic to $\ca F\Op_{\ca M}M$.
\end{lemma}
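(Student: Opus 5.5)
Write $L=\Op_{\ca M}M$, so that $M=(L,B)$ with $B$ a complete Boolean algebra and $L\subseteq B$ a subframe. The plan is to compare $M$ with the candidate $(L,\ca F L)$ in the same fiber and show that the universal map into the terminal object is an isomorphism. Here $\ca F L$ is viewed as a complete Boolean algebra (the MacNeille completion of the Boolean envelope of $L$), and $L$ sits inside it via the Funayama embedding $e^{\ca F}_L$.

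\textbf{Step 1: $(L,\ca F L)$ is in the fiber.} This pair should be a $T_0$ MT-algebra. By the description of $\ca F L$ as the subcolocale of $\ca S L$ generated by joins of elements $e^{\ca S}_L(a)\wedge\neg e^{\ca S}_L(b)$, every element of $\ca F L$ is a join of elements $a\wedge\neg b$ with $a,b\in L$. That is stronger than the $T_0$ condition. Since $\Op_{\ca M}(L,\ca F L)=L$, the pair lies in $\Op_{\ca M}^{-1}(L)$.

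\textbf{Step 2: a map out of $\ca F L$.} Terminality of $M$ gives a map $\tau\colon(L,\ca F L)\to M$ in $\bd{MT}$ lying over $\id_L$. This is a complete Boolean homomorphism $\tau\colon\ca F L\to B$ restricting to the identity on $L$.

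\textbf{Step 3: $\tau$ is surjective.} By the $T_0$ axiom for $M$, every element of $B$ is a join of elements $\bwe_i a_i\wedge\neg a$ with $a,a_i\in L$. Each such element equals $\tau\big(\bwe_i e^{\ca F}_L(a_i)\wedge\neg e^{\ca F}_L(a)\big)$, because $\tau$ preserves arbitrary meets, joins and complements and fixes $L$. Hence $\tau$ is onto.

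\textbf{Step 4: $\tau$ is injective.} This is the step I expect to be the main obstacle. My approach is to use essentiality: the Funayama embedding is the maximal essential extension of $L$. The map $\tau$ is a complete Boolean homomorphism, so its kernel is a principal ideal $\downarrow k$. If $k\neq0$, then, since $\ca F L$ is generated by elements $a\wedge\neg b$, there is a nonzero $a\wedge\neg b\le k$ with $b<a\vee b$. Applying $\tau$ gives $\tau(a)\le\tau(b)$ in $B$, so $a\le b$ in $L$, because $\tau$ is the identity on the subframe $L$ and $L\subseteq B$ is an order embedding. Then $a\wedge\neg b=0$, a contradiction. So $k=0$ and $\tau$ is injective.

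With these two facts, $\tau$ is a bijective complete Boolean homomorphism that restricts to the identity on open elements. It is therefore an isomorphism in $\bd{MT}$, and its inverse also lifts. Lemma~\ref{l: mutually inverse frame isos that lift} then gives $M\cong(L,\ca F L)=\ca F\Op_{\ca M}M$.

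Step 4 needs care in checking that every nonzero element of $\ca F L$ lies above some nonzero $a\wedge\neg b$. This follows from the join description of $\ca F L$. If that description is unavailable in the Boolean presentation, I would instead use the characterisation of $e^{\ca F}_L$ as the MacNeille completion of the Boolean envelope, in which the envelope is join-dense.
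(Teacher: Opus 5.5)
Your proof is correct and follows the paper's route. You compare $M$ with $(L,\ca F L)$ in the same fiber and get $\tau\colon \ca F L\to M$ over $\id_L$ from terminality; surjectivity then comes from the $T_0$ axiom and injectivity from essentiality of the Funayama embedding. The only difference is that where the paper simply cites essentiality, you prove it directly from the fact that the elements $a\wedge\neg b$ are join-dense in $\ca F L$ (for instance, because the Boolean envelope is join-dense in its MacNeille completion).
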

\begin{proof}
Suppose that $M$ is a fiber-terminal MT-algebra. The MT-algebra $(\Op_{\ca M}M,\ca F\Op_{\ca M}M)$ is in the same fiber, and so by our assumption there exists a map of MT-algebras $\tau:\ca F\Op_{\ca M}M\to M$, which restricts to an isomorphism between the frames of opens. In particular, as $M$ is $T_0$, it is a surjection. As $\tau$ is injective when restricted to the opens, by essentiality of the Funayama embedding, it must be injective on all of $\ca F\Op_{\ca M}M$. Then, $\tau$ is a morphism of complete Boolean algebras which is also a bijection. Hence, it is an isomorphism.
\end{proof}

The converse of Lemma \ref{l: fiber terminal implies funayama} above does not hold in general.
\begin{example}\label{e: a funayama which is not terminal}
The complete Boolean algebra $\ca F L$ is atomic if and only if $L$ is $T_D$-spatial (Theorem 3.4 in \cite{arrieta20}). Consider any spatial frame $\Om X$ which is not $T_D$-spatial. If its fiber had a terminal object, this would be isomorphic to $\ca F \Om X$, by Lemma \ref{l: fiber terminal implies funayama}. Then, the identity on $\Om X$ would extend to a morphism of complete Boolean algebras $\overline{\id_{\Om X}}:\ca P X\to \ca F \Om X$, which would then be surjective. But since images of complete maps of atomic complete Boolean algebras are atomic, and $\ca F \Om X$ is not, we would reach a contradiction. Then, $\Op^{-1}_{\ca M}(\Om X)$ has no terminal object.
\end{example}
We proceed to characterize $T_D$-spaces as the spectra of fiber-terminal MT-algebras. For an MT-algebra $M$, we define $\pt^D_{\ca M}M$ as the collection of all \emph{locally closed} atoms, that is, those of the form $a\wedge \neg b$ for $a,b\in \Op_{\ca M}M$. The following is Theorem 5.5 of \cite{bezhanishvili25td}.
\begin{lemma}\label{l: ptdOM is the spectrum of td atoms}
    There is a homeomorphism $\vartheta: \pt^D_{\ca M}M\cong \pt_D\Op_{\ca M}M$, given by $x\mapsto \up x\cap \Op_{\ca M}(M)$.
\end{lemma}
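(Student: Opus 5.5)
The plan is to verify directly, working inside the complete Boolean algebra $M$ with its subframe $L=\Op_{\ca M}M$, that $\vartheta$ is well defined, injective, surjective and compatible with the topologies. I will use the standard description of $\pt_D L$ as the points whose prime element $p$ is \emph{covered}, i.e.\ $p<\bigwedge\{y\in L\mid p<y\}$ (equivalently, by Proposition~\ref{p: td points of a frame}, the exact points). The topology on atoms is taken to have basic opens $\{x\mid x\leq a\}$ for $a\in L$.

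\emph{Well-definedness.} Let $x=a\wedge\neg b$ be a locally closed atom, with $a,b\in L$. Since $M$ is Boolean and $x$ is an atom, $x\leq\bigvee_i u_i$ iff $x\leq u_i$ for some $i$. Hence $F_x=\up x\cap L$ is a completely prime filter; its prime element is $p=\bigvee\{u\in L\mid x\not\leq u\}$. Because $x\not\leq b$, we have $b\leq p$. Now take any $y\in L$ with $y>p$; then $y\in F_x$, so $a\wedge\neg b=x\leq y$, which gives $a\leq y\vee b\leq y$. Thus $\bigwedge\{y>p\}\geq a\vee p>p$, the last inequality holding since $x\leq a$ forces $a\not\leq p$. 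So $p$ is covered and $F_x$ is a D-point.

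\emph{Injectivity.} I will use the $T_0$ axiom: every element of $M$ is a join of elements of the form $\bigwedge_i a_i\wedge\neg a$ with $a_i,a\in L$. Suppose $x\neq x'$ are atoms with $\up x\cap L=\up x'\cap L$. Then for every $a\in L$ we have $x\leq a$ iff $x'\leq a$, and hence also $x\leq\neg a$ iff $x'\leq\neg a$, since for atoms $x\leq\neg a$ iff $x\not\leq a$. Consequently $x$ and $x'$ lie below exactly the same elements of the form $\bigwedge_i a_i\wedge\neg a$. Writing $\neg x$ as a join of such elements, and using that $x'\leq\neg x$ with $x'$ an atom, $x'$ lies below one of these elements. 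Then $x$ lies below it too, so $x\leq\neg x$, which is a contradiction.

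\emph{Surjectivity, the main step.} Given a covered prime $p$, set $q=\bigwedge_L\{y\in L\mid y>p\}$, which lies in $L$ and satisfies $q>p$, and put $e=q\wedge\neg p\neq 0$. This $e$ is locally closed, so it suffices to show that $e$ is an atom with $\up e\cap L=L\setminus{\downarrow}p$. The key observation concerns any $u\in L$: either $u\leq p$, or $u\vee p\geq q$, in which case $e\leq u$. Now let $c=\bigwedge_i a_i\wedge\neg a\leq e$ be a nonzero element of this form. If some $a_i\leq p$, then $c\leq p\wedge\neg p=0$, so every $a_i\geq e$. If $a\not\leq p$, then $e\leq a$ and $c\leq a\wedge\neg a=0$, so $a\leq p$ and $\neg a\geq e$. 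Hence $c=e$. Applying the $T_0$ decomposition to any $0<d\leq e$, every nonzero summand is $e$, so $d=e$ and $e$ is an atom. The same dichotomy gives $e\leq u$ iff $u\not\leq p$, so $\vartheta(e)$ is the point corresponding to $p$. I expect this step to be the main obstacle, since it is the only place where atoms must be produced from purely frame-theoretic data.

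\emph{Homeomorphism.} Both spaces carry topologies indexed by $a\in L$, and $x\leq a$ iff $a\in\vartheta(x)$. So $\vartheta$ maps basic opens onto basic opens, and the bijection $\vartheta$ is a homeomorphism.
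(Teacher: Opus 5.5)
Your proof is correct. The paper gives no argument for this lemma: it imports it as Theorem~5.5 of \cite{bezhanishvili25td}. So your proposal is a genuinely different, self-contained route rather than a reconstruction of the paper's reasoning.

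What your direct verification buys is that it works only with the definitions as this paper states them. These are the covered-prime description of $\pt_D$, the topology on atoms given by the sets $\{x \mid x\leq a\}$ with $a\in\Op_{\ca M}M$, and the paper's form of the $T_0$ axiom. It also shows exactly where each hypothesis enters:
\begin{itemize}
\item Local closedness of $x=a\wedge\neg b$ is what makes the prime $p$ of $\up x\cap L$ covered, since every $y>p$ absorbs $b$ and hence $a$.
\item The $T_0$ axiom is what forces $q\wedge\neg p$ to be an atom in the surjectivity step. Without it this element could split.
\end{itemize}

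You can also drop $T_0$ from the injectivity step, since local closedness already suffices. Suppose $x=a\wedge\neg b$ is a locally closed atom and $x'$ is any atom with $\up x'\cap L=\up x\cap L$. Then $x'\leq a$ and $x'\not\leq b$, so $x'\leq a\wedge\neg b=x$, hence $x'=x$. Thus $T_0$ is needed only to produce the atom in the surjectivity step.

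The paper's citation buys brevity. The cost is that it tacitly assumes the external definitions of $T_0$ MT-algebras, of $\pt_D$ and of the topology on atoms agree with the ones used here. Your argument makes that agreement unnecessary.
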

In \cite{bezhanishvili25td}, a map $\chi_M:M\to \ca P\pt^D_{\ca M}M$ is defined as $\chi_M(x)=\{y\in \pt^D_{\ca M}(M)\mid y\leq x \}$, and this is shown to be a map in $\bd{MT}$. The Boolean algebra $\ca P\pt^D_{\ca M}M$ is then regarded as an MT-algebra, with opens given by the direct image of $\Op_{\ca M}M$ under $\chi_M$.
\begin{lemma}\label{l: if X td then PX mt terminal}
If $X$ is a $T_D$ space, $\ca P X$ is terminal in the fiber of $\Om X$. 
\end{lemma}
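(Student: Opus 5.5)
To show that $(\Om X,\ca P X)$ is terminal in $\Op_{\ca M}^{-1}(\Om X)$, we take an arbitrary object $(M,\theta)$ of the fiber, so that $M$ is a $T_0$ MT-algebra and $\theta\colon\Op_{\ca M}M\cong\Om X$ is a frame isomorphism. We then need a morphism of complete Boolean algebras $\tau\colon M\to\ca P X$ whose restriction to the opens is $\theta$. Uniqueness is automatic, because $\Op_{\ca M}$ is faithful and Proposition \ref{p: hom cat is a preorder} applies. So the whole task is existence.

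The plan is to build $\tau$ pointwise from atoms, using the map $\chi_M\colon M\to\ca P\pt^D_{\ca M}M$ of \cite{bezhanishvili25td}, which we may take to be a morphism in $\bd{MT}$. By Lemma \ref{l: ptdOM is the spectrum of td atoms} we have a homeomorphism $\vartheta\colon\pt^D_{\ca M}M\cong\pt_D\Op_{\ca M}M$. Composing with $\pt_D(\theta^{-1})$ gives a homeomorphism onto $\pt_D\Om X$.

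Since $X$ is $T_D$, every point $x$ gives an exact (equivalently D-) point $\chi_x$ of $\Om X$. Conversely, Lemma \ref{l: exact cp filters are always induced.} shows that every D-point of $\Om X$ is of the form $\chi_x$ for a unique $x$, since $X$ is $T_0$. Hence $x\mapsto\chi_x$ is a bijection $X\cong\pt_D\Om X$, and it is a homeomorphism because the topology on $\pt_D\Om X$ is induced by the opens. Composing everything yields a bijection $h\colon\pt^D_{\ca M}M\to X$. We then set
\[
\tau:=\ca P(h)\circ\chi_M\colon M\to\ca P X,
\]
where $\ca P(h)$ is the direct-image map. Because $h$ is a bijection, $\ca P(h)$ is a complete Boolean isomorphism, so $\tau$ is a morphism of complete Boolean algebras.

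It remains to check that $\tau$ restricts to $\theta$ on opens. For $a\in\Op_{\ca M}M$ and a locally closed atom $y$, we have $y\le a$ iff $a\in\up y\cap\Op_{\ca M}M=\vartheta(y)$. The latter holds iff the point $h(y)$ lies in the open $\theta(a)$. Therefore $\tau(a)=\{h(y)\mid y\le a\}=\theta(a)$.

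The step needing most care is the identification $X\cong\pt_D\Om X$ and its compatibility with $\vartheta$. Here we must make sure the point of $\Om X$ attached to $y$ is exactly $a\mapsto[y\le a]$ transported along $\theta$. If that bookkeeping is off by an inverse, the fix is to replace $\theta$ by $\theta^{-1}$ in the definition of $h$. No properties of $M$ beyond the cited lemmas are needed; in particular, $M$ need not be atomic, since non-locally-closed atoms and atomless parts of $M$ are simply sent to $\emptyset$ by $\chi_M$.
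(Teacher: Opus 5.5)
Your proof is correct and follows the paper's route. Both arguments use $\chi_M\colon M\to\ca P\pt^D_{\ca M}M$, the homeomorphism $\vartheta$, and the $T_D$ identification $X\cong\pt_D\Om X$ to land in $\ca P X$. Your explicit check that $\ca P(h)\circ\chi_M$ restricts to $\theta$ on opens makes precise a compatibility the paper leaves implicit when it only argues that $\Op_{\ca M}(\chi_M)$ is a frame isomorphism; the direction $\pt_D(\theta^{-1})$ you chose is already the right one.
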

\begin{proof}
Let $X$ be a $T_D$-space, and suppose that $(\Op_{\ca M}M,M)$ is in the fiber of $\Om X$. Consider the MT-algebra $\ca P\pt^D_{\ca M}M$. By Lemma \ref{l: ptdOM is the spectrum of td atoms}, this is isomorphic to the MT-algebra $\ca P\pt_D\Om X$, using the hypothesis that $M$ is in the fiber of $\Om X$. In turn, this is isomorphic to $\ca P X$, as $X$ is a $T_D$-space. Hence, for our claim it suffices to find a morphism from $M$ to $\ca P\pt^D_{\ca M}M$ in the fiber of $\Om X$. Consider the map $\chi_M:M\to \ca P\pt^D_{\ca M}M$. By its definition, $\Op_{\ca M}(\chi_M)$ is surjective, and by $T_D$-spatiality of $\Op_{\ca M}(M)$, as well as the isomorphism in Lemma \ref{l: ptdOM is the spectrum of td atoms}, it is injective. This means that it is a frame isomorphism, and so $\chi_M:M\to \ca P\pt^D_{\ca M}M$ is in the fiber $\Op_{\ca M}^{-1}(\Om X)$; hence a map as required.
\end{proof}

\begin{proposition}\label{p: td are the spectra of mt terminals}
For the FC-category $(\Op_{\ca M},\bd{MT})$, the spectra of fiber-terminal objects coincide with the $T_D$-spaces.
\end{proposition}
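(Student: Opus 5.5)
We prove the two inclusions separately. Throughout, the spectrum of an MT-algebra $M$ is $\pt_{\ca M}M$, the space of atoms of $M$; since $\bd{MT}$ is an SFC-category with $\mf{Fix}(\Topc)=\Topz$ (Theorem \ref{t: duality for mt}), the spectrum of $\ca P X$ is homeomorphic to $X$ for every $T_0$-space $X$.

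First we show that every $T_D$-space is the spectrum of a fiber-terminal object. Let $X$ be a $T_D$-space. Lemma \ref{l: if X td then PX mt terminal} says that $(\Om X,\ca P X)$ is terminal in the fiber of $\Om X$. By the remark above, its spectrum is homeomorphic to $X$, which gives this inclusion.

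Conversely, let $M$ be fiber-terminal, write $L=\Op_{\ca M}M$, and set $X=\pt_{\ca M}M$. We will show that $X$ is $T_D$.

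Because the fiber of $L$ has a terminal object, Lemma \ref{l: fiber terminal implies funayama} gives $M\cong \ca F L$. We next identify atoms of $M$ with locally closed atoms. In $\ca F L$, viewed as a subcolocale of $\ca S L$, every element is a join of elements of the form $e^{\ca S}_L(a)\wedge\neg e^{\ca S}_L(b)$. An atom lying below such a join must lie below one of the joinands $a\wedge\neg b$, and since that joinand is nonzero and the element is an atom, the atom equals $a\wedge\neg b$. Hence every atom of $M$ is locally closed, so $\pt_{\ca M}M=\pt^D_{\ca M}M$. Lemma \ref{l: ptdOM is the spectrum of td atoms} then gives a homeomorphism
\[
X\cong \pt_D L .
\]
Spaces of the form $\pt_D L$ are $T_D$, since they are spectra in the Banaschewski–Pultr $T_D$-duality (Example \ref{e: td duality}). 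Therefore $X$ is $T_D$.

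The step needing the most care is the topological identification in the converse. We must check that the topology on atoms is the one induced by the opens of $M$, namely $\{x\mid x\le a\}$ for $a\in L$. We must also check that $\vartheta$ transports this to the standard topology on $\pt_D L$. The latter is part of the statement of Lemma \ref{l: ptdOM is the spectrum of td atoms}, so this reduces to recalling the definition of the topology on $\pt_{\ca M}$.

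If this route proves awkward, there is an alternative. Lemma \ref{l: fiber terminal implies funayama} together with Example \ref{e: a funayama which is not terminal} shows that $\ca F L$ must be atomic, so $L$ is $T_D$-spatial and $\pt_D L\cong\pt_{\ca M}M$. One could instead conclude from this, again using Lemma \ref{l: ptdOM is the spectrum of td atoms}.
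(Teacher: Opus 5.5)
Your argument is correct and is essentially the paper's proof: Lemma \ref{l: if X td then PX mt terminal} together with $\mf{Fix}(\Topc)=\Topz$ gives one inclusion, and Lemmas \ref{l: fiber terminal implies funayama} and \ref{l: ptdOM is the spectrum of td atoms} give the other; you also usefully make explicit the step the paper leaves tacit, that every atom of $\ca F L$ is locally closed, so $\pt_{\ca M}M=\pt^D_{\ca M}M$. Two small corrections: in that step, write the atom itself as a join of elements $a\wedge\neg b$, so that each joinand lies below the atom and any nonzero joinand must equal it (an atom merely lying below some $a\wedge\neg b$ need not equal it); and drop the fallback route, since Example \ref{e: a funayama which is not terminal} only treats spatial frames, and a fiber-terminal $\ca F L$ need not be atomic (for an atomless complete Boolean algebra $L$, $(L,L)$ is the only object in its fiber).
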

\begin{proof}
If $(L,M)\in \bd{MT}$ is fiber-terminal, then by Lemma \ref{l: fiber terminal implies funayama} it is isomorphic to $(L,\ca F L)$ for some frame $L$. By Lemma \ref{l: ptdOM is the spectrum of td atoms}, $\pt_{\ca M}(L,M)$ is then homeomorphic to $\pt_D\Op_{\ca M}M$. Conversely, if $X$ is a $T_D$-space, then $(\Om X,\ca P X)$ is fiber-terminal, by Lemma \ref{l: if X td then PX mt terminal}. By duality, $X$ is homeomorphic to its spectrum, hence the spectrum of a fiber-terminal object. 
\end{proof}

\begin{lemma}\label{l: mt algebras are B1 B2 and B4}
    The SFC-category $(\Op_{\ca M},\bd{MT},\bd 2_{\ca M},\id_{\bd{2}})$ satisfies \ref{T2}, \ref{T1}, and \ref{T4}. Therefore, 
    \[
    (\Op_{\ca M}I_{\ca T},\bd{MT}_{\ca T},\bd{2}_{\ca M},\id_{\bd{2}})
    \]
    is an SFC-category.
\end{lemma}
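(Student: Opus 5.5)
We check the three axioms (T2), (T1), (T4) one at a time for $(\Op_{\ca M},\bd{MT},\bd 2_{\ca M},\id_{\bd 2})$. We then derive the SFC-category claim by repeating the first part of the proof of Theorem~\ref{t: the terminal duality}, which uses only (T1) and (T4), and not (T3). This matters because (T3) fails for MT-algebras: the free complete Boolean algebra on a frame need not exist.

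For (T2) we invoke Theorem~\ref{t: duality for mt}, which gives $\Topc=\Topp$ and $\mf{Fix}(\Topc)=\Topz$. Here we read (T2) as requiring that all spaces be admissible and that the fixpoints be exactly the $T_0$-spaces, which is how it is verified for Raney and Skula extensions in Lemmas~\ref{l: raney is total} and~\ref{l: skula is total}.

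For (T1) we must show that the fibers of $\bd 2$ and of $\Om\mathbb S$ each contain, up to isomorphism, a single object. An MT-algebra $(L,M)$ in such a fiber is $T_0$, so $M$ is generated under joins by elements $\bigwedge_i a_i\wedge\neg a$ with $a,a_i\in L$. For $L\cong\bd 2$ these elements are only $0$ and $1$, so $M\cong\bd 2$, and this is $\bd 2_{\ca M}$. For $L=\Om\mathbb S=\{0<u<1\}$ the generators are $0$, $u$, $\neg u$ and $1$, so $M$ is the four-element Boolean algebra $\ca P\mathbb S$, which is $\Omc\mathbb S$. In both cases every object of the fiber is isomorphic to the given one. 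The isomorphisms are compatible with the identifications to $L$, since an MT-morphism is determined by its restriction to the opens (faithfulness of $\Op_{\ca M}$). So the object is simultaneously initial and terminal in its fiber.

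For (T4), let $(L,M)$ be fiber-terminal. By Proposition~\ref{p: td are the spectra of mt terminals}, $X:=\pt_{\ca M}(L,M)$ is a $T_D$-space. By Lemma~\ref{l: if X td then PX mt terminal}, $\ca P X=\Omc X$ is then fiber-terminal. Proposition~\ref{p: spaces in topt} applies because $\bd 2_{\ca M}$ is fiber-terminal by (T1), and it gives $X\in\Topt$.

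Finally, $\mathbb S\in\Topt$ because $\Omc\mathbb S$ is fiber-terminal (again by Proposition~\ref{p: spaces in topt}), and $\pt_{\Ct}(C)=\ptc(C)\in\Topt$ for every $C\in\Ct$ by Lemma~\ref{l: definition omegat} together with (T4). Hence $(\Op_{\ca M}I_{\ca T},\bd{MT}_{\ca T},\bd 2_{\ca M},\id_{\bd 2})$ is an SFC-category.

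The step I expect to be most delicate is (T4), specifically the identification $\Omc X=\ca P X$ for a $T_D$ space $X$. This identification comes from the proof of Theorem~\ref{t: duality for mt}, and it is what allows Lemma~\ref{l: if X td then PX mt terminal} to be applied directly.
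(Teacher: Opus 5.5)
Your proposal is correct and follows the paper's proof essentially step by step. You get (T2) from Theorem~\ref{t: duality for mt}, (T1) from the fibers of $\bd 2$ and $\Om\mathbb S$ having a single object up to isomorphism, and (T4) via Proposition~\ref{p: td are the spectra of mt terminals}, Lemma~\ref{l: if X td then PX mt terminal} and Proposition~\ref{p: spaces in topt}. The SFC conclusion then comes, as in the proof of Theorem~\ref{t: the terminal duality}, from (T1) and (T4) alone.

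One small correction in (T1). Compatibility of your isomorphisms with the identifications $\theta$ does not follow from faithfulness of $\Op_{\ca M}$, which only gives uniqueness of lifts, not existence. It follows because $\bd 2$ and the three-element chain have no non-trivial frame automorphisms. Hence any frame isomorphism between the underlying frames of two objects in these fibers is the restriction of the MT-isomorphism you exhibited.
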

\begin{proof}
    The fibers of $\bd 2$ and of $\ca P \mathbb{S}$ only contains one object, up to isomorphism, and so \ref{T1} holds. Property \ref{T2} holds by Theorem \ref{l: mt algebras are a quasi t0 duality}. If $M\in \bd{MT}$ is fiber-terminal, by Proposition \ref{p: td are the spectra of mt terminals} its spectrum $\pt_{\ca M}(M)$ is a $T_D$-space. By Proposition \ref{l: if X td then PX mt terminal}, then, $\ca{P}(\pt_{\ca M}(M))$ is fiber-terminal, and so the space is in $\Topt$ by the characterization in Proposition \ref{p: spaces in topt}. Then, \ref{T4} holds.
\end{proof}

\begin{lemma}\label{l: topt and frmt for mt}
       For the SFC-category $(\Op_{\ca M},\bd{MT},\bd 2_{\ca M},\id_{\bd{2}})$:
    
    \begin{enumerate}
        \item $\Topt=\bd{Top_D}$;
        \item $\Frm_{\ca T}\subseteq \Frm_{\ca{LE}}$.
    \end{enumerate}
\end{lemma}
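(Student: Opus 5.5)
For item 1, I will use Proposition~\ref{p: spaces in topt}: since $\bd 2_{\ca M}$ is fiber-terminal (by \ref{T1}, proved in Lemma~\ref{l: mt algebras are B1 B2 and B4}) and $\Topc=\Topp$, a space $X$ lies in $\Topt$ if and only if $\ca P X=\Om_{\ca M}X$ is fiber-terminal.

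If $X$ is $T_D$, Lemma~\ref{l: if X td then PX mt terminal} gives that $\ca P X$ is terminal in the fiber of $\Om X$, so $X\in\Topt$. Conversely, suppose $X\in\Topt$, so $\ca P X$ is fiber-terminal. By \ref{T2} we know $X$ is $T_0$ and that $X\cong \pt_{\ca M}(\ca P X)$, the spectrum of a fiber-terminal object. Proposition~\ref{p: td are the spectra of mt terminals} then makes $X$ a $T_D$-space. Alternatively, Lemma~\ref{l: if it is in TopT it is TD} applies directly, since \ref{T2}, \ref{T1} and \ref{T4} hold. This gives $\Topt=\bd{Top_D}$.

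For item 2, recall that $\Frm_{\ca T}$ is the essential image of $\Op_{\ca M}I_{\ca T}$, with morphisms the frame maps underlying morphisms between fiber-terminal MT-algebras.

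On objects: by Lemma~\ref{l: fiber terminal implies funayama}, every fiber-terminal MT-algebra is isomorphic to $(L,\ca F L)$.

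On morphisms: a morphism $(L,\ca F L)\to(M,\ca F M)$ in $\bd{MT}$ is a complete Boolean algebra map $\ca F L\to\ca F M$ restricting to $f\colon L\to M$ on opens. This is exactly the statement that $f$ extends to a map of complete Boolean algebras between Funayama constructions, which is our definition of locally exact. One must transport along the isomorphisms $M\cong(L,\ca F L)$ from Lemma~\ref{l: fiber terminal implies funayama}. These are identities on opens up to the frame isomorphism, so the extension property is preserved. Hence every morphism of $\Frm_{\ca T}$ lies in $\Frm_{\ca{LE}}$.

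The main subtlety is checking that the identification of $\ca F L$ used in MT (the MacNeille completion of the Boolean envelope) agrees with the sublocale description used in defining locally exact maps. I will handle this by citing the isomorphism of the two constructions from \cite{funayamasrevisited}, which commutes with the embeddings of $L$.
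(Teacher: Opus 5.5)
Your argument is correct. For item 2 and for $\bd{Top_D}\subseteq\Topt$ you argue exactly as the paper does. Item 2 is Lemma \ref{l: fiber terminal implies funayama} plus the definition of locally exact. The inclusion is Lemma \ref{l: if X td then PX mt terminal} plus Proposition \ref{p: spaces in topt}. You are also more careful than the paper about transporting along the fiber isomorphisms and about identifying the two models of $\ca F L$.

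The difference is in the inclusion $\Topt\subseteq\bd{Top_D}$. The paper's intended argument is the following: if $\ca PX$ is fiber-terminal, then by Lemma \ref{l: fiber terminal implies funayama} the inclusion $\Om X\subseteq\ca PX$ is, up to isomorphism, the Funayama embedding, so $X$ is $T_D$. (The paper's text cites Lemma \ref{l: fiber terminal implies funayama} a second time where Theorem \ref{t: many chactacterizations of td spaces} is meant.) That is a single step, and it uses neither the duality nor spectra.

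Your main route instead goes through the fixpoint property $X\cong\pt_{\ca M}(\ca PX)$ and Proposition \ref{p: td are the spectra of mt terminals}. This is valid but longer, since that proposition itself rests on Lemma \ref{l: fiber terminal implies funayama} together with the locally closed atoms of Lemma \ref{l: ptdOM is the spectrum of td atoms}.

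Your alternative via Lemma \ref{l: if it is in TopT it is TD} is the more conceptual one. It shows the inclusion is a formal consequence of (\ref{T2}), (\ref{T1}), (\ref{T4}) and the Banaschewski--Pultr description of $T_D$-spaces as fiber-maximal (Theorem \ref{t: sober and td as fiber minimal and maximals}). Lemma \ref{l: if X td then PX mt terminal} then becomes the only MT-specific input. The paper itself adopts this viewpoint later, in the proof of Theorem \ref{t: terminal duality for mt}.

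One caveat, inherited from the paper rather than introduced by you. You assert $\Topc=\Topp$, as in Theorem \ref{t: duality for mt}, and then use (\ref{T2}), i.e.\ $\Topc=\mf{Fix}(\Topc)=\Topz$, to conclude that every $X\in\Topt$ is $T_0$. Read literally, these two claims clash. For non-$T_0$ $X$, $\ca PX$ is not a $T_0$ MT-algebra, and $\Om_{\ca M}X$ is really $\ca P$ of the $T_0$-reflection. Such an $X$ therefore lies in $\Topt$ as soon as its $T_0$-reflection is $T_D$. So your proof and the paper's both establish $\Topt=\bd{Top_D}$ among $T_0$ spaces, which is the intended reading; under it your argument is complete.
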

\begin{proof}
 We use the characterization in Proposition \ref{p: spaces in topt}, which states that spaces $X$ in $\Topt$ are precisely those such that $\ca P X$ is fiber-terminal. If $X$ is a $T_D$ space, $\ca P X$ is fiber-terminal by Lemma \ref{l: if X td then PX mt terminal}. If $X$ is a space such that $\ca P X$ is fiber-terminal, then by Lemma \ref{l: fiber terminal implies funayama}, $\ca P X$ must be, up to isomorphism, the Funayama envelope of $\Om X$, and so by Lemma \ref{l: fiber terminal implies funayama} it is fiber-terminal.
 Let us show the second item. By Lemma \ref{l: fiber terminal implies funayama}, terminal MT-algebras are of the form $(L,\ca F L)$ for some frame $L$. If a frame map has a lift in $\bd{MT}_{\ca T}$, then, it lifts to the Funayama construction, and so it is locally exact by our definition. 
\end{proof}

\begin{theorem}\label{t: terminal duality for mt}
There is a morphism of SFC-categories:
\[
\begin{tikzcd}[row sep=large,column sep=large]
    \bd{MT}_{\ca T}
    \ar[rr,"I_{\ca T}",hookrightarrow]
    && \bd{MT}.
\end{tikzcd}
\]
The $\mf{Dual}$ functor sends them to maps in $\bd{SADJ}$ as follows.

\begin{equation*}
    \begin{tikzcd}[row sep=huge, column sep=large]
        \bd{MT}
            \ar[r,shift left=2,"\pt_{\ca M}"{above}]
            \ar[r,phantom,"\dashv"{rotate=-90,anchor=center}]
        & \bd{Top}^{\mathsf{op}}
            \ar[l,shift left=2,"\ca{P}"{below}] \\
        \bd{MT}_{\ca T}
            \ar[u,hookrightarrow,red,dashed]
            \ar[r,shift left=2,"\pt_{\ca M}"{above}]
            \ar[r,phantom,"\dashv"{rotate=-90,anchor=center}]
            \ar[d,"\ca O_{\ca M} I_{\ca T}",dotted,no head]
        & \bd{Top_D}^{\mathsf{op}}
            \ar[u,hookrightarrow,red,dashed]
            \ar[l,shift left=2,"\ca{P}"{below}]
             \ar[d,equal] \\
\bd{Frm}_{\ca T}
    \ar[d,hookrightarrow]
    \ar[r,shift left=2,"\ptt"{above}]
    \ar[r,phantom,"\dashv"{rotate=-90,anchor=center}]
& \Topt^{\mathsf{op}}
 \ar[l,shift left=2,"\Om"{below}]
 \ar[d,equal]
             \\
        \bd{Frm}_{\ca{LE}}
            \ar[r,shift left=2,"\pt_{\ca{LE}}"{above}]
            \ar[r,phantom,"\dashv"{rotate=-90,anchor=center}]
        & \bd{Top_D}^{\mathsf{op}}
            \ar[l,shift left=2,"\Om"{below}]
    \end{tikzcd}
\end{equation*}
\end{theorem}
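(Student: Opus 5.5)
The plan is to assemble the theorem from the general results of Subsection~\ref{ss nat adj for fiber initials and terminals}. The MT setting does not fully satisfy totality: \eqref{T3} fails, because fiber-initial MT-algebras need not exist. So Theorem~\ref{t: the terminal duality} cannot be cited wholesale. Its proof, however, only used \eqref{T2}, \eqref{T1} and \eqref{T4}, and Lemma~\ref{l: mt algebras are B1 B2 and B4} establishes exactly these three properties for $\bd{MT}$. The first step is therefore to rerun that proof verbatim.

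By \eqref{T1}, \eqref{T4} and Proposition~\ref{p: spaces in topt}, $(\Op_{\ca M}I_{\ca T},\bd{MT}_{\ca T},\bd 2_{\ca M},\id_{\bd 2})$ is an SFC-category; this is also the conclusion of Lemma~\ref{l: mt algebras are B1 B2 and B4}. The inclusion $I_{\ca T}$ satisfies \eqref{Mxx} and \eqref{M2} trivially. For \eqref{M3}, take $X\in\Topt$. By \eqref{T2}, the characteristic family has the cartesian lift $\ca P X$ in $\bd{MT}$. By Lemma~\ref{l: definition omegat}, this lift is isomorphic to the lift $\Omt X$ in $\bd{MT}_{\ca T}$, so $I_{\ca T}$ preserves it. Since $I_{\ca T}$ is full, Proposition~\ref{p: the zeta generalized} shows that $\mf{Dual}(I_{\ca T})$ lies in $\bd{SADJ}$. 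Lemma~\ref{l: topt and frmt for mt} then identifies $\Topt$ with $\bd{Top_D}$, which gives the top square.

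For the middle square I will mimic Proposition~\ref{p: adding frmt to the picture}. Its proof again only needs $(\Op I_{\ca T},\Ct,\ldots)\in\SFC$, which we have just established. The functor $\Op_{\ca M}I'_{\ca T}\colon \bd{MT}_{\ca T}\to\Frm_{\ca T}$ is an equivalence onto its essential image, since it is full onto that image by definition. Hence $(i_{\ca T},\Frm_{\ca T},\bd 2,\id)$ is an SFC-category, and $\mf{Dual}(\Op_{\ca M}I'_{\ca T})$ is a strong morphism by Proposition~\ref{p: the zeta generalized}. On the topological side the identity $\Topt=\bd{Top_D}$ is again supplied by Lemma~\ref{l: topt and frmt for mt}.

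For the bottom square, Lemma~\ref{l: topt and frmt for mt} gives the inclusion $\Frm_{\ca T}\subseteq\Frm_{\ca{LE}}$. By Lemma~\ref{l: fe is subcoframe of F} and Proposition~\ref{p: td points of a frame}, the points in both categories are the $D$-points, so the spectra agree. I then need this inclusion to be an SFC-functor, i.e.\ to satisfy \eqref{M3}, which I expect to be the main obstacle. To get it, I will identify $\Frm_{\ca{LE}}$ with $\bd{Skula}_{\ca T}$ via Theorem~\ref{t: terminal duality for skula}, so that its $\mathsf{Top}$-category is $\bd{Top_D}$. For $X$ a $T_D$-space, the cartesian lift in $\Frm_{\ca T}$ is $\Om X$. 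I will check that $\Om X$ remains cartesian in $\Frm_{\ca{LE}}$: a frame map whose composites with all the $\chi_x$ are locally exact must itself be locally exact. Because $\ca F\Om X\cong\ca P X$ by Theorem~\ref{t: many chactacterizations of td spaces}, this reduces to extending the map to $\ca P X$ pointwise, exactly as in the proof of Theorem~\ref{t: duality for mt}. With that in hand, Lemma~\ref{l: thr initial topology from xi} together with the equality of $D$-points shows that $\xi$ is a homeomorphism, so this square also lies in $\bd{SADJ}$.
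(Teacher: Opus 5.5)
Your proposal is correct and follows the paper's proof. Like the paper, you rerun Theorem~\ref{t: the terminal duality} using only \eqref{T2}, \eqref{T1}, \eqref{T4} from Lemma~\ref{l: mt algebras are B1 B2 and B4}, then apply Proposition~\ref{p: adding frmt to the picture}, and finish the bottom square with Proposition~\ref{p: the zeta generalized} and Lemma~\ref{l: thr initial topology from xi}. The only difference is that you verify \eqref{M3} for $\Frm_{\ca T}\subseteq\Frm_{\ca{LE}}$ explicitly, by pointwise extension to $\ca P X\cong\ca F\Om X$, where the paper just calls the right-map property ``easy to see''.

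One citation needs fixing. The equality $\Frm_{\ca T}(L,\bd 2)=\Frm_{\ca{LE}}(L,\bd 2)$ does not follow from Lemma~\ref{l: fe is subcoframe of F}. It follows from Lemma~\ref{l: fiber terminal implies funayama} (fiber-terminal objects are $(L,\ca F L)$) together with the definition of local exactness. This is the fullness of $\Frm_{\ca T}\subseteq\Frm_{\ca{LE}}$ that the paper invokes.
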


\begin{proof}
We note that in the proof of Theorem \ref{t: the terminal duality}, only
  properties \ref{T2}, \ref{T1}, and \ref{T4} are used, as well as fullness of
  the functor $\Op I_{\ca T}:\bd{MT}_{\ca T}\to \bd{Frm}_{\ca T}$. By
  Lemma \ref{l: mt algebras are B1 B2 and B4}, the three properties
  \ref{T2}, \ref{T1}, and \ref{T4} are satisfied. Then, the first two squares of the
  diagram commute as in Theorem \ref{t: the terminal duality}. Additionally,
  by Lemma \ref{l: topt and frmt for mt}, $\Topt=\bd{Top_D}$; while
  $\bd{Top_D}=\mf{Fix}_{\ca T}(\Topt)$ follows by the observation that, once
  again, in Theorem \ref{t: main initial terminal theorem} the item on
  fiber-terminals does not use property \ref{T3}. By Lemma \ref{l: topt and
  frmt for mt}, there is an inclusion $\Frm_{\ca T}\subseteq \Frm_{\ca{LE}}$. Let us show that the natural transformation in the claim is
  a homeomorphism. It is easy to see that the two vertical morphisms in the
  third square are a right map of adjunctions. By Proposition \ref{p: the zeta
  generalized}, then, for the desired result it suffices to observe that
  $\Frm_{\ca T}\subseteq \Frm_{\ca{LE}}$ is full.  
\end{proof}

\begin{remark}
There are two more approaches in the literature for obtaining a duality for $T_D$-spaces as a result of a dual adjunction with subcategories of $\bd{MT}$. In \cite{bezhanishvili25td}, this is done by restricting the morphisms in $\bd{MT}$ (to those such that $\Op_{\ca M} f$ is a D-morphism). Another such duality is the content of Theorem 6.4 in \cite{bezhanishvili23}, where objects of $\bd{MT}$ are restricted to those of the form $(L,\ca F L)$ for some frame $L$. By Lemma \ref{l: fiber terminal implies funayama}, all fiber-terminal objects are of this form, so our dual adjunction is a restriction of that in \cite{bezhanishvili23}; it is also a proper restriction as there are objects of the form $(L,\ca F L)$ that are not fiber-terminal (see Example \ref{e: a funayama which is not terminal}).    
\end{remark}

\printbibliography 

\end{document}